\newtheorem{theorem}{Theorem}[section]
\newtheorem{lemma}{Lemma}
\newtheorem{proposition}{Proposition}
\newtheorem{remark}{Remark}
\newtheorem{definition}{Definition}
\newtheorem{corollary}{Corollary}
\newtheorem{assumption}{Assumption}
\DeclareMathOperator{\Tr}{Tr}
\DeclareMathOperator*{\argmin}{argmin}
\newcommand{\R}{\mathbb{R}}
\newcommand{\st}{\mathrm{s.t.}}
\newcommand{\diff}{\mathrm{d}}
\newcommand{\opt}{^\star}
\newcommand{\EE}{\mathbb{E}}
\newcommand{\PP}{\mathbb{P}}
\newcommand{\QQ}{\mathbb{Q}}
\newcommand{\kibitz}[2]{\ifnum\Comments=1{\textcolor{#1}{\textsf{\footnotesize #2}}}\fi}
\newcommand{\dm}{decision maker}  % whether we use decision maker or controller
\newcommand{\Dm}{Decision maker}  % whether we use decision maker or controller
\newcommand{\setalldist}[1]{\mathcal{P}(\R^{#1})}
\newcommand{\setmoments}[1]{{\mathcal{M}_2^{#1}}}
\newcommand{\PPz}[1]{\PP^{#1}_z}  % command to generate the distributions used for W, KL, etc. to compare things...
\newcommand{\g}{g}    % for the generic function used in Assumption 3
\newcommand{\gz}{\g} % for the function used in Assumption 4
\newcommand{\gradgz}{\nabla \gz} % gradient for the function used in Assumption 4
\newcommand{\norm}[1]{\left\lVert #1 \right\rVert}
\newcommand{\revision}[2][black]{\textcolor{#1}{#2}}
\begin{document}

\title{Optimality of Linear Policies in Distributionally Robust Linear Quadratic Control}

\author[$\dagger$]{Bahar Ta{\c{s}}kesen}
\affil[$\dagger$]{\small Booth School of Business, University of Chicago \authorcr \href{mailto:bahar.taskesen@chicagobooth.edu}{\texttt{bahar.taskesen@chicagobooth.edu}}}

\author[$\ddagger$]{Dan A. Iancu}
\affil[$\ddagger$]{Operations, Information \& Technology, Graduate School of Business, Stanford University \authorcr \href{daniancu@stanford.edu}{\texttt{daniancu@stanford.edu}}}

\author[$\ast$]{\c{C}a\u{g}{\i}l Ko\c{c}yi\u{g}it}
\affil[$\ast$]{Luxembourg Centre for Logistics and Supply Chain Management, University of Luxembourg \authorcr \href{cagil.kocyigit@uni.lu}{\texttt{cagil.kocyigit@uni.lu}}}

\author[$\ast\ast$]{Daniel Kuhn}
\affil[$\ast\ast$]{Risk Analytics and Optimization Chair, EPFL \authorcr \href{daniel.kuhn@epfl.ch}{\texttt{daniel.kuhn@epfl.ch}}}

\maketitle

\begin{abstract}%
  We study a generalization of the classical discrete-time, Linear-Quadratic-Gaussian (LQG) control problem where the \revision{distributions of the noise terms} affecting the states and observations are unknown and chosen adversarially from divergence-based ambiguity sets centered around a known nominal distribution. \revision{The noise terms are allowed to have nonzero mean but are required to have finite second moments that satisfy an orthogonality condition, which is equivalent to uncorrelatedness if noise means are zero.} For a finite horizon model with \revision{zero-mean} Gaussian nominal noise and a structural assumption on the divergence that is satisfied by many examples -- including 2-Wasserstein distance, Kullback-Leibler divergence, moment-based divergences, entropy-regularized optimal transport, or Fisher (score-matching) divergence -- we prove that a control policy that is \emph{affine} in the observations is optimal and the adversary's corresponding worst-case optimal distribution is Gaussian. \revision{Under a weak condition satisfied by all these examples, we then prove} that the adversary should optimally set the distribution's mean to zero and the optimal control policy becomes \emph{linear}. Moreover, the adversary should optimally ``inflate" the noise by choosing covariance matrices that dominate the nominal covariance in Loewner order. Exploiting these structural properties, we develop a Frank-Wolfe algorithm whose inner step solves standard LQG subproblems via Kalman filtering and dynamic programming and show that the implementation consistently outperforms semidefinite-programming reformulations of the problem. All structural and algorithmic results extend to an infinite-horizon, average-cost formulation, yielding stationary linear policies and a time-invariant Gaussian distribution for the adversary. Lastly, we show that when the divergence is 2-Wasserstein, the entire framework remains valid when the nominal distributions are elliptical rather than Gaussian.
\end{abstract}

% \KEYWORDS{distributionally robust optimization, linear-quadratic-Gaussian control, linear policies}
\section{Introduction}
\label{sec:introduction}
The Linear Quadratic Gaussian (LQG) control problem has served as a fundamental building block for a wide range of applications in management \citep{bensoussan2007partially,ref:holt_1955_LDR}, economics \citep{Hansen-Sargent-2005}, finance \citep{abeille2016lqg}, engineering \citep{Auger_eta_al_2013,Chen_2012_survey_robotic_vision}, or medicine \citep{patek2007lqg,chakravarty2020clad,kazemian2019glaucoma,todorov2002optimal}.
%{\color{red} DK: Rather ``cognitive sicence''?} \citep{todorov2002optimal}.

The discrete-time, finite-horizon formulation considers the problem of minimizing the expected costs incurred when controlling a linear dynamical system over a finite number of periods $t\in \{0,1,\dots,T-1\}$. The system evolves according to the equations
\label{system}
\begin{equation}
    \label{eq:dynamics}
    x_{t+1} = A_t  x_t + B_t  u_t +  w_t \quad \forall t  \in \{0,1,\dots,T-1\},
\end{equation}
where $x_t\in\R^n$ denotes the system states, $u_t \in\R^m$ denotes the control inputs, $w_t\in\R^n$ denotes an exogenous noise process, and the system matrices~$A_t\in \R^{n\times n}$ and~$B_t \in \R^{n \times m}$ are known. The \dm{} only has access to imperfect state measurements
\begin{equation}
    \label{eq:observation}
    y_t = C_t x_t + v_t \quad \forall t \in \{0,1,\dots,T-1\},
\end{equation}
corrupted by exogenous observation noise~$v_t\in \R^p$, where~$C_t \in \R^{p \times n}$ and usually~$p\leq n$ (so that observing~$y_t$ does not allow perfectly reconstructing~$x_t$ even without observation noise). The control inputs~$u_t$ are \emph{causal}, i.e., depend on the past observations~$y_0,\ldots, y_t$ but not on the future observations~$y_{t+1},\ldots, y_{T-1}$, so that the set of feasible control inputs $\mathcal{U}_y$ is the set of random vectors $u = (u_0,u_1,\dots,u_{T-1})$ such that $u_t = \varphi_t(y_0,\ldots,y_t)$ for every~$t$, where~$\varphi_t:\R^{p(t+1)}\rightarrow\R^m$ is a measurable control policy. Controlling the system generates quadratic costs:
\begin{equation}
    \label{eq:lqr-cost-function}
    J(u) = \sum\limits_{t=0}^{T-1}(  x_t^\top Q_t x_t +   u_t^\top R_t   u_t) +   x_{T}^\top Q_T   x_T,
\end{equation}
where~$Q_t \in \R^{n \times n}$ are positive semidefinite matrices governing the state costs, and $R_t \in \R^{m \times m}$ are positive definite matrices governing the input costs. Under the assumption that the joint probability distribution $\PP$ for the noise terms is known, the classical LQG problem seeks causal control inputs % $(u_0,u_1,\dots,u_{T-1})$ so that $u_t$ depends on the past observations~$y_0,\ldots, y_t$, but not on the future observations~$y_{t+1},\ldots, y_{T-1}$ 
% -- i.e., so that 
% -- that is, inputs of the form $u_t = \varphi_t(y_0,\ldots,y_t)$ for every~$t$, where~$\varphi_t:\R^{p(t+1)}\rightarrow\R^m$ is a measurable function -- 
that minimize the expected costs under the distribution $\PP$, i.e., $\inf_{u \in \mathcal{U}_y} \EE_{\PP}[J(u)]$.

To prove structural results and design tractable algorithms for solving this problem, several assumptions on the probability distribution $\PP$ are typically needed. %Specifically, under the premise that the exogenous noise terms have zero means and are uncorrelated\footnote{That is, $\EE_{\PP}[z]=0$ and $\EE_{\PP}[z' z^\top]=0$ for all $z\neq z' \in \{x_0, w_0, \ldots, w_{T-1}, v_0, \ldots, v_{T-1}\}$.} and follow Gaussian distributions, the LQG model admits a tractable solution: the optimal control inputs are $u\opt_t=K_t \hat x_t$ for every $t\in \{0,\dots,T-1\}$, where the optimal feedback gain matrices $K_t\in\R^{m\times n}$ and the minimum mean-squared-error state estimators~$\hat x_t=\mathbb E_{\PP}[x_t|y_0,\ldots,y_t]$ can be obtained through Kalman filtering and dynamic programming techniques \citep{Bertsekas_2017}. 
Under the premise that all noise terms have zero means and are mutually independent, it is known that the problem admits an optimal control policy of the form $u\opt_t=K_t \hat x_t$ for every $t\in \{0,\dots,T-1\}$. Here, the feedback gain matrices $K_t\in\R^{m\times n}$ only depend on the system and cost matrices $\{A_\tau,B_\tau,Q_\tau,R_\tau\}_{\tau \geq t}$ and can be obtained by solving a set of recursive equations for a system without noise, and $\hat{x}_t=\mathbb E_{\PP}[x_t|y_0,\ldots,y_t]$ is the minimum mean-squared-error (MMSE) estimator of the state $x_t$ given the history of observations $y_0,\dots,y_t$. This \emph{separation principle} holds regardless of the specific probability distribution $\PP$, but does not readily lead to tractable algorithms because calculating the MMSE estimator $\hat{x}_t$ is intractable for general probability distributions $\PP$. As such, the LQG model also makes the additional assumption that $\PP$ is \emph{Gaussian}\footnote{Note that if $\PP$ is a multivariate Gaussian distribution, the requirement that noise terms are independent can be relaxed to only requiring that they are uncorrelated, i.e., $\EE_{\PP}[z' z^\top]=0$ for all $z\neq z' \in \{x_0, w_0, \ldots, w_{T-1}, v_0, \ldots, v_{T-1}\}$.}, in which case the optimal state estimator $\hat{x}_t$ depends linearly on the history of observations $y_0,\dots,y_t$ and can be obtained efficiently with Kalman filtering techniques. (We refer the reader to Appendix~\ref{sec: appx: optimal-solution-classic-LQG} for an overview and to \citet{Bertsekas_2017} for a detailed discussion of these classical results.)

Motivated by practical settings where noise distributions may not be readily available or may not be Gaussian, we consider a generalization of the discrete-time LQG model where an adversary chooses the noise distributions from an ambiguity set $\mathcal{B}$ characterized by a divergence $\mathds{D}$ and centered around a known nominal distribution $\hat{\PP}$, and the \dm{}'s goal is to minimize the costs incurred under the worst-case distribution, $\sup_{\PP \in \mathcal{B}} \EE_{\PP}[J(u)]$. 

This optimization problem -- which we refer to as the distributionally-robust linear quadratic (DRLQ) problem -- is challenging: both the \dm{} and nature are optimizing over infinite-dimensional spaces, and the ambiguity set~$\mathcal{B}$ contains many non-Gaussian distributions, so it is not obvious which structural results from the LQG model would continue to hold or how one could compute an optimal control policy.

\subsection*{Main Contributions}
We first consider the finite-horizon case when the nominal distribution $\hat{\PP}$ is Gaussian \revision{with zero mean, as in the classical LQG model}. We construct ambiguity sets containing all distributions whose ``distance" from the nominal distribution -- measured according to a divergence $\mathds{D}$ -- is not too large. We restrict attention to distributions under which the exogenous noise terms \revision{are allowed to have non-zero means, but are required to have finite second moments that satisfy an orthogonality condition requiring cross second moments to vanish. When noise terms are zero-mean, this second-moment orthogonality (SMO) condition is equivalent to requiring the noise terms to be uncorrelated, which is also a standard requirement in the classical LQG model.} 
We require the divergence $\mathds{D}$ to satisfy a key condition, which we verify for several important examples such as 2-Wasserstein distance, Kullback-Leibler divergence, moment-based divergences, entropy-regularized optimal transport, and Fisher divergence. (The first three examples are discussed in the main text and the last two in the Appendix.)

Within this framework, we prove that an optimal control policy exists that is \emph{affine} in the observations, $u_t\opt = q_t + \sum_{\tau=0}^t U_{t,\tau} y_\tau$ for $q_t \in \R^{m}$ and $U_{t,\tau} \in \R^{m \times p}$, and that the associated worst-case optimal distribution $\PP\opt$ is \emph{Gaussian}. Our proof is novel and does not rely on traditional recursive dynamic programming arguments. Instead, we re-parameterize the control policy using purified observations and derive an upper bound for the resulting minimax formulation by relaxing the ambiguity set (to an outer approximation determined by the first two moments) while simultaneously restricting the \dm{} to affine policies. We then use convex duality to show that the upper bound matches a lower bound obtained by restricting the ambiguity set (to Gaussian distributions) in the dual of the minimax formulation. The matching bounds then certify the optimality of affine output-feedback policies for the \dm{} and of Gaussian distributions for the adversary.

Under two mild and intuitive assumptions that hold for  every divergence we consider, we derive additional structural results that yield sharp managerial insights and facilitate computation.

The first result concerns the means of the noise terms. \revision{We prove that the adversary's worst-case distribution $\PP\opt$ sets the noise mean to zero, and thus the worst-case exogenous noise terms are uncorrelated.} Whereas the vast majority of papers formulating robust LQG models restrict attention to \revision{zero-mean (and uncorrelated) noise for simplicity or in keeping with the classical LQG assumptions}, our findings provide a different justification: this assumption/choice is \emph{conservative}, because allowing the adversary to use zero means gives the adversary more power and results in the worst-case costs for the \dm{}.
Moreover, we prove that when noise is zero-mean, the optimal control policy becomes purely \emph{linear} in the outputs, $u_t\opt = \sum_{\tau=0}^t U_{t,\tau} y_\tau$. The intuition is straightforward: any deterministic bias that nature may introduce can be anticipated and neutralized by a suitable affine shift in the control policy, so it offers the adversary no advantage. In equilibrium, neither player employs predictable offsets, so when the nominal means are zero, the \dm{} also sets the intercepts to zero without incurring any optimality loss.

The second result pertains to the covariance of the noise terms. Restricting attention to zero-mean distributions and linear control policies, we prove that nature's optimal choice of covariance matrix $\Sigma\opt$ dominates the covariance matrix of the nominal distribution $\hat{\Sigma}$ in Loewner order, $\Sigma\opt \succeq \hat{\Sigma}$. Nature therefore spends its ambiguity budget by suitably ``inflating" the nominal covariance matrix $\hat{\Sigma}$, which increases the noise level and, consequently, the decision maker's optimal costs. This finding formalizes the familiar principle that higher variance entails greater uncertainty, extending it to the dynamic setting of distributionally robust LQG control. A practical implication follows immediately: when model misspecification is a concern, a simple yet effective safeguard (even against adversarial distributional ambiguity) is to up-scale the nominal covariance matrix and solve a nominal model under the resulting noisier Gaussian distribution.

We leverage these structural results to design efficient algorithms for finding optimal control policies in the DRLQ problem. We propose an algorithm based on a Frank-Wolfe first-order method that solves at each iteration sub-problems corresponding to classical LQG control problems, using Kalman filtering and dynamic programming. We show that this algorithm enjoys a sublinear convergence rate and is susceptible to parallelization. Our PyTorch implementation, which relies on automatic differentiation, yields uniformly lower runtime than a direct method based on semidefinite programming, outperforming it across every problem horizon and instance we tested. Moreover, the optimal robust policy significantly reduces worst-case costs while exhibiting virtually no performance loss when the nominal distribution is in fact correct.

We then extend our structural results to an infinite-horizon formulation of the DRLQ problem with average-cost objective, time-invariant system matrices, and time-invariant nominal distribution $\hat{\PP}$. Importantly, we do not require the control policies or all distributions in the ambiguity set to be stationary. This setting raises additional technical hurdles that require strengthening the assumptions of the finite-horizon case. However, under assumptions that mirror the classical LQG assumptions for infinite-horizon models, we prove that a time-invariant, \emph{stationary}, linear control policy $u\opt$ is optimal and that the worst-case distribution $\PP\opt$ is a \emph{time-invariant} Gaussian distribution. The result not only generalizes our finite-horizon findings but also aligns seamlessly with the structural insights long-observed in traditional infinite-horizon LQG settings with known distributions.

The Appendix elaborates on several extensions of the framework. \S\ref{sec:entropy_reg_opt_transport} and \S\ref{sec:Fisher_divergence_extension} confirm that all our structural results hold when the divergence $\mathds{D}$ is chosen as the entropy-regularized optimal transport divergence or as the Fisher divergence (also known as score-matching distance), respectively. \S\ref{sec: elliptical} then replaces the nominal Gaussian distribution with an \emph{elliptical} nominal distribution $\hat{\PP}$ with finite second moments -- a family that includes many non-Gaussian laws such as the 
Laplace, logistic, or hyperbolic distributions. Focusing on the 2-Wasserstein distance, we show that all our structural results hold and our scalable Frank-Wolfe algorithm is applicable.

\subsection{Literature Review}
\label{sec:literature review}
Our work is related to the literature on distributionally robust control, which seeks control policies that minimize expected costs under the worst-case system evolution (see, e.g.,  \citealp{Bertsimas_Iancu_Parrilo_2011,Kim_Yang_2023,Kotsalis_2021, Petersen_2000, vanParys_2016,Yang_2021} and references therein). \cite{Kim_Yang_2023} prove the optimality of linear state-feedback control policies for a related minimax LQR model with a Wasserstein distance but with {\it perfect} state observations. With perfect observations and zero-mean noise, the optimal policies in the classical LQR formulation are independent of the noise distribution and are thus inherently robust, %{\color{red} (DK: This may not be true when the noise is ``colored'', that is, when the exogenous noise terms have non-zero means)}, 
so considering imperfect observations and non-zero mean noise is what makes the problem more challenging in our case. Closest to our work, \citet{taskesen2023drlqc} study the finite-horizon version of our model with a 2-Wasserstein distance and all noise distributions \emph{required} to be zero-mean and \citet{lanzetti2025optimality} consider an infinite-horizon formulation with 2-Wasserstein distance and all noise distributions required to be \emph{stationary and zero-mean}, and both papers prove that \emph{affine} output-feedback policies are optimal. Our work unifies and extends these previous results. We provide a unifying description of the ambiguity set via a divergence $\mathds{D}$ that is required to satisfy a key property, which we verify for all the aforementioned examples considered in the literature and new examples that we identify. (Indeed, to the best of our knowledge, this is the first paper to consider entropy-regularized optimal transport or Fisher divergence for distributionally robust control problems.) Moreover, we do not restrict noise distributions to be zero mean in the finite-horizon case or stationary in the infinite-horizon case; instead, we allow the adversary more freedom and we prove -- under mild assumptions -- that such restrictions are without loss of optimality. Lastly, we draw a sharper distinction between affine and linear control policies and we characterize precisely when each class is optimal and how this is related to the adversary's choices. We note that several papers in the literature have also considered robust formulations (with imperfect observations) for \emph{constrained} systems, e.g., \cite{Ben-Tal_Boyd_Nemirovski_2005,ben2006extending,vanParys_2016,Kotsalis_2021, brouillon2024drinf}; these models are more challenging and the common approach is to restrict attention to affine feedback policies for computational tractability and without proving their optimality.

Our work is also related to the literature on distributionally robust filtering for linear dynamical systems, which considers formulations without controls and focuses on the problem of estimating states. \cite{zorzi_2017}  studies a model based on $\tau$-divergences (a class that includes Kullback-Leibler divergence as a special instance) and \cite{Han2023} considers a model based on 2-Wasserstein distance, for which tractable convex reformulations are derived. Within this stream, the closest works to ours are \cite{shafieezadeh2018wasserstein}, \cite{nguyen2023bridging}, which consider the problem of minimax mean-squared-error estimation when ambiguity is modeled with a 2-Wasserstein distance from a nominal Gaussian distribution, and \cite{kargin2024distributionally}, which relaxes the assumption that noise terms are iid and investigates both finite and infinite-horizon models, proving the optimality of linear filters when the nominal distribution is Gaussian and providing tractable convex reformulations using frequency-domain techniques. For the case with Wasserstein distance, our proof relies on some ideas from these papers (such as using the Gelbrich distance to construct upper bounds), which we combine with ideas from control theory on purified output feedback to obtain the construction.

%Also related is the recent literature stream on distributionally robust optimization using the Wasserstein distance \citep{ref:esfahani2018data}.  Also related is~\cite{arora2022data}, which studies multistage distributionally robust problems with ambiguity sets given by a nested Wasserstein distance for stochastic processes and identifies computationally tractable cases. For a broader overview of developments related to optimal transport and Wasserstein distance with an emphasis on computational tractability and applications in machine learning, we refer~to \cite{peyre_cuturi_2019}.

Our paper is also related to literature that documents the optimality of linear/affine policies in (distributionally) robust dynamic optimization models. \cite{bertsimas2010optimality,iancu2013supermodularity} prove optimality for one-dimensional linear systems affected by additive noise and with perfect state observations, but with general convex state and/or control costs. \cite{ref:hadjiyiannis2011affine,vanParys_Goulart_2013} provide computationally tractable approaches for quantifying the suboptimality of affine control policies in finite- or infinite-horizon settings, and \cite{bertsimas2012power,elhousni2021optimality,georghiou2021optimality} characterize the performance of affine policies in two-stage (distributionally) robust dynamic models.

Our proposed algorithm for solving the DRLQ problem is a variant of the classical Frank-Wolfe algorithm for solving convex optimization problems. The original paper introducing the ideas is \citet{ref:frank1956algorithm}, and \citep{taskesen2023drlqc} also rely on these ideas to construct a tractable algorithm for the finite-horizon, 2-Wasserstein DRLQ formulation. We extend that construction and generalize it to other ambiguity sets.

\subsection*{Notation}
All random objects are defined on a measurable space $(\Omega, \mathcal F)$. If this measurable space is equipped with a probability measure~$\PP$, then the distribution of any random vector~$z:\Omega\rightarrow\R^{d_z}$ is given by the pushforward distribution ${\mathbb P}_z={\mathbb P}\circ z^{-1}$ of~${\mathbb P}$ with respect to~$z$. The expectation under~$\PP$ is denoted by~$\mathbb E_{{\mathbb P}}[\cdot]$. We use $\mathcal{P}(\R^d)$ to denote the set of probability measures supported on $\R^d$ with finite second moments. The sets of all natural numbers with and without~$0$ are denoted by~$\mathbb N$ and~$\mathbb N_+$, respectively. For any $t \in \mathbb N$, we set $[t] = \{0, \ldots, t\}$.  
% For $z\in \R^d$ and $R\in \mathbb S_+^d$, we denote by $\|z\|_R = z^\top R z$. {\color{blue} DK: I think a square root is missing here?} 
%For $A \in \R^{n\times m}, $ $A^{[t]} = \text{diag}(A, \ldots, A) $ denotes the block diagonal matrix with $t$ copies of $A$ along the main diagonal. 
We use $\Vert A \Vert_F$ to denote the Frobenius norm and $\Vert A \Vert_2$ to denote the spectral norm of a matrix~$A \in \R^{n\times m}$.
We let $\setmoments{d} = \{ (\mu,M) \,: \mu \in \R^d, \, M \in \mathbb{S}_+^d, \, M \succeq \mu \mu^\top \}$ denote the set of valid pairs of first and second moments for a $d$-dimensional random vector. For $(\mu,M) \in \setmoments{d}$, we use $\mathcal{N}(\mu, M)$ to denote a Gaussian distribution with mean $\mu$ and second-moment matrix $M$. (Occasionally and when no confusion can arise, we also refer to the Gaussian distribution in terms of its mean $\mu$ and covariance matrix $\Sigma = M - \mu \mu^\top$.)
 
%
%  Model description , assumptions, examples
%
\section{Ambiguity Model, Assumptions, and Examples}
\label{sec:problem-definition}
We consider a discrete-time, linear dynamical system like the one described in \S\ref{sec:introduction} and assume that the initial state~$x_0$ and the noise terms~$\{w_t\}_{t=0}^{T-1}$ and~$\{v_t\}_{t=0}^{T-1}$ are exogenously determined %\delbt{uncorrelated}\revision{satisfy second-moment orthogonality condition}, 
and governed by an unknown probability distribution. Because all random vectors appearing in our model are functions of these exogenous uncertainties, we set the sample space without loss of generality as~$\Omega = \R^n \times \R^{n\times T}  \times \R^{p \times T}$. %of realizations of the random vectors $x_0$,~$\{w_t\}_{t=0}^{T-1}$, and~$\{v_t\}_{t=0}^{T-1}$ respectively, 
We use $\mathcal F$ to denote the Borel $\sigma$-algebra on~$\Omega$ and~$\PP$ to denote the joint probability distribution of these random vectors. The joint distribution~$\PP$ is only known to belong to an ambiguity set~$\mathcal{B}$. To model $\mathcal{B}$, we consider a known \emph{nominal} distribution $\hat{\PP}$ with marginals $\hat \PP_{z}$ for every $z \in \mathcal Z = \{x_0, w_0, \ldots, w_{T-1}, v_0, \ldots, v_{T-1}\}$ and a \emph{divergence} $\mathds D : \setalldist{d_z} \times \setalldist{d_z} \to [0,+\infty]$ satisfying $\mathds D(\PP_z,\PP_z)=0$ whenever $\PP_z \in \setalldist{d_z}$, and we construct the ambiguity set~$\mathcal{B}$ as:
\begin{equation}
    \begin{aligned}
        &\mathcal{B} = \left\{\PP \in \mathcal P(\R^{n+ T(n+ p)}): \PP_{z} \in \mathcal{B}_z, ~ \EE_{\PP}[z' z^\top] = 0 ~\forall z\neq z'\in \mathcal Z \right\},
    \end{aligned}
    \label{eq:ambiguity_set_B_def}
\end{equation}
where, for all $z \in \mathcal Z$ and for finite $\rho_z \geq  0$,
% \begin{equation}
%     \begin{aligned}
%         &\mathcal{B}_z = \left\{\PP_z \in \setalldist{d_z}: ~ \mathds D(\PP_{z}, \hat \PP_{z}) \leq \rho_{z} \right\}.
%     \end{aligned}
%     \label{eq:ambiguity_set_Bz_def}
% \end{equation}
\begin{equation}
    \begin{aligned}
        &\mathcal{B}_z = \left\{\PP_z \in \setalldist{d_z}: \mathds D(\PP_{z}, \hat \PP_{z}) \leq \rho_{z} \right\}.\\
        %&\mathcal{B}_z = \left\{\PP_z \in \setalldist{d_z}: \exists (\mu_z,M_z) \in \setmoments{d_z}, \, \EE_{\PP_z}[z] = \mu_z, \, \EE_{\PP_z}[z z^\top] = M_z,  ~ \mathds D(\PP_{z}, \hat \PP_{z}) \leq \rho_{z} \right\}.
    \end{aligned}
    \label{eq:ambiguity_set_Bz_def}
\end{equation}
\revision{We refer to the requirement that $\EE_{\PP}[z' z^\top]=0$ for any $z \neq z' \in \mathcal{Z}$ as the \emph{second moment orthogonality} (SMO) condition. Note that if $\EE_\PP[z]=\EE_\PP[z']=0$, this is equivalent to requiring that $z,z'$ are uncorelated.} Subsequently, for every $z \in \mathcal{Z}$, we use $\mu_z, M_z$, and $\Sigma_z$ to denote the mean, second moment, and covariance matrix, respectively, under a generic distribution $\PP \in \mathcal{B}$, and use $\hat{\mu}_z, \hat{M}_z$, and $\hat{\Sigma}_z$ to denote these quantities, respectively, under the nominal distribution $\hat{\PP}$.

\subsection{Assumptions for Tractability}
\label{subsec:assumptions}
To maintain tractability and rule out uninteresting cases, we impose a few assumptions on the nominal distribution $\hat{\PP}$ and on the structure of the ambiguity set $\mathcal{B}$.

\begin{assumption}
\label{ass:Gaussian}
 $\hat {\PP}$ is a Gaussian distribution satisfying \revision{$\hat\mu_z =0$ for all $z\in \mathcal Z$ and }$\hat{\Sigma}_{v_t} \succ 0$ for all $t \in [0,T-1]$.
\end{assumption}
Requiring $\hat{\PP}$ to be Gaussian renders our model computationally tractable for several cases of practical interest and is consistent with the assumptions in the classical LQG model. Appendix~\S\ref{sec: elliptical} shows that when the divergence $\mathds{D}$ corresponds to a 2-Wasserstein distance, our results also hold for any \emph{elliptical} nominal distribution $\hat{\PP}$ with finite second moments -- a class that includes many \emph{non}-Gaussian distributions such as the Laplace, logistic, or hyperbolic distributions. In general, computing the optimal control policy for an \emph{arbitrary} distribution $\PP$ would be very difficult because even computing the state estimator $\hat x_t$ is hard in that case, as formalized in the following result.
\begin{theorem}[Computational complexity of state estimation]\label{theorem: complexity result}
Computing the state estimator~$\hat x_t=\mathbb E_{\PP}[x_t|y_0,\ldots,y_t]$ is $\#$P-hard even if $t= 0$, $A_0 = B_0 =C_0 = 0$, $v_0$ and $w_0$ are mutually independent, and $\PP_{w_0}$ is a uniform distribution on some nonempty polytope $P \subseteq [0,1]^n$ given as an intersection of finitely many half-spaces. 
\end{theorem}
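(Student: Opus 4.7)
The plan is to reduce from a $\#$P-hard geometric primitive, namely computing the centroid of the uniform distribution on a polytope $P\subseteq[0,1]^n$ presented by its half-space description. This is well known to be $\#$P-hard: starting from the Dyer--Frieze $\#$P-hardness of polytope volume computation, one converts volume queries into centroid queries (and vice versa) via standard reductions -- e.g., embedding $P$ into $P\times[0,1]$ and reading off the last coordinate of the centroid, or writing each centroid coordinate as a ratio of volumes of slices of $P$ with half-spaces $\{x_i\le c\}$.

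First, I would exploit the degenerate choice $A_0=B_0=C_0=0$ to decouple the observation from the quantity of interest. The observation equation collapses to $y_0=v_0$; since $v_0$ is independent of $x_0$ by the paper's uncorrelatedness hypothesis on exogenous noise (and their joint law is therefore a product measure), conditioning on $y_0$ is vacuous and $\hat x_0=\EE_{\PP}[x_0\mid y_0]=\EE_{\PP}[x_0]$ almost surely. This collapses the filtering problem to computing an unconditional first moment and rules out any shortcut that might exploit the structure of the observation.

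Second, I would encode an arbitrary polytope-centroid instance into this setup: given a half-space description of $P\subseteq[0,1]^n$, assign the uniform distribution on $P$ to the relevant exogenous component whose expectation the filter must return, and assign any convenient independent law to $v_0$ (e.g., a standard Gaussian); the encoding is polynomial in the description of $P$. Under this construction, $\hat x_0$ is exactly the centroid of $P$, so any algorithm computing $\hat x_0$ would compute centroids of polytopes. Since the underlying distribution on the nonempty polytope $P\subseteq[0,1]^n$ is bounded, it has finite second moments, so the encoded instance lies within the admissible class considered by the paper.

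The main obstacle is not any intricate construction but rather isolating the right $\#$P-hard primitive and presenting it in a form consistent with the paper's exogenous-noise framework (independent, second-moment-finite distributions on a product sample space). Once the centroid-of-polytope problem is identified as the hardness kernel and the degenerate observation equation is shown to reduce filtering to unconditional integration, the $\#$P-hardness transfers essentially by definition. The takeaway is that even in the absence of any genuine state dynamics or observation coupling, computing MMSE estimates under arbitrary non-Gaussian exogenous distributions is already intractable -- which motivates the paper's Gaussian nominal assumption and the subsequent structural tractability results.
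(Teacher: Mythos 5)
Your proposal is correct and matches the paper's proof: both exploit $A_0=B_0=C_0=0$ and independence to make the conditioning on $y_0=v_0$ vacuous, reducing the filtering problem to computing the centroid of a uniform distribution on a polytope, which is $\#$P-hard (the paper cites \citet{ref:rademacher2007approximating} directly rather than spelling out the volume-to-centroid reduction). The only cosmetic difference is an indexing one that the paper itself is loose about: the paper's proof places the hard uniform law on $w_0$ and evaluates $\EE_\PP[x_1\mid y_0]=\EE_{\PP_{w_0}}[w_0]$, whereas you place it on the component whose unconditional mean the estimator returns; either way the hardness kernel and the argument are the same.
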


Requiring the observation noise terms $v_t$ to have positive definite covariance matrices, $\hat{\Sigma}_{v_t} \succ 0$, is consistent with the LQG framework and allows using the Kalman filter recursions, which compute inverses of these matrices (see Appendix \S\ref{sec: appx: optimal-solution-classic-LQG}).  All our structural results in \S\ref{sec:sec3_purified_observations}-\S\ref{sec:sec3_optimality_linear_zero_mean} hold without this assumption, so this is only required in \S\ref{sec:sec3_worst_case_covariance}. In practice, even if the assumption is not satisfied, one can construct a positive definite covariance matrix that provides an $\epsilon$-approximation to the optimal value, so the assumption does not introduce significant optimality gaps.

Regarding the ambiguity set $\mathcal{B}$, note that our model already includes two requirements: under all valid distributions $\PP$, the exogenous noise terms $z \in \mathcal{Z}$ should have finite second moments (by the definition of $\setalldist{d_z}$) and \revision{should satisfy the pairwise second-moment orthogonality condition} by~\eqref{eq:ambiguity_set_B_def}. Requiring finite second moments (which is encoded in the definition of $\mathcal{P}$) ensures that the DRLQ problem has a finite objective value; without this assumption, the adversary could choose a distribution $\PP$ resulting in an unbounded objective under any finite control policy, because the LQG cost depends quadratically on the noise terms $z \in \mathcal{Z}$. \revision{We require second-moment orthogonality (SMO) rather than uncorrelatedness for tractability. When noise means are allowed to be nonzero, a constraint that would require uncorrelatedness would be bilinear in means and covariances, making the feasible set of first and second moments non-convex. In contrast, the SMO condition is linear in the second moments, which gives rise to convex optimization problems. Because the SMO condition is equivalent to uncorelatedness for zero-mean distributions, once we establish that the worst-case means are zero, we also indirectly establish that the worst-case distribution is uncorrelated, which recovers the classical LQG structure ex post, while maintaining tractability ex ante.} %Moreover, uncorrelated noise is precisely what allows the use of Kalman filtering techniques. Without this requirement, convex, finite-dimensional reformulations are possible, but solution procedures based on Kalman filter recursions would not be applicable even if the probability distribution $\PP$ were known and states were perfectly observable. 

We also include two assumptions on the divergence $\mathds{D}$ characterizing the ambiguity sets %$\mathcal{B}_z$ 
$\mathcal{B}_z$ in~\eqref{eq:ambiguity_set_Bz_def}.

\begin{assumption}
\label{ass:general_assumption_about_M_function}
The divergence $\mathds{D}$ satisfies the following properties:
\begin{enumerate}[noitemsep,nolistsep,label=(\roman*)] 
\item For every $(\mu_z, M_z) \in \setmoments{d_z}$, a Gaussian distribution minimizes the divergence~$\mathds D$ from $\hat \PP_z$ among all distributions with mean~$\mu_z$ and second moment matrix $M_z$, that is,
    \begin{equation*}
    \mathds D(\mathcal N(\mu_z, M_z), \hat \PP_z) = \left\{
    \begin{array}{ccll}  
         &\inf\limits_{\PP_z \in \setalldist{d_z}} &\mathds D(\PP_z, \hat \PP_z) \\
         &\st&\EE_{\PP_z}[z] = \mu_z, &\EE_{\PP_z}[z z^\top ] = M_z.
    \end{array}\right.
   \end{equation*}
    \item The set $\mathcal{M}_{(\mu_z, M_z)} = \{(\mu_z, M_z) \in \setmoments{d_z} : \mathds D(\mathcal N(\mu_z, M_z), \hat{\PP}_z) \leq  \rho_z \}$ is %non-empty, 
    convex and compact.
    \label{prop:2wasserstein_for_elliptical}
\end{enumerate}
\end{assumption}
Assumption~\ref{ass:general_assumption_about_M_function} holds in several important cases (see \S\ref{sec:examples}) and admits an intuitive interpretation. Requirement~(i) readily holds if the divergence $\mathds{D}(\PP_z, \hat{\PP}_z)$ depends only on the first two moments of the distributions $\PP_z, \hat{\PP}_z$. If the divergence is based on an information-theoretic principle related to uncertainty, the Gaussian distribution may satisfy requirement~(i) because it is the ``most uncertain" (maximum-entropy) distribution for a given set of first and second moments; this happens in two of our examples, corresponding to the Kullback-Leibler and Fisher divergences. More broadly,~(i) can be thought of as restricting attention to ambiguity sets $\mathcal{B}_z$ that are ``dense in Gaussians": the requirement is satisfied if for any $0 \leq \rho \leq \rho_z$, the set of distributions in $\mathcal{B}_z$ with ``distance" of at most $\rho$ from the nominal distribution -- if nonempty -- contains at least one Gaussian distribution.  Part~(ii) requires that the set of first and second moments characterizing all Gaussian distributions in the ambiguity set $\mathcal{B}_z$ is ``well behaved," i.e., it is convex and compact. This enables us to evaluate worst-case expectations of \emph{quadratic} functions of $z$ (prominent in the LQG model) over the ambiguity set $\mathcal{B}_z$ by solving finite-dimensional, convex optimization problems.

\subsection{Examples}
\label{sec:examples}
Assumption~\ref{ass:general_assumption_about_M_function} holds in several important instances, which we describe below. The formal results and proofs that help verify these properties are all included in Appendix~\S\ref{app:proofs_for_section2}.

\subsection*{Wasserstein Ambiguity Sets}
Consider an ambiguity set where $\mathds{D}$ corresponds to the 2-Wasserstein distance~$\mathds{W}$, defined as follows. 

\begin{definition}[2-Wasserstein distance]
\label{def:wasserstein-distance}
The 2-Wasserstein distance between two distributions $\PP_z, \hat\PP_z \in \setalldist{d_z}$ is given by
\begin{equation*}
    \begin{aligned}
    \mathds W (\PP_z, \hat \PP_z) = \left(\inf_{\pi \in \Pi(\PP_z, \hat\PP_z)} \int_{\R^{d_z}\times\R^{d_z}} \Vert z - \hat z \Vert_2^2 \, \diff \pi(z,\hat z)\right)^{\frac{1}{2}},
    \end{aligned}
\end{equation*}
where $\Pi(\PP_z, \hat \PP_z)$ denotes the set of all couplings of $\PP$ and $\hat\PP$, that is, all joint distributions of the random vectors $z$ and $\hat z$ with marginal distributions $\PP_z$ and $\hat \PP_z$, respectively.
\end{definition}

Ambiguity sets based on Wasserstein distance have become popular in the DRO literature due to their advantageous statistical and computational properties \citep{ref:kuhn2019wasserstein,blanchet_tutorial}.

Appendix~\S\ref{appendix:assumption2_wasserstein} shows that the Wasserstein distance $\mathds{W}$ satisfies Assumption~\ref{ass:general_assumption_about_M_function}. Leveraging known results in the literature, \Cref{prop:WassersteinSubsetGelbrich} proves that for any two distributions $\PP_z, \hat \PP_z \in \setalldist{d_z}$ with first and second moment pairs given by $(\mu_z,M_z) \in \setmoments{d_z}$ and $(\hat\mu_z, \hat M_z) \in \setmoments{d_z}$, respectively,
\begin{align} 
\mathds{W}\bigl(\PP_z,\hat \PP_z \bigr) \geq  \mathds{G}\left((\mu_z, M_z - \mu_z \mu_z^\top),(\hat \mu_z, \hat M_z - \hat \mu_z \hat \mu_z^\top)\right),
\label{eq:inequality_wasserstein_gelbrich}
\end{align}
with equality holding if $\PP_z$ and $\hat \PP_z$ are Gaussian. Here, $\mathds{G} ((\mu_z, \Sigma_z), (\hat\mu_z, \hat \Sigma_z))$ is the Gelbrich distance between two mean-covariance pairs, 
\begin{equation}\label{eq: Gelbrich distance definition}
    \mathds G \bigl((\mu_z, \Sigma_z), (\hat \mu_z, \hat\Sigma_z) \bigr) = \sqrt{\|\mu_z - \hat \mu_z\|^2+ \operatorname{Tr}\left(\Sigma_z + \hat\Sigma_z - 2\left( \hat \Sigma_z^{1/2} \Sigma_z \hat \Sigma_z^{1/2}\right)^{1/2}\right)}.
\end{equation}
%(See \Cref{prop:WassersteinSubsetGelbrich} for formal statements.)
This implies that~Assumption~\ref{ass:general_assumption_about_M_function}-(i) is satisfied. Assumption~\ref{ass:general_assumption_about_M_function}-(ii) is also satisfied because the set $\mathcal{M}_{(\mu_z, M_z)}$ can be written as $\{(\mu_z, M_z )\in \setmoments{d_z} : \mathds{G}((\mu_z, M_z-\mu_z\mu_z^\top), (\hat\mu_z, \hat \Sigma_z) )^2 \leq \rho_z^2 \}$, which is known to be convex and compact \citep[Proposition~3.17]{ref:nguyen2019adversarial}.

\subsection*{Kullback-Leibler Ambiguity Sets}
Next, consider an ambiguity set where the divergence $\mathds{D}$ corresponds to  the Kullback-Leibler (KL) divergence $\mathds{K}$, defined as follows.
\begin{definition}[KL divergence]
 The KL divergence from distribution~$\PP_z \in \mathcal{P}(\R^{d_z})$ to distribution $\hat \PP_z \in \mathcal{P}(\R^{d_z})$ is defined as
 \[\mathds K(\PP_z, \hat \PP_z) =  \int_{\R^{d_z}} \log\left(\frac{\diff\PP_z}{\diff \hat\PP_z}(z)\right)\diff\PP_z(z)  \]
 if $\PP_z$ is absolutely continuous with respect to $\hat \PP_z$, and $\mathds K(\PP_z, \hat \PP_z) = \infty$ otherwise.
\end{definition}
The KL divergence is a well-established measure of discrepancy between probability distributions that has found applications in statistics, information theory, computer science, economics, and many other fields, and ambiguity sets based on KL divergence have been frequently considered in  distributionally robust optimization and in robust control -- see, e.g., \citet{Whittle_risk_sensitive,ElGhaoui_2002_Var,ref:hong_2012_wp,Hansen-Sargent-2005}.

%Because we only consider finite radia $\rho_z \geq 0$ for the ambiguity sets, we can require without loss of generality that the KL divergence takes finite value. For that purpose, we require that the nominal distribution $\hat{\PP}$ is \emph{non-degenerate}, i.e., that every exogenous random vector should have a positive definite covariance matrix under $\hat{\PP}$, $\hat{\Sigma}_z \succ 0$ for every $z \in \mathcal{Z}$.

Appendix~\S\ref{appendix:assumption2_KL} leverages known results in the literature to argue that the KL divergence satisfies Assumption~\ref{ass:general_assumption_about_M_function}. \Cref{prop: KL bound} formalizes the key result that for any two distributions $\PP_z, \hat \PP_z \in \setalldist{d_z}$ with first and second moments $(\mu_z, M_z) \in \setmoments{d_z}$ and $(\hat \mu_z, \hat M_z) \in \setmoments{d_z}$, respectively,
\begin{align} 
\mathds{K}(\PP_z, \hat \PP_z) \geq  \mathds{T}\left((\mu_z, M_z - \mu_z \mu_z^\top),(\hat \mu_z, \hat M_z - \hat \mu_z {\hat \mu_z}^\top)\right),
\label{eq:inequality_KL}
\end{align}
with equality holding if $\PP_z$ and $\hat \PP_z$ are Gaussian.
% \begin{align*}
%    \mathds W (\PPz{1}, \PPz{2}) \geq  \mathds G ((\mu_1, \Sigma_1),(\mu_2, \Sigma_2)) ~ \text{and} ~ 
%     \mathds W (\PPz{1}, \PPz{2}) = \mathds G ((\mu_1, \Sigma_1),(\mu_2, \Sigma_2)) ~\text{if}~ \PPz{1}, \PPz{2}~\text{are Gaussian},
% \end{align*}
Here, $\mathds{T}\bigl((\mu_z, \Sigma_z), (\hat \mu_z, \hat \Sigma_z)\bigr)$ is a (KL-type) divergence between two mean-covariance pairs, 
\begin{equation*}
 \mathds T\bigl((\mu_z,\Sigma_z), (\hat \mu_z, \hat \Sigma_z) \bigr) =  \frac{1}{2}\left( (\mu_z - \hat \mu_z)^\top \hat \Sigma^{-1} (\mu_z -\hat \mu_z) + \operatorname{Tr}\left(\Sigma_z \hat \Sigma_z^{-1} \right) - \log\det\left(\Sigma_z \hat \Sigma_z^{-1} \right) - d \right).
\end{equation*}

Result~\eqref{eq:inequality_KL} implies that~Assumption~\ref{ass:general_assumption_about_M_function}-(i) is satisfied. Assumption~\ref{ass:general_assumption_about_M_function}-(ii) is also satisfied because the set $\mathcal{M}_{(\mu_z, M_z)}$ can be expressed as
\( \bigl\{(\mu_z, M_z) \in \setmoments{d_z} : M_z - \mu_z \mu_z^\top \in \mathbb S_{++}^{d_z},~\mathds T\bigl((\mu_z,\Sigma_z),(\hat \mu_z,\hat  \Sigma_z)\bigr) \leq \rho_z \bigr\} \). This representation follows from~\eqref{eq:inequality_KL} and because we are interested in $\rho_z$ finite (which implies that $\mathds{T}$ must be finite, so any relevant $\PP_z \in \mathcal{B}_z$ must satisfy $\Sigma_z = M_z - \mu_z \mu_z^\top\succ 0$ due to the $- \log \det \Sigma_z$ term in~$\mathds{T}$). The latter set is known to be convex and compact ~\cite[Lemma~A.3]{taskesen2021sequential}. 
%
%  Moment-based ambiguity
%
\subsection*{Moment Ambiguity Sets}
Lastly, we consider moment ambiguity sets where the divergence $\mathds{D}$ between two probability distributions relies only on the first two moments of the distributions. Specifically, for $\PP_z, \hat \PP_z \in \setalldist{d_z}$ with first and second moments $(\mu_z, M_z)$ and $(\hat \mu_z, \hat M_z)$, respectively, we take 
\begin{align*}
 \mathds D(\PP_z, \hat \PP_{z}) = \mathds{M}\bigl((\mu_z, M_z),(\hat \mu_z, \hat M_z) \bigr),
\end{align*}
where $\mathds{M}: \setmoments{d_z} \times \setmoments{d_z} \to [0, +\infty]$ is any divergence between pairs of first two moments satisfying $\mathds{M}(m_z,m_z)=0$ for all $m_z \in \setmoments{d_z}$. The ambiguity set $\mathcal{B}_z$ for the random variable $z$ with nominal distribution $\hat \PP_z$ (with mean~$\hat{\mu}_z$ and second moment matrix $\hat{M}_z$) can therefore be expressed as:
\begin{equation*}
\begin{aligned}
    \mathcal{B}_z = \left\{\PP_z \in \setalldist{d}: \EE_{\PP_z}[z] = \mu_z,\,\EE_{\PP_z}[z z^\top ] = M_z,~ \mathds{M}\left((\mu_z, M_z), (\hat{\mu}_z, \hat{M}_{z})\right) \leq \rho_{z}\right \}.
\end{aligned}
\end{equation*}
Assumption~\ref{ass:general_assumption_about_M_function}-(i) holds because every distribution (including a Gaussian distribution) with the same mean and second moment would yield the same divergence from the nominal distribution $\hat{\PP}_z$ and would therefore minimize the divergence from $\hat{\PP}_z$. Assumption~\ref{ass:general_assumption_about_M_function}-(ii) holds if the sublevel sets %$\mathcal{M}_{(\mu_z, M_z)} = \{(\mu_z, M_z) \in \setmoments{d_z}:  \mathds{M}\left((\mu_z, M_z), (\hat{\mu}_z, \hat{M}_{z})\right) \leq \rho_{z}\}$ 
of the function $\mathds{M}\bigl(\cdot,\hat{m}_z\bigr) $ restricted to its first variable are convex and compact (for the given $\hat{m}_z = (\hat{\mu}_z,\hat{M}_z)$). Any restriction of $\mathds{M}$ that is quasiconvex and coercive would satisfy the requirement.

%\section{Nash Equilibrium and Optimality of Linear Output Feedback Controllers}
\section{Nash Equilibrium and Optimality of Linear Policies and Gaussians}
\label{sec:Nash}
This section proves our main structural results concerning the DRLQ problem. We view this problem as a game between the \dm{}, who chooses causal control inputs, and nature, which chooses a distribution~$\PP\in\mathcal{B}$. We show that this game admits a Nash equilibrium under our standing assumptions, wherein nature's strategy is a {\em Gaussian} distribution, ~$\PP_z\opt = \mathcal{N}(\mu_z\opt,M_z\opt)$ for any $z \in \mathcal{Z}$, and the \dm{}'s strategy is an {\em affine} output feedback policy. Under mild additional assumptions, we also prove that nature's optimal distribution has a \revision {mean of zero, in which case} the \dm{}'s optimal strategy becomes \emph{linear} and nature's optimal strategy entails choosing a covariance matrix for the noise terms $\Sigma_z\opt$ that dominates the nominal covariance matrix $\hat{\Sigma}_z$ in Loewner order, $\Sigma_z\opt \succeq \hat{\Sigma}_z$.

\subsection{Reformulation with Purified Observations} 
\label{sec:sec3_purified_observations}
We first simplify the problem formulation by re-parametrizing the control inputs in a more convenient form. (This follows ideas similar to~\citealp{Ben-Tal_Boyd_Nemirovski_2005, ben2006extending, ref:hadjiyiannis2011affine}.) Note that the control inputs in the formulation are subject to cyclic dependencies, as $u$ depends on~$y$, while~$y$ depends on~$x$ through~\eqref{eq:observation}, and~$x$ depends again on~$u$ through~\eqref{eq:dynamics}, etc. Because these dependencies make the problem hard to analyze, it is preferable to instead consider the controls as functions of a new set of so-called {\em purified} observations instead of the actual observations~$y_t$. 

Specifically, we first introduce a fictitious {\em noise-free} system
\begin{equation*}
    x'_{t+1} = A_t  x'_t + B_t  u_t \quad \forall t  \in [T-1]\quad\text{and}\quad y'_t = C_t x'_t \quad \forall t \in [T-1]
\end{equation*}
with states~$x'_t\in\R^n$ and outputs~$ y'_t\in\R^p$, which is initialized with~$x'_0=0$ and controlled by the {\em same} inputs~$u_t$ as the original system~\eqref{system}. We then define the purified observation at time~$t$ as~$\eta_t= y_t - {y}'_t$ and we use~$\eta = (\eta_0, \dots, \eta_{T-1})$ to denote the trajectory of {\em all} purified observations.

Because the inputs~$u_t$ are causal, the \dm{} can compute the fictitious state~$x'_t$ and output~$y'_t$ from the observations~$y_0,\ldots,y_t$. Thus, $\eta_t$ is representable as a function of~$y_0,\ldots, y_t$. Conversely, one can show by induction that~$y_t$ can also be represented as a function of~$\eta_0,\ldots, \eta_t$. Moreover, any measurable function of~$y_0,\ldots,y_t$ can be expressed as a measurable function of~$\eta_0,\ldots, \eta_t$ and vice-versa \cite[Proposition~II.1]{ref:hadjiyiannis2011affine}. So if we define~$\mathcal U_\eta$ as the set of all control inputs~$(u_0,u_1,\dots,u_{T-1})$ so that $u_t = \phi_t(  \eta_0, \dots,   \eta_{t})$ for some measurable function~$\phi_t : \R^{p(t+1)}\rightarrow\R^m$ for every~$t\in[T-1]$, the above reasoning implies that~$\mathcal U_\eta = \mathcal U_y$. Moreover, the class of causal control policies that are affine (respectively, linear) in $\eta$ is equivalent to the class of causal control policies that are affine (respectively, linear) in $y$ (see \citet{Ben-Tal_Boyd_Nemirovski_2005, ref:skaf2010design} and \Cref{lemma:linear-rel-u-eta} in \S\ref{subsec:purified_outputs_classical_LQG} for a concise proof). Therefore, in interpreting all our results, the existence of optimal affine (linear) policies $u\opt \in \mathcal{U}_\eta$ is equivalent to the existence of optimal affine (respectively, linear) policies  $u\opt \in \mathcal{U}_y$.

In view of this, we can rewrite the DRLQ problem equivalently as:
\begin{align}\label{eq:DRLQG}
    p^\star &=
    \left\{
    \begin{array}{cll}
    \min\limits_{  x,   u,   y} &\max\limits_{\mathbb P \in \mathcal{B}} \mathbb E_{\mathbb P} \left[   u^\top R   u +   x^\top Q  x \right] \notag\\
    \st &  u \in \mathcal U_{y},{x} = H u + G w,  y = C  x + v
    \end{array}\right.\\
    &= 
    \left\{
    \begin{array}{cll} \min\limits_{x,   u} &\max\limits_{\mathbb P \in \mathcal{B}} \mathbb E_{\mathbb P} \left[   u^\top R   u +   x^\top Q  x \right]\\
    \st &  u \in \mathcal U_{\eta},~{x} = H u + G w,
    \end{array}\right.
\end{align}
where $x = (x_0, \dots, x_T)$, $u = (u_0, \dots, u_{T-1})$, $y = (y_0, \dots, y_{T-1})$, $w = (x_0, w_0,  \dots,  w_{T-1})$, $v = (v_0, \dots, v_{T-1})$, $\eta = (\eta_0, \dots, \eta_{T-1})$, and $R$, $Q$, $H$, $G$ and~$C$ are suitable block matrices (see Appendix~\S\ref{sec: appx: stacked-matrices} for their precise definitions). 

The latter reformulation involving the purified observations~$\eta$ is useful for two reasons. First, the purified outputs are {\em independent} of the inputs $u$. Indeed, by recursively combining the equations of the original and the noise-free systems, one can show that
$\eta = D  w + v$ for some block triangular matrix~$D$ (see Appendix~\S\ref{sec: appx: stacked-matrices} for the construction). So the purified observations depend (affinely) on the exogenous uncertainties but do {\em not} depend on the control inputs $u$, and hence, the cyclic dependencies complicating the original system are eliminated in~\eqref{eq:DRLQG}. Second and more importantly, the purified observations will allow us to reformulate the non-convex DRLQ problem as a \emph{convex} optimization problem, as will become obvious subsequently.

Our results also rely on the dual of~\eqref{eq:DRLQG}, defined as
\begin{equation}\label{DRCPdual}
    d^\star = \left\{
    \begin{array}{ccll} 
    \max\limits_{\mathbb P \in \mathcal{B}} &\min\limits_{  x,   u} &\mathbb E_{\mathbb P} \left[   u^\top R   u +   x^\top Q  x \right]\\
    &\st  &u \in \mathcal U_{\eta},{x} = H u + G w.
    \end{array}\right.
\end{equation} 
%The classical minimax inequality implies that~$p\opt\geq  d\opt$. If we can prove that~$p\opt=d\opt$, that~\eqref{eq:DRLQG} has a solution~$u\opt$ and that~\eqref{DRCPdual} has a solution~$\PP\opt$, then~$(u\opt,\PP\opt)$ must be a Nash equilibrium of the zero-sum game at hand \cite[Theorem~2]{ref:rockafellar1974conjugate}. However, because~$\mathcal U_\eta$ is an infinite-dimensional function space and~$\mathcal{B}$ is an infinite-dimensional, non-convex set of non-parametric distributions, the existence of a Nash equilibrium (in pure strategies) is not obvious. Instead, our proof strategy will rely on constructing an upper bound for $p^\star$ and a lower bound for $d^\star$, and showing that these match.

We prove that $p\opt=d\opt$ and that a Nash equilibrium for our game exists wherein $\PP\opt$ is Gaussian and $u\opt$ is affine. The classical minimax inequality implies that~$p\opt\geq  d\opt$. To prove that~$p\opt=d\opt$, we construct an upper bound for $p^\star$ and a lower bound for $d^\star$, and then argue that these bounds match.

\subsection{Upper Bound for Primal}
\label{sec:sec3_upper_bound_p}
We obtain an upper bound for $p^\star$ by suitably \emph{enlarging} the ambiguity set~$\mathcal{B}$ and \emph{restricting} the control policies $u_t$ to be affine. We first define the following ambiguity set for the noise terms:
\begin{equation*}
    \begin{aligned}
        &\overline{\mathcal{B}} = \{\PP \in \mathcal P(\R^{n+ T(n+ p)}): \PP_{z} \in \overline{\mathcal{B}}_z, ~ \EE_{\PP}[z' z^\top] = 0 ~\forall z \in \mathcal Z, \forall z'\neq z\in \mathcal Z\},
    \end{aligned}
\end{equation*}
where, for all $z \in \mathcal Z$,
\begin{multline*}
\overline{\mathcal{B}}_z \!= \!\bigl\{\PP_z \in \setalldist{d_z}\!:\! \exists (\mu_z , M_z) \in \setmoments{d_z} \text{ with } \EE_{\PP_z}\![z] \!= \!\mu_z,\EE_{\PP_z}\![z z^\top ]\! =\! M_z, \mathds{D}\bigl(\mathcal N(\mu_z,M_z), \hat{\PP}_z\bigr) \leq \rho_z  \bigr\}.
\end{multline*}
To see that~$\overline{\mathcal{B}}$ constitutes an outer approximation for the ambiguity set~$\mathcal{B}$, note first that the random vectors~$x_0$, $\{w_t\}_{t=0}^{T-1}$ and~$\{v_t\}_{t=0}^{T-1}$ \revision{satisfy the SMO condition} %pairwise second-moment orthogonal, that is, $\EE_{\PP}[z'z^\top] =0$ for all distinct $z, z'\in \mathcal Z$\footnote{\revision{In the zero-mean case, the second-moment orthogonality condition is equivalent to $\mathrm{Cov}(z, z')=0$, mutual uncorrelatedness.}}} 
and have finite second moments under any~$\PP\in \overline{\mathcal{B}}$.
Moreover, any $\PP_z \in \mathcal{B}_z$ satisfies 
$\mathds{D}(\PP_z, \hat{\PP}_z) \leq \rho_z$ and because 
$\mathds{D}\bigl(\mathcal{N}(\mu_z,M_z), \hat{\PP}_z\bigr) \leq \mathds{D}(\PP_z, \hat{\PP}_z)$ by Assumption~\ref{ass:general_assumption_about_M_function}-(i), we have that $\PP_z \in \overline{\mathcal{B}}_z$ and therefore $\mathcal{B} \subseteq \overline{\mathcal{B}}$. 

To finalize our construction of the upper bound on $p^\star$, we focus on affine policies of the form $u = q + U   \eta = q + U(Dw + v)$, where $q = (q_0, \dots, q_{T-1})$, and $U$ is a block lower triangular matrix
\begin{equation}
    \label{eq:block-lower-triangular}
    \begin{aligned}
    U = \begin{bmatrix}
    U_{0,0}  & & & \\
    U_{1,0} & U_{1,1} & & \\
    \vdots & &\ddots & \\
    U_{T-1,0} &\dots &\dots &U_{T-1,T-1} 
\end{bmatrix}.
    \end{aligned}
\end{equation}
The block lower triangularity of~$U$ ensures that the corresponding control policy is causal, which in turn ensures that~$u\in\mathcal U_\eta$. In the following, we denote by $\mathcal U$ the set of all block lower triangular matrices of the form~\eqref{eq:block-lower-triangular}. 

An upper bound on problem~\eqref{eq:DRLQG} can now be obtained by \emph{restricting} the \dm{}'s feasible set to causal control policies that are \emph{affine} in the purified observations~$\eta$ and by \emph{relaxing} nature's feasible set to the outer approximation~$\overline{\mathcal{B}}$ of~$\mathcal{B}$. The resulting upper bound is given by:
\begin{equation}
\overline p^\star=\left\{
    \begin{array}{cll}
     \min\limits_{U, q,   x,   u} &\max\limits_{\mathbb{P} \in \overline{\mathcal{B}}} \mathbb E_{\mathbb P} \left[   u^\top R   u +   x^\top Q   x \right]\\
    \st & U \in \mathcal U,  u = q + U(Dw + v),  {x} = H u + G w.
    \end{array}\right.
    \label{upper bound problem 1}
\end{equation}
Because we obtained~\eqref{upper bound problem 1} by restricting the feasible set of the outer minimization problem and relaxing the feasible set of the inner maximization problem in~\eqref{eq:DRLQG}, it is clear that~$\overline p^\star\geq  p\opt$. 

Although problem~\eqref{eq:DRLQG} is still an infinite-dimensional, zero-sum game because nature's choices are over distributions $\PP$, important simplifications are possible. Specifically, note that for any fixed $U,q$, the control policies $u$ and induced states $x = Hu + Gw$ are affine functions on the noise terms $w,v$, and therefore the expected value of the objective, $\mathbb{E}_{\mathbb{P}} \bigl[   u^\top R   u +   x^\top Q   x \big]$, only depends on the first two moments of the random vector $(w,v)$ under distribution $\PP$. This implies that problem~\eqref{upper bound problem 1} can be rewritten as a finite-dimensional zero-sum game, as formalized in the following result.
\begin{proposition}\label{prop: min max SDP upper bound problem}Problem~\eqref{upper bound problem 1} has the same optimal value as the optimization problem:
\begin{equation}
\overline p^\star=\!\!\left\{
\begin{array}{llr}
    \min\limits_{\substack{q \in \R^{pT}\\ U \in \mathcal U}} \max\limits_{\substack{(\mu_w, M_w) \in \mathcal{M}_{(\mu_w, M_w)}\\ (\mu_v, M_v) \in \mathcal{M}_{(\mu_v, M_v)}}} \!\!\!\!\!\!\!\! &\Tr\Big( \bigl( (UD)^\top R UD + (G+HUD)^\top Q (G+HUD) \bigr) M_w  +  U^\top \bar R U M_v\Big ) \\[-2ex]
    &+2 q^\top(\bar R UD + G^\top QH) \mu_w + 2 q^\top \bar R U \mu_v + q^\top \bar R q,
\end{array}\right.
\label{eq: distributionally robust control problem -- affine simplified min max}
\end{equation}
where $\bar R = R + H^\top Q H$ and 
\begin{align*}
    \mathcal{M}_{(\mu_w, M_w)} = \mathcal{M}_{(\mu_{x_0}, M_{x_0})} \times \prod_{t=0}^{T-1} \mathcal{M}_{(\mu_{w_t}, M_{w_t})}, \qquad 
    \mathcal{M}_{(\mu_v, M_v)} = \prod_{t=0}^{T-1} \mathcal{M}_{(\mu_{v_t}, M_{v_t})}.
\end{align*}
\end{proposition}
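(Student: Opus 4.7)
The plan is to substitute the affine parametrization into the cost, expand the quadratic form, and show that only the first two moments of the exogenous uncertainties survive after taking the expectation, so the inner maximization over $\PP \in \overline{\mathcal{B}}$ reduces to a maximization over moment pairs.

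First, I substitute $u = q + UDw + Uv$ and $x = Hq + (G+HUD)w + HUv$ into $u^\top R u + x^\top Q x$. Expanding gives a constant $q^\top \bar R q$ with $\bar R = R + H^\top Q H$, linear terms in $w$ and $v$ whose coefficients involve $q$, pure quadratic forms in $w$ and in $v$ with matrices that depend on $U$, and a bilinear cross-term in $w$ and $v$. Taking $\EE_\PP[\cdot]$ under any $\PP \in \overline{\mathcal{B}}$ and invoking the uncorrelatedness constraint $\EE_\PP[z' z^\top] = 0$ for distinct $z, z' \in \mathcal{Z}$, the bilinear term vanishes, the matrices $M_w = \EE_\PP[w w^\top]$ and $M_v = \EE_\PP[v v^\top]$ emerge as block-diagonal assemblies of the per-component second moments $M_{x_0}, \{M_{w_t}\}, \{M_{v_t}\}$, and the linear-in-noise terms collapse to inner products against $\mu_w$ and $\mu_v$. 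Grouping coefficients and using $\bar R$ reproduces exactly the objective in~\eqref{eq: distributionally robust control problem -- affine simplified min max}.

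Second, I reduce the inner supremum over $\PP$ to one over moment pairs. Since the expected cost depends on $\PP$ only through the per-component pairs $(\mu_z, M_z)$, and since $\overline{\mathcal{B}}_z$ is \emph{defined} as the set of distributions whose first two moments lie in $\setmoments{d_z}$ and satisfy $\mathds{D}(\mathcal{N}(\mu_z, M_z), \hat\PP_z) \leq \rho_z$, the set of attainable per-component moment pairs coincides with $\mathcal{M}_{(\mu_z, M_z)}$. The cross-moment constraint $\EE_\PP[z' z^\top]=0$ imposes no further restriction on the moment pairs themselves, since it is satisfied by the product Gaussian $\PP = \bigotimes_{z \in \mathcal Z} \mathcal N(\mu_z, M_z)$, which lies in $\overline{\mathcal{B}}$ for every admissible collection of moments. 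The two inequalities therefore match: the $\leq$ direction follows because every $\PP \in \overline{\mathcal{B}}$ induces feasible moment pairs, and the $\geq$ direction follows by exhibiting the above product Gaussian.

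The main obstacle is bookkeeping: carefully expanding the quadratic and regrouping the terms to recover the specific coefficients involving $\bar R$, $(G+HUD)^\top Q (G+HUD)$, and $U^\top \bar R U$. No delicate analysis of the divergence $\mathds{D}$ is needed because Assumption~\ref{ass:general_assumption_about_M_function}(i) already builds the moment-based representation directly into the definition of $\overline{\mathcal{B}}_z$, so the reduction from distributions to moment pairs is immediate once the expectation has been evaluated.
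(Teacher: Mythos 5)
Your proposal is correct and follows essentially the same route as the paper's proof: substitute the affine parametrization, observe that the expected cost depends on $\PP$ only through the per-component moment pairs (with the cross terms killed by uncorrelatedness), and close the gap between the distributional and moment-based formulations by exhibiting the independent product Gaussian $\bigotimes_{z\in\mathcal Z}\mathcal N(\mu_z,M_z)\in\overline{\mathcal{B}}$. The paper uses exactly this product-Gaussian construction for the attainability direction, so there is no substantive difference.
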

For this and subsequent results with omitted proofs, we refer the reader to \S\ref{app:proofs_for_section3}.
%
%  LOWER BOUND DUAL
%
\subsection{Lower Bound for Dual}
\label{sec:sec3_lower_bound_d}
To derive a tractable lower bound on $d^\star$, we restrict nature's feasible set to the family~$\mathcal{B}_{\mathcal N}$ of all {\em Gaussian} distributions in the ambiguity set~$\mathcal{B}$. 
The resulting bounding problem is thus given by
\begin{equation}
\label{eq: dual distributionally robust control problem restriction}
    \underline{d}^\star = \left\{
    \begin{array}{ccl} \max\limits_{\mathbb P \in \mathcal{B}_{\mathcal N}} &\min\limits_{x,   u} &\mathbb E_{\mathbb P} \left[   u^\top R   u +   x^\top Q  x \right]\\
    &\st &  u \in \mathcal U_{\eta}, {x} = H u + G w.
    \end{array}\right.
\end{equation}
As we obtained~\eqref{eq: dual distributionally robust control problem restriction} by restricting the feasible set of the outer maximization problem in~\eqref{DRCPdual}, it is clear that $\underline{d}^\star \leq d\opt$. Next, by leveraging the fact that the inner minimization problem in~\eqref{eq: dual distributionally robust control problem restriction} is solved by an affine control policy for any fixed Gaussian distribution in~$\mathcal{B}_{\mathcal N}$, we show that~\eqref{eq: dual distributionally robust control problem restriction} can be recast as a finite-dimensional zero-sum game. 

\begin{proposition}
\label{prop:dual-sdp}
Problem~\eqref{eq: dual distributionally robust control problem restriction} has the same optimal value as the optimization problem:
\begin{equation}
\underline d^\star=\!\left\{\!\!\!\!\!\!\!
\begin{array}{llll} &\max\limits_{\substack{(\mu_w, M_w)  \in \mathcal{M}_{(\mu_w, M_w)}\\ (\mu_v, M_v)  \in \mathcal{M}_{(\mu_v, M_v)}}}\!\!\!\!\! &\min\limits_{\substack{q \in \R^{pT}\\ U \in \mathcal U}} \hspace{-2.1ex}
    &\Tr\Big( \bigl( (UD)^\top R UD + (G\!+\!HUD)^\top Q (G+HUD) \bigr) M_w  \!+\!  U^\top \bar R U M_v\Big ) \\[-2ex]
    &&&+2 q^\top(\bar R UD + G^\top QH) \mu_w + 2 q^\top \bar R U \mu_v + q^\top \bar R q,
\end{array}\right.
\label{eq:dr-affine-max-min}
\end{equation}
where $\bar R$, $\mathcal{M}_{(\mu_w,M_w)}$ and $\mathcal{M}_{(\mu_v,\Sigma_v)}$ are defined exactly as in \Cref{prop: min max SDP upper bound problem}. 
\end{proposition}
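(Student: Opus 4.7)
\textbf{Proof proposal for Proposition~\ref{prop:dual-sdp}.} The plan mirrors the structure of the proof of Proposition~\ref{prop: min max SDP upper bound problem}, but the order is reversed: here I first collapse the inner minimization onto affine policies via classical LQG theory, and then reparametrize the outer maximization by moments. Fix any $\PP \in \mathcal{B}_{\mathcal N}$. Under $\PP$ the random vectors in $\mathcal Z$ are jointly Gaussian and uncorrelated, hence independent, so the inner problem in~\eqref{eq: dual distributionally robust control problem restriction} is a classical (finite-horizon) LQG problem, allowing nonzero means. By the separation principle and the Kalman filter recursions (reviewed in Appendix~\ref{sec: appx: optimal-solution-classic-LQG}), an optimal policy exists that is affine in the observations $y_0, \dots, y_t$, and by the equivalence between affine policies in $y$ and in $\eta$ (Lemma~\ref{lemma:linear-rel-u-eta}), also affine in the purified observations. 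Therefore, restricting the inner minimization to controls of the form $u = q + U\eta = q + U(Dw+v)$ with $q \in \R^{mT}$ and $U \in \mathcal U$ does not change its optimal value.

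Once $u$ is affine in $(w,v)$, so is $x = Hu + Gw$, and the expected cost $\mathbb E_{\PP}[u^\top R u + x^\top Q x]$ depends on $\PP$ only through the first two moments of $w$ and $v$. Expanding the cost with $\bar R = R + H^\top Q H$, using the identity
\[
(UD)^\top \bar R (UD) + 2(UD)^\top H^\top Q G + G^\top Q G = (UD)^\top R (UD) + (G + HUD)^\top Q (G + HUD),
\]
and exploiting $\EE_{\PP}[wv^\top]=0$, produces exactly the quadratic objective appearing in~\eqref{eq:dr-affine-max-min}. This step is mechanical and parallels the corresponding computation in the proof of Proposition~\ref{prop: min max SDP upper bound problem}.

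It remains to recast the outer maximization. Any Gaussian in $\mathcal B_{\mathcal N}$ is determined by its marginal first and second moments on each $z \in \mathcal Z$; uncorrelatedness across components is automatic for independent Gaussian marginals, so no additional constraint is imposed. By Assumption~\ref{ass:general_assumption_about_M_function}(i), a Gaussian marginal $\mathcal{N}(\mu_z, M_z)$ lies in $\mathcal B_z$ if and only if $\mathds D(\mathcal{N}(\mu_z, M_z), \hat{\PP}_z) \leq \rho_z$, that is, $(\mu_z, M_z) \in \mathcal{M}_{(\mu_z, M_z)}$. Taking Cartesian products across $z \in \mathcal Z$ yields the feasible sets $\mathcal M_{(\mu_w, M_w)}$ and $\mathcal M_{(\mu_v, M_v)}$ of~\eqref{eq:dr-affine-max-min}, and the outer maximum over $\mathcal{B}_{\mathcal N}$ becomes precisely the maximum over these moment sets, completing the reformulation.

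The main subtlety is the first step: ensuring that an affine-in-$\eta$ policy remains optimal in the full measurable class $\mathcal U_\eta$ even when noise means are nonzero. I would handle this by writing $w = \mu_w + \tilde w$ and $v = \mu_v + \tilde v$ with zero-mean Gaussians $\tilde w,\tilde v$, substituting into the dynamics to produce a deterministic forcing term and a zero-mean LQG problem in $(\tilde w,\tilde v)$; the standard separation principle gives a policy that is linear in the corresponding purified observations of the shifted system, which translates back into an affine policy in the original $\eta$ once the deterministic shift is absorbed into $q$.
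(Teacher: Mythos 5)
Your proposal is correct and follows essentially the same route as the paper's proof: reduce the inner minimization to affine (purified) output feedback via classical LQG theory and Lemma~\ref{lemma:linear-rel-u-eta}, substitute to obtain a moment-dependent quadratic objective, and then re-parametrize the outer maximization over $\mathcal{B}_{\mathcal N}$ by the marginal moment sets, with the independent Gaussian coupling supplying the converse feasibility direction. The only cosmetic differences are that your appeal to Assumption~\ref{ass:general_assumption_about_M_function}(i) in the last step is unnecessary (for a Gaussian marginal the membership condition is tautologically the definition of $\mathcal{M}_{(\mu_z,M_z)}$), and your mean-centering argument for the nonzero-mean LQG subproblem is a valid alternative to the paper's direct adaptation of the Kalman recursions in Appendix~\ref{sec: appx: optimal-solution-classic-LQG}.
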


\subsection{Optimality of Affine Policies and Gaussian Distributions}
\label{sec:sec3_optimality_affine_gaussian}
The next result leverages the primal and dual relaxations to prove our main result that the primal DRLQ problem in~\eqref{eq:DRLQG} and its dual in~\eqref{DRCPdual} actually have the same optimal value.
\begin{theorem}[Strong duality]
%The optimal value in problem~\eqref{eq:DRLQG} is the same as the optimal value in problem~\eqref{DRCPdual}, $p^\star = d^\star$.
The optimal value in problem~\eqref{eq:DRLQG} equals the optimal values in problems~\eqref{eq: distributionally robust control problem -- affine simplified min max}, \eqref{eq:dr-affine-max-min}~ and~\eqref{DRCPdual}, i.e., $p^\star = \bar{p}^\star = \underline{d}^\star = d^\star$, and all optimal values are attained.
\label{theorem:lower-equal-upper}
\end{theorem}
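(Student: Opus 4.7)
The plan is to sandwich the four quantities in the chain
\[
\overline{p}^\star \;\geq\; p^\star \;\geq\; d^\star \;\geq\; \underline{d}^\star,
\]
where the two outer inequalities hold by construction (see \S\ref{sec:sec3_upper_bound_p} and \S\ref{sec:sec3_lower_bound_d}) and the middle one is the standard weak minimax bound applied to \eqref{eq:DRLQG}. It remains to establish the matching equality $\overline{p}^\star = \underline{d}^\star$, which closes the loop. Crucially, \Cref{prop: min max SDP upper bound problem,prop:dual-sdp} show that $\overline{p}^\star$ and $\underline{d}^\star$ are the min-max and max-min, respectively, of the \emph{same} finite-dimensional objective $F(q, U, m_w, m_v)$ over the \emph{same} feasible sets, so the task reduces to a minimax interchange via Sion's theorem.

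\textbf{Minimax swap.} For each fixed moment pair $(m_w, m_v)$, $F$ is jointly convex in $(q, U)$: for any realization of $(w, v)$, the integrand $u^\top R u + x^\top Q x$ is a convex quadratic in $(q, U)$ because $u = q + U(Dw+v)$ and $x = Hu + Gw$ are affine in $(q, U)$ while $R, Q \succeq 0$, and convexity is preserved under expectation. For each fixed $(q, U)$, $F$ is \emph{affine} in $(m_w, m_v)$, hence continuous and concave. Assumption~\ref{ass:general_assumption_about_M_function}-(ii) ensures that the product moment set $\mathcal{M}_{(\mu_w, M_w)} \times \mathcal{M}_{(\mu_v, M_v)}$ is convex and compact, while $\R^{pT} \times \mathcal{U}$ is convex. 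Sion's minimax theorem therefore delivers $\overline{p}^\star = \underline{d}^\star$, collapsing the sandwich to $p^\star = \overline{p}^\star = \underline{d}^\star = d^\star$.

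\textbf{Attainment.} For each moment pair in the compact set, the inner minimization in \eqref{eq:dr-affine-max-min} is a classical LQG problem driven by the Gaussian with the prescribed moments; its optimum is attained by an affine policy $(q^\star, U^\star)$ via Kalman filtering and the Riccati recursion. The resulting value function is the pointwise infimum of affine maps in the moments, hence concave and upper semicontinuous on the compact set, so the outer maximization is attained at some $(m_w^\star, m_v^\star)$. By Assumption~\ref{ass:general_assumption_about_M_function}-(i), the Gaussian distribution $\PP^\star$ with these moments lies in $\mathcal{B}_{\mathcal N} \subseteq \mathcal{B}$, and thus attains the maxima in both \eqref{eq: dual distributionally robust control problem restriction} and \eqref{DRCPdual}. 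The associated policy $u^\star = q^\star + U^\star \eta$ attains the minimum in \eqref{upper bound problem 1}, and since the expected cost at any affine policy depends only on the first two moments of $(w, v)$, the supremum of this cost at $u^\star$ over $\PP \in \mathcal{B}$ coincides with $\overline{p}^\star$, so $u^\star$ also attains $p^\star$.

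\textbf{Main obstacle.} The principal delicacy is verifying the joint convexity of $F$ in $(q, U)$. The algebraic expression for $F$ contains bilinear cross terms such as $2\,q^\top \bar R\, UD\, \mu_w$ which are not convex in $(q, U)$ in isolation; the clean route is to avoid expanding $F$ and instead invoke the expected-quadratic-in-affine representation described above. A secondary subtlety is that $\R^{pT} \times \mathcal{U}$ is unbounded, but since Sion's theorem only requires compactness on one side (the moment side), this is handled by compactness of $\mathcal{M}_{(\mu_w, M_w)} \times \mathcal{M}_{(\mu_v, M_v)}$; attainment on the unbounded side is then furnished by the classical Riccati/Kalman construction.
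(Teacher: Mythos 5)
Your proposal is correct and follows essentially the same route as the paper's proof: the chain $\underline{d}^\star \le d^\star \le p^\star \le \overline{p}^\star$ by construction and weak duality, followed by Sion's minimax theorem applied to the finite-dimensional reformulations of \Cref{prop: min max SDP upper bound problem,prop:dual-sdp} (compactness of the moment sets on the maximization side, convexity in $(q,U)$ and linearity in the moments), with attainment handled via $R \succ 0$ and compactness. Your justification of joint convexity in $(q,U)$ through the expected-quadratic-of-affine representation is a sound way to make rigorous what the paper asserts directly.
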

\begin{proof}{Proof.} %{Proof of Theorem~\ref{theorem:lower-equal-upper}.}
By weak duality and the construction of the bounding problems~\eqref{eq: distributionally robust control problem -- affine simplified min max} and~\eqref{eq:dr-affine-max-min}, we readily have that~ $\underline d^\star\leq d^\star\leq p^\star\leq \overline{p}^\star$. To complete the argument, we prove that $\underline{d}^\star = \overline{p}^\star$. Consider problem~\eqref{eq: distributionally robust control problem -- affine simplified min max}, with optimal value $\overline{p}^\star$, and problem~\eqref{eq:dr-affine-max-min}, with optimal value $\underline{d}^\star$. Note that these problems are dual to each other, that is, they can be transformed into one another by interchanging minimization and maximization. In both problems, the set $\cal U$ appearing in the minimization is convex and closed, and the feasible sets  $\mathcal{M}_{(\mu_w,M_w)}$ and $\mathcal{M}_{(\mu_v,M_v)}$ in the maximization are convex and compact under Assumption~\ref{ass:general_assumption_about_M_function}~\ref{prop:2wasserstein_for_elliptical}. Moreover, the objective in both problems is convex quadratic in the minimization variables~$(U,q)$ and is linear in the maximization variables $(\mu_w, M_w)$ and $(\mu_v, M_v)$. Therefore, the conditions of Sion's minimax theorem~\citep{ref:sion1958minimax} are satisfied and we conclude that $\underline{d}^\star = \overline{p}^\star$.  Moreover, the optimal values in~\eqref{eq:DRLQG} and~\eqref{eq: distributionally robust control problem -- affine simplified min max} are attained because of our assumption that $R \succ 0$, whereas the optimal values in \eqref{eq:dr-affine-max-min}~ and~\eqref{DRCPdual} are attained because the feasible sets $\mathcal{M}_{\mu_w,M_w}$ and $\mathcal{M}_{\mu_v,M_v}$ are compact.
\end{proof}

\Cref{theorem:lower-equal-upper} has several important implications. From a mathematical perspective, it establishes strong duality between two {\em infinite-dimensional} zero-sum games, which is not a priori expected to hold. From a practical perspective, it implies that a \dm{} faced with solving the DRLQ problem~\eqref{eq:DRLQG} can restrict attention to policies that depend \emph{affinely} on the purified outputs $\eta$ (or, equivalently, on the original outputs $y$).
\begin{corollary}[\Dm{}'s Nash strategy is affine]
    \label{corollary:affine-controllers-are-optimal}
    The primal DRLQ problem~\eqref{eq:DRLQG} admits an optimal \textup{affine} policy of the form~$u\opt=q\opt+U\opt \eta$ for some~$U\opt\in\mathcal U$ and~$q\opt\in\R^m$.
\end{corollary}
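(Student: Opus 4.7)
The plan is to obtain this corollary as an immediate consequence of \Cref{theorem:lower-equal-upper}. Specifically, I would use the attainment assertion of the theorem to extract a minimizer $(U^\star, q^\star) \in \mathcal U \times \R^{pT}$ of the bounding problem~\eqref{upper bound problem 1}, set $u^\star := q^\star + U^\star \eta$, and show that this affine policy achieves the original primal value $p^\star$.

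To execute this plan, I would first check feasibility. Because $U^\star$ is block lower triangular, we have $u^\star_t = q^\star_t + \sum_{\tau=0}^{t} U^\star_{t,\tau} \eta_\tau$, which is a measurable function of $\eta_0, \dots, \eta_t$; thus $u^\star \in \mathcal U_\eta$. The equivalence $\mathcal U_\eta = \mathcal U_y$ recalled in \S\ref{sec:sec3_purified_observations}, together with the preservation of affinity under the bijection between $y$ and $\eta$, implies that $u^\star$ can equivalently be expressed as a causal affine function of $y_0, \dots, y_{T-1}$, yielding the form stated in the corollary. Next I would sandwich the worst-case cost of $u^\star$ by the inequality chain
\[
p^\star \;\leq\; \sup_{\PP \in \mathcal B}\EE_{\PP}\bigl[(u^\star)^\top R\, u^\star + (x^\star)^\top Q\, x^\star\bigr] \;\leq\; \sup_{\PP \in \overline{\mathcal B}}\EE_{\PP}\bigl[(u^\star)^\top R\, u^\star + (x^\star)^\top Q\, x^\star\bigr] \;=\; \overline{p}^\star,
\]
where the first inequality uses the feasibility of $u^\star$ in~\eqref{eq:DRLQG}, the second uses $\mathcal{B} \subseteq \overline{\mathcal{B}}$ (verified just above~\eqref{upper bound problem 1}), and the final equality uses the optimality of $(U^\star, q^\star)$ for~\eqref{upper bound problem 1}. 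Since \Cref{theorem:lower-equal-upper} gives $\overline{p}^\star = p^\star$, the chain collapses to equalities and $u^\star$ attains the optimum in~\eqref{eq:DRLQG}.

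The main obstacle is essentially bookkeeping rather than any new mathematical content: once strong duality and attainment in $\overline{p}^\star$ are in hand from \Cref{theorem:lower-equal-upper}, the corollary follows from the one-line sandwich above. The only subtlety is ensuring that affinity of $u^\star$ in the purified observations $\eta$ translates to affinity in the original observations $y$, which is the content of the standard equivalence cited in \S\ref{sec:sec3_purified_observations}.
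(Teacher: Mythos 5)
Your proposal is correct and is exactly the argument the paper intends: the corollary is stated as an immediate consequence of the equality $p^\star=\overline{p}^\star$ and the attainment of $\overline{p}^\star$ by some $(U^\star,q^\star)$, combined with the sandwich $p^\star\leq\sup_{\PP\in\mathcal B}\EE_\PP[\cdot]\leq\sup_{\PP\in\overline{\mathcal B}}\EE_\PP[\cdot]=\overline{p}^\star=p^\star$ and the $\mathcal U_\eta=\mathcal U_y$ equivalence for translating affinity in $\eta$ back to affinity in $y$. No gaps; this matches the paper's (implicit) proof.
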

%Therefore, the \dm{} can restrict attention to a causal policy that depends affinely on the purified outputs $\eta$ (or, equivalently, on the original outputs $y$).
Lastly, \Cref{theorem:lower-equal-upper} implies that the worst-case distribution in the DRLQ problem is Gaussian.
\begin{corollary}[Nature's Nash strategy is a Gaussian distribution]
   %Under Assumptions~\ref{ass:Gaussian} and~\ref{ass:general_assumption_about_M_function}, 
   %The dual of the DRLQ problem, i.e., problem~\eqref{DRCPdual}, is solved by a distribution~$\PP\opt\in\mathcal{B}_{\mathcal N}$.
   The dual DRLQ problem~\eqref{DRCPdual} admits an optimal solution that is a Gaussian distribution,~$\PP\opt\in\mathcal{B}_{\mathcal N}$.
    \label{corollary:normal-distributions-are-optimal}
\end{corollary}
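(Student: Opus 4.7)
The plan is to deduce the corollary directly from Theorem~\ref{theorem:lower-equal-upper}, which already gives us every ingredient we need. First, I would appeal to the chain of equalities $p^\star=\bar p^\star=\underline d^\star=d^\star$ together with the attainment claims. In particular, the equivalent finite-dimensional max-min problem~\eqref{eq:dr-affine-max-min} attains its optimal value: its outer maximization is over $\mathcal{M}_{(\mu_w,M_w)}\times\mathcal{M}_{(\mu_v,M_v)}$, which is compact by Assumption~\ref{ass:general_assumption_about_M_function}\ref{prop:2wasserstein_for_elliptical}, and the inner minimization problem has a finite value (again because $R\succ 0$ makes the quadratic form coercive in $q$ and in $U$ along the uncontrolled directions, as exploited in \Cref{theorem:lower-equal-upper}). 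Hence some maximizer $((\mu_w^\star,M_w^\star),(\mu_v^\star,M_v^\star))$ exists.

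Next, I would use the correspondence underlying \Cref{prop:dual-sdp}: the maximizer in~\eqref{eq:dr-affine-max-min} gives rise to the product Gaussian distribution $\PP^\star$ with marginals $\PP^\star_z=\mathcal{N}(\mu_z^\star,M_z^\star)$ for every $z\in\mathcal Z$, and with the cross-moments $\EE_{\PP^\star}[z'z^\top]=0$ built in by independence of the marginals. By Assumption~\ref{ass:general_assumption_about_M_function}\textup{(i)}, each marginal $\mathcal{N}(\mu_z^\star,M_z^\star)$ satisfies $\mathds D(\mathcal{N}(\mu_z^\star,M_z^\star),\hat \PP_z)\leq \rho_z$, so $\PP^\star\in\mathcal{B}_{\mathcal N}\subseteq\mathcal{B}$ and $\PP^\star$ attains the value $\underline d^\star$ in the restricted problem~\eqref{eq: dual distributionally robust control problem restriction}.

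Finally, I would close the loop: because $\PP^\star\in \mathcal{B}$, it is feasible for the dual DRLQ problem~\eqref{DRCPdual}, and the value it achieves is $\underline d^\star=d^\star$ by Theorem~\ref{theorem:lower-equal-upper}. Hence $\PP^\star$ is in fact optimal for~\eqref{DRCPdual}, proving the corollary.

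This argument is essentially a bookkeeping consequence of Theorem~\ref{theorem:lower-equal-upper}, so there is no serious obstacle. The only subtlety worth spelling out is the step bridging the finite-dimensional maximizer in~\eqref{eq:dr-affine-max-min} back to an actual Gaussian distribution in~$\mathcal{B}$: this requires invoking Assumption~\ref{ass:general_assumption_about_M_function}\textup{(i)} to guarantee that the Gaussian built from the moment maximizer really lies in the original divergence ball, and invoking the definition of $\mathcal{B}$ to check that the product-independent construction respects the uncorrelatedness constraint $\EE_{\PP}[z'z^\top]=0$ for $z\neq z'$.
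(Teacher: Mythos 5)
Your proposal is correct and follows essentially the same route as the paper, which derives the corollary from the equality $\underline d^\star=d^\star$ in \Cref{theorem:lower-equal-upper} together with attainment of $\underline d^\star$ on the compact moment sets; you have merely spelled out the bookkeeping that the paper leaves implicit. One tiny remark: the inclusion $\mathcal{N}(\mu_z^\star,M_z^\star)\in\mathcal{B}_z$ follows directly from the definition of $\mathcal{M}_{(\mu_z,M_z)}$ rather than from Assumption~\ref{ass:general_assumption_about_M_function}(i), but this misattribution is harmless.
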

\Cref{corollary:normal-distributions-are-optimal} follows from the equality~$\underline d^\star= d^\star$. Note that the optimal Gaussian distribution~$\PP\opt$ is uniquely determined by the first and second moments~$(\mu\opt_w, M\opt_w)$ and~$(\mu\opt_v, M\opt_v)$ of the exogenous uncertain parameters, which can be computed by solving problem~\eqref{eq:dr-affine-max-min}. That the worst-case distribution is actually Gaussian is not a-priori expected and is surprising given that the ambiguity set~$\mathcal{B}$ contains many non-Gaussian distributions.

At an intuitive level, \Cref{theorem:lower-equal-upper} delivers two key insights. First, it confirms the merits of affine output feedback policies, which extend from the classical LQG setting to the distributionally robust LQG setting: even when the noise distribution is unknown, a more elaborate control policy cannot outperform an affine one. Second, the result supplies a novel justification for the standard focus on Gaussian noise in the LQG model: beyond analytical tractability, this assumption is also \emph{conservative} because it allows capturing the worst-case costs that the \dm{} might face within the ambiguity set (provided the nominal distribution remains Gaussian).

\subsection{Optimality of Linear Policies and Zero-Mean Distributions}
\label{sec:sec3_optimality_linear_zero_mean}
Under some additional mild assumptions on the ambiguity sets, we can further refine the structural results concerning the \dm{}'s and nature's Nash strategies. We first state an additional assumption on the set of first two moments $\mathcal{M}_{(\mu_z, M_z)}$ defined in ~Assumption~\ref{ass:general_assumption_about_M_function}.
\begin{assumption}
    \label{ass:setting_mean_zero_feasible}
    For any $z \in \mathcal{Z}$, the set $\mathcal{M}_{(\mu_z, M_z)} = \{(\mu_z, M_z) \in \setmoments{d_z} : \mathds D(\mathcal N(\mu_z, M_z), \hat{\PP}_z) \leq  \rho_z \}$ defined in ~Assumption~\ref{ass:general_assumption_about_M_function} is such that $(\mu_z,M_z) \in \mathcal{M}_{(\mu_z, M_z)}$ implies that  $(0,M_z) \in \mathcal{M}_{(\mu_z, M_z)}$.
\end{assumption}
The assumption bears an intuitive interpretation. Formulated as a feasibility condition on the ambiguity set $\mathcal{B}_z$, it states that if a Gaussian distribution $\PP_z = \mathcal{N}(\mu_z,M_z)$ belongs to $\mathcal{B}_z$, then the ``centered" Gaussian $\PP'_z = \mathcal{N}(0,M_z)$ -- obtained by setting the mean to zero while keeping the second moment unchanged -- should also be feasible, $\PP'_z \in \mathcal{B}_z$. In other words, the adversary may always transfer deterministic bias into additional variance without leaving $\mathcal{B}_z$. Because this transformation raises the covariance from $M_z - \mu_z \mu_z^\top$ to $M_z$ and the latter dominates the former in the Loewner (positive semidefinite) order, the key intuition behind the assumption is to allow the adversary to ``inflate" uncertainty (while holding the second moment fixed).

The validity of Assumption~\ref{ass:setting_mean_zero_feasible} depends on three key inputs: the divergence $\mathds{D}$, the nominal distribution $\hat{\PP}_z = \mathcal{N}(\hat{\mu}_z,\hat{M}_z)$, and the radius $\rho_z$ of the ambiguity set. The following result shows that all our examples from~\S\ref{sec:examples} satisfy Assumption~\ref{ass:setting_mean_zero_feasible} under very mild conditions if the nominal distribution has zero mean.
\begin{proposition}
    \label{prop:assumption3_for_W_KL_M}
    If $\hat{\mu}_z=0$, the divergences $\mathds{W}, \mathds{K}$ defined in \S\ref{sec:examples} satisfy~Assumption~\ref{ass:setting_mean_zero_feasible} and the moment-based ambiguity set based on divergence $\mathds{M}$ also satisfies the assumption if 
    \begin{align}
        \mathds{M}\left( (0, M_z), (0, \hat{M}_z) \right) \leq \mathds{M}\left( (\mu_z, M_z), (0, \hat{M}_z) \right) , ~ \forall (\mu_z,M_z) \in \setmoments{d_z}.
        \label{eq:Assumption3_condition_M_amb_set}
    \end{align}
\end{proposition}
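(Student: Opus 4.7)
The plan is to verify, for each of the three divergences, that setting the mean to zero while holding the second moment matrix fixed cannot increase the divergence from $\hat{\PP}_z = \mathcal{N}(0,\hat{M}_z)$. Once this is done, the conclusion follows: if $(\mu_z, M_z) \in \mathcal{M}_{(\mu_z, M_z)}$, i.e., $\mathds{D}(\mathcal{N}(\mu_z,M_z),\hat{\PP}_z) \leq \rho_z$, then $\mathds{D}(\mathcal{N}(0,M_z),\hat{\PP}_z) \leq \rho_z$ as well, which is precisely Assumption~\ref{ass:setting_mean_zero_feasible}. Throughout I would note that under a Gaussian with mean $\mu_z$ and second moment $M_z$ the covariance is $\Sigma_z = M_z - \mu_z\mu_z^\top$, whereas under mean $0$ and second moment $M_z$ the covariance is $M_z$ itself; this is the lever that creates the ``inflation'' of uncertainty.

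For the 2-Wasserstein case, I would use that the divergence between two Gaussians equals the Gelbrich distance (already invoked in~\eqref{eq:inequality_wasserstein_gelbrich}), expand both squared distances using~\eqref{eq: Gelbrich distance definition}, and observe that the $\|\mu_z\|^2$ and $-\operatorname{Tr}(\mu_z\mu_z^\top)$ terms cancel. What remains is the sign of
\[
\operatorname{Tr}\bigl((\hat{M}_z^{1/2} M_z \hat{M}_z^{1/2})^{1/2}\bigr) - \operatorname{Tr}\bigl((\hat{M}_z^{1/2}(M_z - \mu_z\mu_z^\top)\hat{M}_z^{1/2})^{1/2}\bigr),
\]
which is non-negative because $M_z \succeq M_z - \mu_z\mu_z^\top$ and the matrix square root is operator monotone (Löwner--Heinz), so the trace of the square root is monotone in Loewner order. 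Rearranging yields $\mathds{G}((0,M_z),(0,\hat{M}_z)) \leq \mathds{G}((\mu_z, M_z - \mu_z\mu_z^\top),(0,\hat{M}_z))$, as desired.

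For the KL case, I would substitute $\hat{\mu}_z=0$ and $\hat{\Sigma}_z=\hat{M}_z$ in the closed-form expression for $\mathds{T}$ given just before~\eqref{eq:inequality_KL}. The Mahalanobis term $\mu_z^\top \hat{M}_z^{-1}\mu_z$ exactly cancels the contribution $\operatorname{Tr}(\mu_z\mu_z^\top \hat{M}_z^{-1})$ arising from $\operatorname{Tr}(\Sigma_z\hat{M}_z^{-1})$, so that the difference between the two divergences collapses to
\[
\mathds{T}((0,M_z),(0,\hat{M}_z)) - \mathds{T}((\mu_z, M_z-\mu_z\mu_z^\top),(0,\hat{M}_z)) = \tfrac{1}{2}\bigl(\log\det(M_z-\mu_z\mu_z^\top) - \log\det M_z\bigr) \leq 0,
\]
where the inequality uses monotonicity of $\log\det$ on the positive definite cone together with $M_z \succeq M_z - \mu_z\mu_z^\top \succ 0$ (the strict positivity is guaranteed whenever the KL divergence is finite). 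For the moment-based ambiguity set, the claim is immediate because $\mathds{D}$ depends only on the first two moments, so condition~\eqref{eq:Assumption3_condition_M_amb_set} is literally the required inequality.

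I do not anticipate any serious obstacle: the proof reduces entirely to the Löwner--Heinz operator monotonicity of the square root (for Wasserstein) and to the monotonicity of $\log\det$ (for KL), with a clean algebraic cancellation in each case. The only subtlety worth flagging is the finiteness of $\log\det(M_z - \mu_z\mu_z^\top)$ in the KL argument, which I would justify by noting that $\mathds{T}((\mu_z, M_z - \mu_z\mu_z^\top),(0,\hat{M}_z)) \leq \rho_z < \infty$ forces $M_z - \mu_z\mu_z^\top \succ 0$ via the $-\log\det$ penalty.
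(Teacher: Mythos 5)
Your proposal is correct and follows essentially the same route as the paper's proof: reduce each case to a comparison of the closed-form divergences between mean/second-moment pairs, observe that the mean-dependent terms cancel, and conclude via operator monotonicity of the square root (Wasserstein) and monotonicity of $\log\det$ (KL), with the moment-based case holding by hypothesis. The finiteness remark about $M_z-\mu_z\mu_z^\top\succ 0$ in the KL case matches the paper's treatment as well.
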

To see that the conditions are mild, note that requiring the nominal mean to be zero is very common in LQR/LQG models. Textbook treatments of the classical LQG model restrict attention to zero-mean noise without loss of generality \citep{Bertsekas_2017}, and the vast majority of distributionally-robust LQR/LQG formulations require that \emph{all} distributions in the ambiguity set be zero-mean (see, for instance, \citet{taskesen2023drlqc,Kim_Yang_2023,Han2023} and our literature review in \S\ref{sec:literature review}). For the moment-based ambiguity set, the condition holds if the divergence $\mathds{M}$ penalizes deviations from the nominal mean $\hat{\mu}_z=0$, which is a sensible requirement (for instance, this holds if $\mathds{M}$ penalizes separately the distance between the first moments and the second moments).
Assumption~\ref{ass:setting_mean_zero_feasible} allows refining our structural results on the solution to the DRLQ problem.
\begin{theorem}[Worst-case nominal mean and linear controls]\label{thm:worst-case-mean-zero}
    Under Assumptions~\ref{ass:Gaussian}-\ref{ass:setting_mean_zero_feasible}, 
    problem~\eqref{DRCPdual} admits an optimal solution $\PP\opt$
    that is Gaussian and has zero mean, $\EE_{\PP_z\opt}[z]=0$, for all $z \in \mathcal{Z}$. Moreover, under such $\PP\opt$, the inner minimization problem in~\eqref{DRCPdual} admits an optimal \emph{linear} control policy, $u\opt = U\opt \eta$ for some $U \in \mathcal{U}$.
\end{theorem}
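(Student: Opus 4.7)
My plan is to work with the finite-dimensional reformulation of the dual problem given in~\eqref{eq:dr-affine-max-min} and to exploit the explicit structure of the objective to isolate the dependence on the means $(\mu_w, \mu_v)$. Concretely, I would rewrite the objective as
\begin{equation*}
f(U, q, \mu_w, \mu_v, M_w, M_v) = T(U, M_w, M_v) + 2\, q^\top b(U, \mu_w, \mu_v) + q^\top \bar R q,
\end{equation*}
where $T(U, M_w, M_v)$ collects the trace terms (which depend only on $U$ and the second moments) and $b(U, \mu_w, \mu_v) = (\bar R UD + G^\top Q H)\mu_w + \bar R U \mu_v$. Since $\bar R = R + H^\top Q H \succ 0$ by Assumption~\ref{ass:Gaussian} and the standing hypothesis $R \succ 0$, the inner minimization in $q$ for fixed $U$ admits the closed-form solution $q\opt(U, \mu_w, \mu_v) = -\bar R^{-1} b(U, \mu_w, \mu_v)$, with optimal value $T(U, M_w, M_v) - b(U, \mu_w, \mu_v)^\top \bar R^{-1} b(U, \mu_w, \mu_v)$. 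In particular, $b = 0$ whenever $\mu_w = 0$ and $\mu_v = 0$, which gives optimizer $q\opt = 0$ and optimal value exactly $T(U, M_w, M_v)$.

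The first key inequality follows immediately: because $\bar R^{-1} \succ 0$, for every $U$ and every $(\mu_w, \mu_v, M_w, M_v)$,
\begin{equation*}
\min_{q} f(U, q, \mu_w, \mu_v, M_w, M_v) \;\leq\; \min_{q} f(U, q, 0, 0, M_w, M_v).
\end{equation*}
Minimizing over $U \in \mathcal{U}$ on both sides preserves the inequality, so the full inner minimum is weakly larger when the means are set to zero (with the second moments held fixed). Next I would invoke Assumption~\ref{ass:setting_mean_zero_feasible}: for every $(\mu_w, M_w) \in \mathcal{M}_{(\mu_w, M_w)}$ and every $(\mu_v, M_v) \in \mathcal{M}_{(\mu_v, M_v)}$, the pairs $(0, M_w)$ and $(0, M_v)$ remain feasible. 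Therefore any feasible maximizer $(\mu_w, \mu_v, M_w, M_v)$ in~\eqref{eq:dr-affine-max-min} can be replaced by $(0, 0, M_w, M_v)$ without leaving the feasible set and without decreasing the objective. Combined with the attainment of the outer maximum established in Theorem~\ref{theorem:lower-equal-upper} (which uses compactness of the moment sets), this yields an optimal solution $(\mu_w\opt, \mu_v\opt, M_w\opt, M_v\opt)$ with $\mu_w\opt = 0$ and $\mu_v\opt = 0$.

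Finally I would lift this back to the original dual problem~\eqref{DRCPdual} through strong duality: by Theorem~\ref{theorem:lower-equal-upper} and Corollary~\ref{corollary:normal-distributions-are-optimal}, the Gaussian distribution $\PP\opt$ whose moments coincide with $(\mu_w\opt, M_w\opt)$ and $(\mu_v\opt, M_v\opt)$ is optimal for~\eqref{DRCPdual} and so has zero mean. Under this $\PP\opt$, the inner minimization in~\eqref{DRCPdual} admits an affine minimizer $u\opt = q\opt + U\opt \eta$ by Corollary~\ref{corollary:affine-controllers-are-optimal}, and solving the corresponding finite-dimensional problem with $\mu_w\opt = \mu_v\opt = 0$ forces $b = 0$ and hence $q\opt = 0$, so that $u\opt = U\opt \eta$ is linear.

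The main obstacle is simply bookkeeping: one needs to verify carefully that the inner minimization in~\eqref{DRCPdual} at a fixed Gaussian distribution reduces (without loss) to the finite-dimensional problem in $(U, q)$ from~\eqref{eq: distributionally robust control problem -- affine simplified min max}, so that the closed-form solution for $q$ transfers. This is exactly what the purified-observation reformulation in~\S\ref{sec:sec3_purified_observations} together with the equivalence $\mathcal U_\eta = \mathcal U_y$ and Corollary~\ref{corollary:affine-controllers-are-optimal} provide, so no further machinery is needed beyond what has already been established.
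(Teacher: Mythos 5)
Your proposal is correct and follows essentially the same route as the paper's proof: both work in the finite-dimensional reformulation~\eqref{eq:dr-affine-max-min}, solve the unconstrained minimization over $q$ in closed form to get $q\opt = -\bar R^{-1}\bigl((\bar R UD + G^\top QH)\mu_w + \bar R U \mu_v\bigr)$, observe that the resulting value differs from the zero-mean value by the nonpositive quantity $-b^\top \bar R^{-1} b$ for every fixed $U$, invoke Assumption~\ref{ass:setting_mean_zero_feasible} for feasibility of the zero-mean moment pairs, and conclude $q\opt = 0$ (hence linearity) once the means vanish. The only cosmetic difference is that the paper phrases the mean-zeroing step as a contradiction argument while you argue directly.
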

\Cref{thm:worst-case-mean-zero} extends the classical LQG insight -- that \emph{linear} output feedback policies are optimal -- to our DRLQ model and \emph{proves} that zero-mean (Gaussian) distributions are optimal for nature.

To appreciate the intuition behind this result, recall that under nature's optimal choice of Gaussian distribution $\PP\opt$, the \dm{} can restrict attention to an affine policy, $u = q + U \eta$. The proof of \Cref{thm:worst-case-mean-zero} shows that under the \emph{optimal} choice of intercept $q$, the objective becomes a quadratic function of the means $\mu_z$ that is maximized by setting $\mu_z = 0$. This arises because nature can increase the costs achieved with a (Gaussian) distribution with nonzero mean $\mu_z$ by instead choosing a zero-mean Gaussian distribution and increasing the covariance matrix by $\mu_z \mu_z^\top$. This change -- which leads to feasible distributions, by Assumption~\ref{ass:setting_mean_zero_feasible} -- magnifies the noise and increases the \dm{}'s costs. In equilibrium, when nature uses zero-mean policies, the \dm{} can restrict attention to \emph{linear} -- rather than \emph{affine} -- policies. This conveys a simple intuition: nature cannot gain by adding a predictable offset/bias to its distribution (through the mean), because that could be corrected by the \dm{} through an appropriate choice of intercept $q$, so at optimality, neither nature nor the \dm{} add predictable offsets to their actions.

The result in \Cref{thm:worst-case-mean-zero} is also important because it provides a novel rationale for considering zero-mean noise distributions. Whereas the vast majority of papers formulating robust LQG models restrict attention to zero-mean noise for simplicity or in keeping with the classical LQG setting (as discussed in our earlier examples and in \S\ref{sec:literature review}), 
%As discussed earlier, textbook treatments of the classical LQG model, where the noise distribution is known, commonly restrict attention to zero-mean noise, without loss of generality \citep{Bertsekas_2017}. The vast majority of distributionally-robust LQR/LQG formulations build on this assumption and require \emph{all distributions in the ambiguity set} to be zero-mean (see, for instance, \citet{taskesen2023drlqc,Kim_Yang_2023,Han2023} and our literature review in \S\ref{sec:literature review}). The common argument is that this is without loss of generality, but 
the result in \Cref{thm:worst-case-mean-zero} provides a different justification: this assumption/choice is \emph{conservative}, because allowing the adversary to use zero means gives the adversary more power and results in the worst-case costs for the \dm{}.

\revision{In view of the structural result in \S\ref{sec:sec3_optimality_linear_zero_mean} and to simplify exposition, we assume throughout the rest of the paper that the mean of the noise under any distribution in the ambiguity set is 0, i.e., $\EE_{\PP_z}[z] = \mu_z = 0$ for all $z \in \mathcal Z$ and all $\PP_z \in \mathcal{B}_z$.}
\subsection{Worst-Case Covariance Matrix}
\label{sec:sec3_worst_case_covariance}
Our final structural result further develops the intuition above and shows that nature's optimal distribution $\PP\opt$ entails suitably ``inflating" the covariance matrix of the nominal distribution $\hat{\PP}$. Because any zero-mean Gaussian distribution $\PP_z$ is fully specified through the second moment/covariance matrix $M_z\opt=\Sigma_z\opt$, we can shorten notation by using $\mathcal{M}_{\Sigma_z}$ to denote the sets $\mathcal{M}_{(\mu_z=0, M_z)}$ defined in~Assumption~\ref{ass:general_assumption_about_M_function}.

Our final result requires a mild condition on the sets $\mathcal{M}_{\Sigma_z}$ that further refines Assumption~\ref{ass:setting_mean_zero_feasible}.
\begin{assumption}
\label{ass:M-gradients}
Fix $\mu_z = \hat{\mu}_z=0$ for all $z\in \mathcal{Z}$. There exists $\gz: \mathbb S_+^{d_z} \to \R$ such that the sets defined in ~Assumption~\ref{ass:general_assumption_about_M_function} can be represented as \(\mathcal{M}_{\Sigma_z} = \{ \Sigma_z \in \mathbb{S}_+^{d_z} \,: \, \gz(\Sigma_z) \leq  0\}\) for any $z \in \mathcal{Z}$, where $\gz$ is convex on $\mathbb{S}_+^{d_z}$, differentiable on $\mathbb{S}_{++}^{d_z}$, and satisfies: (i) $\gz(\hat{\Sigma}_z) < 0$, (ii)~$\hat{\Sigma}_z \in \argmin_{\Sigma_z \succeq 0} \gz(\Sigma_z)$, and (iii) $\gradgz(\Sigma_1) \succeq \gradgz(\Sigma_2)$ implies $\Sigma_1 \succeq \Sigma_2$, for any $\Sigma_1, \Sigma_2 \in \mathbb S_+^{d_z}$.
\end{assumption}
%
%
%  NEW DISCUSSION: WE REQUIRE \rho > 0 
%
\Cref{prop:assumption4_for_base_examples} in Appendix~\S\ref{sec: appx: verification-assumption-m-gradients} shows that Assumption~\ref{ass:M-gradients} is  satisfied by all examples in \S\ref{sec:examples} if $\rho_z > 0$ (and, for the case of the moment-based ambiguity, if the divergence $\mathds{M}$ satisfies a mild condition). The key requirements are intuitive if one takes $\gz(\Sigma_z)$ as a (convex, increasing) transformation of the distance $\mathds{D}(\PP_z, \hat{\PP}_z)$ between a distribution $\PP_z = \mathcal{N}(0,\Sigma_z)$ and the nominal $\hat{\PP}_z = \mathcal{N}(0,\hat{\Sigma}_z)$. Requirement~(i) simply asks that $\hat{\Sigma}_z$ is an interior point of the set of valid covariances $\mathcal{M}_z$, which holds in all our examples provided there is ambiguity, $\rho_z > 0$. Requirement~(ii) is readily satisfied because $\mathds{D}(\PP_z, \hat{\PP}_z) \geq \mathds{D}(\hat{\PP}_z, \hat{\PP}_z)$ for any divergence $\mathds{D}$. Lastly,~(iii) asks that the gradient map $\gradgz$ be order-reflecting, i.e., that gradients that dominate in the Loewner (positive semidefinite) order should correspond to (distributions whose) covariances also dominate in the Loewner order. Put more intuitively, this means that ``noisier" gradient maps should come from ``noisier" distributions.

\begin{theorem}
\label{thm:optimal-covs-are-higher}
Under Assumptions~\ref{ass:Gaussian}-\ref{ass:M-gradients}, problem~\eqref{DRCPdual} admits an optimal solution $\PP\opt$ such that $\PP_z\opt=\mathcal{N}(0,\Sigma_z\opt)$ and the optimal covariance satisfies $\Sigma\opt_{z}\succeq \hat\Sigma_{z}$, for every $z \in \mathcal{Z}$. 
\end{theorem}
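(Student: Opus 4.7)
The plan is to leverage \Cref{theorem:lower-equal-upper} and \Cref{thm:worst-case-mean-zero} to reduce \eqref{DRCPdual} to a finite-dimensional concave maximization over covariance matrices, and then extract the Loewner inequality from a KKT analysis that exploits the order-reflection property in \Cref{ass:M-gradients}. First, since an optimal adversary is Gaussian with zero mean and the inner problem is solved by a linear policy $u=U\eta$, only the covariances $\Sigma_w,\Sigma_v$ remain, and \eqref{eq:dr-affine-max-min} collapses to the maximization of
\[
V(\Sigma_w,\Sigma_v) = \min_{U\in\mathcal{U}} \Tr\bigl(\Phi(U)\Sigma_w + U^\top\bar R U\,\Sigma_v\bigr),\qquad \Phi(U) = (UD)^\top R UD + (G+HUD)^\top Q(G+HUD),
\]
over $\mathcal{M}_{\Sigma_w}\times\mathcal{M}_{\Sigma_v}$. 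The constraint decouples across $z\in\mathcal Z$, so it suffices to prove $\Sigma^\star_z\succeq\hat\Sigma_z$ block-by-block. Because $R\succ 0$ forces $\bar R\succ 0$, the inner minimization is strictly convex in $U$ and admits a unique minimizer $U^\star(\Sigma_w,\Sigma_v)$, so Danskin's theorem yields $\nabla_{\Sigma_w}V = \Phi(U^\star)\succeq 0$ and $\nabla_{\Sigma_v}V = (U^\star)^\top\bar R U^\star\succeq 0$, making $V$ concave and weakly Loewner-monotone in each block $\Sigma_z$ and formalizing the intuition that ``noisier is worse for the \dm{}.''

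Next, I would apply KKT at the optimum. Under \Cref{ass:M-gradients}, $\mathcal{M}_{\Sigma_z} = \{\Sigma\succeq 0 : g_z(\Sigma)\leq 0\}$ with $g_z$ convex, differentiable on $\mathbb{S}_{++}^{d_z}$, and $g_z(\hat\Sigma_z) < 0$, so Slater's condition holds at $\hat\Sigma_z$. The KKT conditions for the concave maximization then produce multipliers $\lambda^\star_z \geq 0$ and $N^\star_z\succeq 0$ satisfying
\[
\lambda^\star_z\,\nabla g_z(\Sigma^\star_z) \,=\, \nabla_{\Sigma_z}V(\Sigma^\star) + N^\star_z \,\succeq\, 0,\qquad \lambda^\star_z\,g_z(\Sigma^\star_z)=0,\qquad \langle N^\star_z,\Sigma^\star_z\rangle=0.
\]
Because \Cref{ass:M-gradients}(ii) makes $\hat\Sigma_z$ an unconstrained interior minimizer of the convex differentiable $g_z$, we have $\nabla g_z(\hat\Sigma_z) = 0$. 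In the generic case $\lambda^\star_z > 0$, the first KKT identity then yields $\nabla g_z(\Sigma^\star_z)\succeq 0 = \nabla g_z(\hat\Sigma_z)$, and the order-reflection clause \Cref{ass:M-gradients}(iii) immediately delivers $\Sigma^\star_z \succeq \hat\Sigma_z$.

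For the degenerate case $\lambda^\star_z = 0$, stationarity forces $\nabla_{\Sigma_z}V(\Sigma^\star) = 0$ and $N^\star_z = 0$, so $\Sigma^\star_z$ is an unconstrained maximizer of the concave slice $V(\,\cdot\,,\Sigma^\star_{-z})$; a small strictly-Loewner-monotone perturbation of the objective (e.g., $V_\epsilon(\Sigma)=V(\Sigma)+\epsilon\Tr(\Sigma_z)$) pushes the multiplier into the generic regime $\lambda_z^{\star,\epsilon}>0$, and compactness of $\mathcal{M}_{\Sigma_z}$ together with closedness of the Loewner cone allows passing $\epsilon\to 0$ to extract an alternative optimum of the original problem with $\Sigma^\star_z\succeq\hat\Sigma_z$. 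The hardest step will be the order-reflection argument itself: converting the PSD-valued gradient inequality $\nabla g_z(\Sigma^\star_z)\succeq \nabla g_z(\hat\Sigma_z)$ into a Loewner inequality on the optimizer. This is precisely what \Cref{ass:M-gradients}(iii) is designed for; without such a hypothesis, positive-semidefiniteness of the gradient map does not in general imply Loewner-monotonicity of its argument, which is why verifying this property case-by-case (as done in \Cref{prop:assumption4_for_base_examples}) is essential for the framework to cover every specific ambiguity set.
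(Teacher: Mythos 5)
Your core argument is the same as the paper's: apply KKT to the covariance maximization, use $\nabla g_z(\hat\Sigma_z)=0$ (from Assumption~\ref{ass:M-gradients}-(ii) and convexity) to get $\gradgz(\Sigma_z\opt)\succeq 0=\gradgz(\hat\Sigma_z)$, and invoke the order-reflection property (iii) to conclude $\Sigma_z\opt\succeq\hat\Sigma_z$. The differences are in the setup, and two of them create small but genuine gaps. First, you form the concave value function $V=\min_U(\cdot)$ and invoke Danskin with a ``unique minimizer,'' but strict convexity of the inner problem in $U$ requires $\Sigma_v\succ 0$ (the quadratic form is $\Tr(U^\top\bar R\,U\,\Sigma_v)+\cdots$), which is not guaranteed a priori on $\mathcal{M}_{\Sigma_v}$ --- indeed positive definiteness of $\Sigma_v\opt$ is one of the payoffs of this very theorem. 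The paper sidesteps this entirely by fixing $U$ at its saddle-point value and maximizing the resulting \emph{linear} objective $\Tr(\Upsilon_w\Sigma_w)+\Tr(\Upsilon_v\Sigma_v)$, whose gradient is just the matrix $\Upsilon^{(z)}\succeq 0$; no envelope theorem is needed, and this also proves the stronger statement that nature's best response to \emph{any} fixed linear policy inflates the covariance. (Your argument is repairable by replacing the gradient of $V$ with a supergradient of the form $\Phi(U\opt)$.) Second, Assumption~\ref{ass:M-gradients} only guarantees differentiability of $g_z$ on $\mathbb{S}_{++}^{d_z}$, so writing $\nabla g_z(\Sigma_z\opt)$ presumes $\Sigma_z\opt\succ 0$; the paper handles this by first restricting the feasible set to $\Sigma_z\succeq\epsilon I$ with $0<\epsilon<\lambda_{\min}(\hat\Sigma_z)$ and then verifying a posteriori that this constraint is slack. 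Finally, your degenerate case $\lambda_z\opt=0$ and perturbation-plus-limit argument is workable (modulo perturbing all degenerate blocks simultaneously, since $V$ is not separable across $z$ even though the constraints are), but the paper's treatment is simpler: when the gradient block vanishes the objective is independent of $\Sigma_z$, so one may just take $\Sigma_z\opt=\hat\Sigma_z$; and when it does not vanish, $\lambda_z=0$ is directly contradictory, so no limiting argument is required.
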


\Cref{thm:optimal-covs-are-higher} offers a clear and powerful insight: when nature selects a zero-mean Gaussian distribution that maximizes the cost, its optimal strategy is to \emph{inflate} the nominal covariance matrix $\hat{\Sigma}$, so that the optimal covariance $\Sigma\opt$ dominates $\hat{\Sigma}$ in the Loewner (positive semidefinite) ordering. 
%\Cref{thm:optimal-covs-are-higher} captures an intuitive but important insight: in choosing (zero-mean, Gaussian) distributions that maximize the costs, nature's optimal choice entails suitably ``inflating" the nominal covariance matrix $\hat{\Sigma}$ by choosing a covariance matrix $\Sigma\opt$ that dominates it in the Loewner (or positive semidefinite) order. 
This result generalizes a familiar principle -- that increasing a random variable's variance creates ``more uncertainty" -- to the significantly more complex, dynamic environment provided by the LQG. This also suggests an important practical take-away: in LQG problems where model ambiguity is a concern, a simple yet effective rule of thumb is to inflate the covariance matrix of the noise, which will provide additional protection against uncertainty.

%%%%%%%%%%%%%%%%%
%
%  ALGORITHMS 
%
%%%%%%%%%%%%%%%%%
\section{Efficient Numerical Solution of DRLQ Problems}
\label{sec:DR-LQG-algorithm}
Our duality results in \Cref{theorem:lower-equal-upper} lead to immediate algorithms for computing optimal strategies: the optimal value in the DRLQ problem is the same as the optimal value in problems~\eqref{eq: distributionally robust control problem -- affine simplified min max} and~\eqref{eq:dr-affine-max-min}, and the latter problems are finite-dimensional, convex-concave problems with smooth objectives, which are amenable to saddle-point methods \citep{Juditsky_Nem_2022,schiele2024disciplined}. However, such approaches would fail to exploit the temporal structure of the original control problem and would generally result in large-dimensional optimization problems.

We next leverage our results from~\S\ref{sec:Nash} to develop a set of more efficient algorithms to solve the DRLQ problem via Kalman filtering and DP techniques. Our algorithms will rely on all structural results from~\S\ref{sec:Nash}, as formalized in the next result.
\begin{corollary}[Solving DRLQ via Kalman Filtering]
\label{corol:worst-case-Kalman-elliptical}
Under Assumptions~\ref{ass:Gaussian}-\ref{ass:M-gradients}, the solution to the DRLQ problem~\eqref{eq:DRLQG} can be computed using the Kalman filtering techniques in Appendix~\S\ref{sec: appx: optimal-solution-classic-LQG} applied to a classical LQG model with distribution $\PP\opt$. Specifically, the optimal control policy is $u_t\opt = K_t\hat{x}_t$ for every $t \in [T-1]$, where $K_t$ is the optimal state-feedback gain given by~\eqref{eq:feedback-gain-Kt} and $\hat{x}_t$ is obtained using the Kalman filter recursions~\eqref{eq:mmse} corresponding to distribution $\PP\opt$.
\end{corollary}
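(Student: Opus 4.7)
The plan is to assemble the structural results from \S\ref{sec:Nash} into a best-response argument: once we know what $\PP\opt$ looks like, the \dm's inner problem collapses to a classical LQG problem for which Kalman filtering and dynamic programming give a closed-form solution. Concretely, I would first invoke \Cref{theorem:lower-equal-upper} to obtain a Nash pair $(u\opt,\PP\opt)$ with $\EE_{\PP\opt}[J(u\opt)] = p\opt = d\opt$. The Nash property immediately implies that $u\opt$ is a best response to $\PP\opt$, i.e.\ $u\opt \in \argmin_{u \in \mathcal U_y} \EE_{\PP\opt}[J(u)]$. Hence it suffices to show that this inner minimization is a standard LQG problem to which the Kalman-filter/Riccati machinery of Appendix~\S\ref{sec: appx: optimal-solution-classic-LQG} applies.

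Next I would establish that $\PP\opt$ has the ``LQG-compatible'' structure. By \Cref{corollary:normal-distributions-are-optimal}, $\PP\opt$ may be taken Gaussian. Under Assumption~\ref{ass:setting_mean_zero_feasible}, \Cref{thm:worst-case-mean-zero} lets us take $\PP\opt$ with zero mean for every $z \in \mathcal Z$, and under Assumption~\ref{ass:M-gradients}, \Cref{thm:optimal-covs-are-higher} gives $\Sigma_z\opt \succeq \hat \Sigma_z$ for all $z\in\mathcal Z$. The uncorrelatedness constraint built into $\mathcal B$ is preserved by $\PP\opt \in \mathcal B$. Thus under $\PP\opt$, the tuple $\{x_0, w_0,\ldots, w_{T-1}, v_0,\ldots, v_{T-1}\}$ is jointly Gaussian, zero-mean, and mutually uncorrelated (hence mutually independent), which is precisely the noise structure assumed in the classical LQG model.

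The one technical point that needs care is the invertibility of the observation-noise covariances required by the Kalman recursions~\eqref{eq:mmse}. Here I would use the combination of Assumption~\ref{ass:Gaussian}, which gives $\hat\Sigma_{v_t}\succ 0$, together with \Cref{thm:optimal-covs-are-higher}, which yields $\Sigma_{v_t}\opt \succeq \hat\Sigma_{v_t}$; by transitivity of the Loewner order $\Sigma_{v_t}\opt \succ 0$ for every $t$, so the Kalman gain is well-defined at every step. With this in hand, applying the classical LQG result (Appendix~\S\ref{sec: appx: optimal-solution-classic-LQG}) to the inner minimization $\min_u \EE_{\PP\opt}[J(u)]$ yields the separation-principle form $u_t\opt = K_t \hat x_t$, with the gains $K_t$ obtained from the Riccati recursion~\eqref{eq:feedback-gain-Kt} and $\hat x_t = \EE_{\PP\opt}[x_t \mid y_0,\ldots,y_t]$ computed by the Kalman filter~\eqref{eq:mmse} driven by the covariances of $\PP\opt$.

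The main obstacle I anticipate is not conceptual but notational: verifying that the objects appearing in the appendix's LQG formulation are computed with respect to $\PP\opt$ and not $\hat\PP$ (for instance, the innovation covariance and filter gain depend on $\Sigma_z\opt$, while the feedback gain $K_t$ depends only on the cost and dynamics matrices and is therefore distribution-independent). Because $\PP\opt$ inherits exactly the structural properties (Gaussianity, zero mean, uncorrelatedness, positive-definite measurement noise) that make the classical LQG derivation go through, no new derivation is needed beyond pointing to the appendix; the corollary is in this sense a repackaging of the structural theorems of \S\ref{sec:Nash} into an algorithmic recipe.
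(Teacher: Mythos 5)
Your proposal is correct and follows essentially the same route as the paper: it assembles \Cref{theorem:lower-equal-upper}, \Cref{corollary:normal-distributions-are-optimal}, \Cref{thm:worst-case-mean-zero}, and \Cref{thm:optimal-covs-are-higher} into a best-response argument, and correctly identifies the one genuinely technical point -- that $\Sigma_{v_t}\opt \succeq \hat\Sigma_{v_t} \succ 0$ is what licenses the matrix inversions in the Kalman recursions -- which is exactly the point the paper emphasizes. No gaps.
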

This follows from the theorems in \S\ref{sec:Nash}, but it is worth emphasizing that \emph{all} those structural results are needed. Specifically, \Cref{thm:worst-case-mean-zero} implies that the optimal linear policy~$u\opt$ can be found by solving a \emph{classical} LQG problem corresponding to the (unknown, optimal) zero-mean Gaussian distribution~$\PP\opt$. However, to solve this problem with the Kalman filter recursions requires the covariance matrices of all noise terms $v_t$ under $\PP\opt$ to be positive \emph{definite}. Assumption~\ref{ass:Gaussian} only requires this for the nominal distribution, $\hat{\Sigma}_{v_t} \succ 0$, but it is \Cref{thm:optimal-covs-are-higher} that ensures that $\Sigma\opt_{v_t} \succeq \hat{\Sigma}_{v_t} \succ 0$.

We design an iterative algorithm to compute~$\PP\opt$. $\PP\opt$ is uniquely determined by the covariance matrices $\Sigma_w\opt \in \mathcal{M}_{\Sigma_w}$ and $\Sigma_v\opt \in \mathcal{M}_{\Sigma_v}$, chosen to maximize the objective in~\eqref{eq:dr-affine-max-min}. Moreover, we can replace $\mathcal{M}_{\Sigma_v}$ with $\mathcal{M}_{\Sigma_v}^+ = \{ \Sigma_{v_t} \in \mathcal M_{\Sigma_{v_t}} : \Sigma_{v_t} \succeq \lambda I \}$\footnote{For instance, $\lambda$ can be set as the minimum eigenvalue of $\hat{\Sigma}_v$.} for a small $\lambda > 0$, which is without loss of optimality due to~\Cref{thm:optimal-covs-are-higher}. 

We first reformulate~\eqref{eq:dr-affine-max-min} as
\begin{equation} 
    \max\limits_{\Sigma_w \in \mathcal{M}_{\Sigma_w}, \Sigma_v \in \mathcal{M}_{\Sigma_v}^+} f(\Sigma_w, \Sigma_v),
    \label{eq:dr-sdp-W-V}
\end{equation}
where~$f(\Sigma_w,\Sigma_v)$ denotes the optimal value of the classical LQG problem corresponding to the Gaussian distribution~$\PP$ with covariance matrices~$\Sigma_w$ and~$\Sigma_v$. We propose a Frank-Wolfe algorithm for solving problem~\eqref{eq:dr-sdp-W-V} (see \citealp{ref:frank1956algorithm, ref:levitin1966constrained}).  Because this requires certain smoothness properties for $f$~\citep{jaggi2013revisiting}, we first provide a structural result. 
\begin{proposition}
\label{prop:structure-of-f}
Under Assumptions~\ref{ass:Gaussian}-\ref{thm:optimal-covs-are-higher}, $f$ is concave and $\beta$-smooth on $\mathcal{M}_{\Sigma_w}\times \mathcal{M}_{\Sigma_v}^+$.
\end{proposition}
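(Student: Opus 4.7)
The plan is to use the saddle-point representation in~\eqref{eq:dr-affine-max-min} to write $f$ as the value function of a parametric convex quadratic program and to exploit uniform strong convexity of its inner minimization. Because $\hat{\mu}_z=0$ throughout, \Cref{thm:worst-case-mean-zero} implies that the optimal intercept $q$ and first moments in the inner minimization of~\eqref{eq:dr-affine-max-min} vanish, so $f$ admits the representation
\begin{equation*}
    f(\Sigma_w,\Sigma_v) \,=\, \min_{U\in\mathcal U}\; \phi(U;\Sigma_w,\Sigma_v),
\end{equation*}
where
\begin{equation*}
    \phi(U;\Sigma_w,\Sigma_v) \,=\, \Tr\!\bigl(\bigl((UD)^\top R UD + (G+HUD)^\top Q (G+HUD)\bigr)\Sigma_w\bigr) + \Tr\!\bigl(U^\top \bar R\, U\, \Sigma_v\bigr).
\end{equation*}
For every fixed $U\in\mathcal U$, $\phi(U;\cdot,\cdot)$ is linear in $(\Sigma_w,\Sigma_v)$, so $f$ is a pointwise infimum of linear functions and is therefore concave on $\mathcal{M}_{\Sigma_w}\times\mathcal{M}_{\Sigma_v}^+$.

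For smoothness, the key observation is that the inner problem is a strongly convex quadratic in $U$ whose strong convexity modulus is uniform over the feasible set. The Hessian of $\phi(\cdot;\Sigma_w,\Sigma_v)$ with respect to $\operatorname{vec}(U)$ contains the summand $2(\Sigma_v\otimes \bar R)$ arising from $\Tr(U^\top \bar R U \Sigma_v)$. Because $R\succ 0$ yields $\bar R=R+H^\top QH\succ 0$, and because every $\Sigma_v\in\mathcal{M}_{\Sigma_v}^+$ satisfies $\Sigma_v\succeq \lambda I$, this Hessian is lower-bounded uniformly by $2\lambda\,\sigma_{\min}(\bar R)>0$ on the compact feasible set; the restriction to the subspace of block lower triangular matrices $\mathcal U$ preserves the bound since $\mathcal U$ is a coordinate subspace. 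The minimizer $U^\star(\Sigma_w,\Sigma_v)$ is therefore unique, and Danskin's envelope theorem yields the closed-form gradients
\begin{equation*}
\nabla_{\Sigma_w} f = (U^\star D)^\top R\, U^\star D + (G+H U^\star D)^\top Q\,(G+H U^\star D),\qquad
\nabla_{\Sigma_v} f = (U^\star)^\top \bar R\, U^\star.
\end{equation*}
The first-order optimality condition for the inner problem then takes the form of a linear system $\mathcal H(\Sigma_w,\Sigma_v)\operatorname{vec}(U^\star)=b(\Sigma_w)$, in which $\mathcal H$ and $b$ are affine in $(\Sigma_w,\Sigma_v)$ and $\mathcal H$ is uniformly positive definite. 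Standard perturbation bounds (or the implicit function theorem) then yield Lipschitz continuity of $U^\star$ on the compact set $\mathcal{M}_{\Sigma_w}\times\mathcal{M}_{\Sigma_v}^+$, which propagates through the quadratic maps above to give Lipschitz continuity of $\nabla f$, hence $\beta$-smoothness for some finite $\beta$.

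The main obstacle I expect is establishing the uniform coercivity of the inner problem after restricting $U$ to the block lower triangular subspace $\mathcal U$: although $2(\Sigma_v\otimes\bar R)$ is positive definite in the ambient matrix space, one needs a coercivity modulus that is both independent of $(\Sigma_w,\Sigma_v)$ and preserved under restriction to $\mathcal U$. This is precisely where the truncation $\mathcal{M}_{\Sigma_v}^+$, which guarantees $\Sigma_v\succeq \lambda I$ for a fixed $\lambda>0$, is essential; the restriction step itself is unproblematic because $\mathcal U$ is a coordinate subspace defined by zeroing certain blocks of $U$, so the minimum eigenvalue of the restricted Hessian is bounded below by the same modulus. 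The role of \Cref{thm:optimal-covs-are-higher} is implicit here: it certifies that replacing $\mathcal{M}_{\Sigma_v}$ with $\mathcal{M}_{\Sigma_v}^+$ is without loss of optimality in the Frank-Wolfe iteration, so the constant $\beta$ obtained by this argument is the relevant smoothness modulus for the algorithm.
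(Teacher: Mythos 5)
Your proof is correct, and the concavity half is the same one-line argument the paper uses (pointwise infimum of functions linear in $(\Sigma_w,\Sigma_v)$). The smoothness half, however, takes a genuinely different route. The paper invokes \Cref{corol:worst-case-Kalman-elliptical} to write $f$ via the Kalman-filter/DP recursion~\eqref{eq:kalman-cov-updates} and the cost formula~\eqref{eq:lqg-cost}: since every $\Sigma_v\in\mathcal{M}_{\Sigma_v}^+$ is strictly positive definite, all matrix inverses in the recursion are well defined, $f$ is a composition of proper rational functions and hence $C^\infty$ on a neighborhood of the compact feasible set, and $\beta$ is then simply taken to be the (finite, attained) supremum of $\Vert\nabla^2 f\Vert_2$ via Weierstrass. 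You instead stay entirely inside the convex-duality picture of~\eqref{eq:dr-affine-max-min}: uniform strong convexity of the inner quadratic program in $U$ (coming from $\Sigma_v\succeq\lambda I$ and $\bar R\succ 0$, and correctly preserved under restriction to the subspace $\mathcal U$) gives a unique minimizer $U^\star(\Sigma_w,\Sigma_v)$ that is Lipschitz by perturbation of the uniformly well-conditioned linear first-order system, and the envelope theorem then yields explicit gradient formulas whose Lipschitz continuity follows by composition on a compact set. Both arguments hinge on exactly the same two ingredients — the lower bound $\Sigma_v\succeq\lambda I$ on $\mathcal{M}_{\Sigma_v}^+$ and compactness of the moment sets — but they buy different things: the paper's route is shorter given that \Cref{corol:worst-case-Kalman-elliptical} is already in hand and gives infinite differentiability for free, while yours is self-contained (no Kalman machinery), produces the analytic expressions for $\nabla_{\Sigma_w}f$ and $\nabla_{\Sigma_v}f$ that the paper otherwise obtains only by automatic differentiation, and in principle yields an explicit $\beta$ in terms of $\lambda$, $\sigma_{\min}(\bar R)$, and the norms of the stacked system matrices rather than an abstract supremum. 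Two cosmetic points: the representation $f=\min_{U\in\mathcal U}\phi$ follows directly from the definition of $f$ as the zero-mean classical LQG value together with \Cref{prop:dual-sdp}, so the appeal to \Cref{thm:worst-case-mean-zero} is a slight detour; and since $\mathcal U$ is an unbounded subspace you should note (as your coercivity bound already implies) that the minimizers are uniformly bounded over the compact parameter set before applying the envelope theorem.
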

For a proof, see Appendix~\S\ref{sec:app:beta_smoothness}. This allows using a Frank-Wolfe algorithm to solve problem~\eqref{eq:dr-sdp-W-V}. Each iteration of this algorithm solves a direction-finding subproblem, that is, a variant of problem~\eqref{eq:dr-sdp-W-V} that maximizes the first-order Taylor expansion of~$f$ around the current iterates $(\Sigma^{(k)}_w,\Sigma^{(k)}_v)$:
\begin{equation} 
    \max\limits_{\Sigma_{w} \in \mathcal{M}_{\Sigma_w}, \Sigma_{v} \in \mathcal{M}^+_{\Sigma_v}} \bigl\langle \nabla_{\Sigma_w} f(\Sigma^{(k)}_w, \Sigma^{(k)}_v), \Sigma_{w} - \Sigma^{(k)}_w \bigr\rangle + \bigl \langle \nabla_{\Sigma_v} f(\Sigma^{(k)}_w, \Sigma^{(k)}_v), \Sigma_{v}- \Sigma^{(k)}_v \bigr \rangle.
    \label{eq:auxiliary-problem}
\end{equation}
The next iterates are obtained by moving towards a maximizer $(\Sigma\opt_{w}, \Sigma\opt_{v})$ of~\eqref{eq:auxiliary-problem}, i.e., we update
\[
    (\Sigma^{(k+1)}_w,\Sigma^{(k+1)}_v) \leftarrow (\Sigma^{(k)}_w,\Sigma^{(k)}_v)+\alpha \cdot (\Sigma\opt_{w}-\Sigma^{(k)}_w, \Sigma\opt_{v}-\Sigma^{(k)}_v),
\]
where~$\alpha$ is an appropriate step size. The proposed Frank-Wolfe algorithm enjoys a very low per-iteration complexity because problem~\eqref{eq:auxiliary-problem} is separable. To see this, we reformulate~\eqref{eq:auxiliary-problem} as:
\begin{equation} 
    \label{eq:linearization-oracle}
    \begin{array}{c@{}l@{}l}
    \displaystyle \max_{\{\Sigma_{z}\}_{z \in \mathcal{Z}}}~ & \displaystyle \sum_{z \in \mathcal{Z}} \bigl \langle \nabla_{\Sigma_{z}}f\bigl(\Sigma^{(k)}_w, \Sigma^{(k)}_v \bigr), \Sigma_{z} - \Sigma^{(k)}_{z} \bigr \rangle\\
    %& \Sigma_z \succeq 0 ~ \forall z \in \mathcal{Z} \\
     \st  & \Sigma_{z} \in \mathcal{M}_{\Sigma_z} ~\forall z \in \mathcal{Z} \\
     & \Sigma_z \in \mathcal{M}_{\Sigma_z}^+ ~ \forall z \in \{v_0, \ldots, v_{T-1}\}.
    \end{array}
\end{equation}
Hence, \eqref{eq:auxiliary-problem} decomposes into $|\mathcal{Z}| = 2T+1$ separate subproblems that can be solved in parallel. 

\begin{remark}
    \label{rem:efficient_computation_wasserstein_KL}
    Although the subproblems above can be reformulated as tractable SDPs that are amenable to off-the-shelf solvers, for specific divergences $\mathds{D}$ one may be able to further simplify this computation. For instance, for the Wasserstein and KL ambiguity sets from \S\ref{sec:examples} (corresponding to divergences $\mathds{W}$ and $\mathds{K}$, respectively), the optimization problem in~\eqref{eq:linearization-oracle} for a given $z \in \mathcal{Z}$ can be reduced to solving a univariate algebraic equation, which can be done to any desired accuracy~$\delta>0$ by an efficient bisection algorithm. Appendix~\S\ref{sec:bisection} provides details for this construction and a proof of all relevant results, that leverage existing results in the literature.
\end{remark}

\begin{remark}[Automatic differentiation]
    \label{rem:differentiation}
    Recall that~$f(\Sigma_w,\Sigma_v)$ is the optimal value of the LQG problem corresponding to the Gaussian distribution~$\PP$ with the covariance matrices~$\Sigma_w$ and~$\Sigma_v$. By using the underlying dynamic programming equations, $f(\Sigma_w,\Sigma_v)$ can thus be expressed in closed form as a serial composition of $\mathcal O(T)$ rational functions (see Appendix~\S\ref{sec: appx: optimal-solution-classic-LQG} for details). Hence, $\nabla_{\Sigma_z} f(\Sigma_w,\Sigma_v)$ can be calculated symbolically for any $z\in\mathcal Z$ by repeatedly applying the chain and product rules. However, the resulting formulas are lengthy and cumbersome.
    We thus compute the gradients numerically using backpropagation. The cost of evaluating $\nabla_{\Sigma_z} f(\Sigma_w,\Sigma_v)$ is then of the same order of magnitude as the cost of evaluating~$f(\Sigma_w,\Sigma_v)$.
\end{remark}
A detailed description of the proposed Frank-Wolfe method is given in~\Cref{alg:FW} below.

\begin{algorithm}[!h]
\caption{Frank-Wolfe algorithm for solving~\eqref{eq:dr-sdp-W-V}}
\begin{algorithmic}[1]
\State \textbf{Input:} initial iterates $\Sigma^{(0)}_w$, $\Sigma^{(0)}_v$, nominal covariance matrices~$\hat \Sigma_w$, $\hat \Sigma_v$, oracle precision~$\delta \in (0, 1)$
\State{{set}~initial iteration counter~$k =0$}
\While{stopping criterion is not met}
\State{\textbf{for}~$z\in\mathcal Z$~\textbf{do in parallel}} 
\State{\qquad compute $\nabla_{\Sigma_z} f(\Sigma^{(k)}_w, \Sigma^{(k)}_v)$}
\State{\qquad compute {$\Sigma_z\opt$} that solves~\eqref{eq:linearization-oracle} to precision $\delta$} \label{alg:line:line-search}
\State{\textbf{end}}
% \State$g\leftarrow \langle \nabla_Wf(W, V), L\opt_W - W\rangle + \langle \nabla_V f(W, V), L\opt_V-V\rangle$
\State $(\Sigma^{(k+1)}_w, \Sigma^{(k+1)}_v) \leftarrow (\Sigma^{(k)}_w, \Sigma^{(k)}_v)+ 2/(2+k) \cdot (\Sigma_w\opt-\Sigma_w, \Sigma_v\opt-\Sigma_v)$
\State $k \gets k + 1$
\EndWhile\\
\textbf{Output}: $\Sigma^{(k)}_w$ and $\Sigma^{(k)}_v$
\end{algorithmic}
\label{alg:FW}
\end{algorithm}
By Theorem~1 and Lemma~7 in \citet{jaggi2013revisiting}, which apply in view of \Cref{prop:structure-of-f}, \Cref{alg:FW} attains a suboptimality gap of $\epsilon$ within $\mathcal{O}(1/\epsilon)$ iterations. Its precise computational complexity is critically dependent on the tractability of \Cref{alg:line:line-search} (this is very efficient for Wasserstein and KL ambiguity, by \Cref{rem:efficient_computation_wasserstein_KL}).

\section{Numerical Experiments}
\label{sec:numerics}
We conduct numerical experiments to assess the merits of the DRLQ model and the effectiveness of the algorithms introduced in \S\ref{sec:DR-LQG-algorithm}. We consider a class of dynamical systems with $n = m = p = d$ with $d\in \mathbb N_{+}$. We set ~$A_t=A$ where $A$ has $0.1$ on the main diagonal and the super-diagonal and zeroes elsewhere ($A_{i,j} = 0.1$ if $i=j$ or $i=j-1$ and $A_{i,j}=0$ otherwise), 
and the other matrices to $B_t= C_t= Q_t = R_t = I_d$. The nominal covariance matrices $\hat{\Sigma}_z$ are constructed randomly and with eigenvalues in the interval $[1,2]$ (to ensure they are positive definite).
We construct ambiguity sets based on either Wasserstein or KL divergence.

All experiments were conducted on an Apple M3 Max machine equipped with 64 GB of RAM. All linear SDP problems were formulated in Python (v3.8.6) using CVXPY (v1.6.1)~\citep{ref:agrawal2018rewriting, ref:diamond2016cvxpy} and solved with MOSEK~\citep{ref:mosek} (v11.0.8). 
Additionally, the gradients of $f(\Sigma_w, \Sigma_v)$ were computed using Pymanopt (v2.2.1)~\citep{ref:townsend2016pymanopt} in combination with PyTorch's automatic differentiation module (v2.6.0)~\citep{ref:paszke2017automatic, ref:NEURIPS2019_bdbca288}. 
The code is publicly available in the Github repository~{\url{https://github.com/BaharTaskesen/DRLQG}}.

\subsection{Computational Efficiency of Algorithm \ref{alg:FW}}
We compare two approaches for finding the optimal value of the DRLQ problem~\eqref{eq:DRLQG}: directly solving the SDP reformulation of~\eqref{eq:dr-affine-max-min} with MOSEK and using our custom Frank-Wolfe method discussed in \Cref{alg:FW}. For these experiments, we set $d=10$ and the corresponding radii of the ambiguity sets as $\rho_{x_0} = \rho_{w_t} = \rho_{v_t} = 10^{-1}$. We compare the two approaches in 10 problem instances (generated randomly and independently) and we compare performance as a function of the problem horizon $T$, which we vary. We set a stopping criterion corresponding to an optimality gap below~$10^{-3}$ and we run the Frank-Wolfe method with~$\delta = 0.95$. 

\Cref{fig:res-wass} and \Cref{fig:res-kl}, which correspond to the Wasserstein and KL ambiguity sets, respectively, depict the results. In both figures, the left panel illustrates the execution time for both approaches as a function of the planning horizon $T$ (omitting runs where MOSEK exceeds $100$s) and the right panel visualizes the empirical convergence behavior of the Frank-Wolfe algorithm. The results highlight that the Frank-Wolfe algorithm achieves running times that are uniformly lower than MOSEK across all problem horizons and is able to find highly accurate solutions already after a small number of iterations (50 iterations for problem instances with horizon~$T=10$).

\begin{figure}
    \centering
    \includegraphics[width=0.49\linewidth]{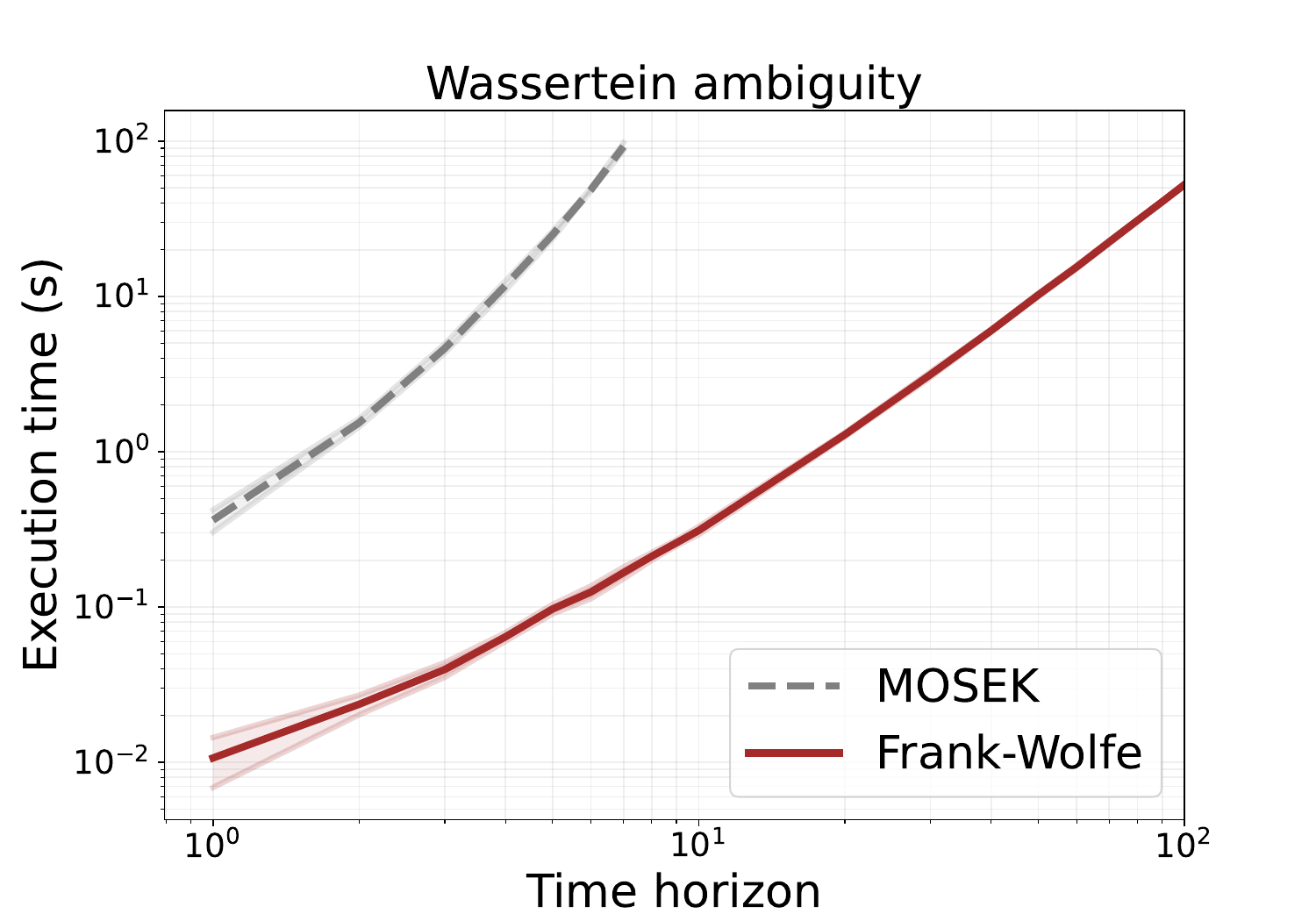} 
    \includegraphics[width=0.49\linewidth]{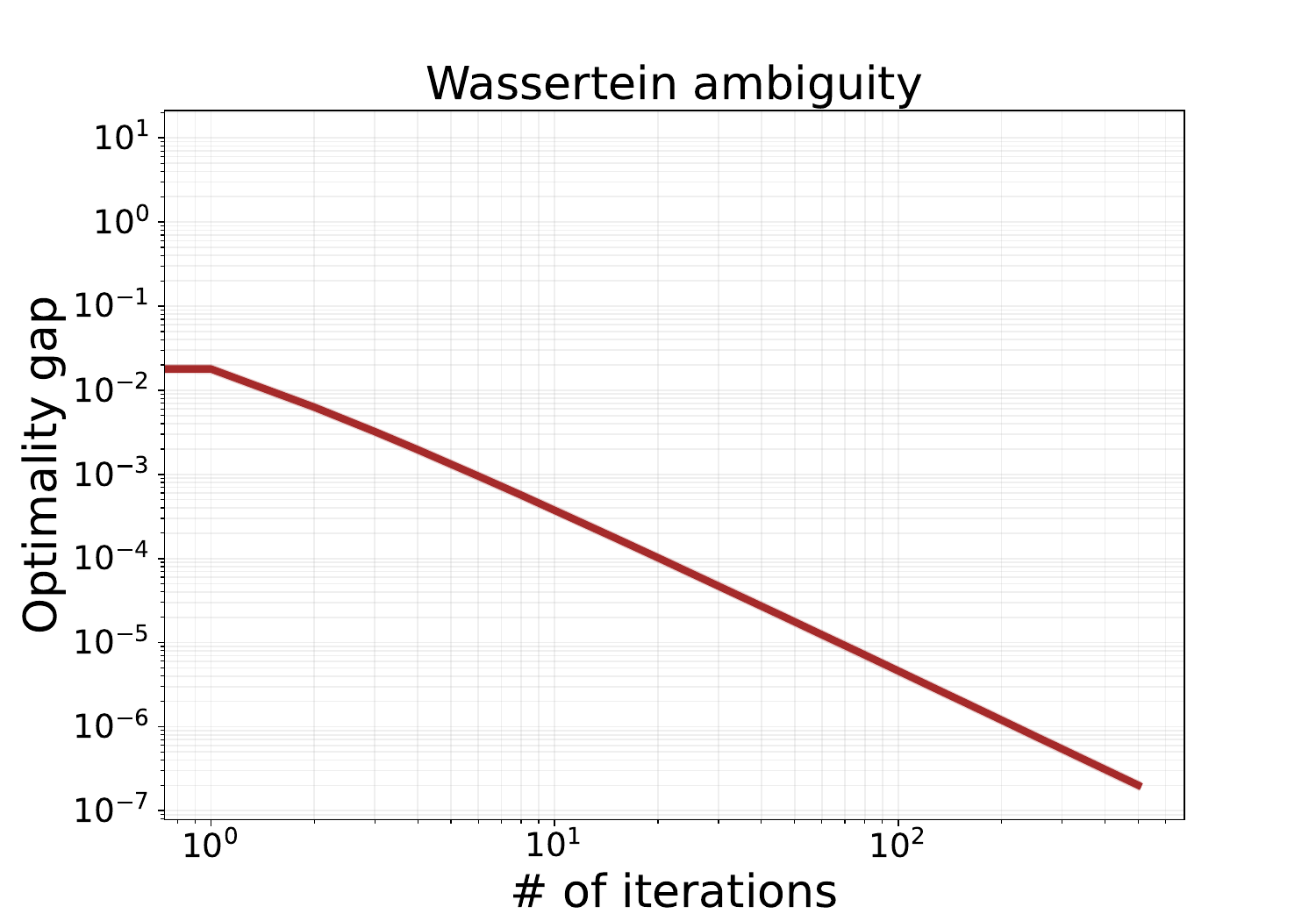}
\caption{Computation time (Left) and optimality gap (Right) of the Frank-Wolfe algorithm for Wasserstein ambiguity.}
\label{fig:res-wass}
\end{figure}

\begin{figure}
    \centering
    \includegraphics[width=0.49\linewidth]{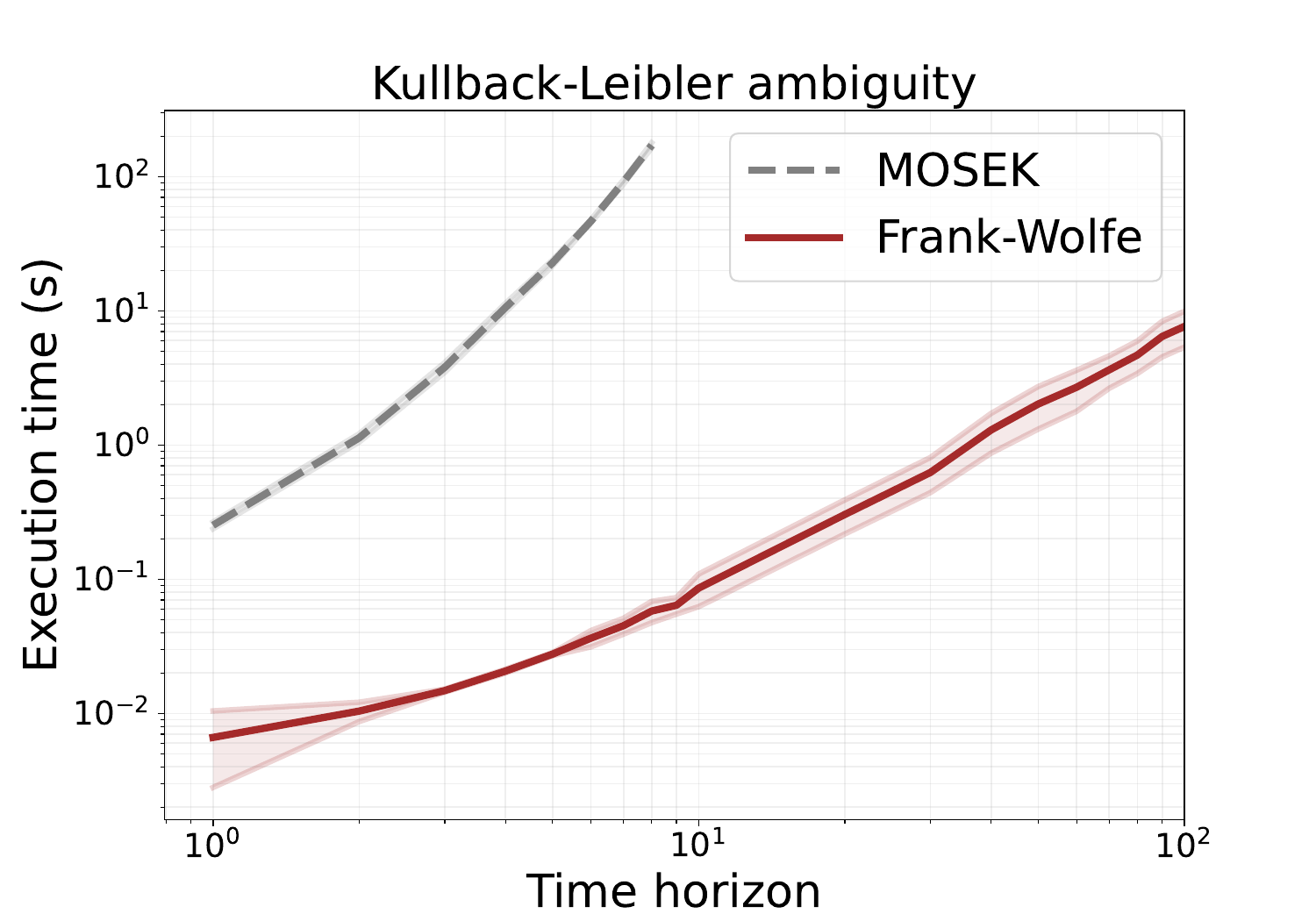}
    \includegraphics[width=0.49\linewidth]{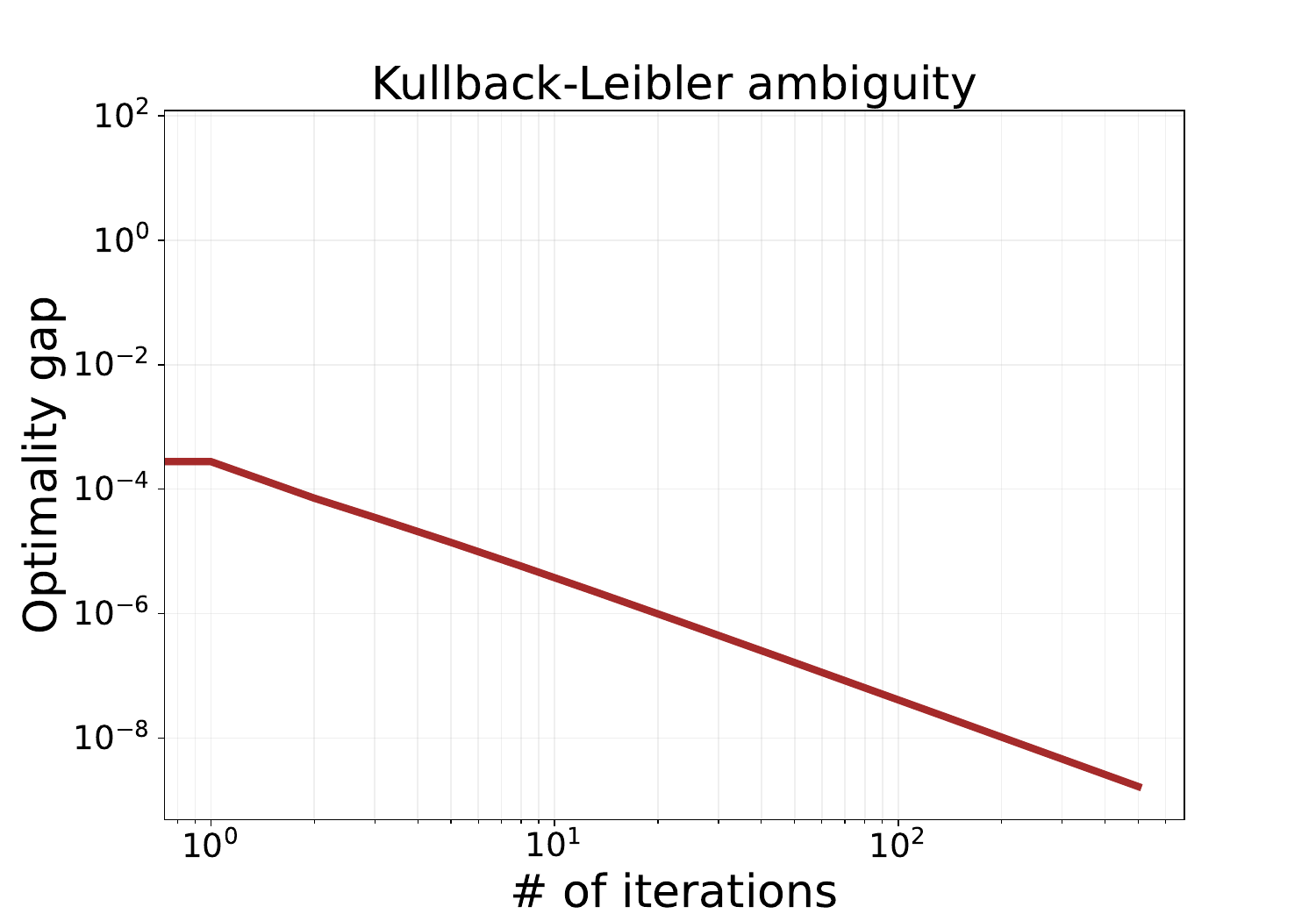}
    \caption{Computation time (Left) and optimality gap (Right) of the Frank-Wolfe algorithm for Kullback-Leibler ambiguity.}
\label{fig:res-kl}
\end{figure}

\subsection{Worst-case Performance}
We next evaluate the benefits of the robust approach. Let $u\opt$ denote the \emph{robustly optimal} policy, i.e., the optimal policy in the DRLQ model, obtained by solving problem~\eqref{eq:DRLQG}, and let $\hat{u}$ denote the \emph{nominally optimal} policy, i.e., the policy that minimizes the expected cost under the nominal distribution, $\EE_{\hat{\PP}}[J(u)]$. To gauge the robustness and conservativeness of the policies $u\opt$ and $\hat{u}$, we compare them under the nominal distribution and under their respective worst-case distributions. Specifically, letting $\PP\opt(u)$ denote the adversary's optimal choice of distribution $\PP$ corresponding to a control policy $u$ and letting $J(u)$ denote the dependency of the cost on the control policy $u$, we calculate the following two performance gaps:
\begin{align*}
    \textup{Worst-case-gap} &= \EE_{\PP\opt(\hat{u})}[J(\hat{u})] - \EE_{\PP\opt(u\opt)}[J(u \opt)], ~~ & 
    \textup{Nominal-gap} &= \EE_{\hat{\PP}}[J(u\opt)] - \EE_{\hat{\PP}}[J(\hat{u})].   
\end{align*}
In our experiments, we set $d=2$, $T=2$, $\rho_{x_0} = \rho_{w_t} = \rho_{v_t} = \rho$, and we vary the common radius $\rho$ from 0 to 10, calculating the ``Worst-case-gap" and ``Nominal-gap" for each value of $\rho$ in 10 independently generated random problem instances. The results are depicted in \Cref{fig:exp-costs}, with the left panel corresponding to the Wasserstein ambiguity set and the right panel corresponding to the KL ambiguity set.  The results indicate that using the optimal policy from the DRLQ model, $u\opt$, leads to dramatically lower worst-case costs, particularly as the ambiguity radius $\rho$ increases. Surprisingly, this improvement does not impact performance in the nominal scenario, where using $u\opt$ does not substantially increase costs relative to using the nominally optimal policy $\hat{u}$.
\begin{figure}
    \centering
    \includegraphics[width=0.45\linewidth]{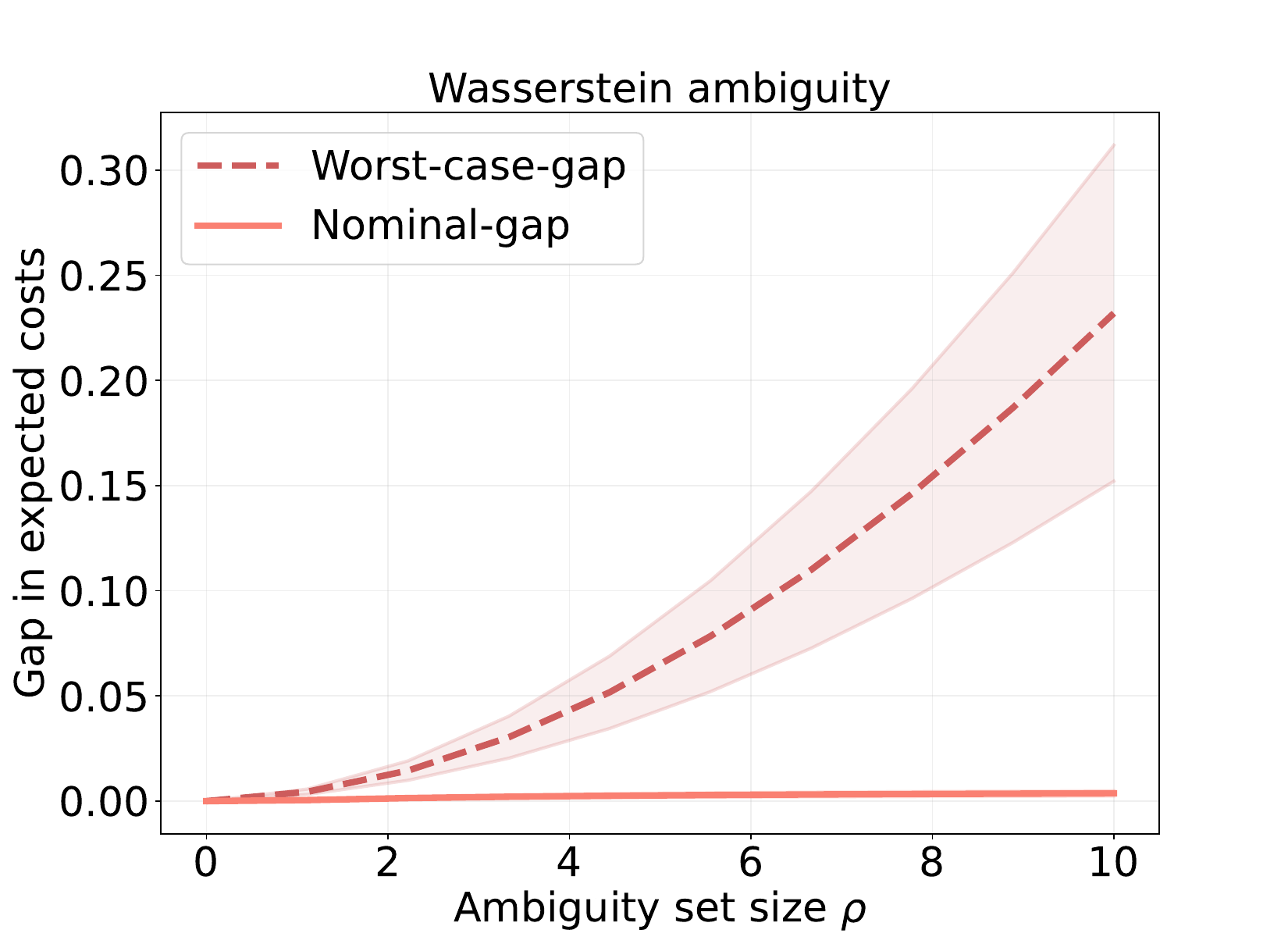}
    \includegraphics[width=0.45\linewidth]{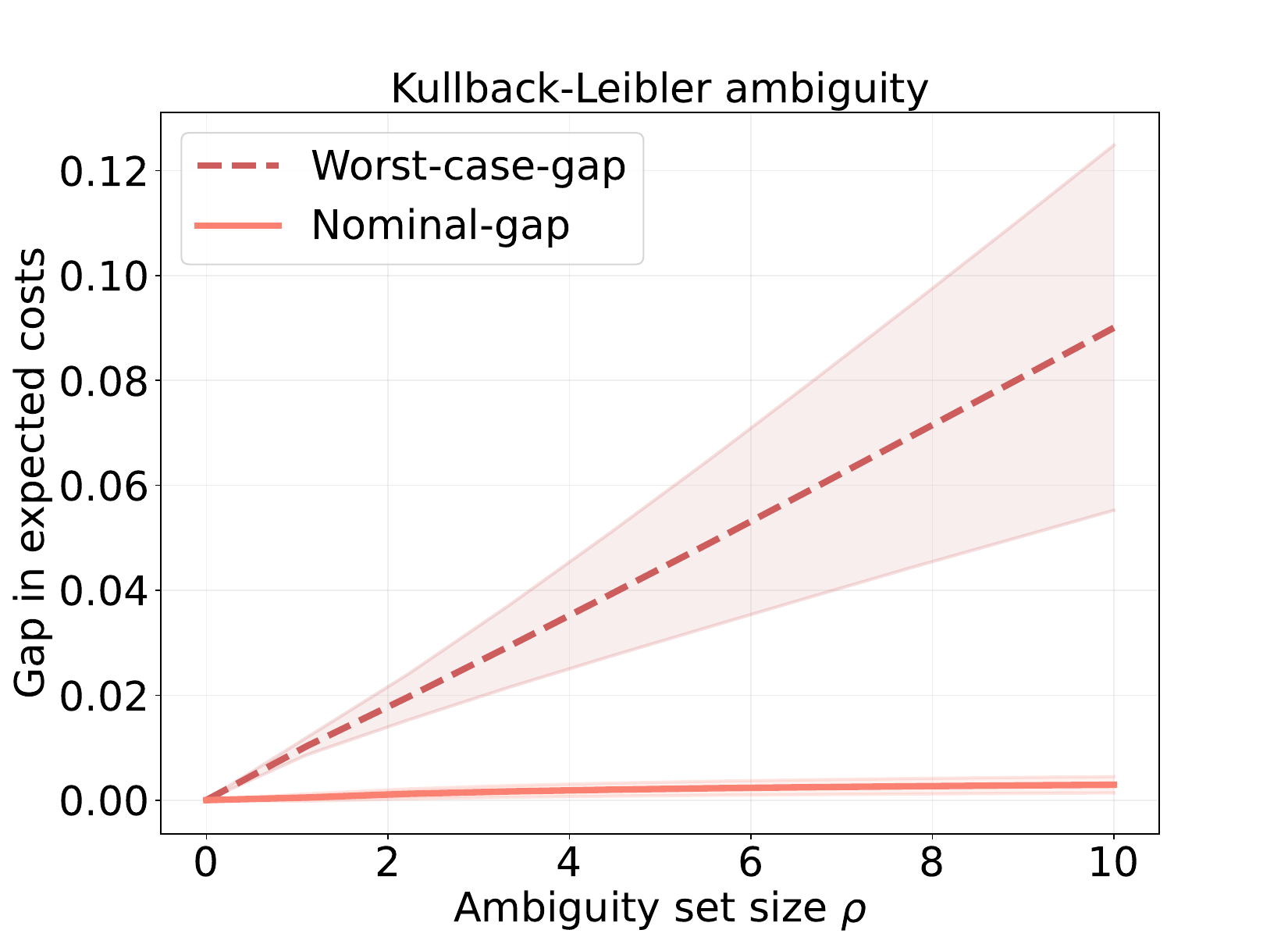}
    \caption{The difference in worst-case expected costs when using the nominally optimal policy $\hat{u}$ instead of the robustly optimal policy $u\opt$ (``Worst-case-gap") and the difference in expected costs under the nominal distribution $\hat{\PP}$ when using the robustly optimal policy $u\opt$ instead of the nominally optimal policy $\hat{u}$ (``Nominal-gap"), as a function of the radius of the ambiguity sets $\rho$, for Wasserstein ambiguity (Left) and KL ambiguity (Right). The solid line is the mean and the confidence bands correspond to one standard deviation, estimated from 10 independent runs.}
    \label{fig:exp-costs}
\end{figure}
\section{Infinite-Horizon DRLQ Problems}
\label{sec:infinite-horizon}
% {\color{Maroon} BT: We need this condition: $\Sigma_w, \Sigma_v \succ 0$ for all the covariances in our ambiguity set in this section. Not only $\Sigma_v$ as in the finite horizon case. }
We now extend the results of \S\ref{sec:Nash} to infinite-horizon control problems with an average cost criterion. Throughout this section, we restrict attention to linear \emph{time-invariant} systems of the form \eqref{eq:dynamics} and \eqref{eq:observation} where~$T=\infty$ and $A_t=A_0$, $B_t=B_0$ and $C_t=C_0$ for all $t\in\mathbb N$. All random variables emerging in our model are functions of the initial state $x_0$ and the noise terms $\{w_t\}_{t=0}^\infty$ and $\{v_t\}_{t=0}^\infty$. Therefore, we set the sample space to $\Omega =\R^n\times (\times_{t=0}^\infty (\R^n\times \R^p))$ and equip it with its product Borel $\sigma$-algebra~$\mathcal F$. In analogy to the finite-horizon theory, we define $x=(x_t)_{t=0}^\infty$, $u=(u_t)_{t=0}^\infty$, $y=(y_t)_{t=0}^\infty$, $w=(x_0, (w_t)_{t=0}^\infty)$ and $v=(v_t)_{t=0}^\infty$ as well as infinite-dimensional block matrices $H$, $G$ and~$C$ (whose definitions mirror those for the finite horizon and are omitted for brevity). With these conventions, the input, state and output processes are subject to the usual system equations $x=Hu+Gw$ and $y=Cx+v$. As before, we denote by $\mathcal{U}_y$ the set of control inputs $u$ such that $u_t = \varphi_t(y_0,\ldots,y_t)$ for every~$t \in \mathbb N$, where~$\varphi_t:\R^{p(t+1)}\rightarrow\R^m$ is a measurable control policy. To describe the distribution of the exogenous uncertainties, it is again convenient to set $\mathcal Z=\{x_0,w_0, v_0, w_1,v_1,  \ldots \}$. We then define~$\mathcal{B}^\infty$ exactly as the ambiguity set~$\mathcal{B}$ from \S\ref{sec:problem-definition}.

For costs, we take $Q_t=Q_0$ and $R_t=R_0$ for all $t\in\mathbb N$, where $Q_0 \in \mathbb S_+^n$ and $R_0 \in \mathbb S_{++}^m$. We study an infinite-horizon DRLQ problem that minimizes the worst-case, long-run average cost: 
\[
    J_\PP(x,u) = \limsup\limits_{T\to \infty} \frac{1}{T} \sum\limits_{t=0}^{T-1} \EE_{\PP}\left[ x_t^\top Q_0 x_t + u_t^\top R_0 u_t \right],
\]
where the worst case is taken over all~$\PP\in \mathcal{B}^\infty$. 

%
%  All assumptions for the inf-horizon, in one place as opposed to sprinkled throughout
%  This also allows us to define all the relevant sets of moments and say that they equal each other
%
\subsection{Assumptions}
\label{sec:assumptions_inf_horizon}
To ensure that the problem is well posed under an infinite-horizon setting and average-cost criterion, we must slightly strengthen the assumptions in~\S\ref{subsec:assumptions} and make a few additional mild assumptions. 

First, we mirror the classical infinite-horizon LQG setting by stating an assumption on system and cost matrices. Recall that we already focus on time-invariant systems and costs. To state our additional requirement, we recall the following definition of a Schur-stable matrix.
%
%  Schur-stability
%
\begin{definition}
A square matrix $A \in \R^{n\times n}$ is said to be \emph{Schur} stable if its eigenvalues are strictly within the unit circle in $\mathbb C$. 
\end{definition}
If $A$ is a Schur stable matrix, then $A^t\to 0 $ as $t\to \infty$ and the system defined through the dynamics $z_{t+1} = A z_t$ for $t\in \mathbb N_{+}$ is stable, that is, $z_t \to 0$ as $t \to \infty$ \cite[Page 135]{Bertsekas_2017}. 

Subsequently, we impose the following assumption on the system and cost matrices.
%
%  Stabilizability, detectability, etc.
%
\begin{assumption} 
\label{ass:stabilizability-detectibility}
The matrices $A_0, B_0, C_0$ and $Q_0$ satisfy the following properties.
    \begin{enumerate}[label=(\roman*)]
        \item $(A_0,B_0)$ is stabilizable, i.e., there exists $K \in \R^{m\times n}$ such that $A_0 + B_0 K$ is Schur stable.
        \item $(A_0,C_0)$ is detectable, i.e., there exists $L \in \R^{n\times p}$ such that $A_0 - LC_0$ is Schur stable. 
        \item $Q_0 \succ 0$.
    \end{enumerate}
    % The LQG system is stabilizable and detectable. 
\end{assumption}
These assumptions are standard in the context of infinite-horizon control of linear dynamical systems (see \citealp{ref:lancaster1995algebraic,Bertsekas_2017}, and our brief review in \S\ref{sec:classical-lqg-inf-horizon}). \emph{Stabilizability} guarantees the existence of a stationary state-feedback control policy that makes the states of a noise-free system converge and allows deriving an optimal state-feedback control policy of the form $u_t\opt = K \hat{x}_t$, where $\hat{x}_t$ denotes the MMSE state estimator (see \S\ref{sec:classical-lqg-inf-horizon}).  However, even if~$u_t$ stabilizes the noise-free system, the states under linear (purified) output feedback could diverge for large~$t$ without the \emph{detectability} assumption and without a strictly positive definite cost matrix $Q_0$.\footnote{This suggests that the stability properties of a system are more difficult to analyze when the control policy is parametrized in terms of the purified outputs. In contrast, the convexity properties of the system are more difficult to analyze when policies are parametrized in terms of the original outputs.}

We also strengthen slightly our assumptions concerning the nominal distribution $\hat{\PP}$ by requiring this to be zero-mean Gaussian and also time-invariant.
\begin{definition}
    We call a probability distribution $\PP\in\mathcal{B}^\infty$ {\em time-invariant} if there exist $\Sigma_w\in\mathbb S_+^{n}$ and $\Sigma_v\in\mathbb S_+^{p}$ such that $\EE_\PP[x_0 x_0^\top] =\Sigma_w$ and $\EE_\PP[w_t w_t^\top] =\Sigma_w$ and $\EE_\PP[v_t v_t^\top] =\Sigma_v$ for all $t\in\mathbb N$. 
\end{definition}
Subsequently, we use $\mathcal{B}^\infty_\mathcal{N}$ to denote the subset of all time-invariant Gaussian distributions in~$\mathcal{B}^\infty$. Throughout this section, we therefore replace Assumption~\ref{ass:Gaussian} with the following assumption.
\begin{assumption}
\label{ass:nominal_inf_horizon}
The nominal distribution $\hat{\PP}$ and the ambiguity set $\mathcal{B}^\infty$ satisfy the requirements:\\
(i) $\hat{\PP}$ is a time-invariant Gaussian distribution, $\hat{\PP} \in \mathcal{B}^\infty_\mathcal{N}$, with $\hat{\mu}_z = 0$ and $\hat{\Sigma}_{z} \succ 0$ for all $z \in \mathcal{Z}$;\\
(ii) $\rho_z = \rho_w$ for all $z \in \{x_0\} \cup \{w_t\}_{t=0}^\infty$ and $\rho_z = \rho_v$ for all $z \in \{v_s\}_{t=0}^\infty$;\\
(iii) all distributions in $\mathcal{B}^\infty$ have zero mean, $\EE_{\PP_z}[z]=0$ for every $z\in\mathcal Z$ and every $\PP \in \mathcal{B}^\infty$.
\end{assumption}
The requirements in~(i) concerning the nominal distribution are standard in infinite-horizon LQG control problems \citep{ref:lancaster1995algebraic,Bertsekas_2017}. Under a time-invariant, Gaussian noise distribution $\hat{\PP}$, these requirements are only slightly stronger than those in Assumption~\ref{ass:Gaussian}, by asking that the covariance matrices for the state noise be positive definite, $\hat{\Sigma}_{x_0} \succ 0$ and $\hat{\Sigma}_{w_t} \succ 0$ for all $t \in \mathbb{N}$. Requirement~(ii) is also aligned with time-invariance by asking that the corresponding ambiguity sets have the same radius. Lastly, the restriction to zero-mean distributions in~(iii) simplifies exposition and is driven by our results in \S\ref{sec:sec3_optimality_linear_zero_mean}; this requirement can be relaxed and arguments mirroring those in \S\ref{sec:sec3_optimality_linear_zero_mean} can be used to prove that nature will optimally choose zero-mean distributions, but we omit details for brevity and instead simply state this as a requirement. 

Throughout this section, Assumption~\ref{ass:general_assumption_about_M_function} remains unchanged. Note that in view of~Assumption~\ref{ass:nominal_inf_horizon}, the sets of moments defined in Assumption~\ref{ass:general_assumption_about_M_function} only involve the covariance matrices $\Sigma_z$ (as in \S\ref{sec:sec3_worst_case_covariance} and \S\ref{sec:DR-LQG-algorithm}) and are also time-invariant, so we define the following simpler notation:
\begin{align}   
    \begin{aligned}
        \mathcal{M}_{\Sigma_w} &= \{\Sigma \in \mathbb{S}_+^{n} : \mathds D(\mathcal N(0, \Sigma), \hat{\PP}_z) \leq  \rho_w \}, &&\forall \, z \in \{x_0\} \cup \{w_t\}_{t=0}^\infty \\
        \mathcal{M}_{\Sigma_v} &= \{\Sigma \in \mathbb{S}_+^{p} : \mathds D(\mathcal N(0, \Sigma), \hat{\PP}_z) \leq  \rho_v \}, && \forall \, z \in \{v_s\}_{t=0}^\infty.
    \end{aligned}
    \label{eq:definitions_Mw_Mv_inf_horizon}
\end{align}
By Assumption~\ref{ass:general_assumption_about_M_function}, the sets  $\mathcal{M}_{\Sigma_{w}}$ and $\mathcal{M}_{\Sigma_{v}}$ are convex and compact.

Lastly, we preserve Assumption~\ref{ass:M-gradients} suitably generalized to our infinite-horizon setting, so that
\begin{align}
    \mathcal{M}_{\Sigma_z} = \{ \Sigma \in \mathbb{S}_+^{d_z} : \gz(\Sigma_z) \leq 0 \}, ~ \forall z \in \mathcal{Z}.
\end{align}
where $\gz$ is a convex, differentiable function.

%
%  Continued proof
%
\subsection{Construction of Primal and Dual and Their Bounds}
With these preliminaries, the DRLQ problem can be formulated as:
\begin{equation*}
     p\opt = \left\{
    \begin{array}{c@{~~}l}
         \inf\limits_{x,u,y} & \sup\limits_{\PP\in \mathcal{B}^\infty} ~J_\PP(x,u) %\limsup\limits_{T\to \infty} \displaystyle\frac{1}{T} \sum\limits_{t=0}^{T-1} \EE_{\PP}\left[ \bigl( x_t^\top Q x_t + u_t^\top Ru_t \bigr) \right]
         \\
         \st & u \in \mathcal U_y,~\, x = Hu + Gw, ~\,y = Cx + v.
    \end{array}\right.
\end{equation*}
In analogy to \S\ref{sec:Nash}, we define $\eta=(\eta_t)_{t=0}^\infty$ as the trajectory of all purified observations that satisfies $\eta=Dw+v$, where $D=CG$, and~$\mathcal U_\eta$ as the set of all control inputs $u$ so that $u_t = \phi_t(\eta_0, \dots,   \eta_{t})$ for some measurable function~$\phi_t : \R^{p(t+1)}\rightarrow\R^m$ for every~$t\in \mathbb N_+$. 
%Thus, $u\in\mathcal U_\eta$ if and only if for every~$t\in\mathbb N_+$ there exists a measurable function $\psi_t : \R^{p\times(t+1)} \to \R^m$ such that $u_t = \psi_t(\eta_0, \ldots, \eta_t)$. 
%Proposition~II of \citet{ref:hadjiyiannis2011affine} ensures that $\mathcal U_y = \mathcal U_\eta$.
%Hence, 
By \cite[Proposition~II.1]{ref:hadjiyiannis2011affine}, we can rewrite the infinite-horizon DRLQ problem equivalently as
\begin{equation}
    \label{eq:inf-primal-1}
     p\opt = \left\{
    \begin{array}{c@{~~}l}
         \inf\limits_{x,u} & \sup\limits_{\PP\in \mathcal{B}^\infty} ~ J_\PP(x,u) \\
         \st & u \in \mathcal U_\eta,~\, x = Hu + Gw.
    \end{array}\right.
\end{equation}
Our results also rely on the dual of \eqref{eq:inf-primal-1} defined as
\begin{equation}
    \label{eq:inf-dual-1}
    d\opt = \left\{
    \begin{array}{cclllll}
         &\sup\limits_{\PP\in \mathcal{B}^\infty} &\inf\limits_{x,u} & J_\PP(x,u) \\
         &&\st & u \in \mathcal U_\eta,~\, x = Hu + Gw.
    \end{array}\right.
\end{equation}
In the remainder of this section, we demonstrate that, under mild assumptions, the primal DRLQ problem is solved by a \emph{stationary, linear} control policy, the dual problem is solved by a \emph{time-invariant, Gaussian} distribution, and strong duality holds. To establish these results, we proceed as in \S\ref{sec:Nash}: we first construct an upper bound for the primal problem~\eqref{eq:inf-primal-1} followed by a lower bound for the dual problem~\eqref{eq:inf-dual-1}, and then show that the bounds coincide, which proves all three claims.

\subsection*{Upper Bound for Primal}
Mirroring \S\ref{sec:Nash}, we obtain an upper bound on $p\opt$ by inflating the ambiguity set $\mathcal{B}^\infty$ and restricting the control policies. We define the inflated ambiguity set~$\overline{\mathcal{B}}{}^\infty$ exactly as the ambiguity set~$\overline{\mathcal{B}}$ from \Cref{sec:Nash}, but we also add the additional condition that~$\EE_{\PP_z}[z]=0$ for every~$z\in\mathcal Z$ to mirror the new Assumption~\ref{ass:nominal_inf_horizon}-(ii) on $\mathcal{B}^\infty$. For any divergence~$\mathds{D}$ satisfying Assumption~\ref{ass:general_assumption_about_M_function}-(i), we can readily verify that~$\mathcal{B}^\infty$ is a subset of~$\overline{\mathcal{B}}{}^\infty$. Restricting the control policies $u$ requires more care in the infinite-horizon case, to ensure that long-run-average costs are finite and to enable our subsequent duality proof. To that end, we restrict attention (without loss of optimality) to a \emph{subset} of the set of all \emph{stationary} purified output control policies $u \in \mathcal{U}_\eta$ under which the covariance matrices of controls $u_t$ and states $x_t$ converge, which also guarantees that the long-run average costs converge to a finite value. %, for which we will prove that \emph{time-invariant Gaussian} distributions achieve the worst case, which will then allow us to consider an ambiguity set $\overline{\mathcal{B}}{}^\infty$ of stationary distributions and define the upper bound on $p\opt$. 
To formalize these, we first define the set of block lower triangular Toeplitz matrices.
\begin{definition}[Toeplitz Matrices]
An infinitely long block matrix~$M$ with blocks of size $k\times l$ is called a block lower triangular Toeplitz matrix if there exist $M_t\in\R^{k\times l}$ for all $t\in\mathbb N$ so that
\[
    M = \begin{bmatrix}
    M_0 & & & \\
    M_1 & M_0 &  &  \\
    M_2 & M_1 & M_0 & \\
    \vdots & &  & \ddots
    \end{bmatrix}.
\]
All blocks of the $t$-th subdiagonal of $M$ are given by the same matrix~$M_{t}\in \R^{k\times l}$, so $M$ can be uniquely identified by its blocks $\{M_{t}\}_{t=0}^\infty$. The family of all block lower triangular Toeplitz matrices with blocks of size $k\times l$ is denoted by $\mathcal T^{k\times l}$. We also define the norm of an infinitely long Toeplitz matrix $M\in\mathcal T^{k\times l}$ as $\|M\|_{\mathcal T} = (\sum_{t=0}^\infty \|M_t\|_{\rm F}^2)^{1/2}$, where $\|M_t\|_{\rm F}$ stands for the Frobenius norm of $M_t$.
\end{definition}
The following lemma summarizes useful structural properties of Toeplitz matrices that will enable us to write compact expressions and study the properties of our control policies.
\begin{lemma}
\label{lem:low-tri-toep-set} The following properties hold for block lower triangular Toeplitz matrices:
\begin{enumerate}
    \item[i)] If $M\in \mathcal T^{k\times l}$ and $N\in\mathcal T^{k\times l}$, then $O=M+N\in \mathcal T^{k\times l}$ with $O_t=M_t+N_t\in\R^{k\times l}$ for all $t\in\mathbb N$.
    \item[ii)] If $M\in \mathcal T^{k\times l}$ and $N\in\mathcal T^{l\times m}$, then $O=MN\in \mathcal T^{k\times m}$ with $O_t= \sum_{s=0}^{t} M_s N_{t-s} \in\R^{k\times m}$ for all $t\in\mathbb N$.
       \item[iii)] If $M\in \mathcal T^{k\times k}$ with $M_0$ invertible, then $N=M^{-1}\in\mathcal T^{k\times k}$, and its blocks obey the recursion
    \begin{align}
        \label{eq:toeplitz-recursion}
        N_0=M_0^{-1} \quad\text{and} \quad N_t=-M_0^{-1}\sum_{s=1}^{t} M_s N_{t-s} \quad \forall t\in\mathbb N.
    \end{align}
\end{enumerate}
\end{lemma}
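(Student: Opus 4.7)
The plan is to prove the three parts in order, treating (ii) as the computational core from which (iii) follows by a standard generating-function-style recursion argument.

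For part (i), I would simply observe that matrix addition is block-wise, so the $(i,j)$-block of $O=M+N$ equals the $(i,j)$-block of $M$ plus that of $N$. Because both summands have the same block lower triangular Toeplitz pattern -- the $(i,j)$-block of $M$ equals $M_{i-j}$ when $i\geq j$ and $0$ otherwise, and similarly for $N$ -- the sum has the same pattern with blocks $O_t = M_t+N_t$. No infinite sums appear, so there is nothing to worry about with convergence.

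For part (ii), I would compute the $(i,j)$-block of $O=MN$ directly. Formally, $(MN)_{ij} = \sum_{k} M_{ik} N_{kj}$, but because $M$ is block lower triangular, the terms vanish unless $k\leq i$, and because $N$ is block lower triangular they vanish unless $k\geq j$; hence the sum truncates to the finite range $k\in\{j,\ldots,i\}$ (and is $0$ when $i<j$, showing $O$ is block lower triangular). Substituting the Toeplitz identities $M_{ik}=M_{i-k}$ and $N_{kj}=N_{k-j}$ and changing variables $s=k-j$ yields
\begin{equation*}
(MN)_{ij} \;=\; \sum_{s=0}^{i-j} M_{i-j-s}\, N_s,
\end{equation*}
which depends on $(i,j)$ only through the difference $i-j$. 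This proves the Toeplitz structure, and a final re-indexing $s\mapsto t-s$ delivers the stated convolution formula $O_t = \sum_{s=0}^{t} M_s N_{t-s}$. All sums are finite because of triangularity, so no analytic issues arise.

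For part (iii), I would build the candidate inverse $N$ by the claimed recursion and then verify it is a genuine two-sided inverse using part (ii). The identity $I$ lies in $\mathcal T^{k\times k}$ with blocks $I_0$ the identity matrix and $I_t=0$ for $t\geq 1$. Requiring $MN=I$ and applying the convolution formula from (ii) yields $M_0 N_0 = I_k$ (forcing $N_0 = M_0^{-1}$, which exists by hypothesis) and, for $t\geq 1$, $M_0 N_t + \sum_{s=1}^{t} M_s N_{t-s} = 0$, which rearranges to \eqref{eq:toeplitz-recursion}. This recursion uniquely defines a sequence $\{N_t\}_{t\geq 0}\subset \R^{k\times k}$ and hence a Toeplitz matrix $N\in\mathcal T^{k\times k}$ with $MN=I$. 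To upgrade this left inverse to a two-sided inverse, I would run the analogous recursion for $NM=I$ (with the roles of $M$ and $N$ swapped in the convolution), obtain a matrix $N'\in\mathcal T^{k\times k}$ with $N'M=I$, and then invoke the standard ring-theoretic identity $N' = N'(MN) = (N'M)N = N$ inside the ring $\mathcal T^{k\times k}$ of block lower triangular Toeplitz matrices; associativity of this product is justified by (ii) since all convolutions are finite.

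The only mildly delicate point is ensuring that one is allowed to manipulate these infinite matrices as if they formed an ordinary ring. That is handled cleanly by (ii): since each block of any product is a finite sum, associativity and distributivity reduce to scalar/block identities and no convergence hypothesis on $\|\cdot\|_{\mathcal T}$ is needed for this lemma. The norm $\|\cdot\|_{\mathcal T}$ will only enter later when analyzing control policies with finite long-run covariance, so I would defer any such considerations to the subsequent results.
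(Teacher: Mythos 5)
Your proof is correct and follows essentially the same route as the paper, which simply omits the details of (i)--(ii) and notes that the recursion in (iii) is exactly what makes $MN$ the identity via the convolution formula of (ii). Your additional verification that $N$ is a two-sided inverse (via the left-inverse recursion and the ring identity $N'=N'(MN)=(N'M)N=N$, justified by finiteness of all convolutions) is a welcome extra detail that the paper's one-line argument leaves implicit.
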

Subsequently, for a Toeplitz matrix $M$ obtained by adding or multiplying Toeplitz matrices, we use the $(M)_t$ to denote the block matrix on the $t$-th subdiagonal of $M$.

With this preparation, we can formally define the class of \emph{stationary} control policies.
\begin{definition}
    We call a linear output feedback policy $u\in\mathcal U_y$ {\em stationary} if there exists $U'\in\mathcal T^{m\times p}$ such that $u=U'y$. Similarly, we call a linear {\em purified} output feedback policy $u\in\mathcal U_\eta$ {\em stationary} if there exists $U\in\mathcal T^{m\times p}$ such that $u=U\eta$. 
\end{definition}
Subsequently, we focus on \emph{stationary, linear, purified output} feedback policies, i.e., $u\in \mathcal{U}_\eta$ such that $u = U\eta$ for some $U\in\mathcal T^{m\times p}$. (Through a straightforward extension of results in \Cref{lemma:linear-rel-u-eta}, one can verify that these are equivalent to stationary linear output feedback policies.) Under such policies, the following result -- which leverages \Cref{lem:low-tri-toep-set} -- yields compact expressions for several important quantities.
\begin{lemma}
\label{lem:x-t-u-t-in-w-v} 
Consider $u \in \mathcal{U}_\eta$ such that $u = U\eta$ for $U\in\mathcal T^{m\times p}$. For any $t \in \mathbb N$, we have:
\begin{subequations}   
    \begin{align}
        & u_t = \sum_{s=0}^t \left[ (U D)_{t-s} w_s + (U) _{t-s} v_s \right] \quad \text{and} \quad x_t = \sum_{s=0}^t \left[ (G + H U D) _{t-s} w_s + (H U)_{t-s} v_s \right] \label{eq:controls_states_inf_horizon}\\
        & \Sigma_{u_t} = \EE_\PP\left[u_tu_t^\top\right] = \sum_{s=0}^t \left( (U D)_{t-s} \Sigma_{w_s} (U D)_{t-s}^\top + (U) _{t-s} \Sigma_{v_s} (U) _{t-s}^\top\right) \label{eq:cov_controls_inf_horizon} \\
        & \Sigma_{x_t} = \EE_\PP \left[x_tx_t^\top \right] = \sum_{s=0}^t \left( (G + H U D) _{t-s} \Sigma_{w_s} (G + H U D) _{t-s}^\top + (H U)_{t-s} \Sigma_{v_s} (H U)_{t-s}^\top \right) \label{eq:cov_states_inf_horizon} \\
        & \frac{1}{T} \sum_{t=0}^{T-1} \EE_{\PP}\left[ x_t^\top Q_0 x_t + u_t^\top R_0 u_t \right] = \frac{1}{T} \sum_{s=0}^{T-1} \Tr\left(\Sigma_{w_s} \left(\sum_{t=0}^{T-1-s} M_{t}\right) + \Sigma_{v_s} \left(\sum_{t=0}^{T-1-s} N_{t} \right) \right), \label{eq:exp_avg_run_cost_inf_horizon}
    \end{align}
\end{subequations}
where
\[
    M_t = (UD)_t^\top R_0 (UD)_t + (G+HUD)_t^\top Q_0 (G+HUD)_t \quad \text{and} \quad N_t = (U)_t^\top R_0 (U)_t + (HU)_t^\top Q_0 (HU)_t.
\]
\end{lemma}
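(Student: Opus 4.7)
The plan is to exploit the Toeplitz structure established in \Cref{lem:low-tri-toep-set} together with the uncorrelatedness of the noise terms that is built into any $\PP\in\mathcal{B}^\infty$. First I would observe that under the time-invariance assumption the block matrices $H$, $G$, and $C$ are all block lower triangular Toeplitz, and hence so is $D=CG$ by \Cref{lem:low-tri-toep-set}-(ii). Substituting the stationary policy $u=U\eta$ into $\eta=Dw+v$ and $x=Hu+Gw$ then gives $u=(UD)w+Uv$ and $x=(G+HUD)w+(HU)v$, with all coefficient matrices belonging to $\mathcal T^{\cdot\times\cdot}$ by parts (i)–(ii) of \Cref{lem:low-tri-toep-set}.

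Next I would read off the $t$-th block row of these identities. Since every Toeplitz matrix $M\in\mathcal T^{k\times l}$ has $(t,s)$-block equal to $M_{t-s}$ for $s\leq t$ and zero otherwise, the expressions in~\eqref{eq:controls_states_inf_horizon} for $u_t$ and $x_t$ follow immediately. This is the first and purely algebraic step.

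For the covariance identities~\eqref{eq:cov_controls_inf_horizon}–\eqref{eq:cov_states_inf_horizon} I would expand $\EE_\PP[u_tu_t^\top]$ and $\EE_\PP[x_tx_t^\top]$ using the representations just obtained, and then invoke the uncorrelatedness property $\EE_\PP[z'z^\top]=0$ for $z\neq z'\in\mathcal Z$ (built into $\mathcal{B}^\infty$ exactly as for $\mathcal{B}$ in~\eqref{eq:ambiguity_set_B_def}) together with the zero-mean assumption from~Assumption~\ref{ass:nominal_inf_horizon}-(iii). This annihilates all off-diagonal terms in the double sums and reduces $\EE_\PP[w_s w_s^\top]$ and $\EE_\PP[v_s v_s^\top]$ to $\Sigma_{w_s}$ and $\Sigma_{v_s}$ respectively, producing the claimed block-sum formulas.

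Finally, for~\eqref{eq:exp_avg_run_cost_inf_horizon} I would rewrite the per-stage cost as $\EE_\PP[x_t^\top Q_0x_t+u_t^\top R_0 u_t]=\Tr(Q_0\Sigma_{x_t}+R_0\Sigma_{u_t})$, substitute the previously derived covariance expressions, and use the cyclic property of the trace so that each summand takes the form $\Tr(\Sigma_{w_s}\,(\cdot)_{t-s})$ or $\Tr(\Sigma_{v_s}\,(\cdot)_{t-s})$ with the middle factors equal to $M_{t-s}$ and $N_{t-s}$, respectively. The last step is to swap the order of summation in $\sum_{t=0}^{T-1}\sum_{s=0}^t$ to $\sum_{s=0}^{T-1}\sum_{\tau=0}^{T-1-s}$ with $\tau=t-s$, which yields exactly~\eqref{eq:exp_avg_run_cost_inf_horizon}. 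The main obstacle is purely bookkeeping: verifying that the Toeplitz blocks of $UD$, $G+HUD$, and $HU$ are inherited correctly via \Cref{lem:low-tri-toep-set} and that the index reshuffling in the final interchange of summations aligns each $\Sigma_{w_s}$ (resp.\ $\Sigma_{v_s}$) with the correct partial sum $\sum_{\tau=0}^{T-1-s}M_\tau$ (resp.\ $\sum_{\tau=0}^{T-1-s}N_\tau$); no new ideas are required beyond those already in place.
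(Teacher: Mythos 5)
Your proposal is correct and follows essentially the same route as the paper's proof: Toeplitz algebra from \Cref{lem:low-tri-toep-set} to get the block representations of $u_t$ and $x_t$, zero-mean and uncorrelatedness to obtain the covariance sums, and the trace identity followed by interchanging the order of summation with the index shift $\tau = t-s$ to arrive at~\eqref{eq:exp_avg_run_cost_inf_horizon}. No gaps.
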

%Above, $(UD)_s$ is shorthand for the block matrix on the $s$-th subdiagonal of the Toeplitz matrix~$UD$.

The expressions in \Cref{lem:x-t-u-t-in-w-v} for the covariances $\Sigma_{u_t},\Sigma_{x_t}$ and the average cost reveal that additional requirements are needed on the stationary control policy $u$ to ensure convergence and finite costs. To that end, in deriving the upper bound on $p\opt$, we further restrict the control policies in the feasible set of problem~\eqref{eq:inf-primal-1} to stationary linear policies $u=U\eta$ where~$U$ belongs to the set:
\begin{align*}
    \mathcal{U}_\infty = \left\{ U \in \mathcal T^{m\times p} : \|U\|_{\mathcal T}<\infty,  \|U D\|_{\mathcal T}< \infty,  \| G +  HU D\|_{\mathcal T} <\infty,  \| HU\|_{\mathcal T}< \infty \right\}.
\end{align*}
Note that $\mathcal U_\infty$ is a convex set because $\mathcal T^{m\times p}$ is a linear space and the norm $\|\cdot\|_\mathcal T$ is convex.

With this, we now formalize the upper bound on $p\opt$ by inflating the ambiguity set to $\overline{\mathcal{B}}{}^\infty$ and using $\mathcal U_\infty$ to construct the stationary policies. We obtain the following upper bound:
\begin{equation}
\overline p^\star=\left\{
    \begin{array}{cclll}
     &\min\limits_{U,x,u} &\max\limits_{\mathbb{P} \in \overline{\mathcal{B}}{}^\infty} ~ J_\PP(x,u) %~\displaystyle \limsup_{T\to \infty} \frac{1}{T} \sum_{t=0}^{T-1} \EE_{\PP}\left[ x_t^\top Q x_t + u_t^\top Ru_t \right]
     \\
    &\st & U \in \mathcal U_\infty,  u = U(Dw + v),  x = H u + G w.
    \end{array}\right.
    \label{eq:upper-bound-infty}
\end{equation}

The following result provides a refinement for the construction by showing that we can restrict attention to time-invariant, Gaussian distributions in $\overline{\mathcal{B}}{}^\infty$ without any optimality loss.
\begin{proposition}
\label{prop:inf-optimal-distributions}
Under Assumptions~\ref{ass:general_assumption_about_M_function},~\ref{ass:stabilizability-detectibility}-\ref{ass:nominal_inf_horizon}, with $u = U(Dw + v)$ for $U\in \mathcal U_\infty$ and $x = H u + G w$, the inner maximization problem in~\eqref{eq:upper-bound-infty} is solved by a time-invariant Gaussian distribution $\PP^\star\in\mathcal{B}^\infty_{\mathcal{N}}$.
\end{proposition}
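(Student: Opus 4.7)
The plan is to reduce the inner maximization to a static finite-dimensional problem in two steps: first, a Gaussian/second-moment reduction that parallels the finite-horizon argument, and then a time-invariance argument driven by Cesàro averaging of the long-run average objective.

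First, since $u = U(Dw+v)$ and $x = Hu+Gw$ are linear in $w, v$, equation~\eqref{eq:exp_avg_run_cost_inf_horizon} of \Cref{lem:x-t-u-t-in-w-v} shows that the average cost depends on $\PP$ only through the marginal second-moment matrices $\{\Sigma_{w_s}\}_{s\ge 0}$ and $\{\Sigma_{v_s}\}_{s\ge 0}$. For any candidate $\PP\in\overline{\mathcal B}{}^\infty$, I construct the product Gaussian $\PP'$ with independent zero-mean marginals $\mathcal N(0,\Sigma_{w_s})$ and $\mathcal N(0,\Sigma_{v_s})$; by independence, the uncorrelatedness condition is trivially met, and by the definition of $\overline{\mathcal B}{}^\infty$ each marginal Gaussian already lies in the original $\mathcal B_z$, so in fact $\PP'\in\mathcal B^\infty$ and yields the same cost as $\PP$. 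The problem is thus reduced to maximizing over sequences $\{\Sigma_{w_s}\}\subseteq\mathcal M_{\Sigma_w}$ and $\{\Sigma_{v_s}\}\subseteq\mathcal M_{\Sigma_v}$.

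Second, I collapse these sequences to constants. Because $U\in\mathcal U_\infty$, the Toeplitz norms $\|UD\|_{\mathcal T}$, $\|G+HUD\|_{\mathcal T}$, and $\|HU\|_{\mathcal T}$ are finite, which by the elementary bound $\|A^\top S A\|_F \le \|S\|_2 \|A\|_F^2$ yields $\sum_{t\ge 0}\|M_t\|_F<\infty$ and $\sum_{t\ge 0}\|N_t\|_F<\infty$. Setting $\bar M = \sum_{t\ge 0}M_t$, $\bar N = \sum_{t\ge 0}N_t$, $S^w_k = \sum_{t=0}^{k-1}M_t$, and $S^v_k = \sum_{t=0}^{k-1}N_t$, I rewrite \eqref{eq:exp_avg_run_cost_inf_horizon} as
\[
\frac{1}{T}\sum_{s=0}^{T-1}\Tr\bigl(\Sigma_{w_s}\bar M + \Sigma_{v_s}\bar N\bigr) + \varepsilon_T,
\]
with $\varepsilon_T = \frac{1}{T}\sum_{s=0}^{T-1}\Tr\bigl(\Sigma_{w_s}(S^w_{T-s}-\bar M) + \Sigma_{v_s}(S^v_{T-s}-\bar N)\bigr)$. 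Using that $\mathcal M_{\Sigma_w}$ and $\mathcal M_{\Sigma_v}$ are compact (Assumption~\ref{ass:general_assumption_about_M_function}), I obtain a uniform bound $B$ on $\|\Sigma_{w_s}\|_F,\|\Sigma_{v_s}\|_F$, so $|\varepsilon_T|\le \frac{B}{T}\sum_{k=1}^{T}(\|S^w_k-\bar M\|_F+\|S^v_k-\bar N\|_F)$; since the summands vanish as $k\to\infty$, their Cesàro averages vanish as $T\to\infty$, so $\varepsilon_T\to 0$.

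The leading term is bounded pointwise by $c_w^\star := \max_{\Sigma\in\mathcal M_{\Sigma_w}}\Tr(\Sigma\bar M)$ plus $c_v^\star := \max_{\Sigma\in\mathcal M_{\Sigma_v}}\Tr(\Sigma\bar N)$, both attained (at some $\Sigma_w^\star,\Sigma_v^\star$) by compactness. Hence the $\limsup$ over any time-varying feasible sequence is at most $c_w^\star+c_v^\star$. The time-invariant Gaussian $\PP^\star$ with $\Sigma_{w_s}\equiv\Sigma_w^\star$ and $\Sigma_{v_s}\equiv\Sigma_v^\star$ is feasible in $\mathcal B^\infty_{\mathcal N}$ because Assumption~\ref{ass:nominal_inf_horizon}(i)-(ii) makes the radii $\rho_w,\rho_v$ common across all indices and the nominal marginals identical, while Assumption~\ref{ass:general_assumption_about_M_function}(i) gives $\mathds D(\mathcal N(0,\Sigma_w^\star),\hat\PP_z)\le\rho_w$ for every $z\in\{x_0\}\cup\{w_t\}$ and analogously for $v$. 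Under this $\PP^\star$ the averaging limit exists and equals $c_w^\star+c_v^\star$, so the bound is tight and $\PP^\star$ is optimal. The main technical obstacle is controlling the $\varepsilon_T$ correction rigorously and disentangling it from the $\limsup$; this is handled exactly through absolute summability of $\{M_t\}_{t\ge 0}$, $\{N_t\}_{t\ge 0}$ (ensured by $U\in\mathcal U_\infty$) together with compactness of the covariance sets. Verifying that the product-Gaussian construction meets every constraint in $\overline{\mathcal B}{}^\infty$ is immediate once one unpacks the definitions and uses Assumption~\ref{ass:nominal_inf_horizon}(iii).
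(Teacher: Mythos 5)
Your proposal is correct and follows essentially the same route as the paper's proof: reduce the inner maximization to the marginal second moments, use $U\in\mathcal U_\infty$ to show the cost kernels $M_t,N_t$ are summable, maximize the resulting linear trace functionals over the compact sets $\mathcal M_{\Sigma_w},\mathcal M_{\Sigma_v}$, and verify that the time-invariant product Gaussian built from the maximizers is feasible and attains the bound. The only (cosmetic) differences are that you obtain summability via absolute convergence of $\sum_t\|M_t\|_F$ rather than the paper's monotonicity-plus-polarization argument, and you handle the upper bound and its attainment simultaneously through a single vanishing Cesàro error term instead of the paper's separate PSD-domination bound and $\varepsilon$--$T_0$ argument.
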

\Cref{prop:inf-optimal-distributions} implies that the feasible set of the inner maximization problem in~\eqref{eq:upper-bound-infty} can be restricted to $\mathcal{B}^\infty_{\mathcal{N}}$ without any optimality loss. Because any distribution~$\PP\in \mathcal{B}^\infty_{\mathcal{N}}$ is uniquely determined by the covariance matrices $\Sigma_w$ and $\Sigma_v$, the upper bounding problem~\eqref{eq:upper-bound-infty} is equivalent to the simplified minimax problem: 
\begin{equation}
\overline p^\star=
    \min\limits_{U\in\mathcal U_\infty} \max\limits_{\Sigma_w\in \mathcal{M}_{\Sigma_w}, \Sigma_v \in \mathcal{M}_{\Sigma_v}} ~ J(U;\Sigma_w,\Sigma_v)  %~~\mbox{where}~~ J(U;\Sigma_w,\Sigma_v) = J_\PP(HU\eta+Gw, U\eta),
    \label{eq:upper-bound-infty-reformulation}
\end{equation}
with objective function given by
\[
    J(U;\Sigma_w,\Sigma_v) = J_\PP(HU\eta+Gw, U\eta),
\]
and $\PP=\PP_{x_0} \otimes ( \otimes_{t=0}^\infty (\PP_{w_t} \otimes \PP_{v_t}))$, with $\PP_{x_0}=\PP_{w_t} =\mathcal N(0,\Sigma_w)$ and  $\PP_{v_t} =\mathcal N(0,\Sigma_v)$ for all $t\in\mathbb N$.

Moreover, mirroring the developments in \S\ref{sec:sec3_worst_case_covariance}, we can further restrict the feasible sets in the inner maximization in~\eqref{eq:upper-bound-infty-reformulation} to only contain covariance matrices that dominate the nominal covariance matrices in Loewner order. More formally, if we define the sets 
\[ \mathcal{M}_{\Sigma_w}^+ = \{\Sigma_w \in \mathcal{M}_{\Sigma_w} : \Sigma_w \succeq \hat\Sigma_w\} ~ \mbox{and}~  \mathcal{M}_{\Sigma_v}^+ = \{\Sigma_v  \in  \mathcal{M}_{\Sigma_v} : \Sigma_v\succeq \hat\Sigma_v\},\] %which are compact and convex as by Assumption~\ref{ass:general_assumption_about_M_function}. Because 
then a straightforward adaptation of \Cref{thm:optimal-covs-are-higher} -- which apply here because Assumption~\ref{ass:M-gradients} holds -- can be used to argue that the optimal $\Sigma_w\opt$ and $\Sigma_v\opt$ in the inner maximization problem in \eqref{eq:upper-bound-infty-reformulation} satisfy $\Sigma_w\opt \succeq \hat\Sigma_w$ and $\Sigma_v\opt \succeq \hat\Sigma_v$. Hence, we reformulate $\bar p\opt$ as 
 \begin{equation}
     \bar p\opt = \min\limits_{U \in \mathcal U_\infty} \max\limits_{\Sigma_w \in \mathcal{M}^+_{\Sigma_w}, \Sigma_v\in \mathcal{M}^+_{\Sigma_v}} J(U; \Sigma_w, \Sigma_v). 
 \end{equation}

\subsection*{Lower Bound for Dual}
To derive a lower bound on $d\opt$, we restrict nature's choices to the set $\underline{\mathcal{B}}^\infty_{\mathcal{N}}$ of all time-invariant Gaussian distributions in the ambiguity set $\mathcal{B}^\infty$ constrained so their covariances belong to the sets $\mathcal{M}^+_{\Sigma_w}$ and $\mathcal{M}^+_{\Sigma_v}$, respectively; that is, \( \underline{\mathcal{B}}^\infty_{\mathcal{N}} = \{ \PP \in \mathcal{B}^\infty_{\mathcal{N}} : \EE_{\PP}[ z z^\top ] \succeq \EE_{\hat{\PP}}[ z z ^\top] ~\mbox{for all}~  z \in \mathcal{Z} \}.\) Therefore, we propose the following lower bound on $d\opt$:
\begin{equation}
    \label{eq:inf-lower-bound-dual}
    \underline d\opt = \left\{
    \begin{array}{c@{~~}c@{~~}l}
         %\sup\limits_{\PP \in \mathcal{B}^\infty_{\mathcal{N}} : \Sigma_w \succeq \hat{\Sigma}_w, \Sigma_v \succeq \hat{\Sigma}_v} &\inf\limits_{x,u} & J_\PP(x,u) \\
         \sup\limits_{\PP \in \underline{\mathcal{B}}^\infty_{\mathcal{N}}} &\inf\limits_{x,u} & J_\PP(x,u) \\
         &\st & u \in \mathcal U_\eta,~\, x = Hu + Gw.
    \end{array}\right.
\end{equation}
As~\eqref{eq:inf-lower-bound-dual} is obtained by restricting the feasible set of the dual DRLQ problem~\eqref{eq:inf-dual-1}, we have $\underline{d}\opt \leq d\opt$. 

The following proposition, which leverages the stabilizability and detectability Assumption~\ref{ass:stabilizability-detectibility} and classical results in infinite-horizon LQG control,  will allow simplifying the formulation in~\eqref{eq:inf-lower-bound-dual} by only considering stationary, linear policies.
\begin{proposition}
\label{prop:inf-optimal-controllers}
Under Assumptions~\ref{ass:general_assumption_about_M_function},~\ref{ass:stabilizability-detectibility}-\ref{ass:nominal_inf_horizon} and for any $\PP\in \underline{\mathcal{B}}^\infty_{\mathcal{N}}$, the inner minimization problem in~\eqref{eq:inf-lower-bound-dual} is solved by a policy $u=U\eta$ for some $U \in \mathcal U_\infty$ and a state process $x = H u + G w$.
\end{proposition}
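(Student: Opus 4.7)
The plan is to reduce the inner minimization to a classical infinite-horizon LQG problem with known Gaussian noise statistics, invoke textbook results on its steady-state solution, and then rewrite the stationary state-feedback/Kalman-filter pair as a single block Toeplitz operator $U$ acting on the purified observations $\eta$.

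First I would fix an arbitrary $\PP\in\underline{\mathcal{B}}^\infty_{\mathcal N}$. By definition of this set and by Assumption~\ref{ass:nominal_inf_horizon}, $\PP$ is a zero-mean, time-invariant Gaussian distribution under which the noise terms are mutually uncorrelated with covariances $\Sigma_w\succeq\hat\Sigma_w\succ 0$ and $\Sigma_v\succeq\hat\Sigma_v\succ 0$. Together with Assumption~\ref{ass:stabilizability-detectibility} ($(A_0,B_0)$ stabilizable, $(A_0,C_0)$ detectable, $Q_0\succ 0$), the inner minimization in~\eqref{eq:inf-lower-bound-dual} is a classical infinite-horizon LQG problem with average-cost criterion. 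Standard results (see \S\ref{sec:classical-lqg-inf-horizon} and \citealp{Bertsekas_2017}) then guarantee the existence of an optimal policy of the separation form $u_t^\star=K\hat x_t$, where $K\in\R^{m\times n}$ is the stationary LQR gain associated with the algebraic Riccati equation for $(A_0,B_0,Q_0,R_0)$ and $\hat x_t=\EE_\PP[x_t\mid y_0,\ldots,y_t]$ is produced by the steady-state Kalman filter with observer gain $L\in\R^{n\times p}$. Crucially, the stabilizability/detectability conditions imply that $A_0+B_0K$ and $A_0-LC_0$ are both Schur stable.

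Next I would express this optimal policy in terms of $\eta$. Decomposing $x_t=x'_t+e_t$ with $e_{t+1}=A_0 e_t+w_t$, $e_0=x_0$, and $x'_t$ the noise-free trajectory driven by $u$, one has $\eta_t=C_0 e_t+v_t$, and conditioning gives $\hat x_t=x'_t+\hat e_t$ where $\hat e_t$ satisfies the stationary recursion $\hat e_{t+1}=(A_0-LC_0)\hat e_t+L\eta_t$. Combining this with $x'_{t+1}=(A_0+B_0K)x'_t+B_0K\hat e_t$ and $u_t=Kx'_t+K\hat e_t$ writes the optimal control as a time-invariant linear functional of $\eta_0,\ldots,\eta_t$, so there exists $U\in\mathcal T^{m\times p}$ with $u^\star=U\eta$; its blocks $U_t$ are explicit products of powers of the Schur-stable matrices $A_0-LC_0$ and $A_0+B_0K$ sandwiched between $K$, $B_0$ and $L$, and hence $\|U_t\|_2$ decays geometrically in $t$. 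The equivalence between causal linear policies in $y$ and in $\eta$ (see \Cref{lemma:linear-rel-u-eta}) guarantees that no optimality is lost by working in $\mathcal U_\eta$.

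Finally, I would verify $U\in\mathcal U_\infty$. The matrices $D$, $G$ and $H$ are themselves block lower triangular Toeplitz matrices built from the time-invariant system data, and the four operators $U$, $UD$, $HU$ and $G+HUD$ coincide with the $\eta\to u$, $w\to u$, $\eta\to x$ and $w\to x$ transfer Toeplitz operators of the closed-loop time-invariant system whose internal dynamics are governed by $A_0+B_0K$ and $A_0-LC_0$. Schur stability of both closed-loop matrices implies that each of these transfer operators has a geometrically decaying impulse response, whence all four Toeplitz norms are finite. The main obstacle in the argument is this last step: one must carefully track how the two Schur-stable matrices propagate through the composition of Toeplitz operators to obtain uniform geometric decay of the block norms (rather than mere summability), which is where the full force of Assumption~\ref{ass:stabilizability-detectibility} is used.
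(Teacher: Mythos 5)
Your proposal is correct and follows the same overall skeleton as the paper's proof: reduce the inner minimization to a classical infinite-horizon LQG problem via the equivalence $\mathcal U_\eta=\mathcal U_y$, invoke the stationary separation-principle solution $u_t^\star=K\hat x_t$ with Schur-stable closed-loop matrices, represent that policy as a block lower triangular Toeplitz operator acting on $\eta$, and then verify membership in $\mathcal U_\infty$. The two places where you genuinely diverge are worth noting. First, for the passage from output feedback to purified-output feedback, the paper writes the optimal policy as $u=U'y$ with explicit Toeplitz blocks $U'_t$ and then applies the identity $U=(I-U'CH)^{-1}U'$ together with \Cref{lem:low-tri-toep-set}(iii) to conclude $U\in\mathcal T^{m\times p}$; you instead decompose $x_t=x'_t+e_t$ and derive the $\eta$-driven filter for $\hat e_t$ directly, which yields the Toeplitz blocks of $U$ explicitly as a cascade of two Schur-stable recursions (be careful with the exact filter form -- the closed-loop estimator matrix is $(I-LC_0)A_0$, not $A_0-LC_0$, and the update uses $\eta_{t+1}$ -- though this does not affect the argument). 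Second, and more substantively, for the verification that $\|U\|_{\mathcal T},\|UD\|_{\mathcal T},\|HU\|_{\mathcal T},\|G+HUD\|_{\mathcal T}<\infty$, the paper argues probabilistically: it invokes \Cref{lem:stationary-covs} to get convergence of $\Sigma_{u_t}$ and $\Sigma_{x_t}$, and then uses the strict positive definiteness $\Sigma_w,\Sigma_v\succ0$ (guaranteed because $\PP\in\underline{\mathcal B}^\infty_{\mathcal N}$ and $\hat\Sigma_z\succ0$) to back out finiteness of the Toeplitz norms from the expressions in \Cref{lem:x-t-u-t-in-w-v}. Your argument is deterministic: the four operators are closed-loop transfer maps of a system whose internal dynamics are governed by the Schur-stable pair $(A_0+B_0K,\,(I-LC_0)A_0)$, so their impulse responses decay geometrically (with at most a polynomial factor when the spectral radii coincide) and are hence square-summable. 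Your route avoids the need for $\Sigma_w,\Sigma_v\succ0$ at this step and correctly handles the subtlety that $G$ alone need not have decaying blocks when $A_0$ is unstable -- only the closed-loop combination $G+HUD$ does -- whereas the paper's route avoids the bookkeeping of tracking decay rates through compositions of Toeplitz operators. Both are valid; there is no gap.
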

\Cref{prop:inf-optimal-controllers} implies that the feasible set of the inner minimization problem in~\eqref{eq:inf-lower-bound-dual} can be restricted to linear stationary policies of the form $u=U\eta$ for some $U\in\mathcal U_\infty$ without optimality loss. Thus, the lower bounding problem~\eqref{eq:upper-bound-infty} is equivalent to the simplified maximin problem 
\begin{equation}
\underline d^\star=
    \max\limits_{\Sigma_w\in \mathcal{M}_{\Sigma_w}^+, \Sigma_v \in \mathcal{M}_{\Sigma_v}^+} \min\limits_{U\in\mathcal U_\infty} ~ J(U;\Sigma_w,\Sigma_v),
    \label{eq:lower-bound-infty-reformulation}
\end{equation}
where the objective function $J(U;\Sigma_w,\Sigma_v)$ is defined as before.

Finally, we prove that $J(U;\Sigma_w, \Sigma_v)$ is a convex-concave saddle function. We first prove that the limit superior in the definition of the average cost $J_\PP(x,u)$ reduces to a normal limit whenever $u=U\eta$ is a stationary policy induced by some $U\in\mathcal U_\infty$, from which the result on $J$ will follow.

\begin{lemma}
    \label{lem:limsup-converges}
    Under Assumptions~\ref{ass:general_assumption_about_M_function},~\ref{ass:stabilizability-detectibility}-\ref{ass:nominal_inf_horizon} and for any $\PP\in \underline{\mathcal{B}}^\infty_{\mathcal{N}}$, if $u=U\eta$ and $x=H u+Gw$ for some $U\in\mathcal U_\infty$, then:
    \begin{equation}
        J_\PP(x,u) =  \lim_{T\to\infty} \frac{1}{T} \sum\limits_{t=0}^{T-1} \EE_{\PP}\left[ x_t^\top Q_0 x_t + u_t^\top R_0 u_t \right].
        \label{eq:J-limit-formula}
    \end{equation}
    %\label{lem:limsup-converges}
\end{lemma}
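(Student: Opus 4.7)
The plan is to reduce the limsup in the definition of $J_\PP(x,u)$ to an ordinary limit by combining the closed-form partial-average formula of \Cref{lem:x-t-u-t-in-w-v} with the absolute summability guaranteed by $U\in\mathcal U_\infty$.

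First, I would invoke the time-invariance of $\PP\in\underline{\mathcal{B}}^\infty_{\mathcal{N}}$ (so $\Sigma_{w_s}=\Sigma_w$ and $\Sigma_{v_s}=\Sigma_v$ for all $s\in\mathbb N$) and substitute into~\eqref{eq:exp_avg_run_cost_inf_horizon}. Swapping the order of the double sum, I would rewrite the partial averages in the compact form
\begin{equation*}
\frac{1}{T}\sum_{t=0}^{T-1}\EE_{\PP}\bigl[x_t^\top Q_0 x_t + u_t^\top R_0 u_t\bigr]= \sum_{t=0}^{T-1}\bigl(1-\tfrac{t}{T}\bigr)\,a_t, \qquad a_t := \Tr(\Sigma_w M_t)+\Tr(\Sigma_v N_t),
\end{equation*}
with $M_t, N_t$ as defined in \Cref{lem:x-t-u-t-in-w-v}. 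The task then reduces to showing that $\sum_{t=0}^{T-1}(1-t/T)a_t$ converges as $T\to\infty$.

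Next, I would establish that $\sum_{t=0}^\infty a_t<\infty$; note that $a_t\ge 0$ since $M_t$ and $N_t$ are positive semidefinite by construction. Using the standard PSD inequality $\Tr(AB)\le \|A\|_2\Tr(B)$ (valid for $A,B\succeq 0$) twice, I would bound each of the four summands comprising $a_t$ and obtain
\begin{equation*}
a_t \le \|\Sigma_w\|_2\bigl(\|R_0\|_2\|(UD)_t\|_F^2+\|Q_0\|_2\|(G+HUD)_t\|_F^2\bigr)+\|\Sigma_v\|_2\bigl(\|R_0\|_2\|(U)_t\|_F^2+\|Q_0\|_2\|(HU)_t\|_F^2\bigr).
\end{equation*}
Summing in $t$ and invoking the definition $\|M\|_\mathcal T^2=\sum_t \|M_t\|_F^2$, the total is bounded by a finite combination of $\|UD\|_\mathcal T^2$, $\|G+HUD\|_\mathcal T^2$, $\|U\|_\mathcal T^2$, and $\|HU\|_\mathcal T^2$, all of which are finite precisely because $U\in\mathcal U_\infty$.

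Finally, absolute summability yields the limit by a standard Cesaro-type estimate: for any $\varepsilon>0$, pick $N$ with $\sum_{t\ge N}a_t<\varepsilon$, split $\sum_{t=0}^{T-1}(t/T)a_t$ at $N$, and observe that the prefix contributes at most $(N/T)\sum_{t<N}a_t\to 0$ while the tail contributes at most $\varepsilon$. Hence $\sum_{t=0}^{T-1}(t/T)a_t\to 0$, so $\sum_{t=0}^{T-1}(1-t/T)a_t\to \sum_{t=0}^\infty a_t$, a finite limit. Since the partial averages converge, the limsup defining $J_\PP(x,u)$ coincides with this limit, establishing~\eqref{eq:J-limit-formula}. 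The main obstacle is the second step: translating the Toeplitz norm conditions in the definition of $\mathcal U_\infty$ into absolute summability of the scalar sequence $a_t$ via the PSD trace inequality; once that bound is secured, the rearrangement in the first step and the Cesaro argument in the third are routine.
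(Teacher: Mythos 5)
Your proof is correct, but it follows a different route from the paper's. The paper establishes convergence of the per-period cost by invoking \Cref{lem:stationary-covs} to get $\Sigma_{x_t}\to\Sigma_{x_\infty}$ and $\Sigma_{u_t}\to\Sigma_{u_\infty}$, and then runs an $\varepsilon$-splitting (Ces\`aro) argument with the matrix H\"older inequality $\Tr(\Sigma Q)\le\|\Sigma\|_2\Tr(Q)$ to show $J_\PP(x,u)=\Tr(\Sigma_{x_\infty}Q_0)+\Tr(\Sigma_{u_\infty}R_0)$. You instead work directly from the block-Toeplitz formula~\eqref{eq:exp_avg_run_cost_inf_horizon}, rewrite the partial average as $\sum_{t<T}(1-t/T)a_t$ with $a_t=\Tr(\Sigma_w M_t)+\Tr(\Sigma_v N_t)\ge 0$, prove $\sum_t a_t<\infty$ from the four norm conditions defining $\mathcal U_\infty$, and finish with the same Ces\`aro tail-splitting. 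The two limits agree, since $\Tr(\Sigma_{x_\infty}Q_0)+\Tr(\Sigma_{u_\infty}R_0)=\sum_{t}a_t$. Your route is more self-contained and arguably cleaner here: it applies verbatim to an arbitrary $U\in\mathcal U_\infty$, whereas the paper's citation of \Cref{lem:stationary-covs} is, strictly speaking, a lemma proved for the specific Kalman-filter-based policy $u_t=K\hat x_t$, so your argument avoids having to justify that the covariance limits exist for general stationary policies (a fact that, as you show, follows directly from summability of the Toeplitz blocks). The summability step itself closely parallels the bound used in the proof of \Cref{prop:inf-optimal-distributions}, so it is fully consistent with the paper's toolkit.
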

With this, the following result proves that $J(U; \Sigma_w, \Sigma_v)$ is convex-concave.

\begin{proposition}
\label{prop:J-saddle}
   %Assume that $\hat\PP\in \mathcal{B}^\infty_{\mathcal{N}}$ is a time-invariant Gaussian distribution with $\hat\Sigma_w=\EE_{\hat\PP}[w_0w_0^\top]\succ 0$ and $\hat \Sigma_v=\EE_{\hat\PP}[v_0v_0^\top]\succ 0$. Assume further that there exist $\rho_w, \rho_v\geq 0$ such that $\rho_{x_0}=\rho_w$ and $\rho_{w_t}=\rho_w$ as well as $\rho_{v_t}=\rho_v$ for all $t\in\mathbb N$. If Assumption~\ref{ass:stabilizability-detectibility} holds, 
   Under Assumptions~\ref{ass:general_assumption_about_M_function},\ref{ass:M-gradients},~\ref{ass:stabilizability-detectibility}-\ref{ass:nominal_inf_horizon} and for any $\PP\in \underline{\mathcal{B}}^\infty_{\mathcal{N}}$, the restriction of $J(U; \Sigma_w, \Sigma_v)$ to $\mathcal U_\infty\times (\mathcal{M}^+_{\Sigma_w}\times\mathcal{M}^+_{\Sigma_v})$ is convex in~$U$ and linear in~$(\Sigma_w, \Sigma_v)$.  
\end{proposition}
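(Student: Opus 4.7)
The plan is to derive a compact closed-form expression for $J(U;\Sigma_w,\Sigma_v)$ and then read off linearity and convexity directly. Since any $\PP\in \underline{\mathcal{B}}^\infty_{\mathcal{N}}$ is time-invariant, we have $\Sigma_{w_s}=\Sigma_w$ and $\Sigma_{v_s}=\Sigma_v$ for every $s\in\mathbb N$. Substituting this into~\eqref{eq:exp_avg_run_cost_inf_horizon} and swapping the order of the two summations yields
\[
    \frac{1}{T}\sum_{t=0}^{T-1}\EE_\PP[x_t^\top Q_0 x_t + u_t^\top R_0 u_t]= \sum_{t=0}^{T-1}\Bigl(1-\tfrac{t}{T}\Bigr)\Bigl(\Tr(\Sigma_w M_t)+\Tr(\Sigma_v N_t)\Bigr),
\]
where $M_t, N_t$ are the matrices defined in \Cref{lem:x-t-u-t-in-w-v}. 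By \Cref{lem:limsup-converges} the left-hand side converges to $J(U;\Sigma_w,\Sigma_v)$ as $T\to\infty$. The membership $U\in\mathcal U_\infty$ guarantees that the Toeplitz norms $\|UD\|_{\mathcal T}$, $\|G+HUD\|_{\mathcal T}$, $\|U\|_{\mathcal T}$, and $\|HU\|_{\mathcal T}$ are all finite, which bounds $\Tr(M_t) \leq \|R_0\|_2 \|(UD)_t\|_F^2 + \|Q_0\|_2\|(G+HUD)_t\|_F^2$ and analogously for $\Tr(N_t)$, so the scalar series $\sum_t \Tr(\Sigma_w M_t)$ and $\sum_t \Tr(\Sigma_v N_t)$ converge absolutely. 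Dominated convergence then lets us drop the factor $(1-t/T)$ in the limit, giving
\[
    J(U;\Sigma_w,\Sigma_v) = \Tr\!\Bigl(\Sigma_w\sum_{t=0}^\infty M_t\Bigr)+\Tr\!\Bigl(\Sigma_v\sum_{t=0}^\infty N_t\Bigr).
\]

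Linearity of $J$ in $(\Sigma_w,\Sigma_v)$ is now immediate, because the infinite sums $\sum_t M_t$ and $\sum_t N_t$ depend only on $U$ and the fixed system and cost matrices. For convexity in $U$, \Cref{lem:low-tri-toep-set} shows that the subdiagonal blocks $(UD)_t$, $(U)_t$, $(HU)_t$ are linear in the blocks of $U$, and hence $(G+HUD)_t$ is affine in $U$. Every summand of $M_t$ and $N_t$ therefore has the form $A(U)^\top P A(U)$ with $A(U)$ affine in $U$ and $P\in\{Q_0,R_0\}$ positive semidefinite. For any $\Sigma\succeq 0$,
\[
    U\mapsto \Tr\!\bigl(\Sigma\, A(U)^\top P A(U)\bigr)=\bigl\|P^{1/2}A(U)\Sigma^{1/2}\bigr\|_F^2
\]
is the squared Frobenius norm of an affine map, hence convex in~$U$. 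Each partial sum $\sum_{t=0}^{T-1}[\Tr(\Sigma_w M_t)+\Tr(\Sigma_v N_t)]$ is a sum of such convex functions, and convexity is preserved under pointwise limits, so $J(\cdot;\Sigma_w,\Sigma_v)$ is convex on $\mathcal U_\infty$.

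The main obstacle is justifying the interchange of the outer limit $T\to\infty$ with the sum over~$t$ and ensuring that the resulting series define genuine, finite matrices in the Loewner sense. The Toeplitz-norm conditions in the definition of $\mathcal U_\infty$ were crafted precisely for this: they deliver the summability of the Frobenius norms that bound the trace series, which in turn legitimizes the dominated-convergence step above. A second subtlety is that convexity of $J$ in $U$ need only hold on the convex set $\mathcal U_\infty$; this is fortunate, because the closed-form expression involves infinite series that are well-defined exactly on this set. Once these two analytic points are handled, the algebraic convexity and linearity arguments above are entirely routine.
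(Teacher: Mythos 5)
Your proof is correct, but it takes a more computational route than the paper's. The paper's own proof stays abstract: it observes that the finite-horizon average $J_T(U;\Sigma_w,\Sigma_v)$ given by~\eqref{eq:exp_avg_run_cost_inf_horizon} is convex quadratic in $U$ and linear in $(\Sigma_w,\Sigma_v)$ for each fixed $T$, then writes $J=\limsup_T J_T=\lim_{T'}\sup_{T\geq T'}J_T$ and invokes preservation of convexity under pointwise suprema and limits to get convexity in $U$, and finally uses \Cref{lem:limsup-converges} to replace the $\limsup$ by a genuine limit so that linearity in $(\Sigma_w,\Sigma_v)$ follows as a pointwise limit of linear maps. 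You instead exploit time-invariance of $\PP$ to rewrite $J_T$ as $\sum_{t=0}^{T-1}(1-t/T)(\Tr(\Sigma_w M_t)+\Tr(\Sigma_v N_t))$, use the Toeplitz-norm bounds defining $\mathcal U_\infty$ to establish absolute summability, and pass to the limit by dominated convergence to obtain the closed form $J=\Tr(\Sigma_w\sum_t M_t)+\Tr(\Sigma_v\sum_t N_t)$ — essentially reconstructing the identity the paper derives separately in the proof of \Cref{prop:inf-optimal-distributions}. Your index swap is right (for fixed $t$ there are $T-t$ admissible values of $s$), your trace bounds $\Tr(\Sigma_w M_t)\leq\|\Sigma_w\|_2\Tr(M_t)$ and the Frobenius-norm estimates are valid, and your convexity argument via $\Tr(\Sigma\,A(U)^\top P A(U))=\|P^{1/2}A(U)\Sigma^{1/2}\|_F^2$ with $A(U)$ affine is sound. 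What the paper's argument buys is brevity and the fact that convexity in $U$ holds even before one knows the $\limsup$ is a limit; what yours buys is an explicit formula from which both properties are immediate and which makes the role of the $\mathcal U_\infty$ norm conditions transparent. Both proofs are complete.
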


The insights of this section culminate in the following main result, which shows that all structural results shown in the finite horizon extend to the infinite horizon formulation.

\begin{theorem}[Nash strategies and strong duality]
    %Under~Assumption~\ref{ass:Gaussian}, Assumption~\ref{ass:general_assumption_about_M_function}, Assumption~\ref{ass:M-gradients} and~Assumption~\ref{ass:stabilizability-detectibility}, the following hold.
    Under Assumptions~\ref{ass:general_assumption_about_M_function},\ref{ass:M-gradients},~\ref{ass:stabilizability-detectibility}-\ref{ass:nominal_inf_horizon}, we have:
    \begin{itemize}
        \item[(i)] %Strong duality holds, that is, we have $ p\opt =  d\opt$ for the infinite-horizon DRLQ problem.    
        The optimal value in problem \eqref{eq:inf-primal-1} 
        equals the optimal values in problems~\eqref{eq:inf-dual-1}, \eqref{eq:upper-bound-infty}, and \eqref{eq:inf-lower-bound-dual}, i.e., $p^\star = d^\star = \bar{p}^\star = \underline{d}^\star$, and all optimal values are attained.
        \item[(ii)] The primal DRLQ problem~\eqref{eq:inf-primal-1} admits an optimal stationary linear control policy, $u\opt=U \eta$ for $U \in U_\infty$.
        \item[(iii)] The dual DRLQ problem~\eqref{eq:inf-dual-1} admits an optimal time-invariant Gaussian distribution, $\PP\opt \in \mathcal{B}^\infty_{\mathcal{N}}$.
        \item[(iv)] The optimal covariance matrices under $\PP\opt$ satisfy $\Sigma_w\opt \succeq \hat{\Sigma}_w$ and $\Sigma_v\opt \succeq \hat{\Sigma}_v$.
    \end{itemize}
    \label{thm:strong-duality-infinite}
\end{theorem}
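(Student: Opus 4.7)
The plan is to mirror the proof of \Cref{theorem:lower-equal-upper} from the finite-horizon case, showing the chain of inequalities $\underline{d}^\star \leq d^\star \leq p^\star \leq \bar{p}^\star$ and closing it by proving $\bar{p}^\star = \underline{d}^\star$ via a minimax argument. Weak duality gives $d^\star \leq p^\star$. Since $\underline{d}^\star$ is obtained by restricting nature's feasible set in \eqref{eq:inf-dual-1} to the subset $\underline{\mathcal{B}}{}^\infty_{\mathcal N}\subseteq \mathcal{B}^\infty$, we have $\underline{d}^\star \leq d^\star$. Similarly, $\bar{p}^\star$ inflates the ambiguity set to $\overline{\mathcal{B}}{}^\infty$ (which contains $\mathcal{B}^\infty$ under Assumption~\ref{ass:general_assumption_about_M_function}-(i)) and restricts the \dm{} to stationary linear policies indexed by $\mathcal{U}_\infty$, giving $p^\star \leq \bar{p}^\star$.

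Next, I use the reformulations already established in the excerpt. By \Cref{prop:inf-optimal-distributions} and the refinement using a straightforward adaptation of \Cref{thm:optimal-covs-are-higher},
$\bar{p}^\star = \min_{U\in \mathcal{U}_\infty} \max_{(\Sigma_w,\Sigma_v)\in \mathcal{M}^+_{\Sigma_w}\times \mathcal{M}^+_{\Sigma_v}} J(U;\Sigma_w,\Sigma_v)$.
By \Cref{prop:inf-optimal-controllers},
$\underline{d}^\star = \max_{(\Sigma_w,\Sigma_v)\in \mathcal{M}^+_{\Sigma_w}\times \mathcal{M}^+_{\Sigma_v}} \min_{U\in \mathcal{U}_\infty} J(U;\Sigma_w,\Sigma_v)$.
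The set $\mathcal{U}_\infty$ is convex, the sets $\mathcal{M}^+_{\Sigma_w}$ and $\mathcal{M}^+_{\Sigma_v}$ are convex and compact (intersections of the convex, compact $\mathcal{M}_{\Sigma_z}$ from Assumption~\ref{ass:general_assumption_about_M_function}-(ii) with a closed Loewner half-space that contains $\hat{\Sigma}_z$ in its interior by Assumption~\ref{ass:nominal_inf_horizon}-(i)), and by \Cref{prop:J-saddle} the objective $J$ is convex in $U$ and linear in $(\Sigma_w,\Sigma_v)$. Invoking Sion's minimax theorem then yields $\bar{p}^\star = \underline{d}^\star$, which collapses the chain of inequalities to the equalities claimed in~(i).

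Attainment follows by a two-sided argument. The outer maximum in $\underline{d}^\star$ is attained because the feasible set is compact and $\min_{U\in \mathcal{U}_\infty} J(U;\Sigma_w,\Sigma_v)$, being the pointwise infimum of a family of linear (hence upper semicontinuous) functions in $(\Sigma_w,\Sigma_v)$, is upper semicontinuous. This identifies an optimal pair $(\Sigma_w^\star,\Sigma_v^\star)$ and hence an optimal time-invariant Gaussian $\PP^\star\in \mathcal{B}^\infty_{\mathcal N}$ for~\eqref{eq:inf-dual-1}, proving~(iii); because these covariances lie in $\mathcal{M}^+_{\Sigma_w}\times \mathcal{M}^+_{\Sigma_v}$, we also obtain $\Sigma_w^\star\succeq \hat{\Sigma}_w$ and $\Sigma_v^\star\succeq \hat{\Sigma}_v$, i.e.~(iv). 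For the primal, I apply \Cref{prop:inf-optimal-controllers} with the optimal $\PP^\star\in \underline{\mathcal{B}}{}^\infty_{\mathcal N}$ to produce $U^\star\in \mathcal{U}_\infty$ solving $\min_{U\in \mathcal{U}_\infty} J(U;\Sigma_w^\star,\Sigma_v^\star)$. By the saddle property, $(U^\star,(\Sigma_w^\star,\Sigma_v^\star))$ is a saddle point of $J$, so $u^\star=U^\star\eta$ attains $\bar{p}^\star=p^\star$ in~\eqref{eq:inf-primal-1}, proving~(ii).

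The main obstacle is verifying the hypotheses of Sion's theorem, since $\mathcal{U}_\infty$ is an infinite-dimensional convex subset of $\mathcal{T}^{m\times p}$ and, unlike in the finite-horizon case, simple Euclidean compactness is unavailable. The compact side is $\mathcal{M}^+_{\Sigma_w}\times \mathcal{M}^+_{\Sigma_v}$, which is enough for Sion's theorem provided $U\mapsto J(U;\Sigma_w,\Sigma_v)$ is quasiconvex and lower semicontinuous in some topology on $\mathcal{U}_\infty$ compatible with $\|\cdot\|_{\mathcal T}$, and $(\Sigma_w,\Sigma_v)\mapsto J$ is linear (hence continuous) in the finite-dimensional factor. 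Convexity is given by \Cref{prop:J-saddle}; lower semicontinuity follows because the limit in \eqref{eq:J-limit-formula} from \Cref{lem:limsup-converges} expresses $J$ as a convergent sum of continuous quadratic functionals of the Toeplitz blocks, and these partial sums are continuous in $\|\cdot\|_{\mathcal T}$ on $\mathcal{U}_\infty$, so the supremum of the partial sums is lower semicontinuous. Once this topological bookkeeping is in place, Sion's theorem applies and the remainder of the argument is essentially identical to the finite-horizon proof.
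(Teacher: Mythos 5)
Your proposal is correct and follows essentially the same architecture as the paper's proof: the chain $\underline d\opt\leq d\opt\leq p\opt\leq\overline p\opt$, the reduction to the simplified minimax/maximin problems via \Cref{prop:inf-optimal-distributions} and \Cref{prop:inf-optimal-controllers}, a minimax theorem to close the gap, and attainment/assertions (ii)--(iv) read off from the saddle structure and the construction of $\mathcal{M}^+_{\Sigma_w}\times\mathcal{M}^+_{\Sigma_v}$.

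The one substantive difference is your choice of minimax theorem. You invoke Sion's theorem, which (with only the $(\Sigma_w,\Sigma_v)$-side compact) forces you to establish lower semicontinuity of $U\mapsto J(U;\Sigma_w,\Sigma_v)$ on $\mathcal U_\infty$ in a topology compatible with $\|\cdot\|_{\mathcal T}$ --- the issue you rightly flag as the main obstacle, and which you resolve by writing $J=\Tr(\Sigma_w M_\infty)+\Tr(\Sigma_v N_\infty)$ as a supremum of partial sums that are continuous in finitely many Toeplitz blocks. That argument is sound. The paper instead applies Fan's minimax theorem (\citealp{ref:fan1953minimax}; Theorem~4.2 in \citealp{ref:sion1958minimax}), which requires compactness and upper semicontinuity only on the concave side and imposes \emph{no} topological conditions on the convex side; this makes your semicontinuity bookkeeping unnecessary. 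Both routes work; Fan's theorem simply buys a shorter proof, while your version is self-contained modulo the (correct) l.s.c. verification. One cosmetic nit: the Loewner half-space $\{\Sigma:\Sigma\succeq\hat\Sigma_z\}$ contains $\hat\Sigma_z$ on its boundary, not in its interior --- but all you need for nonemptiness of $\mathcal{M}^+_{\Sigma_z}$ is that $\hat\Sigma_z$ belongs to both sets, which it does.
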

% \begin{theorem}[{\citet{ref:fan1953minimax}, \citet[Theorem 4.2]{sion1958general}}]
%     Let $\mathcal{M}$ be a non-empty convex subset of a vector space and $\mathcal N$ be any non-empty compact and convex subset of a  topological vector space. Suppose that $f: \mathcal{M} \times \mathcal N \to \R$ is convex in $\mu $ and concave as well as upper semicontinuous in $\nu$. Then, we have $\inf_{\nu \in \mathcal N } \max_{\mu \in \mathcal{M}} f(\mu, \nu) = \max_{\mu \in \mathcal{M}}\inf_{\nu \in \mathcal N} f (\mu, \nu)$.
%     \label{thm:fansminimax}
% \end{theorem}
\begin{proof}{Proof.}
The proof of~\Cref{thm:strong-duality-infinite} relies on the minimax theorem due to \citet{ref:fan1953minimax} (also see Theorem 4.2 in \citet{ref:sion1958minimax}), which states that if $\mathcal{M}$ is a non-empty convex subset of a vector space, $\mathcal N$ is any non-empty compact and convex subset of a  topological vector space, and $f: \mathcal{M} \times \mathcal N \to \R$ is convex in its first argument and concave and upper semicontinuous in the second argument, then $\inf_{\nu \in \mathcal N } \max_{\mu \in \mathcal{M}} f(\mu, \nu) = \max_{\mu \in \mathcal{M}}\inf_{\nu \in \mathcal N} f (\mu, \nu)$.

By the construction of the upper and lower bounding problems~\eqref{eq:upper-bound-infty} and~\eqref{eq:inf-lower-bound-dual}, respectively,
\begin{equation}
    \label{eq:primal-dual-bounds-infinite-horizon}
    \underline d\opt \leq d\opt \leq p^\star \leq \overline p^\star.
\end{equation}
From \Cref{prop:inf-optimal-controllers} and the discussion immediately following it, we also have that~$\underline d\opt$ matches the optimal value of the simplified maximin problem~\eqref{eq:lower-bound-infty-reformulation}. Similarly, from \Cref{prop:inf-optimal-distributions} and the discussion following it, we have that~$\overline p^\star$ matches the optimal value of the simplified minimax problem~\eqref{eq:upper-bound-infty-reformulation}. Note that~\eqref{eq:upper-bound-infty-reformulation} and~\eqref{eq:lower-bound-infty-reformulation} differ only with respect to the order of minimization and maximization. By construction, the feasible set~$\mathcal U_\infty$ from which policies are chosen in~\eqref{eq:upper-bound-infty-reformulation} and~\eqref{eq:lower-bound-infty-reformulation} is a non-empty subset of the infinite-dimensional linear space~$\mathcal T^{m\times p}$ and is convex because the norm $\|\cdot\|_\mathcal T$ for Toeplitz matrices is convex. In addition, %by ~Assumption~\ref{ass:general_assumption_about_M_function}\,\ref{prop:2wasserstein_for_elliptical}, 
the feasible set $\mathcal{M}^+_{\Sigma_w}\times \mathcal{M}^+_{\Sigma_v}$ from which nature chooses covariance matrices in ~\eqref{eq:upper-bound-infty-reformulation} and~\eqref{eq:lower-bound-infty-reformulation} is a non-empty convex and compact subset of the finite-dimensional linear space~$\mathbb S^n\times\mathbb S^p$. Finally, \Cref{prop:J-saddle} ensures that  the restriction of $J(U; \Sigma_w, \Sigma_v)$ to $\mathcal U_\infty\times (\mathcal{M}_{\Sigma_w}\times\mathcal{M}_{\Sigma_v})$ is convex in~$U$ and linear (and thus trivially upper semicontinuous) in~$(\Sigma_w, \Sigma_v)$. Therefore, the Fan Minimax Theorem is applicable to problems ~\eqref{eq:upper-bound-infty-reformulation} and~\eqref{eq:lower-bound-infty-reformulation} and this implies that $\underline d\opt=\overline p\opt$, which in turn implies that all three inequalities in~\eqref{eq:primal-dual-bounds-infinite-horizon} collapse to equalities. Each of these equalities implies one of the three assertions in~(i)-(iii). 

Lastly, assertion~(iv) follows immediately from the construction of the sets $\mathcal{M}^+_{\Sigma_w}\times \mathcal{M}^+_{\Sigma_v}$, which implies that $\Sigma_w\opt \succeq \hat{\Sigma}_w$ and $\Sigma_v\opt \succeq \hat{\Sigma}_v$.

\end{proof}

\section{Conclusions, Limitations, and Future Directions}
\label{sec:conclusions_future_directions}
This work formulated a distributionally robust version of the classical LQG problem by replacing the fixed disturbance model with a divergence ball around a nominal distribution. Under \revision{zero-mean} Gaussian nominal noise, \revision{an orthogonality requirement on the second moments of the distributions (equivalent to uncorelatedness under zero means),} and suitable structural requirements on the divergence, we proved that affine output-feedback policies and a Gaussian distribution form a Nash equilibrium in our mini-max game. We also showed that it is optimal for the adversary to set the mean of the distribution to zero\revision{, in which case} the \dm{}'s policies become linear and the adversary optimally “inflates’’ the nominal covariance matrix. The results generalize and rationalize many results from the (robust) LQG literature, provide an intuitive rule of thumb to address distributional misspecification in practice, and enable a very efficient Frank-Wolfe algorithm whose iterations are standard LQG subproblems. All results extend to an infinite-horizon, average-cost setting -- yielding stationary linear policies and a time-invariant Gaussian worst-case model -- and to entropy-regularized optimal transport, Fisher divergence, or an  elliptical nominal distribution with 2-Wasserstein distance.

Future work could be aimed at extending this framework or addressing some of its limitations. For instance, one could leverage our results to compute control policies with performance guarantees when the disturbance noise distribution is known but otherwise \emph{general}. More specifically, consider a distribution $\QQ$ such that $\QQ \in \mathcal{B}$ for some ambiguity set $\mathcal{B}$ that is compatible with our assumptions. Then, the optimal solution in the DRLQ problem $\inf_{u \in \mathcal{U}_y} \sup_{\PP \in \mathcal{B}} \EE_{\PP}[J(u)]$ will be feasible in the (intractable) linear quadratic control problem $\inf_{u \in \mathcal{U}_y} \EE_{\QQ}[J(u)]$, and its optimality gap will be upper bounded by the difference between the optimal value of the DRLQ model and the optimal value in the distributionally robust ``optimistic" problem $\inf_{\PP \in \mathcal{B}} \inf_{u \in \mathcal{U}_y} \EE_{\PP} [J(u)]$. For this procedure to be effective, one must be able to solve the latter problem (which may be non-convex) and also test whether a given distribution $\QQ$ belongs to $\mathcal{B}$, and possibly adjust $\mathcal{B}$ to guarantee this, without significantly expanding the ambiguity set too much. These tasks are likely challenging for general distributions $\QQ$, so future work could be devoted to identifying tractable cases and designing algorithms for membership testing, ambiguity set calibration, and solving the optimistic problem.

One could also consider distributionally robust formulations for other stochastic control problems that extend the LQR or LQG frameworks, such as the linear-exponential-quadratic Gaussian model due to \citet{Whittle_risk_sensitive} or the one recently considered in~\cite{deZegherIancuPlambeck_2019}, or the model with affine dynamics and extended quadratic costs from \citet{Barratt_Boyd_2022}. Alternatively, one could consider formulations that involve state or control constraints and derive policies with provable performance guarantees, or consider control problems while learning the ambiguity set, as in \citet{iancu_trichakis_yoon_2021} and related literature.

\section*{Acknowledgements}
This work was supported as a part of the NCCR Automation, a National Centre of Competence in Research, funded by the Swiss National Science Foundation (grant number 51NF40\_225155).
Bahar Taşkesen gratefully acknowledges financial support from the University of Chicago Booth School of Business.
Dan Iancu gratefully acknowledges partial support from INSEAD during his leave from Stanford University, which contributed to the completion of this research.
\newpage

%%%%%%%%%%%%%%%%%%%%%%%%%%%%%%%%%%%%%%%%%%
%
%  Appendices
%
%%%%%%%%%%%%%%%%%%%%%%%%%%%%%%%%%%%%%%%%%%

% \begin{APPENDICES}
\section*{Appendix}
\setcounter{section}{0}
\renewcommand\thesection{\Alph{section}}
\renewcommand{\theequation}{A.\arabic{equation}}
\renewcommand{\thefigure}{A.\arabic{figure}}
\renewcommand{\thetable}{A.\arabic{table}}
\renewcommand{\thealgorithm}{A.\arabic{algorithm}}
\renewcommand\thetheorem{\Alph{theorem}}
\renewcommand\thelemma{\Alph{lemma}}

\maketitle

\section{Results on the LQG Problem with Known Distributions} 
\label{sec: appx: optimal-solution-classic-LQG}
\subsection{Finite Horizon}
The finite horizon classical LQG problem assumes that all exogenous noise terms follow known Gaussian distributions with the covariance matrices for the output noise being positive definite, i.e., $\Sigma_{v_t} \succ 0$ for every $t \in [T-1]$ \citep{Bertsekas_2017}. In this section, we restate these classical results and adapt them to the case where the exogenous noise has non-zero mean. Throughout this section, we let $\mu_{x_0},\mu_{v_t}, \mu_{w_t}$ denote the means and $X_0 = \Sigma_{x_0}$, $V_t = \Sigma_{v_t}$, and $W_t = \Sigma_{w_t}$ to denote the covariance matrices characterizing the exogenous noise.

This problem can be solved efficiently via dynamic programming~\citep{Bertsekas_2017}. The unique optimal control inputs satisfy $u\opt_t=K_t \hat x_t$ for every $t\in[T-1]$, where~$K_t\in\R^{n\times n}$ is the optimal feedback gain matrix, and~$\hat x_t=\mathbb E_{\PP}[x_t|y_0,\ldots,y_t]$ is the minimum mean-squared-error estimator of~$x_t$ given the observation history up to time~$t$. Thanks to the celebrated separation principle, $K_t$ can be computed by pretending that the system is deterministic and allows for perfect state observations, and~$\hat x_t$ can be computed while ignoring the control problem.

To compute~$K_t$, one first solves the deterministic LQR problem corresponding to the LQG problem. Its value function~$x_t^\top P_tx_t$ at time~$t$ is quadratic in~$x_t$, and~$P_t$ obeys the backward recursion
\begin{subequations}
\label{eq:LQR-solution}
\begin{equation}
    P_t = A_t^\top P_{t+1} A_t + Q_t - A_t^\top P_{t+1} B_t(R_t + B_t^\top P_{t+1} B_t)^{-1} B_t^\top P_{t+1}A_t \quad \forall t\in[T-1]
\label{eq:control-gain-Pt}
\end{equation}
initialized by~$P_T=Q_T$. The optimal feedback gain matrix~$K_t$ can then be computed from~$P_{t+1}$ as
\begin{equation}
    K_t = -(R_t + B_t^\top P_{t+1}B_t)^{-1} B_t^\top P_{t+1} A_t\quad \forall t\in[T-1].
    \label{eq:feedback-gain-Kt}
\end{equation}
\end{subequations}
Importantly, note that $K_t$ only depends on the system matrices $\{A_\tau,B_\tau\}_{\tau \geq t}$ and on the cost matrices $\{R_\tau,Q_\tau\}_{\tau \geq t}$, but does \emph{not depend} on the distribution of the exogenous noise terms.

Because~$x_t$ and~$(y_0,\ldots,y_t)$ follow a multivariate Gaussian distribution, the minimum mean-squared-error estimator~$\hat x_t$ can be calculated directly using the formula for the mean of a conditional Gaussian distribution. Alternatively, one can use the Kalman filter to compute~$\hat x_t$ recursively, which is more insightful and more efficient. The Kalman filter also recursively computes the covariance matrix~$\Sigma_t$ of~$x_t$ conditional on~$y_0,\ldots,y_t$ and the covariance matrix~$\Sigma_{t+1 | t}$ of~$x_{t+1}$ conditional on~$y_0,\ldots,y_{t}$ evaluated under~$\PP$.
%a current state estimate covariance matrix~$\Sigma_{t} = \EE_\PP[(x_t - \hat x_t) (x_t - \hat x_t)^\top]$ as well as a next state estimate covariance matrix~$\Sigma_{t+1 | t} = A_{t} \Sigma_t A_{t}^\top + W_t$. 
Specifically, these covariance matrices obey the forward recursion
\begin{equation}
    \left.\begin{aligned}
    &\Sigma_t = \Sigma_{t | t-1} - \Sigma_{t | t-1} C_t^\top (C_t \Sigma_{t | t-1} C_t^\top + V_t)^{-1} C_t \Sigma_{t | t-1} \\ & \Sigma_{t+1 | t} = A_{t} \Sigma_t A_{t}^\top + W_t 
    \end{aligned} \right\} ~\forall t\in[T-1]
\label{eq:kalman-cov-updates}
\end{equation}
initialized by~$\Sigma_{0 | -1} = X_0$. Using~$\Sigma_{t | t-1}$, we then define the Kalman filter gain as
\begin{equation*}
    L_t = \Sigma_{t} C_{t}^\top  V_{t}^{-1} \quad \forall t \in [T-1]
\end{equation*}
which allows us to compute the minimum mean-squared-error estimator via the forward recursion
\begin{equation}
    \label{eq:mmse}
    \hat x_{t+1} = A_t \hat x_t + B_t K_t \hat{x}_t + \hat \mu_{w_t}+ L_{t+1} \left(y_{t+1} - C_{t+1}(A_t \hat x_t + B_t K_t \hat{x}_t + \hat \mu_{w_t})  - \mu_{v_{t+1}}\right)  \quad \forall t\in[T-1]
\end{equation}
initialized by~$\hat x_0 = L_0 y_0- \hat \mu_{v_0}$.

Note that~\eqref{eq:feedback-gain-Kt} and~\eqref{eq:mmse} readily show that the optimal control policy $u_t\opt$ depends \emph{affinely} on the outputs $y_0, y_1,\dots, y_t$ and provide the explicit recursive procedure needed to compute all the relevant coefficients.

Moreover, one can verify from these expressions that the optimal value of the LQG problem is
\begin{equation}
    \sum\limits_{t=0}^{T-1} \Tr((Q_t-P_t) \Sigma_t) + \sum\limits_{t=1}^T \Tr( P_t (A_{t-1} \Sigma_{t-1} A_{t -1}^\top + W_{t-1})) + \Tr(P_0 X_0) .
    \label{eq:lqg-cost}
\end{equation}

%%%%%%%%%%%%%%%%%%%%%%%%%%%%%%%%%%%%%%%%%%
%
%  Definitions of stacked system matrices.  
%
%%%%%%%%%%%%%%%%%%%%%%%%%%%%%%%%%%%%%%%%%%
%\section{Definitions of Stacked System Matrices}
\subsection{Definitions of Stacked System Matrices for Finite Horizon}
\label{sec: appx: stacked-matrices}
The stacked system matrices appearing in problem~\eqref{eq:DRLQG} are defined as follows. First, the stacked state and input cost matrices $Q\in \mathbb S^{n(T+1)}$ and $R\in \mathbb S^{mT}$ are set to
\[
    \quad Q = \begin{bmatrix}
    Q_0  \\
    &Q_1   \\
    & & \ddots \\
    & & &Q_{T}
    \end{bmatrix}
    \quad \text{and} \quad R = \begin{bmatrix}
    R_0  \\
    &R_1   \\
    & & \ddots \\
    & & &R_{T-1}
    \end{bmatrix},
\]
respectively. Similarly, the stacked matrices appearing in the linear dynamics and the measurement equations $C\in \R^{pT\times n(T+1)}$, $G\in\R^{n (T+1) \times n(T+1)}$ and $H\in \R^{n(T+1) \times m T}$ are defined as
\begin{equation*}
\begin{aligned}
    C = \begin{bmatrix}
    C_0 &0 \\
    &C_1  & 0\\
    & & \ddots &\ddots\\
    & & &C_{T-1} &0
    \end{bmatrix} ,\quad G =
    \begin{bmatrix}
    A_0^0 \\
    A_0^1 &A^1_1 \\
    \vdots & &\ddots \\
    A_0^T &A_1^T &\dots &A^T_T
    \end{bmatrix}
    \end{aligned}
\end{equation*}
and
\[
    H = 
    \begin{bmatrix}
    0 \\
    A^1_1 B_0 & 0 \\
    A^2_1 B_0 &A^2_2 B_1 &0 \\
    \vdots & & &\ddots \\
    \vdots & & & &0\\
    A_1^T B_0 &A_2^T B_1 &\dots &\dots &A^T_T B_{T-1}
    \end{bmatrix},
\] 
respectively, where $A^t_s = \prod_{k = s}^{t-1} A_{k}$ for every $s < t$ and $A^t_s = I$ for $s= t$.

%%%%%%%%%%%%%%%%%%%%%%%%%%%%%%%%%%%%
%
%  Purified output feedback
%
%%%%%%%%%%%%%%%%%%%%%%%%%%%%%%%%%%%%
\subsection{Results on Purified Output Feedback Policies}
\label{subsec:purified_outputs_classical_LQG}
Using the stacked system matrices, we can now express the purified observation process~$\eta$ as a linear function of the exogenous uncertainties~$w$ and~$v$ that is {\em not} impacted by $u$. The following result summarizes this -- see also \cite{Ben-Tal_Boyd_Nemirovski_2005, ref:skaf2010design}.
\begin{lemma}
    We have $\eta = Dw + v$, where $D = CG$.
    \label{lemma:eta-rep-w-v}
\end{lemma}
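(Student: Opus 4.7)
The plan is to derive the identity by a direct computation using the stacked matrix representation constructed in Appendix~\S\ref{sec: appx: stacked-matrices}. The underlying intuition is that both the original system and the fictitious noise-free system are driven by the \emph{same} control input $u$, so when we form $\eta = y - y'$ all $u$-dependent terms cancel, leaving an affine expression in only the exogenous uncertainties $w$ and $v$.

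Concretely, I would first write the stacked form of the actual system as $x = Hu + Gw$ and $y = Cx + v$, and combine these to obtain
\begin{equation*}
    y \;=\; C(Hu + Gw) + v \;=\; CHu + CGw + v.
\end{equation*}
Next, for the fictitious noise-free system, the initial condition $x'_0 = 0$ together with the vanishing of all process noise $w_t$ corresponds, in the stacked notation, to setting the exogenous vector to zero. Applying the same block dynamics yields $x' = Hu$ and therefore $y' = Cx' = CHu$. Subtracting gives
\begin{equation*}
    \eta \;=\; y - y' \;=\; (CHu + CGw + v) - CHu \;=\; CGw + v,
\end{equation*}
and setting $D := CG$ establishes the lemma.

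There is no substantive obstacle: the argument is essentially a two-line bookkeeping computation once the stacked matrices of Appendix~\S\ref{sec: appx: stacked-matrices} are in place. The only subtlety worth highlighting is that the cancellation of the $CHu$ term hinges crucially on both systems sharing the \emph{same} causal input $u$ -- exactly the feature that the \dm{} exploits by simulating the noise-free copy internally. The resulting expression $\eta = Dw + v$ is what makes $\eta$ an \emph{input-free}, affine function of the exogenous noise, a property that is repeatedly invoked in \S\ref{sec:sec3_purified_observations}--\S\ref{sec:sec3_optimality_linear_zero_mean} to reformulate the non-convex DRLQ problem as a convex one and to enable the strong-duality argument.
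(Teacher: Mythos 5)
Your proposal is correct and follows essentially the same route as the paper: both use the stacked relations $x = Hu + Gw$, $y = Cx + v$ for the actual system and $x' = Hu$, $y' = Cx'$ for the noise-free copy, and observe that the common $CHu$ term cancels in $\eta = y - y'$, leaving $\eta = CGw + v = Dw + v$. No gaps.
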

\begin{proof}{Proof.} %{[Proof of \Cref{lemma:eta-rep-w-v}]}
The purified observation process is defined as~$\eta = y -\hat y$. Recall now that the observations of the original system satisfy $y=Cx+v$. Similarly, one readily verifies that the observations of the fictitious noise-free system satisfy $\hat y= C \hat x$. Thus, we have $\eta= C(x-\hat x)+v$. Next, recall that the state of the original system satisfies $x=Hu+Gw$, and note that the state of the fictitious noise-free system satisfies $\hat x=Hu$. Combining all of these linear equations finally shows that $u$ cancels out and that $\eta = CGw+v = D w + v$. 
\end{proof}

It is well known that every causal control policy that is linear in the original observations~$y$ can be reformulated as a causal policy that is linear in the purified observations~$\eta$ and vice versa \citep{Ben-Tal_Boyd_Nemirovski_2005, ref:skaf2010design}. Perhaps surprisingly, however, the one-to-one transformation between the respective coefficients of~$y$ and~$\eta$ is {\em not} linear. To keep this paper self-contained, we review these insights in the next lemma. 
\begin{lemma}
If $u=U\eta+q$ for some~$U \in \mathcal U$ and $q \in \R^{pT}$, then $u=U'y+q'$ for $U'= (I + UCH)^{-1} U $ and $q'=(I + UCH)^{-1} q$. Conversely, if $u=U'y+q'$ for some~$U' \in \mathcal U$ and $q' \in \R^{pT}$, then $u=U\eta+q$ for $U=(I - U'CH)^{-1}{ U'}$ and $q=(I - U'CH)^{-1} q'$.
\label{lemma:linear-rel-u-eta}
\end{lemma}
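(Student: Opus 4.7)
The plan is to prove the lemma by deriving a single key identity relating the observation process $y$ to the purified observation process $\eta$ through the controls $u$, then solving for $u$ in each direction and verifying that the required matrix inverses exist and preserve causality.

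First I would combine the definitions to express $y$ in terms of $\eta$ and $u$. Using the state equation $x=Hu+Gw$, the measurement equation $y=Cx+v$, and the relation $\eta=Dw+v=CGw+v$ established in Lemma~\ref{lemma:eta-rep-w-v}, I obtain the identity $y = CHu + \eta$, or equivalently $\eta = y - CHu$. This identity is the only thing that will be needed: it encodes the entire difference between the original and fictitious noise-free systems.

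Next, for the forward direction, I would substitute $\eta = y - CHu$ into $u=U\eta+q$ to get $u = Uy - UCHu + q$, i.e., $(I+UCH)u = Uy + q$. Provided $I+UCH$ is invertible, this yields $u=(I+UCH)^{-1}Uy + (I+UCH)^{-1}q$, as claimed. For the converse direction, I would substitute $y = CHu+\eta$ into $u=U'y+q'$ to get $(I-U'CH)u = U'\eta + q'$, and again invert to obtain the claimed formulas.

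The key fact that remains to be verified is that $I+UCH$ and $I-U'CH$ are invertible and that the resulting feedback matrices remain block lower triangular (so that causality is preserved, i.e., the new $U$ or $U'$ lies in $\mathcal{U}$). The crucial observation here is that $CH$ is \emph{strictly} block lower triangular: by the explicit form of $H$ in Appendix~\ref{sec: appx: stacked-matrices}, $H$ has a zero main block diagonal (since $u_t$ affects $x_{t+1}$ but not $x_t$), and $C$ is block diagonal (with a trailing zero column), so their product $CH$ has a zero main block diagonal. Since $U$ and $U'$ are block lower triangular, the products $UCH$ and $U'CH$ are also strictly block lower triangular, hence nilpotent. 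Consequently $I\pm UCH$ and $I\mp U'CH$ are unit block lower triangular and therefore invertible, with inverses that are themselves block lower triangular (in fact expressible as finite Neumann series). This simultaneously establishes existence of the inverses and confirms that $U'=(I+UCH)^{-1}U\in\mathcal{U}$ and $U=(I-U'CH)^{-1}U'\in\mathcal{U}$. The main obstacle is simply bookkeeping the block-triangular structure; once the identity $y=CHu+\eta$ is in hand, everything else is linear algebra.
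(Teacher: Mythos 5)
Your proof is correct and follows essentially the same route as the paper's: derive the identity $\eta = y - CHu$ (the paper writes it as $u = Uy - UCHu + q$ via $\hat y = C\hat x = CHu$), solve the resulting linear equation for $u$, and justify invertibility by noting that $I \pm UCH$ is unit block lower triangular. Your additional observation that $UCH$ is strictly block lower triangular, hence nilpotent, so that the inverse is itself block lower triangular and causality is preserved, is a welcome refinement of a point the paper leaves implicit.
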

\begin{proof}{Proof.} %{[Proof of~\Cref{lemma:linear-rel-u-eta}]}
    If $u = U \eta +q$ for some~$U \in \mathcal U$ and $q \in \R^{pT}$, then we have
    \[
        u=U\eta+q= U(y-\hat y)+q=Uy-UC\hat x+q=Uy-UCHu+q,
    \]
    where the second equality follows from the definition of~$\eta$, the third equality holds because $y = Cx+v$, and the last equality exploits our earlier insight that $\hat y = C\hat x$. The last expression depends only on~$y$ and~$u$. Solving for $u$ yields $u= U'y+q'$, where $U'= (I+ UCH)^{-1} U$ and $q'=(I + UCH)^{-1}q$. Note that $(I+ UCH)$ is indeed invertible because $ I+ UCH$ is a lower triangular matrix with all diagonal entries equal to one, ensuring a determinant of one. 
    
    Similarly, if $u = U'y + q'$ for some~$U' \in \mathcal U$ and $q' \in \R^{pT}$, then we have
    \[
        u = U'y + q' = U'(\eta + \hat y)+q' = U'\eta + U'C\hat x +q' = U'\eta + U'CHu +q'.
    \]
    Solving for~$u$ yields $u=U\eta+q$, where $U = (I - U' CH)^{-1} U' $ and $ q = (I - U' CH)^{-1} q'$. Note again that $(I- U'CH)$ is indeed invertible because $(I - U' CH)$ is a lower triangular matrix with all diagonal entries equal to one. 
 \end{proof}

%%%%%%%%%%%%%%%%%%%%%%%%%%%%%%%%%%%%
%
%   INFINITE HORIZON - CLASSICAL RESULTS
%
%%%%%%%%%%%%%%%%%%%%%%%%%%%%%%%%%%%%
\subsection{Infinite-Horizon Results}
\label{sec:classical-lqg-inf-horizon}
The infinite-horizon classical LQG problem with average cost criterion assumes that all exogenous noise terms follow known time-invariant Gaussian distributions with the covariance matrices for the  noise being positive definite, \textit{i.e.}, $\Sigma_{v_t} = \Sigma_{v} \succ 0$ and $\Sigma_{w_t} = \Sigma_{w} \succ 0$ for all $t \in \mathbb N$.
For simplicity, we consider the case where the exogenous noise has zero mean. Furthermore, consistent with \Cref{sec:infinite-horizon}, we assume that the standard assumptions outlined in~Assumption~\ref{ass:stabilizability-detectibility} hold.

The infinite-horizon classical LQG problem is solved by optimal control inputs that satisfy $u_t\opt = K \hat x_t$ for every $t \in \mathbb N$, where $K \in \R^{n \times n}$ is the optimal steady-state feedback gain matrix, and $\hat x_t$ is again the minimum mean-squared-error estimator of~$x_t$ given the observation history up to time~$t$.

To compute $K$, one first finds the $P \in \mathbb S_+^{n}$ that solves the discrete-time algebraic Riccati equation~(DARE)
\begin{equation}P = A_0^\top P A_0 + Q_0 - A_0^\top P B_0 (R_0 + B_0^\top P B_0)^{-1} B_0^\top P A_0,
\label{eq:dare}
\end{equation}
which is guaranteed to exist and to be unique under Assumption~\ref{ass:stabilizability-detectibility} by \cite[Lemma 16.6.1]{ref:lancaster1995algebraic}.
The matrix~$K$ can then be computed from~$P$ as $$K = - (R_0 + B_0^\top P B_0)^{-1} B_0^\top P A_0.$$

As in the finite-horizon case, the Kalman filter can be used to compute $\hat x_t$. The steady-state covariance matrix $\tilde \Sigma \in \mathbb S_+^{n}$ solves
\begin{equation}\tilde \Sigma = A_0 \tilde \Sigma A_0^\top + \Sigma_w - A_0\tilde \Sigma C_0^\top(C_0 \tilde \Sigma C_0^\top + \Sigma_v)^{-1} C_0 \tilde \Sigma A_0^\top,
\label{eq:kalman-dare}
\end{equation}
and is guaranteed to exist and to be unique under Assumption~\ref{ass:stabilizability-detectibility} by \cite[Theorem 17.5.3]{ref:lancaster1995algebraic}. We define the steady-state Kalman filter gain as
$$L = \tilde \Sigma C_0^\top (\Sigma_v + C_0 \tilde \Sigma C_0 )^{-1}.$$
This allows state estimation via the recursion
\begin{equation}\hat x_{t+1} = A_0 \hat x_t + B_0 u_t\opt + L (y_{t+1} - C_0(A_0 \hat x_t + B_0 u^\star_t)),
\label{eq:state-estimate-dyn}
\end{equation}
initialized by $\hat x_0 = L y_0$.
If $\hat\Sigma_w \succ 0$, then $A(I - LC_0)$ is Schur stable by \citep[Theorem 17.5.3]{ref:lancaster1995algebraic}. 
Additionally, if $Q_0 \succ 0$, then $(A_0 +B_0 K)$ is Schur stable by \citep[Theorem 16.6.4]{ref:lancaster1995algebraic}. These observations will be useful for establishing that both the state $x_t$ and its estimate $\hat{x}_t$ admit stationary covariance matrices.

\begin{lemma}
Suppose that Assumption~\ref{ass:stabilizability-detectibility} holds, $\hat\Sigma_w \succ 0$ and $\hat\Sigma_v\succ 0$.
   If $u_t\opt = K \hat x_t$, and $\hat x_t$ obeys the forward recursion 
   \[\hat x_{t+1} = A_0 \hat x_t + B_0 u_t\opt + L(y_{t+1} - C_0 (A_0 \hat x_t + B_0 u_t\opt)),\]
   initialized by $\hat x_0 = L y_0$, then $x_t$ and $\hat x_t$ admit stationary covariance matrices. 
    \label{lem:stationary-covs}
\end{lemma}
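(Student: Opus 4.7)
The plan is to rewrite the joint dynamics of the state estimate $\hat x_t$ and the estimation error $e_t := x_t - \hat x_t$ as a linear time-invariant recursion driven by white Gaussian noise whose state matrix is Schur stable; standard convergence results for the discrete Lyapunov equation will then immediately yield stationary covariance matrices for $\hat x_t$ and $e_t$, and hence for $x_t$. Concretely, substituting $u_t\opt = K \hat x_t$ and $y_{t+1} = C_0 x_{t+1} + v_{t+1}$ into~\eqref{eq:state-estimate-dyn} and using $x_{t+1} = A_0 x_t + B_0 K \hat x_t + w_t$, the innovation $y_{t+1} - C_0(A_0 \hat x_t + B_0 u_t\opt)$ simplifies to $C_0 A_0 e_t + C_0 w_t + v_{t+1}$. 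A direct calculation then yields
\begin{align*}
e_{t+1} &= (I - L C_0) A_0 \, e_t + (I - L C_0) w_t - L v_{t+1}, \\
\hat x_{t+1} &= (A_0 + B_0 K)\, \hat x_t + L C_0 A_0 \, e_t + L C_0 w_t + L v_{t+1}.
\end{align*}
Stacking $z_t := (\hat x_t\trsp, e_t\trsp)\trsp$ and $\xi_t := (w_t\trsp, v_{t+1}\trsp)\trsp$, these two displays combine into $z_{t+1} = \mathcal{A} z_t + \mathcal{B} \xi_t$, where $\mathcal{A}$ is block upper triangular with diagonal blocks $A_0 + B_0 K$ and $(I - L C_0) A_0$.

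The excerpt has already noted that, under Assumption~\ref{ass:stabilizability-detectibility} together with $Q_0 \succ 0$, the block $A_0 + B_0 K$ is Schur stable by~\citep[Theorem~16.6.4]{ref:lancaster1995algebraic}, and that $A_0 (I - L C_0)$ is Schur stable by~\citep[Theorem~17.5.3]{ref:lancaster1995algebraic} under the hypothesis $\hat \Sigma_w \succ 0$. Since $(I - LC_0) A_0$ and $A_0 (I - LC_0)$ are same-sized square matrices and therefore share the same spectrum, both diagonal blocks of $\mathcal{A}$ are Schur stable, and hence so is $\mathcal{A}$ itself. Because $z_t$ depends only on $\{w_s\}_{s < t}$ and $\{v_s\}_{s \le t}$, it is uncorrelated with $\xi_t$, and therefore $\Sigma_{z_t}$ obeys the discrete Lyapunov recursion $\Sigma_{z_{t+1}} = \mathcal{A} \Sigma_{z_t} \mathcal{A}\trsp + \mathcal{B}\, \mathrm{blkdiag}(\hat\Sigma_w, \hat\Sigma_v)\, \mathcal{B}\trsp$. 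Schur stability of $\mathcal{A}$ guarantees that $\Sigma_{z_t}$ converges from any initial condition to the unique positive semidefinite solution of the corresponding algebraic Lyapunov equation. Extracting the upper-left block gives a stationary limit for $\Sigma_{\hat x_t}$, while writing $x_t = [I,\, I]\, z_t$ and taking the induced limit of $\Sigma_{x_t} = [I,\, I]\, \Sigma_{z_t}\, [I,\, I]\trsp$ yields a stationary limit for $\Sigma_{x_t}$.

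The main obstacle I anticipate is the careful bookkeeping around the Schur stability of $(I - LC_0) A_0$: convergence theorems for the Kalman filter are traditionally stated for $A_0 (I - LC_0)$, so the transfer to $(I - LC_0) A_0$ relies on the elementary observation that commuted products of same-sized square matrices share their full spectrum. One also has to check that $z_0$ has finite second moment (which holds since $\hat x_0 = L y_0$ is a linear combination of $x_0$ and $v_0$, both with finite second moment) so that the Lyapunov recursion is initialized at a finite matrix; from there, Schur stability of $\mathcal{A}$ makes the choice of initialization irrelevant. All remaining ingredients -- deriving the joint recursion, verifying that $\xi_t$ is uncorrelated with $z_t$, and invoking convergence of the discrete Lyapunov recursion under a Schur-stable state matrix -- are classical.
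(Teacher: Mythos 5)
Your proof is correct and follows essentially the same route as the paper: both derive the error dynamics, stack them with a second coordinate into a block-triangular LTI system whose diagonal blocks are $A_0+B_0K$ and $(I-LC_0)A_0$, invoke Schur stability of each block, and conclude via convergence of the discrete Lyapunov recursion. The only (immaterial) difference is that you stack $(\hat x_t, e_t)$ where the paper stacks $(x_t, e_t)$; your explicit remark that $(I-LC_0)A_0$ inherits the spectrum of $A_0(I-LC_0)$ is a detail the paper leaves implicit.
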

\begin{proof}{Proof.}
Denote by $e_t = x_t - \hat x_t$ the estimation error and by $z_{t} = [x_t^\top, e_t^\top]^\top$ the joint vector of state and estimation error under the optimal control inputs $u^\star_t = K \hat{x}_t$ for every $t \in \mathbb N$.
The state dynamics under the optimal control inputs are then given by
\begin{equation}
    \begin{aligned}
        x_{t+1} &= A_0 x_t + B_0 K \hat x_t + w_t = A_0(e_t + \hat x_t) + B_0 K \hat x_t + w_t\\
        &= A_0 e_t + (A_0+B_0 K)\hat x_t + w_t = (A_0+B_0 K) x_t - B_0 K e_t + w_t.
    \end{aligned}
    \label{eq:x-t-dyn}
\end{equation}
    For the error dynamics, we have
    \begin{equation}
    \begin{aligned}
        e_{t+1}
        &= x_{t+1} - \hat x_{t+1} \\
        &= A_0 x_t + B_0 K \hat{x}_t + w_t -  A_0 \hat x_t - B_0 K \hat{x}_t - L (C_0 x_{t+1} + v_{t+1}) + L C_0(A_0 \hat x_t + B_0 K \hat x_t)\\
        &= (A_0 - L C_0 A_0) (x_t - \hat x_t) - (I - L C_0) w_t - L v_{t+1}\\
        &= (A_0 - L C_0 A_0) e_t + (I - L C_0) w_t - L v_{t+1},
    \end{aligned}
    \label{eq:e-t-dynamics}
    \end{equation}
    where the second equality follows from \eqref{eq:x-t-dyn} and $y_{t+1} = C_0 x_{t+1} + v_{t+1}$, the third equality follows from $x_{t+1} = A_{0} x_t + B_0 K \hat{x}_t + w_t$ and rearranging terms, and the last equality follows from the definition of $e_t$. 
    Combining the dynamics in \eqref{eq:x-t-dyn} and \eqref{eq:e-t-dynamics}, we have
    \begin{equation*}
        z_{t+1} =Fz_t + \Xi \, \xi_t,~\text{where}~ F = \begin{bmatrix}
            A_0 + B_0 K  &  -B_0 K\\
            0            &  A_0 - L C_0 A_0
        \end{bmatrix},~\xi_t = \begin{bmatrix}
        w_{t}\\
        v_{t+1}
    \end{bmatrix}~ \text{and}~\Xi = \begin{bmatrix}
            I   & 0\\
            I - LC_0    & -L
        \end{bmatrix}.
    \end{equation*}
   Note that $(\xi_t)_{t\in\mathbb N}$ is an i.i.d. sequence with zero mean and finite covariance in the form of 
    \begin{equation}
        \Sigma_{\xi_t} = \EE[\xi_t \xi_t^\top] = \begin{bmatrix}
            \Sigma_w \quad &     0\\
            0        & \Sigma_v
        \end{bmatrix}:= \Sigma_\xi.
    \end{equation}
    As $\xi_t $ is independent of $z_t$, the linear recursion of $z_{t+1}$ implies that $\Sigma_{z+1} = \EE[z_{t+1} z_{t+1}^\top]$ follows the discrete-time Lyapunov recursion 
    \begin{equation}\Sigma_{z_{t+1}} = F \Sigma_{z_t} F^\top +  \Xi \Sigma_{\xi}\Xi^\top. 
    \label{eq:lyapunov-z-covs}
    \end{equation}
     Note that $F$ is an upper block triangular matrix, and thus its spectrum is given by $\sigma(F) = \sigma(A_0 + B_0 K) \cup \sigma(A_0 - L C_0 A_0)$. As both $A_0 + B_0 K$ and $(I -L C_0 ) A_0$ are Schur stable, $\sigma(F) < 1$. Therefore, $F$ is a Schur stable matrix. By \cite[Theorem 3.4]{ref:kumar2015stochastic}, as $F$ is Schur stable and $G \Sigma_{\xi} G^\top \succeq 0$, the matrix $\Sigma_{z_t}$ converges to the unique positive semidefinite solution of 
     \begin{equation}\Sigma_{z} = F \Sigma_z F^\top + G \Sigma_{\xi} G^\top.
     \label{eq:sigma-z-lyapunov-eq}
     \end{equation}
        By construction of $z_t$, $\Sigma_{z_t}$ admits the following block from 
     \begin{equation}
         \Sigma_{z_t} =\begin{bmatrix}
             \EE[x_t x_t^\top] \quad \,  & \EE[x_t e_t^\top]\\
            \EE[e_t x_t^\top]   \quad\, &  \EE[e_t e_t^\top]
         \end{bmatrix}
     \end{equation}
     As $\Sigma_{z_t}$ converges to a stationary matrix $\Sigma_z$ solving \eqref{eq:sigma-z-lyapunov-eq}, each of its blocks must converge as well, and thus this observation completes the first assertion of the statement that $x_t$ admits a stationary covariance matrix. 
     
          Next, recall that the estimator satisfies $\hat x_t = x_t - e_t$, and thus we have 
     \begin{equation*}\EE[\hat x_t \hat x_t^\top] =\EE[(x_t - e_t)(x_t - e_t)^\top] = \EE[x_t x_t^\top] + \EE[e_t e_t^\top] - \EE[x_t e_t^\top] - \EE[e_t x_t^\top]. 
     \label{eq:estimator-covs}
     \end{equation*}
 Because each of the components of $\Sigma_{z_t}$ converges, the matrices on the right-hand-side of the expression above also converge. Hence, $\EE[\hat x_t \hat x_t^\top]$ converges to a stationary matrix.  
\end{proof}

%=====================================================
%
%  Proofs for Section 2 (ambiguity model description + examples + verifying assumptions)
%
%=====================================================
\section{Proofs for \Cref{sec:problem-definition}}
\label{app:proofs_for_section2}

\proof[Proof of~Theorem~\ref{theorem: complexity result}.]
Under the assumptions of the theorem we have $x_{1} = w_0$ and $y_0 = v_0 $. Hence, $\EE_\PP[w_0 | v_0] $ is equivalent to $\EE_{\PP_{w_0}}[{w_0}]$ because $v_0$ and $w_0$ are independent. As $\PP_{w_0}$ is the uniform distribution over polytope~$\mathcal H$, computing $\EE_{\PP_{w_0}}[w_0]$ is equivalent to computing the centroid of~$\mathcal H$, which is known to be $\#$P-hard~\cite[Theorem~1]{ref:rademacher2007approximating}.  \endproof

%\section{Verifying Assumptions for Examples in \Cref{sec:examples}}

%
%  Assumption 2 for Wasserstein
%
\medskip
\subsection{Verifying Assumption~\ref{ass:general_assumption_about_M_function} for Examples in \S\ref{sec:examples}}

\smallskip
\subsubsection{Assumption~\ref{ass:general_assumption_about_M_function} for Wasserstein Ambiguity Sets.}
\label{appendix:assumption2_wasserstein}
Our construction and proof rely on the Gelbrich distance, which we formally define next.
\begin{definition}[Gelbrich distance]
\label{def:gelbrich-distance}
For any $d \in \mathbb N$, the Gelbrich distance between two pairs of mean vectors and covariance matrices~$(\mu_z, \Sigma_z), (\hat\mu_z, \hat \Sigma_z) \in \setmoments{d_z}$ is given by
\begin{equation*}
    \mathds G \bigl((\mu_z, \Sigma_z), (\hat \mu_z, \hat\Sigma_z) \bigr) = \sqrt{\|\mu_z - \hat \mu_z\|^2+ \operatorname{Tr}\left(\Sigma_z + \hat\Sigma_z - 2\left( \hat \Sigma_z^{1/2} \Sigma_z \hat \Sigma_z^{1/2}\right)^{1/2}\right)}.
\end{equation*}
\end{definition}
The Gelbrich distance is closely related to the 2-Wasserstein distance. Indeed, it is known that the 2-Wasserstein distance between two distributions is bounded below by the Gelbrich distance between the mean-covariance pairs of the two distributions, and when the two distributions are Gaussian, this bound tight. These results are summarized in the next proposition.
\begin{proposition}[Gelbrich bound {\cite[Theorem 2.1]{gelbrich1990formula}}]\label{prop:WassersteinSubsetGelbrich}
For any two distributions $\PP_z, \hat \PP_z \in \setalldist{d_z}$ with mean-covariance pairs $(\mu_z, \Sigma_z), ( \hat \mu_z, \hat \Sigma_z) \in \setmoments{d_z}$, respectively, we have: 
\begin{enumerate}
    \item[i)]$\mathds W (\PP_z, \hat \PP_z) \geq  \mathds G ((\mu_z, \Sigma_z),(\hat \mu_z, \hat \Sigma_z))$, 
    \item[ii)] $\mathds W (\PP_z, \hat \PP_z) =  \mathds G ((\mu_z, \Sigma_z),(\hat \mu_z, \hat \Sigma_z))$ if $\PP_z$ and $\hat \PP_z$ are Gaussian.\footnote{\citet{gelbrich1990formula} proves this equality more generally, for any two elliptical distributions with the same generator (we discuss this in Appendix~\S\ref{sec: elliptical} -- see \Cref{prop: elliptical variant of Assumption 2 holds}). The equality for the Gaussian case is a special instance of that result.}
\end{enumerate}
\end{proposition}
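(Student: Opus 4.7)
\proof{Proof Sketch.}
The plan is to fix an arbitrary coupling $\pi \in \Pi(\PP_z,\hat\PP_z)$, expand the squared cost pointwise, and reduce the problem to bounding a single cross-covariance trace. Writing $z - \hat z = (z-\mu_z) - (\hat z - \hat\mu_z) + (\mu_z - \hat\mu_z)$ and taking expectations under~$\pi$, the cross term involving $(\mu_z-\hat\mu_z)$ vanishes because the marginals of~$\pi$ have means $\mu_z$ and $\hat\mu_z$. This yields the decomposition
\begin{equation*}
\int \|z-\hat z\|_2^2\,\mathrm{d}\pi(z,\hat z) \;=\; \|\mu_z - \hat\mu_z\|_2^2 \;+\; \Tr(\Sigma_z) + \Tr(\hat\Sigma_z) - 2\Tr(S_\pi),
\end{equation*}
where $S_\pi = \EE_\pi[(z-\mu_z)(\hat z-\hat\mu_z)^\top]$ is the cross-covariance under~$\pi$.

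For part~(i), the plan is to observe that the block matrix
\begin{equation*}
\begin{pmatrix} \Sigma_z & S_\pi \\ S_\pi^\top & \hat\Sigma_z \end{pmatrix} \;\succeq\; 0
\end{equation*}
because it is the covariance matrix of the jointly distributed centered vector $((z-\mu_z), (\hat z-\hat\mu_z))$ under $\pi$. A standard Schur-complement / SDP argument (or direct computation using Von Neumann's trace inequality on the singular values of $\hat\Sigma_z^{1/2}\Sigma_z\hat\Sigma_z^{1/2}$) gives the bound $\Tr(S_\pi) \le \Tr\bigl((\hat\Sigma_z^{1/2}\Sigma_z\hat\Sigma_z^{1/2})^{1/2}\bigr)$. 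Substituting back and taking the infimum over $\pi \in \Pi(\PP_z,\hat\PP_z)$ yields $\mathds{W}(\PP_z,\hat\PP_z)^2 \ge \mathds{G}((\mu_z,\Sigma_z),(\hat\mu_z,\hat\Sigma_z))^2$, proving~(i).

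For part~(ii), the plan is to exhibit an explicit coupling that saturates the bound when both marginals are Gaussian. Define the affine map $T(\hat z) = \mu_z + A(\hat z - \hat\mu_z)$ with $A = \hat\Sigma_z^{-1/2}\bigl(\hat\Sigma_z^{1/2}\Sigma_z\hat\Sigma_z^{1/2}\bigr)^{1/2}\hat\Sigma_z^{-1/2}$ (using pseudoinverses if $\hat\Sigma_z$ is singular, handled by a standard limiting argument), and let $\pi\opt$ be the pushforward of $\hat\PP_z$ under $\hat z \mapsto (T(\hat z),\hat z)$. Because $\hat\PP_z$ is Gaussian and $T$ is affine, the first marginal of $\pi\opt$ is Gaussian with mean $\mu_z$ and covariance $A\hat\Sigma_z A^\top = \Sigma_z$, matching $\PP_z$. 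A direct computation shows that the cross-covariance under $\pi\opt$ equals $A\hat\Sigma_z = \hat\Sigma_z^{-1/2}(\hat\Sigma_z^{1/2}\Sigma_z\hat\Sigma_z^{1/2})^{1/2}\hat\Sigma_z^{1/2}$, whose trace is exactly $\Tr((\hat\Sigma_z^{1/2}\Sigma_z\hat\Sigma_z^{1/2})^{1/2})$. Hence $\pi\opt$ achieves the Gelbrich value, and combined with part~(i) this proves equality.

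The main technical obstacle is the trace inequality under the PSD coupling constraint, which I would handle either by Schur complements (reducing to $S S^\top \preceq \Sigma_z^{1/2}\hat\Sigma_z \Sigma_z^{1/2}$ after a symmetric change of variables) and Von Neumann's trace inequality, or by citing the standard Bures--Wasserstein identity. The well-definedness of~$A$ in the degenerate case $\hat\Sigma_z\not\succ 0$ is handled by perturbing $\hat\Sigma_z$ to $\hat\Sigma_z + \varepsilon I$ and passing to the limit, using continuity of the matrix square root on $\mathbb{S}_+^{d_z}$.
\endproof
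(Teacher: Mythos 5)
Your proposal is correct, but it is worth noting that the paper does not actually prove this proposition at all: it is stated as a known result and attributed to \citet[Theorem~2.1]{gelbrich1990formula}, so there is no internal argument to compare against. What you have written is, in effect, the standard self-contained proof of Gelbrich's theorem (the Olkin--Pukelsheim/Bures--Wasserstein argument): decompose the transport cost under an arbitrary coupling into $\|\mu_z-\hat\mu_z\|_2^2+\Tr(\Sigma_z)+\Tr(\hat\Sigma_z)-2\Tr(S_\pi)$, bound $\Tr(S_\pi)$ using positive semidefiniteness of the joint covariance block matrix, and then saturate the bound with the affine Monge map $T(\hat z)=\mu_z+A(\hat z-\hat\mu_z)$. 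All the key steps check out: the cross term with $\mu_z-\hat\mu_z$ does vanish, the Schur-complement condition $S_\pi\hat\Sigma_z^{-1}S_\pi^\top\preceq\Sigma_z$ combined with Von Neumann's inequality does give $\Tr(S_\pi)\leq\Tr\bigl((\hat\Sigma_z^{1/2}\Sigma_z\hat\Sigma_z^{1/2})^{1/2}\bigr)$, your matrix $A$ satisfies $A\hat\Sigma_zA^\top=\Sigma_z$, and Gaussianity of $\PP_z$ is used exactly where it should be (to conclude that the affine pushforward of $\hat\PP_z$ \emph{is} $\PP_z$, not merely a distribution with the same first two moments). Two small imprecisions: the relevant singular values in the Von Neumann step are those of $\hat\Sigma_z^{1/2}\Sigma_z^{1/2}$ (whose sum equals $\Tr((\hat\Sigma_z^{1/2}\Sigma_z\hat\Sigma_z^{1/2})^{1/2})$), not of $\hat\Sigma_z^{1/2}\Sigma_z\hat\Sigma_z^{1/2}$ itself; and the perturbation argument for singular $\hat\Sigma_z$ should be stated as applying only to part~(ii) (part~(i) needs no invertibility since you can phrase the trace bound as an SDP over the PSD cone). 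Neither affects correctness. Since the paper treats this as a citation, you could equally well have done the same; supplying the proof is a reasonable, more self-contained alternative.
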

\Cref{prop:WassersteinSubsetGelbrich} implies that Assumption~\ref{ass:general_assumption_about_M_function} (i) is satisfied. To see that Assumption~\ref{ass:general_assumption_about_M_function} (ii) is also satisfied,  %for $\hat \PP_z$ with mean vector~$\hat \mu_1$ and covariance matrix $\hat \Sigma_z$ 
consider the set $\mathcal{M}_{(\mu_z, M_z)}$ for $\mathds{D}=\mathds{W}$, which we can rewrite as:
\begin{equation*}
    \begin{aligned}
        \mathcal{M}_{(\mu_1, M_z)}^{\mathds W} 
        &= \left\{ (\mu_z, M_z) \in \setmoments{d_z} : \mathds W\left(\mathcal N(\mu_z,M_z), \hat{\PP}_z\right) \leq \rho_z \right\}\\
        &= \left\{ (\mu_z, M_z) \in \setmoments{d_z}, \, \left( \mathds{G}((\mu_z,M_z - \mu_z\mu_z^\top),(\hat \mu_z, \hat \Sigma_z)) \right)^2 \leq \rho_z^2 \right\},
    \end{aligned}
\end{equation*}
where the second equality follows from~\Cref{prop:WassersteinSubsetGelbrich}-(ii). The latter set is known to be convex and compact~\cite[Proposition 3.17]{ref:nguyen2019adversarial}, so we conclude that the 2-Wasserstein distance satisfies also~Assumption~\ref{ass:general_assumption_about_M_function}-(ii).

%
%  Assumption 2 for KL Ambiguity
%
\smallskip
\subsubsection{Assumption~\ref{ass:general_assumption_about_M_function} for Kullback-Leibler Ambiguity Sets.}
\label{appendix:assumption2_KL}
Mirroring the Wasserstein case, we first formalize a divergence between mean-covariance pairs. 
\begin{definition}[KL Divergence Between Moments]
 \label{def:kl-covs}
 The KL-type divergence from $(\mu_z, \Sigma_z )\in \R^{d_z} \times \mathbb S^{d_z}_{+}$ to $(\hat\mu_z, \hat\Sigma_z) \in\R^{d_z} \times \mathbb S^{d_z}_{++}$ is given by
 \[\mathds T((\mu_z,\Sigma_z), (\hat\mu_z, \hat \Sigma_z)) =  \frac{1}{2}\left( (\mu_z - \hat \mu_z)^\top \hat \Sigma_z^{-1} (\mu_z -\hat \mu_z) + \operatorname{Tr}\left(\Sigma_z \hat \Sigma_z^{-1} \right) - \log\det\left(\Sigma_z \hat \Sigma_z^{-1} \right) - d_z \right).\]
\end{definition}
In our case, the mean-covariance pair $(\hat \mu_z, \hat \Sigma_z)$ corresponds to the nominal distribution $\hat{\PP}$. To ensure that the inverse of $\hat \Sigma_z$ is well-defined, we must therefore require that the nominal distribution $\hat{\PP}_z$ of every noise term $z \in \mathcal{Z}$ be non-degenerate, that is, the nominal covariance matrix is positive definite, $\hat{\Sigma}_z \succ 0$. This slightly strengthens Assumption~\ref{ass:Gaussian}, but is without substantial practical loss.

The following proposition shows that the KL divergence from any distribution to a non-degenerate, Gaussian distribution is bounded below by the KL-type divergence between their respective mean-covariance pairs, and the bound is tight if the former distribution is Gaussian.
\begin{proposition}[KL bound] % {\citep{kullback1997information}, \cite[Proposition~2]{fang2020independent}}]
\label{prop: KL bound}
For any distribution $\PP_z$ on $\setalldist{d_z}$ with mean-covariance pair $(\mu_z, 
\Sigma_z) \in \setmoments{d_z}$ and Gaussian distribution $\hat \PP_z$ with mean-covariance pair $(\hat \mu_z, \hat \Sigma_z) \in \R^{d_z} \times \mathbb S_{++}^{d_z}$, we have: 
\begin{enumerate}
    \item[i)] $\mathds{K} (\PP_z, \hat \PP_z) \geq  \mathds T((\mu_z,\Sigma_z),(\hat \mu_z, \hat \Sigma_z))$
    \item[ii)] $\mathds{K} (\PP_z, \hat \PP_z) =  \mathds T((\mu_z,\Sigma_z),(\hat \mu_z, \hat \Sigma_z))$ if $\PPz{1}$ is Gaussian.
\end{enumerate}
\end{proposition}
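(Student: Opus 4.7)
The plan is to handle part~(ii) first by a direct calculation and then bootstrap part~(i) from it via an auxiliary Gaussian that matches the first two moments of $\PP_z$. For part~(ii), I would simply plug the two Gaussian densities into the definition of $\mathds K$. Writing the log-ratio of the densities as $\tfrac12 \log\det(\hat\Sigma_z \Sigma_z^{-1}) + \tfrac12 (z-\hat\mu_z)^\top \hat\Sigma_z^{-1}(z-\hat\mu_z) - \tfrac12(z-\mu_z)^\top \Sigma_z^{-1}(z-\mu_z)$ and taking the expectation under $\mathcal{N}(\mu_z,\Sigma_z)$, the quadratic-form identity $\EE[(z-\hat\mu_z)^\top \hat\Sigma_z^{-1}(z-\hat\mu_z)] = \Tr(\hat\Sigma_z^{-1}\Sigma_z) + (\mu_z-\hat\mu_z)^\top \hat\Sigma_z^{-1}(\mu_z-\hat\mu_z)$ and $\EE[(z-\mu_z)^\top \Sigma_z^{-1}(z-\mu_z)] = d_z$ collapse everything to $\mathds T((\mu_z,\Sigma_z),(\hat\mu_z,\hat\Sigma_z))$.

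For part~(i), I would introduce $q := \mathcal N(\mu_z, \Sigma_z)$, the Gaussian that shares its first two moments with $\PP_z$, and assume without loss of generality that $\PP_z \ll \hat \PP_z$ (otherwise $\mathds K(\PP_z,\hat\PP_z)=+\infty$ and the inequality is trivial). A brief argument is needed to verify $\PP_z \ll q$ when $\Sigma_z \succ 0$; if $\Sigma_z$ is singular, then $\PP_z$ is concentrated on an affine subspace of Lebesgue measure zero, whence $\PP_z \not\ll \hat\PP_z$ and the bound again holds trivially because $\mathds T = +\infty$. In the nondegenerate case, I would use the chain-rule decomposition
\[
    \log\frac{\diff \PP_z}{\diff \hat\PP_z} = \log\frac{\diff \PP_z}{\diff q} + \log\frac{\diff q}{\diff \hat\PP_z},
\]
and integrate both sides against $\PP_z$ to obtain $\mathds K(\PP_z,\hat\PP_z) = \mathds K(\PP_z,q) + \EE_{\PP_z}[\log(\diff q/\diff \hat\PP_z)]$. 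The key observation is that $\log(\diff q/\diff \hat\PP_z)$ is a quadratic polynomial in $z$, so its expectation depends only on the first two moments; since $\PP_z$ and $q$ agree on those, $\EE_{\PP_z}[\log(\diff q/\diff \hat\PP_z)] = \EE_q[\log(\diff q/\diff \hat\PP_z)] = \mathds K(q,\hat\PP_z)$. Part~(ii) then identifies this last term as $\mathds T((\mu_z,\Sigma_z),(\hat\mu_z,\hat\Sigma_z))$, and non-negativity of $\mathds K(\PP_z,q)$ yields the inequality, with equality exactly when $\PP_z = q$, i.e., when $\PP_z$ is Gaussian.

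The main subtlety is the handling of the boundary and absolute-continuity edge cases: rigorously justifying the decomposition requires $\PP_z \ll q \ll \hat \PP_z$, and one must check that the $\log\det$ term in $\mathds T$ is properly $+\infty$ whenever $\Sigma_z$ is singular so that the inequality remains tight and vacuously true. Everything else is routine Gaussian bookkeeping, aided by the fact that this decomposition is a well-known Pythagorean-style identity for the KL projection of $\PP_z$ onto the exponential family of Gaussians (the result is classical, so the proof will almost certainly be short and cite standard references).
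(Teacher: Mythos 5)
Your proof is correct and is essentially the paper's argument in different packaging: the paper writes $\mathds K(\PP_z,\hat\PP_z)$ as a cross-entropy term against $\hat\PP_z$ (which depends only on the first two moments because $\ln \hat f_z$ is quadratic) minus the differential entropy of $\PP_z$, and then invokes the maximum-entropy property of the Gaussian -- which is exactly the statement $\mathds K(\PP_z,q)\geq 0$ for the moment-matched Gaussian $q$ in your Pythagorean decomposition. Your additions (the explicit Gaussian--Gaussian computation for part~(ii), which the paper leaves as a standard fact, and the singular-$\Sigma_z$ / absolute-continuity edge cases) are correct and only make the argument more self-contained.
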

\begin{proof}{Proof.}
Let $\hat f_z$ denote the density of the Gaussian distribution $\hat \PP_z$. Because the KL divergence from $\PP_z$ to $\hat \PP_z$ is finite only if $\PP_z$ is absolutely continuous with respect to $\hat \PP_z$, $\PP_z$ must admit a density on $\R^{d_z}$, which we denote by $f_z$. The KL divergence can then be written as:
\begin{align*}
\mathds{K}(\PP_z, \hat \PP_z) 
 &= \int_{\R^{d_z}} f_z(z)\,\ln \left(\frac{f_z(z)}{\hat f_z(z)}\right)\,\diff z \\
 & = h(\PP_z) - \int_{\R^{d_z}} f_z(z)\,\ln \hat f_z(z)\diff z,
\end{align*}
where $h(\PP_z) = - \int_{\R^{d_z}} f_z(z)\,\ln f_z(z)\diff z$ denotes the \emph{differential entropy} of $\PP_z$. Because $\hat \PP_z$ is Gaussian with mean $\hat \mu_z$ and covariance $\hat \Sigma_z$, the second term above can be written as:
\begin{align*}
\int_{\R^{d_z}} f_z(z)\,\ln \hat f_z(z)\,\diff z
&= -\tfrac12\, \mathbb{E}_{\PP_z}\Bigl[(z-\hat \mu_z)^\top (\hat \Sigma_z)^{-1} (z-\hat \mu_z)\Bigr]
  -\tfrac12\,\ln\bigl((2\pi)^d\,\det(\hat \Sigma_z)\bigr) \\
&= -\tfrac12\, \mathbb{E}_{\PP_z}\Bigl[\Tr \Bigl( (\hat \Sigma_z)^{-1} (z-\hat \mu_z) (z-\hat \mu_z)^\top \Bigr)\Bigr]
  -\tfrac12\,\ln\bigl((2\pi)^d\,\det(\hat \Sigma_z)\bigr) \\
&= -\tfrac12\, \Tr \Bigl((\hat \Sigma_z)^{-1}\,\bigl(\Sigma_z 
  + (\mu_z-\hat \mu_z)(\mu_z-\hat \mu_z)^\top\bigr)\Bigr) -\tfrac12\,\ln\bigl((2\pi)^d\,\det(\hat \Sigma_z)\bigr),
\end{align*}
where in the last step we used that fact that the mean is $\mu_z$ and the covariance is $\Sigma_z$ under $\PP_z$. Because $\mu_z$, $\Sigma_z$, $\hat \mu_z$, and $\hat \Sigma_z$ are fixed, the expression above is fixed (for any distribution $\PP_z$ with mean $\mu_z$ and covariance $\Sigma_z$), and the problem of minimizing the KL divergence reduces to the problem of \emph{maximizing} the differential entropy $h(\PP_z)$. It is a standard result that among all distributions on $\mathbb{R}^{d_z}$ with given mean and covariance matrix, the Gaussian distribution maximizes the differential entropy \citep[][Theorem~9.6.5]{CoverThomas2006}. This completes the proof. 
\end{proof}

\Cref{prop: KL bound} implies that~Assumption~\ref{ass:general_assumption_about_M_function}-(i) is satisfied. To see that~Assumption~\ref{ass:general_assumption_about_M_function}-(ii) is also satisfied, consider the set $\mathcal{M}_{(\mu_z, M_z)}$ for $\mathds{D}=\mathds{K}$, which can be rewritten as:
\begin{equation*}
    \begin{aligned}
        \mathcal{M}_{(\mu_z, M_z)}^{\mathds K} 
        &= \bigl\{(\mu_z, M_z) \in \setmoments{d_z} : \mathds K\bigl(\mathcal N(\mu_z, M_z), \hat{\PP}_z\bigr) \leq \rho_z \bigr\} \\
        &= \left\{(\mu_z, M_z) \in \setmoments{d_z} : M_z - \mu_z \mu_z^\top \in \mathbb S_{++}^{d_z},~\mathds T((\mu_z,\Sigma_z),(\hat \mu_z,\hat  \Sigma_z)) \leq \rho_z \right\}.
    \end{aligned}
\end{equation*}
The second equality follows from \Cref{prop: KL bound}-(ii) and because any distribution $\PP_z$ in the set $\mathcal{M}_{(\mu_z, M_z)}^{\mathds{K}}$ must yield a positive definite covariance matrix for $z$ because we consider 
$\rho_z$ finite.\footnote{This follows because $\mathds{T}\bigl((\mu_z,M_z - \mu_z \mu_z^\top),(\hat \mu_z,\hat  \Sigma_z) \bigr)$ is finite only if $\Sigma_z = M_z - \mu_z \mu_z^\top \succ 0$, due to the $- \log \det \Sigma_z$ term.} The set $\mathcal{M}_{(\mu_z, M_z)}^{\mathds K}$ is known to be convex and compact ~\cite[Lemma~A.3]{taskesen2021sequential}. 
We thus conclude that the KL divergence satisfies all premises of Assumption~\ref{ass:general_assumption_about_M_function}.

%
%  Moment-based ambiguity
%
\smallskip
\subsubsection{Assumption~\ref{ass:general_assumption_about_M_function} for Moment Ambiguity Sets.}
\label{sec:assumption 2 moment ambiguity sets}
We finally consider moment ambiguity sets in which the divergence $\mathds D$ between two probability distributions relies only on the first two moments of the respective distributions. Specifically, for  distributions $\PP_z, \hat \PP_z \in \setalldist{d_z}$ with mean-second moment matrix pairs $(\mu_z, M_z)$ and $(\hat \mu_z, \hat M_z)$, respectively, we
take 
\[\mathds D(\PP_z, \hat \PP_z) = \mathds{M}\bigl((\mu_z, M_z),(\hat \mu_z, \hat M_z)\bigr),\] 
where $\mathds{M}:  \setmoments{d_z} \times \setmoments{d_z} \to [0, +\infty]$ is any divergence between mean-second moment matrix pairs satisfying $\mathds{M}(m_z,m_z)=0$ for all $m_z = (\mu_z,M_z) \in \setmoments{d_z}$. The ambiguity set $\mathcal{B}_z$ for the random variable $z$ with nominal distribution $\hat \PP_z$ can therefore be expressed~as:
\begin{equation*}
\begin{aligned}
    \mathcal{B}_z = \{\PP_z \in \setalldist{d}: \EE_{\PP_z}[z] = \mu_z,\,\EE_{\PP_z}[z z^\top ] = M_z,\, \mathds{M}\left((\mu_z, M_z), (\hat{\mu}_z, \hat{M}_{z})\right) \leq \rho_{z}\}.
\end{aligned}
\end{equation*}
In this case, Assumption~\ref{ass:general_assumption_about_M_function}-(i) is readily satisfied for any $\mathds{M}$ because every distribution -- including a Gaussian distribution -- with a given mean and second moment matrix would yield the same divergence from $\hat{\PP}_z$ and would therefore minimize the divergence from the nominal $\hat{\PP}_z$. Moreover,  Assumption~\ref{ass:general_assumption_about_M_function}-(ii) is satisfied if the set 
\[ \mathcal{M}_{(\mu_z, M_z)} = \{(\mu_z, M_z) \in \setmoments{d_z}:  \mathds{M}\left((\mu_z, M_z), (\hat{\mu}_z, \hat{M}_{z})\right) \leq \rho_{z}\} \]
is convex and compact for the given $(\hat{\mu}_z, \hat{M}_{z})$. This is readily satisfied if the restriction of $\mathds{M}$ to its first argument $(\mu_z, \Sigma_z)$ is a quasiconvex and coercive function.

%=====================================================
%
%  Proofs for Section 2 (ambiguity model description + examples + verifying assumptions)
%
%=====================================================
\section{Proofs for \Cref{sec:Nash}}
\label{app:proofs_for_section3}

\subsection{Proofs for \S\ref{sec:sec3_upper_bound_p} (Upper Bound for Primal)}
%
%   3.1 Primal upper bound
%
%  PROPOSITION 1
%
\proof[Proof of \Cref{prop: min max SDP upper bound problem}.]
In problem~\eqref{upper bound problem 1}, $u$ and $x$ are given by $u = q + UDw + Uv$ and ${x} = H u + G w = Hq + (G + HUD) w + HUv$, respectively. By substituting these expressions into the objective function of problem \eqref{upper bound problem 1}, we obtain the following equivalent reformulation:
\begin{equation*}
\begin{aligned}
\min\limits_{\substack{q \in \R^{pT}\\ U \in \mathcal U}} \max_{\mathbb P \in \overline{\mathcal{B}}} &\mathbb E_{\mathbb P} \Big[ (U (Dw + v) + q)^\top \bar R (U (Dw + v) + q) + w^\top (G^\top QHUD + G^\top Q G) w \Big]\\[-2ex]
&\hspace{1cm}+ \EE_{\PP} [2 w^\top G^\top QHq].
\end{aligned}
\end{equation*}
Because the objective function in the problem above is quadratic in $w$ and $v$, we can express the expectation with respect to any $\mathbb{P} \in \overline{\mathcal{B}}$ in terms of the first two moments of $w$ and $v$, $(\mu_w, M_w)$ and $(\mu_v, M_v)$ respectively, where: 
%$(\mu_w, M_w) =  (\EE_{\PP}[w], \mathbb{E}_{\mathbb P}[  w w^\top])$ and $(\mu_v, M_v) = (\EE_{\PP}[v],  \mathbb{E}_{\mathbb P}[  v v^\top])$, where 
\begin{equation*}
    \begin{aligned}
    &\mu_w = \EE_{\PP}[w] = (\mu_{x_0}, \mu_{w_0}, \ldots, \mu_{w_{T-1}}),~&&M_w = \mathbb{E}_{\mathbb P}[w w^\top] = \operatorname{diag}(M_{x_0}, M_{w_0}, \dots, M_{w_{T-1}})\\
    &\mu_v = \EE_{\PP}[v] = (\mu_{v_0}, \ldots, \mu_{v_{T-1}}),~&&M_v = \mathbb{E}_{\mathbb P}[v v^\top] = \operatorname{diag}(M_{v_0}, \dots, M_{v_{T-1}}).
    \end{aligned}
\end{equation*}
Thus, the problem becomes
\begin{equation}\label{upper bound problem 1 reformulation}
\begin{array}{ccclll}
&\min\limits_{\substack{q \in \R^{pT}\\ U \in \mathcal U}} &\max\limits_{\substack{\mu_w, M_w,\\
\mu_v, M_v,
\mathbb P}} &\Tr\Big( \bigl( (UD)^\top R UD + (G+HUD)^\top Q (G+HUD) \bigr) M_w  +  U^\top \bar R U M_v\Big ) \\[-2ex]
&&&+2 q^\top(\bar R UD + G^\top QH) \mu_w + 2 q^\top \bar R U \mu_v + q^\top \bar R q\\
&&\st &\mathbb{P} \in \overline{\mathcal{B}},~ \mu_w = \EE_\PP[w], M_w = \mathbb{E}_{\mathbb P}[w w^\top],\mu_v = \EE_\PP[v],~M_v = \mathbb{E}_{\mathbb P}[  v v^\top].
\end{array}
\end{equation}

We first prove that the objective in problem~\eqref{upper bound problem 1 reformulation} is at least as large as the objective in problem~\eqref{eq: distributionally robust control problem -- affine simplified min max}. It suffices to prove that any $(\mu_w,M_w),(\mu_v,M_v)$ feasible in the inner maximization problem in~\eqref{upper bound problem 1 reformulation} are also feasible in the inner maximization problem in~\eqref{eq: distributionally robust control problem -- affine simplified min max}. This holds because any $\mathbb{P} \in \overline{\mathcal{B}}$ and $(\mu_w,M_w),(\mu_v,M_v)$ feasible in the inner maximization in~\eqref{upper bound problem 1 reformulation} must satisfy $(\mu_w,M_w) \in \mathcal{M}_{(\mu_w, M_w)}$  and $(\mu_v,M_v) \in \mathcal{M}_{(\mu_v, M_v)}$ by the definition of $\overline{\mathcal{B}}$, and therefore $(\mu_w,M_w),(\mu_v,M_v)$ are feasible in the inner maximization problem in~\eqref{eq: distributionally robust control problem -- affine simplified min max}. 

To conclude our proof, we show that the inner maximization problems in~\eqref{upper bound problem 1 reformulation} and~\eqref{eq: distributionally robust control problem -- affine simplified min max} have the same optimal value. Note that for any $(\mu_w, M_w)$ and $(\mu_v, M_v)$
feasible in the inner maximization problem in~\eqref{eq: distributionally robust control problem -- affine simplified min max}, the distribution obtained by taking independent couplings of Gaussian distributions with the same first two moments -- i.e., $\mathbb P = \PP_{x_0} \otimes (\otimes_{t=0}^{T-1}\PP_{w_t})\otimes (\otimes_{t=0}^{T}\PP_{v_t})$ where $\PP_{z} = \mathcal N(\mu_{z}, M_{z})$ for every $z \in \mathcal{Z}$ and $\PP_1 \otimes \PP_2$ denotes the independent coupling of distributions $\PP_1$ and $\PP_2$ -- would be feasible in the inner maximization problem in \eqref{upper bound problem 1 reformulation} and would result in the same objective value. 

Therefore, the relaxation is exact and the optimal values of~\eqref{upper bound problem 1},~\eqref{eq: distributionally robust control problem -- affine simplified min max}, and~\eqref{upper bound problem 1 reformulation} coincide. 
\endproof

%%  -- Dual Lower Bound
\subsection{Proofs for \S\ref{sec:sec3_lower_bound_d} (Lower Bound for Dual)}
%
%  Proposition 2
%
\begin{proof}[Proof of \Cref{prop:dual-sdp}.]
We can replace the feasible set~$\mathcal U_\eta$ of the inner minimization  with~$\mathcal U_y$ because the space~$\mathcal U_y$ of all causal output feedback policies coincides with the space~$\mathcal U_\eta$ of all causal {\em purified} output feedback policies. 

With this change, the inner minimization problem in~\eqref{eq: dual distributionally robust control problem restriction} becomes an LQG problem where the uncertainties have a known Gaussian distribution $\PP \in {\mathcal{B}}_{\mathcal{N}}$, for which classical results apply. By standard LQG theory, an \emph{affine} output feedback policy $u=U'y+q'$ for some~$U'\in\mathcal U$ and~$q'\in\R^{pT}$ is optimal (see Appendix~\S\ref{sec: appx: optimal-solution-classic-LQG} and \citealp{Bertsekas_2017}). And because any such policy can be equivalently expressed as an affine {\em purified}-output feedback policy~$u=U\eta+q$ for some~$U\in\mathcal U$ and~$q\in\R^{pT}$ (see \Cref{lemma:linear-rel-u-eta}), the feasible set of the inner minimization problem in~\eqref{eq: dual distributionally robust control problem restriction} can be taken as the set of all affine purified-output feedback policies without sacrificing optimality. 

Thus, problem~\eqref{eq: dual distributionally robust control problem restriction} has the same optimal value as problem:
\begin{equation*}
    \begin{array}{ccl} \max\limits_{\mathbb P \in \mathcal{B}_{\mathcal N}} &\min\limits_{q, U, x, u} &\mathbb E_{\mathbb P} \left[   u^\top R   u +   x^\top Q  x \right]\\
    &\st &  U \in \mathcal U, u = q + U \eta, {x} = H u + G w.
    \end{array}
\end{equation*}

Using a similar reasoning as in the proof of \Cref{prop: min max SDP upper bound problem}, we can now substitute the linear representations of $ u$ and $  x$ into the objective function and reformulate the above problem as
\begin{equation}
\begin{array}{cccll}
&\max\limits_{\substack{\mu_w, {M_w}, \\ \mu_v, {M_v},\mathbb P}} &\min\limits_{\substack{q \in \R^{pT}\\ U \in \mathcal U}}  &\Tr\Big( %(G^\top Q HUD + G^\top Q G + D^\top U^\top \bar R UD) M_w  
\bigl( (UD)^\top R UD + (G+HUD)^\top Q (G+HUD) \bigr) M_w 
+  U^\top \bar R U M_v\Big ) \\[-2ex]
&&&+2 q^\top(\bar R U D + G^\top QH) \mu_w + 2 q^\top \bar R U \mu_v + q^\top \bar R q,\\
&&\st &\mathbb P \in \mathcal{B}_{\mathcal N},\mu_w = \EE_{\PP}[w], ~M_w= \mathbb{E}_{\mathbb P}[w w^\top],
~\mu_v = \EE_\PP[v],~M_v = \mathbb{E}_{\mathbb P}[  v v^\top].
\end{array}
\label{eq:simplified_rewriting_dual_bnd}
\end{equation}

Lastly, we use a similar reasoning as in the proof of~\Cref{prop: min max SDP upper bound problem} to prove that problem~\eqref{eq:simplified_rewriting_dual_bnd} has the same optimal objective as problem~\eqref{eq:dr-affine-max-min}. Consider any 
$(\mu_w, {M_w}), (\mu_v, {M_v}), \mathbb P$ feasible in the outer maximization problem in~\eqref{eq:simplified_rewriting_dual_bnd}. Because $\PP\in \mathcal {\mathcal{B}}_{\mathcal N}$ is a {\em Gaussian} distribution and marginal distributions of Gaussians are also Gaussian, we can write the requirement $\mathds{D}(\PPz{1}, \hat{\mathbb{P}}_{z}) \leq \rho_{z}$ in the definition of $\mathcal{B}_{\mathcal N}$ equivalently as $\mathds{D}(\mathcal{ N}(\mu_{z},M_{z}), \hat{\mathbb P}_{z}) \leq \rho_{z}$, for any $z \in {\cal Z}$. This implies that $(\mu_w, {M_w}), (\mu_v, {M_v})$ is feasible in the outer maximization problem in~\eqref{eq:dr-affine-max-min}, proving that the optimal value in~\eqref{eq:dr-affine-max-min} is at least as large as that in~\eqref{eq:simplified_rewriting_dual_bnd}. However, for any~$(\mu_w, {M_w}), (\mu_v, {M_v})$ feasible in the outer maximization in~\eqref{eq:dr-affine-max-min}, the Gaussian distribution obtained from independent couplings $\mathbb P = \PP_{x_0} \otimes (\otimes_{t=0}^{T-1}\PP_{w_t})\otimes (\otimes_{t=0}^{T}\PP_{v_t})$ where $\PP_{z} = \mathcal N(\mu_{z}, M_{z})$ for every $z \in \mathcal{Z}$, is feasible in~\eqref{eq:simplified_rewriting_dual_bnd}, proving that the two problems have the same optimal value. 
\end{proof}

%%%%%%%%%%%%%%%%%%%%%%%%%%%%%%%%%%%%%%%%%%%%%%%%%%%%%%%%%%%%
%
%  Section 3.5 (Zero-mean Distributions / Linear Policies)
%
%%%%%%%%%%%%%%%%%%%%%%%%%%%%%%%%%%%%%%%%%%%%%%%%%%%%%%%%%%%%
%% ASSUMPTION 3
\subsection{Proofs for \S\ref{sec:sec3_optimality_linear_zero_mean} (Optimality of Linear Policies and Zero-Mean Distributions)}
\begin{proof}[Proof of \Cref{prop:assumption3_for_W_KL_M}.]
We prove the result separately, for each divergence of interest. Recall that $\hat{\mu}_z=0$, which implies that $\hat{M}_z = \hat{\Sigma}_z$, so the nominal distribution of interest is $\hat{\PP}_z = \mathcal{N}(0,\hat{\Sigma}_z)$. Because the proof is done separately for each $z\in \mathcal{Z}$, we simplify notation by dropping the subscript $z$ from quantities of interest such as $\PP_z,\hat{\PP}_z,\hat{\mu}_z, M_z, \Sigma_z$.

Consider any two Gaussian distributions $\PP_{}, \PP_{}' \in \mathcal{B}_{}$ such that $\EE_{\PP_{}}[zz^\top] = \EE_{\PP_{}'}[zz^\top] = M_{}$ and $\mu'_{} = \hat{\mu}_{} = 0$, which implies that $\Sigma'_{} =\EE_{\PP_{}'}[(z-\mu_{}')(z-\mu'_{})^\top]= M_{}$. %Let $\mu'_{} = \EE_{\PP'_{}}[z]$. 
For the subsequent arguments, it helps to note that
\begin{align}
    \| \mu_{} - \EE_{\hat{\PP}_{}}[z] \|^2 + \Tr ( \Sigma_{} ) - \| \mu_{}' -  \EE_{\hat{\PP}_{}}[z] \|^2 - \Tr ( \Sigma_{}' ) &= \Tr (M_{}) - \Tr (M_{}) = 0.
    \label{eq:equality_for_normals_implied_by_assump_4}
\end{align}
Also, it is useful to recall that the maps $X \mapsto \hat{\Sigma}_{}^{1/2} X \hat{\Sigma}_{}^{1/2}$ and $X \mapsto X^{\frac{1}{2}}$ are operator monotone on the cone of positive semidefinite matrices $\mathbb S^{d_{}}_{+}$, that is, are functions $f : \mathbb{S}_{+}^{d_{}} \rightarrow \mathbb{S}_{+}^{d_{}}$ that satisfy $f(X) \succeq f(Y)$  for any $X,Y \in \mathbb S_{+}^{d_{}}$ with $X \succeq Y$. (For a proof of these facts, see Theorem~1.5.9 and Theorem~4.2.3 in~\citet{ref:bhatia2009positive}.)

\medskip \noindent \textbf{The 2-Wasserstein distance $\mathds{W}$.} Recall from \S\ref{appendix:assumption2_wasserstein} and \Cref{prop:WassersteinSubsetGelbrich} that in this case, we have:
\begin{align*}
    \mathcal{M}_{(\mu_{}, M_{})} = \Bigl\{(\mu_{}, M_{}) \in \setmoments{d_{}}: ~ \bigl( \mathds{G}\bigl((\mu_{}, M_{}), (0, \hat{M}_{}) \bigr) \bigr)^2 \leq \rho_{}^2 \Bigr\}.
\end{align*}
To prove that $(\mu,M) \in  \mathcal{M}_{(\mu_{}, M_{})}$ implies that $(0,M) \in  \mathcal{M}_{(\mu_{}, M_{})}$, it therefore suffices to show that $( \mathds{G}((0, M_{}), (0, \hat{M}_{})) )^2 \leq ( \mathds{G}((\mu_{}, M_{}), (0, \hat{M}_{}) ) )^2$. To that end, using the expression of $\mathds{G}$ from \Cref{def:gelbrich-distance} and applying~\eqref{eq:equality_for_normals_implied_by_assump_4} implies that:
\begin{align}
    % & \g\bigl((\hat{\mu}_{}, M_{}), (\hat{\mu}_{}, \hat{M}_{}) \bigr) - \g \bigl((\mu_{}, M_{}), (\hat{\mu}_{}, \hat{M}_{}) \bigr) \\
    % &=\mathds{W}(\PP_{}', \hat \PP_{})^2 -\mathds{W}(\PP_{}, \hat{\PP}_{})^2 \\
    & \bigl( \mathds{G}\bigl((0, M_{}), (0, \hat{M}_{}) \bigr) \bigr)^2 - \bigl(\mathds{G}\bigl((\mu_{}, M_{}), (0, \hat{M}_{}) \bigr) \bigr)^2 \notag \\ & \qquad \qquad =-2 \left( \Tr \left( \hat{\Sigma}_{}^{1/2} M_{} \hat{\Sigma}_{}^{1/2}\right)^{1/2} - \Tr \left( \hat{\Sigma}_{}^{1/2} (M_{} -\mu_{}\mu_{}^\top) \hat{\Sigma}_{}^{1/2}\right)^{1/2} \right) \leq 0,
    \label{eq:first_condition_assumpt_3_W}
\end{align}
where the inequality follows because $M_{} \succeq M_{} - \mu_{}\mu_{}^\top$ and the mappings $X \mapsto \hat{\Sigma}_{}^{1/2} X \hat{\Sigma}_{}^{1/2}$ and $X\mapsto X^{1/2}$ are operator monotone and $\Tr(X) \geq \Tr(Y)$ if $X \succeq Y$. This completes the argument.

\medskip
\noindent \textbf{The KL Divergence $\mathds{K}$.} Recall from \Cref{appendix:assumption2_KL} and \Cref{prop: KL bound} that in this case, we have:
\begin{align*}
    \mathcal{M}_{(\mu_{}, M_{})} = \Bigl\{(\mu_{}, M_{}) \in \setmoments{d_{}}: \, \mathds{T}\bigl((\mu_{}, M_{}), (0, \hat{M}_{}) \bigr) \leq \rho_{} \Bigr\}.
\end{align*}
With the same proof strategy as above, using the expression of $\mathds{T}$ from \Cref{def:kl-covs} and applying~\eqref{eq:equality_for_normals_implied_by_assump_4}, we obtain:
\begin{align*}
    % & \g\bigl((\hat{\mu}_{}, M_{}), (\hat{\mu}_{}, \hat{M}_{}) \bigr) - \g \bigl((\mu_{}, M_{}), (\hat{\mu}_{}, \hat{M}_{}) \bigr) \\
    % & = \mathds{K}(\PP_{}', \hat \PP_{}) -\mathds{K}(\PP_{}, \hat{\PP}_{}) \\
     \mathds{T}\bigl((0, M_{}), (0, \hat{M}_{}) \bigr) - \mathds{T}\bigl((\mu_{}, M_{}), (0, \hat{M}_{}) \bigr) 
     &= - \frac{1}{2} \mu_{}^\top \hat{\Sigma}_{}^{-1} \mu_{} + \frac{1}{2} \Tr\left( (M_{} - M_{} + \mu_{} \mu_{} ^\top ) \hat{\Sigma}_{}^{-1} \right) \\
     & \qquad \qquad -\frac{1}{2} \left( \log\det\left(M_{} \hat{\Sigma}_{}^{-1} \right) - \log\det\left( (M_{} - \mu_{} \mu_{}) \hat{\Sigma}_{}^{-1} \right) \right) \\
     &\leq 0,
\end{align*}
where the inequality follows because the first two terms cancel out and the term in the last bracket is positive because $\log \det (X) \geq \log \det (Y)$ if $X \succeq Y$ and $M_{} \succeq M_{} - \mu_{}\mu_{}^\top$.

\medskip \noindent \textbf{The Moment-Based Ambiguity Set  $\mathds{M}$.} Recall from \Cref{sec:assumption 2 moment ambiguity sets} that we have:
\[ \mathcal{M}_{(\mu_{}, M_{})} = \{(\mu_{}, M_{}) \in \setmoments{d_{}}:  \mathds{M}\left((\mu_{}, M_{}), (\hat{\mu}_{}, \hat{M}_{})\right) \leq \rho_{}\}. \]
Therefore, the condition stated in \Cref{prop:assumption3_for_W_KL_M} exactly ensures that Assumption~\ref{ass:setting_mean_zero_feasible} is satisfied.  
\end{proof}

%
%  Theorem 3
%
\bigskip
\begin{proof}[Proof of~Theorem~\ref{thm:worst-case-mean-zero}.]
%We prove the results for the optimal solution $\PP\opt \in\mathcal{B}_{\mathcal N}$ to \eqref{eq: dual distributionally robust control problem restriction}, which is also optimal in~\eqref{DRCPdual} by Corollary~\ref{corollary:normal-distributions-are-optimal}. 
Because problem~\eqref{DRCPdual} has the same optimal value as problem~\eqref{eq: dual distributionally robust control problem restriction} by \Cref{corollary:normal-distributions-are-optimal}, we prove the results for the optimal solution $\PP\opt \in\mathcal{B}_{\mathcal N}$ to the outer maximization in~\eqref{eq: dual distributionally robust control problem restriction} and the corresponding optimal policy $u\opt$ to the inner minimization in~\eqref{eq: dual distributionally robust control problem restriction}. (All these optimal solutions exist in view of \Cref{theorem:lower-equal-upper}.)

We first recall a few important facts from~\Cref{prop:dual-sdp} and its proof. By \Cref{prop:dual-sdp}, the optimal value in problem~\eqref{eq: dual distributionally robust control problem restriction} is the same as the optimal value in problem~\eqref{eq:dr-affine-max-min}. The proof of \Cref{prop:dual-sdp} also shows that for any $\PP \in \mathcal{B}_{\mathcal{N}}$ feasible in~\eqref{eq: dual distributionally robust control problem restriction}, the means and second moments of $w$ and $v$ evaluated under $\PP$ would be feasible in~\eqref{eq:dr-affine-max-min}, i.e., $\PP \in \mathcal{B}_{\mathcal{N}}$ implies that $(\mu_w, M_w) \in \mathcal{M}_{(\mu_w, M_w)}$ and $(\mu_v, M_v) \in \mathcal{M}_{(\mu_v, M_v)}$. Moreover, a policy of the form $u = U \eta + q$ for $U \in \mathcal U$ and $q \in \R^{pT}$ is optimal in the inner minimization problems in~\eqref{eq:dr-affine-max-min} and in~\eqref{eq: dual distributionally robust control problem restriction}. Lastly, recall that the objective in~\eqref{eq:dr-affine-max-min} evaluated for the moment pairs $(\mu_w, M_w),(\mu_v, M_v)$ and policy $u = U \eta + q$ is given by:
\begin{align*}
    f\bigl((q, U); (\mu_w, M_w), (\mu_v, M_v) \bigr) &= \Tr\left( (UD)^\top R UD + (G+HUD)^\top Q (G+HUD) \bigr) M_w +  U^\top \bar R U M_v \right ) \\ 
    & \qquad \qquad +2 q^\top(\bar R UD + G^\top QH) \mu_w + 2 q^\top \bar R U \mu_v + q^\top \bar R q.
\end{align*}
Let us fix any $U \in \mathcal{U}$ and consider the inner minimization problem in~\eqref{eq:dr-affine-max-min}, which corresponds to finding $q \in \mathbb{R}^{pT}$ to minimize the function $f$ above. Because this is an unconstrained, convex optimization problem, the solution is given by the first-order optimality condition,
\begin{equation}
q\opt(U, \mu_w, \mu_v) =-\bar{R}^{-1} \bar{K}(\mu_w, \mu_v),
\label{eq:opt_q_given_P_and_U}
\end{equation}
where $\bar{K}(\mu_w, \mu_v)= (\bar{R} U D + G^\top Q H ) \mu_w + \bar{R} U \mu_v$. In particular, the optimal choice $q\opt$ only depends on $U$ and on the means $\mu_w,\mu_v$, and moreover, $q\opt=0$ if $\mu_w=0$ and $\mu_v=0$.

In this context, consider the distribution $\PP\opt \in \mathcal{B}_\mathcal{N}$ that is optimal in~\eqref{eq: dual distributionally robust control problem restriction} and let $\mu_z\opt = \EE_{\PP \opt}[z]$ and $M_z\opt = \EE_{\PP \opt}[z z^\top]$ denote the mean and second moment matrix of $z\in \mathcal Z$ under $\PP\opt$, respectively.

We prove our desired result by contradiction. Suppose that $\PP\opt$ is such that at least one of $\mu_w\opt$ and $\mu_v\opt$ is non-zero and consider a Gaussian distribution $\tilde{\PP}$ under which the first and second moment pairs for the random vectors $w$ and $v$ are respectively given by $(0, M_w\opt)$ and $(0, M_v\opt)$. That is, $\tilde{\PP}$ has zero mean and the same second moments as $\PP \opt$. 

$\tilde{\PP}$ is feasible in problem~\eqref{eq: dual distributionally robust control problem restriction} by  Assumption~\ref{ass:setting_mean_zero_feasible}. We claim that the value of the objective in~\eqref{eq:dr-affine-max-min} corresponding to $\tilde{\PP}$ is at least as large as the objective in~\eqref{eq:dr-affine-max-min} corresponding to $\PP\opt$. Recalling the definition of $f$, we can evaluate the difference in these objectives for an arbitrary fixed $U \in \mathcal{U}$ and the optimal $q\opt$ from~\eqref{eq:opt_q_given_P_and_U} as:
\begin{align*}
    %& f\left(\left(q\opt(U, \mu_w, \mu_v), U\right); (\mu_w, M_w), (\mu_v, M_v)\right) - f\left((q\opt(U, 0, 0), U); (0, M_w), (0, M_v)\right)  \\
    & = f\left(\left(-\bar{R}^{-1} \bar{K}(\mu_w, \mu_v), U\right); (\mu_w, M_w), (\mu_v, M_v)\right) - f\left((0, U); (0, M_w), (0, M_v)\right) \\
    & = 2 \left(q\opt(U, \mu_w, \mu_v)\right)^\top \bar{K}(\mu_w, \mu_v) + \left(q\opt(U, \mu_w, \mu_v)\right)^\top \bar{R} q\opt(U, \mu_w, \mu_v)) \\
    %&= \left(q\opt(U, \mu_w, \mu_v)\right)^\top \bar{K}(\mu_w, \mu_v) \\
    & = - \left(\bar{K}(\mu_w, \mu_v)\right)^\top \bar{R}^{-1} \bar{K}(\mu_w, \mu_v).
\end{align*}
The last expression is non-positive because $\bar{R} \succ 0$. Because this holds for an arbitrary $U\in \mathcal{U}$, we conclude that $\tilde{\PP}$ yields an objective in~\eqref{eq:dr-affine-max-min} at least as large as that achieved by $\PP\opt$. Therefore, it is always optimal for nature to choose a distribution $\PP\opt$ that is zero-mean.

For $\PP\opt$ with zero mean,  $q\opt=0$ by~\eqref{eq:opt_q_given_P_and_U}, so the optimal policy can be taken as $u = U\opt \eta$ for some $U \in \mathcal{U}_\eta$.
\end{proof}

%%%%%%%%%%%%%%%%%%%%%%%%%%%%%%%%%%%%%%%%%%%%%%%%%%%
%
%   ASSUMPTION 4
%
%%%%%%%%%%%%%%%%%%%%%%%%%%%%%%%%%%%%%%%%%%%%%%%%%%%
\medskip
\subsection{Proofs for \S\ref{sec:sec3_worst_case_covariance} (Worst-Case Covariance)}
\subsubsection{Verifying Assumption~\ref{ass:M-gradients}.}
\label{sec: appx: verification-assumption-m-gradients}
We show that all ambiguity sets introduced in \S\ref{sec:examples} also satisfy Assumption~\ref{ass:M-gradients} (under a mild condition for the moment-based ambiguity set). This is summarized in the following result.
\begin{proposition}
    \label{prop:assumption4_for_base_examples}
    The ambiguity sets based on the divergences $\mathds{W}$ and $\mathds{K}$ introduced in \S\ref{sec:examples} satisfy Assumption~\ref{ass:M-gradients} if $\rho_z > 0$ for every $z \in \mathcal{Z}$. The moment-based ambiguity set also satisfies Assumption~\ref{ass:M-gradients} if $\rho_z > 0$ and $\mathds{M}((0,\Sigma_z),(0,\hat{\Sigma}_z))$ is differentiable in $\Sigma_z$ on $\mathbb{S}_{++}^{d_z}$ and satisfies:
    \begin{align} 
    \left.\nabla_{\Sigma_z} \mathds{M}((0,\Sigma_z),(0,\hat{\Sigma}_z))\right|_{\Sigma_z = \Sigma_1} 
    \!\!\!\!\succeq \!\!
    \left.\nabla_{\Sigma_z} \mathds{M}((0,\Sigma_z),(0,\hat{\Sigma}_z))\right|_{\Sigma_z = \Sigma_2}\Rightarrow\Sigma_1 \succeq \Sigma_2,~ \forall\, \Sigma_1, \Sigma_2 \in \mathbb S_+^{d_z}, ~ \forall \, z\in \mathcal{Z}.
    \label{eq:condition_M_assumption4}
    \end{align}
\end{proposition}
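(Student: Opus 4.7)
The plan is to define a natural constraint function $g_z$ for each of the three divergences from~\S\ref{sec:examples} and then verify the four requirements of Assumption~\ref{ass:M-gradients} case by case. Specifically, I would take $g_z(\Sigma_z) = \mathds{G}^2\bigl((0,\Sigma_z),(0,\hat\Sigma_z)\bigr) - \rho_z^2$ for the 2-Wasserstein case (using the Gelbrich reformulation from~\S\ref{appendix:assumption2_wasserstein}), $g_z(\Sigma_z) = \mathds{T}\bigl((0,\Sigma_z),(0,\hat\Sigma_z)\bigr) - \rho_z$ for the KL case (using the $\mathds{T}$ reformulation from~\S\ref{appendix:assumption2_KL}), and $g_z(\Sigma_z) = \mathds{M}\bigl((0,\Sigma_z),(0,\hat\Sigma_z)\bigr) - \rho_z$ for the moment-based case. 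In each instance the feasible set $\mathcal{M}_{\Sigma_z}$ agrees with $\{\Sigma_z \in \mathbb{S}_+^{d_z} : g_z(\Sigma_z) \leq 0\}$ by results already established in~\S\ref{appendix:assumption2_wasserstein},~\S\ref{appendix:assumption2_KL}, and~\S\ref{sec:assumption 2 moment ambiguity sets}.

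Convexity of $g_z$ on $\mathbb{S}_+^{d_z}$ and differentiability on $\mathbb{S}_{++}^{d_z}$ follow from well-known properties of the underlying divergences: the squared Gelbrich distance is jointly convex in its arguments (see~\citet{ref:nguyen2019adversarial}); $\mathds{T}(\cdot,(0,\hat\Sigma_z))$ combines the linear trace term $\tfrac{1}{2}\Tr(\Sigma_z\hat\Sigma_z^{-1})$ with the convex function $-\tfrac{1}{2}\log\det(\Sigma_z)$; and the moment-based case inherits these properties directly from its stated hypotheses. Conditions~(i) and~(ii) are then essentially immediate: since $\mathds{D}(\hat\PP_z,\hat\PP_z) = 0$ and $\rho_z > 0$, the value $g_z(\hat\Sigma_z)$ is strictly negative, verifying~(i); and because every divergence is nonnegative with minimum attained at $(\hat\PP_z,\hat\PP_z)$, $\hat\Sigma_z$ attains the minimum of $g_z$ on $\mathbb{S}_+^{d_z}$, verifying~(ii). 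First-order optimality at this interior minimizer moreover forces $\nabla g_z(\hat\Sigma_z) = 0$.

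The bulk of the work is condition~(iii). For the \textbf{KL} case, direct differentiation gives $\nabla g_z(\Sigma_z) = \tfrac{1}{2}\bigl(\hat\Sigma_z^{-1} - \Sigma_z^{-1}\bigr)$, so $\nabla g_z(\Sigma_1) \succeq \nabla g_z(\Sigma_2)$ is equivalent to $\Sigma_2^{-1} \succeq \Sigma_1^{-1}$, and antitonicity of matrix inversion on $\mathbb{S}_{++}$ yields $\Sigma_1 \succeq \Sigma_2$. For the \textbf{2-Wasserstein} case, differentiation yields
\[
\nabla g_z(\Sigma_z) \;=\; I - S_{\Sigma_z}, \qquad S_{\Sigma_z} \;:=\; \hat\Sigma_z^{1/2}\bigl(\hat\Sigma_z^{1/2}\Sigma_z\hat\Sigma_z^{1/2}\bigr)^{-1/2}\hat\Sigma_z^{1/2},
\]
where $S_{\Sigma_z} \succeq 0$ is the Bures-Wasserstein transport map characterized by $S_{\Sigma_z}\Sigma_z S_{\Sigma_z} = \hat\Sigma_z$. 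Gradient dominance then corresponds to $S_{\Sigma_1} \preceq S_{\Sigma_2}$, and one concludes by translating this PSD ordering of transport maps back into a PSD ordering of the $\Sigma_z$'s themselves via the identity $\Sigma_z = S_{\Sigma_z}^{-1}\hat\Sigma_z S_{\Sigma_z}^{-1}$. For the \textbf{moment-based} case, condition~(iii) is literally the stated hypothesis~\eqref{eq:condition_M_assumption4}, so there is nothing further to check.

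The main obstacle I expect is condition~(iii) in the 2-Wasserstein case: the gradient involves operator square roots of $\hat\Sigma_z^{1/2}\Sigma_z\hat\Sigma_z^{1/2}$, and translating Loewner inequalities through such expressions is delicate because the Loewner order does not interact straightforwardly with matrix squaring or with sandwiching by noncommuting PSD matrices. The cleanest route is likely to exploit the Bures-Wasserstein geometry -- reparametrizing by the transport map $T$ with $\Sigma_z = T\hat\Sigma_z T$ and $T \succeq 0$ -- together with operator-monotone function techniques.
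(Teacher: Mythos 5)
Your construction coincides with the paper's: the same three choices of $\gz$ (squared Gelbrich minus $\rho_z^2$ for $\mathds{W}$, $\mathds{T}$ minus $\rho_z$ for $\mathds{K}$, $\mathds{M}$ minus $\rho_z$ for the moment case), the same verification of convexity and differentiability, the same one-line arguments for conditions (i)--(ii) from $\mathds D(\hat\PP_z,\hat\PP_z)=0$ and $\rho_z>0$, the same gradient formulas, and the same treatment of condition (iii) in the KL case (antitonicity of $X\mapsto X^{-1}$) and the moment case (it is the stated hypothesis). All of that is complete and matches the paper.

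The gap is exactly where you predicted it: condition (iii) for the 2-Wasserstein case is asserted rather than proved. You reduce gradient dominance to $S_{\Sigma_1}\preceq S_{\Sigma_2}$ and then say one ``concludes by translating this PSD ordering of transport maps back into a PSD ordering of the $\Sigma_z$'s via $\Sigma_z=S_{\Sigma_z}^{-1}\hat\Sigma_z S_{\Sigma_z}^{-1}$.'' That translation is precisely the nontrivial step: from $S_{\Sigma_1}^{-1}\succeq S_{\Sigma_2}^{-1}$ you would need the map $A\mapsto A\hat\Sigma_z A$ to be operator monotone on $\mathbb{S}_{++}^{d_z}$, and it is not --- already for $\hat\Sigma_z=I$ this is the failure of operator monotonicity of squaring ($U\succeq V\succ0$ does not imply $U^2\succeq V^2$). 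So the route you sketch does not close as written. The paper instead argues at the level of $M_i=\hat\Sigma_z^{1/2}\Sigma_i\hat\Sigma_z^{1/2}$: a congruence by $\hat\Sigma_z^{-1/2}$ turns gradient dominance into $M_1^{-1/2}\preceq M_2^{-1/2}$, after which it invokes the order-reversal of $X\mapsto X^{-1/2}$ and a final congruence to conclude $\Sigma_1\succeq\Sigma_2$. Be aware, however, that the passage from $M_1^{-1/2}\preceq M_2^{-1/2}$ to $M_1\succeq M_2$ is the \emph{inverse} of an operator-antitone map --- the same delicate point in a different guise --- so this is the one step in the whole proposition that genuinely requires a supporting argument in your write-up rather than an appeal to monotonicity folklore.
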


\begin{proof}{Proof.}
To simplify notation, we subsequently drop the subscript $z$ from $\Sigma_z, \hat{\Sigma}_z$, $\rho_z$ or $d_z$.

%
%  Wasserstein
% 
\smallskip
\noindent {\bf Wasserstein ambiguity sets.} Set $\gz(\Sigma) = (\mathds{G}((0, \Sigma),(0, \hat{\Sigma})))^2 - \rho^2 $ where $\mathds{G}$ is the Gelbrich distance. We claim that all requirements in Assumption~\ref{ass:M-gradients} are satisfied if $\rho > 0$. We have:
\[ \gz(\Sigma) = \Tr\Bigl(\Sigma + \hat{\Sigma} - 2\bigr( \hat{\Sigma}^{1/2} \Sigma \hat{\Sigma}^{1/2}\bigr)^{1/2}\Bigr) - \rho^2.\] 
\Cref{prop:WassersteinSubsetGelbrich} and the discussion that verified Assumption~\ref{ass:general_assumption_about_M_function}-(ii) imply that the set $\mathcal{M}_\Sigma = \bigl\{ \Sigma \in \mathbb{S}^{d}_+ : \gz(\Sigma) \leq 0 \bigr\}$ is convex and compact. Moreover, $\gz$ is differentiable in $\Sigma$ on $\mathbb{S}_{++}^{d}$ because $\mathds{G}$ is. Requirements~(i) and~(ii) are readily satisfied because 
$\gz$ and the Gelbrich distance $\mathds{G}$ are minimized at $\Sigma = \hat{\Sigma}$ and we have $\gz(\hat{\Sigma}) = -\rho^2 < 0$ when $\rho > 0$. To check~(iii), note that the gradient $\gradgz$ is:
\[\gradgz(\Sigma) = I - \hat\Sigma^\frac{1}{2} (\hat\Sigma^\frac{1}{2} \Sigma \hat\Sigma^\frac{1}{2})^{-\frac{1}{2}}\hat\Sigma^\frac{1}{2}. \] 
Fix $\Sigma_1, \Sigma_2 \in \mathbb S_+^{d_z}$ and suppose that $\gradgz(\Sigma_1) \succeq \gradgz(\Sigma_2)$. This implies
\[\hat\Sigma^\frac{1}{2} (\hat\Sigma^\frac{1}{2} \Sigma_1 \hat\Sigma^\frac{1}{2})^{-\frac{1}{2}}\hat\Sigma^\frac{1}{2} \preceq \hat\Sigma^\frac{1}{2} (\hat\Sigma^\frac{1}{2} \Sigma_2 \hat\Sigma^\frac{1}{2})^{-\frac{1}{2}}\hat\Sigma^\frac{1}{2} ~\Rightarrow~ (\hat\Sigma^\frac{1}{2} \Sigma_1 \hat\Sigma^\frac{1}{2})^{-\frac{1}{2}} \preceq (\hat\Sigma^\frac{1}{2} \Sigma_2 \hat\Sigma^\frac{1}{2})^{-\frac{1}{2}} ~\Rightarrow~ \Sigma_1 \succeq \Sigma_2, \]
where the first implication follows because pre- and post-multiplying by $\hat{\Sigma}^{-\frac{1}{2}}$ preserves ordering, and the second implication follows because the operator $X \mapsto X^{-1/2}$ reverses ordering on $\mathbb{S}_+^d$ (i.e., $X_1 \succeq X_2$ implies $(X_1)^{-1/2} \preceq (X_2)^{-1/2}$) and pre- and post-multiplying by $\hat{\Sigma}^{1/2}$ preserves ordering.

%
%  KL
%
\smallskip
\noindent \textbf{Kullback-Leibler ambiguity sets.} Set $\gz(\Sigma) = \mathds{T}((0, \Sigma),(0, \hat{\Sigma})) - \rho$ where $\mathds{T}$ is the KL-type divergence between moments introduced in \Cref{def:kl-covs}. We claim that all requirements in Assumption~\ref{ass:M-gradients} are satisfied if $\rho > 0$. We have:
\[ \gz(\Sigma) = \frac{1}{2}\Bigl( \Tr\bigl(\Sigma \hat{\Sigma}^{-1}\bigr) - \log\det\bigl(\Sigma \hat{\Sigma}^{-1} \bigr) - d \Bigr) - \rho.\] 
\Cref{prop: KL bound} and the discussion that verified Assumption~\ref{ass:general_assumption_about_M_function}-(ii) imply that the set $\mathcal{M}_\Sigma = \bigl\{ \Sigma \in \mathbb{S}^{d}_{++} : \gz(\Sigma) \leq 0 \bigr\}$ is convex and compact. Moreover, $\gz$ is differentiable in $\Sigma$ on $\mathbb{S}_{++}^{d}$ because the $\Tr(\cdot)$ and $\log \det (\cdot)$ operators are differentiable on $\mathbb{S}_{++}^{d}$. Requirements~(i) and~(ii) are readily satisfied because 
$\gz$ and $\mathds{T}$ are both minimized at $\Sigma = \hat{\Sigma}$ and we have $\gz(\hat{\Sigma}) = -\rho < 0$ when $\rho > 0$. To check~(iii), note that the gradient $\gradgz$ can be written using matrix-calculus rules as:
\[ \gradgz(\Sigma) = \frac{1}{2} (\hat{\Sigma}^{-1} - \Sigma^{-1}). \]
That $\gradgz(\Sigma_1) \succeq \gradgz(\Sigma_2)$ implies $\Sigma_1 \succeq \Sigma_2$ follows because $X \mapsto X^{-1}$ is order-reversing on $\mathbb{S}^{d}_{++}$.

%
%  Moment-based
%
\smallskip
\noindent \textbf{Moment-Based ambiguity sets.} Define 
$\gz(\Sigma) = \mathds{M}((0,\Sigma),(0,\hat{\Sigma})) - \rho$. The conditions follow immediately by recognizing that $\mathds{M}$ being differentiable and satisfying $\mathds{M}(m,m)=0$ for all $m \in \setmoments{d}$ (as required in its definition in \S\ref{sec:examples}) imply that $\gradgz(\hat{\Sigma})=0$ and $\gz(\hat{\Sigma}) < 0$ for $\rho > 0$. Condition~(iii) is exactly equivalent to the stated condition~\eqref{eq:condition_M_assumption4} on $\mathds{M}$. 
\end{proof}

%%%%%%%%%%%%%%%%%%%%%%%%%%%%%%%%%%%%%%%%%%%%%
%
%  Theorem 3 (worst-case covariance matrices)
%
%%%%%%%%%%%%%%%%%%%%%%%%%%%%%%%%%%%%%%%%%%%%%
\bigskip
\begin{proof}[Proof of \Cref{thm:optimal-covs-are-higher}.]
Problem~\eqref{DRCPdual} has the same optimal value as problem~\eqref{eq:dr-affine-max-min}, so we consider the latter formulation. Recall that $\hat{\mu}_z=0$, so \Cref{thm:worst-case-mean-zero} allows us to restrict attention to zero-mean, Gaussian distributions $\PP_z$ for nature and linear control policies $u = U \eta$. 

We prove a slightly stronger result: that for any linear control policy, nature's optimal choice satisfies $\Sigma\opt_{z}\succeq \hat\Sigma_{z}$, for every $z \in \mathcal{Z}$. Consider any $U\in \mathcal U$ and define auxiliary matrices
\[\Upsilon_w = (UD)^\top R UD + (G+HUD)^\top Q (G+HUD)\quad \text{and} \quad \Upsilon_v = U^\top R U + (HU)^\top R HU.\]
Because $R \succ 0$ and $Q \succeq 0$, we have $\Upsilon_w \succeq 0$ and $\Upsilon_v \succeq 0$. Substituting into the objective of~\eqref{eq:dr-affine-max-min} yields:
\begin{equation}
    \Tr(\Upsilon_w \Sigma_w) + \Tr(\Upsilon_v \Sigma_v). 
    \label{eq:objective_with_upsilon_w_v}
\end{equation}
Function~\eqref{eq:objective_with_upsilon_w_v} is separable in $\Sigma_w$ and $\Sigma_v$. In fact, because $\Sigma_w$ and $\Sigma_v$ are block-diagonal matrices (because the random noise terms $z \neq z' \in \mathcal{Z}$ are uncorrelated), this objective further decomposes based on each component $z \in \mathcal{Z}$. For instance, with $\mathcal Z_1 = \{x_0, w_0, \ldots, w_{T-1}\}$, we have: 
\[\Tr(\Upsilon_w \Sigma_w) = \sum\limits_{z\in\mathcal Z_1} \Upsilon_w^{(z)}\Sigma_{z},\]
where each $\Upsilon_w^{(z)}$ is the diagonal block of $\Upsilon_w$ corresponding to block $\Sigma_z$ in $\Sigma_w\opt$. A similar decomposition applies to the objective over $\Sigma_v$, with $\Upsilon_v^{(z)}$ denoting the blocks in $\Upsilon_w$. Because the constraints $(\Sigma_w, \Sigma_v) \in \mathcal{M}_{\Sigma_w} \times \mathcal{M}_{\Sigma_v}$ are also separable by $z\in \mathcal{Z}$, problem~\eqref{eq:dr-affine-max-min} is separable by $z\in \mathcal{Z}$:
\begin{equation}
\max\limits_{(\Sigma_w, \Sigma_v) \in \mathcal{M}_{\Sigma_w} \times \mathcal{M}_{\Sigma_v}} \Tr(\Upsilon_w \Sigma_w + \Upsilon_v  \Sigma_v) = \sum_{z \in \mathcal{Z}_1} \max\limits_{\Sigma_z \in \mathcal{M}_{\Sigma_z}} \Tr(\Upsilon_w^{(z)} \Sigma_z) + \sum_{z \in \mathcal{Z} \setminus \mathcal{Z}_1} \max\limits_{\Sigma_z \in \mathcal{M}_{\Sigma_z}} \Tr(\Upsilon_v^{(z)} \Sigma_z).
\label{eq:max-problem-decomposes}
\end{equation}
We prove the result for a given $z\in \mathcal{Z}$. Without loss of generality, consider the problem:
\begin{equation*}
\begin{array}{cclll}
\textup{(SDP)}_1 \qquad
    &\max\limits_{\Sigma_z} &\Tr(\Upsilon^{(z)}_w \Sigma_z) \\
    &\st & \gz(\Sigma_z) \leq 0 \\
    &&\Sigma_z \succeq 0.
\end{array}
\end{equation*}
Instead of solving this problem, we consider the following relaxation:
\begin{equation*}
\begin{array}{cclll}
    \textup{(SDP)}_2 \qquad  &\max\limits_{\Sigma_z} &\Tr(\Upsilon^{(z)}_w \Sigma_z) \\
    &\st & \gz(\Sigma_z) \leq 0 \\
    &&\Sigma_z \succeq \epsilon I,
\end{array}
\end{equation*}
where $\epsilon$ satisfies $0 < \epsilon < \lambda_{\min}(\hat{\Sigma}_z)$. Subsequently, we will prove that an optimal solution to (SDP)$_2$ exists satisfying $\Sigma_z\opt \succeq \hat{\Sigma}_z$, which (because $\hat{\Sigma}_z \succ \epsilon I$) will imply that the last constraint in (SDP)$_2$ will not be binding and $\Sigma_z\opt$ will also be optimal in (SDP)$_1$.

Consider problem (SDP)$_2$. Because $\Upsilon_w \succeq 0$, its principal sub-matrices are positive semidefinite, so $\Upsilon_w^{(z)} \succeq 0$. Let $\Sigma_z\opt$ be a maximizer in the optimization problem above. (The Weierstrass Theorem, which applies because the objective is linear and Assumption~\ref{ass:general_assumption_about_M_function} guarantees that the feasible set $\mathcal{M}_{\Sigma_z}$ is compact, implies that a maximizer exists.) We prove that a maximizer exists so that $\Sigma_{w}\opt \succeq \hat\Sigma_w$.

\textbf{(a)} If $\Upsilon_w^{(z)} =0$, we can replace $\Sigma_z\opt $ with $\hat\Sigma_z$, which satisfies $\Sigma_z\opt \succeq \hat\Sigma_z$ and is optimal.

\textbf{(b)} If $\Upsilon_w^{(z)} \neq 0$, with $\lambda_z$ and $\Lambda_z$ as the dual variables for the constraints above, the KKT conditions for this maximization problem are:
%
%  KKT
%
\begin{align}
    &-\Upsilon_w^{(z)} + \lambda_z \, \gradgz(\Sigma_{z}\opt) -\Lambda_z = 0 && \tag{Stationarity} \label{eq:st-cond}\\
    &\lambda_z \, \gz (\Sigma_{z}\opt) = 0,~\Tr(\Lambda_z^\top \Sigma_{z}\opt) = 0 &&\tag{Complementary Slackness}\label{eq:cs-cond}\\
    &\lambda_z \geq 0, \Lambda_z
    \in \mathbb S_+^n &&\tag{Dual Feasibility}\label{eq:df-cond}
\end{align}
In writing these KKT conditions, the use of the gradient $\gradgz$ was valid because the feasible set of (SDP)$_2$ is contained in $\mathbb{S}_{++}^{d_z}$ and $\gz$ is differentiable on the latter set, by Assumption~\ref{ass:M-gradients}. Moreover, the KKT conditions are necessary for optimality here because we have a convex optimization problem and Slater's condition is satisfied because $\gz(\hat{\Sigma}_z) < 0$ due to Assumption~\ref{ass:M-gradients}-(i). 

The \ref{eq:st-cond} condition implies:
\begin{align*}
    \lambda_z \, \gradgz(\Sigma_{z}\opt) = \Upsilon^{(z)}_w + \Lambda_z. 
\end{align*}

Because $\Upsilon_w^{(z)} \succeq 0$ and $\Lambda_z\succeq 0$, we have $\lambda_z \, \gradgz(\Sigma_z\opt) \succeq 0$. Moreover, if $\lambda_z=0$ above, we must have $\Upsilon^{(z)}_w = \Lambda_z = 0$, which contradicts our standing assumption that $\Upsilon_w^{(z)} \neq 0$.

If $\lambda_z \neq 0$, because $\gradgz(\Sigma_{z}\opt) \succeq 0$ by the argument above, we have: 
\begin{align}
\gradgz(\Sigma_z\opt) \succeq 0 = \gradgz (\hat\Sigma_z),
\label{eq:proof_assum4_gradient_ineq}
\end{align}
where the last equality follows from Assumption~\ref{ass:M-gradients}-(ii), which together with the convexity and differentiability of $\gz$ implies that $\gradgz(\hat{\Sigma}_z)=0$. But then, \eqref{eq:proof_assum4_gradient_ineq} and Assumption~\ref{ass:M-gradients}-(iii) imply $\Sigma_{z}\opt \succeq \hat\Sigma_z$, which completes our proof.  
\end{proof}

%================================================
%
%  Proofs for Section 4 (algorithms)
%
%================================================
\section{Proofs for \Cref{sec:DR-LQG-algorithm}}
\label{sec:app:proofs_section_4}

\subsection{Concavity and $\beta$-Smoothness for the Objective}
\label{sec:app:beta_smoothness}
Recall the notation from \Cref{sec:DR-LQG-algorithm} whereby $f(\Sigma_w,\Sigma_v)$ is the optimal value of the classical LQG problem for a Gaussian distribution with zero mean and covariances $\Sigma_w, \Sigma_v$. 

% \begin{definition}[Strong convexity]
% \label{def:strong-convex}
% The function $f: \R^d \to \R$ is strongly convex with parameter $f_s$ if the function $g(x) = f(x) - \frac{f_s}{2} \|x\|^2$ is convex.
% \end{definition}
%
%
%  Beta-smoothness
%
\begin{definition}[$\beta$-smoothness]
\label{def:beta-smooths}
For $\mathcal{M}_1,\mathcal{M}_2 \subseteq \mathbb S_+^d$, the function $f: \mathcal{M}_1 \times \mathcal{M}_2 \to \R$ is called $\beta$-smooth for some $\beta>0$ if
\[
    |\nabla f(\Sigma_1,\Sigma_2)-\nabla f(\Sigma_1',\Sigma_2')| \leq \beta\left( \|\Sigma_1-\Sigma_1'\|_{\mathrm{F}}^2+\|\Sigma_2- \Sigma_2'\|_{\mathrm{F}}^2 \right)^\frac{1}{2} \quad \forall\, \Sigma_1, \Sigma_2\in \mathcal{M}_1, \Sigma_2, \Sigma_2'\in \mathcal{M}_2.
\]
\end{definition}
% \begin{proposition}
% \label{prop:structure-of-f}
% Under Assumptions~\ref{ass:Gaussian}-\ref{thm:optimal-covs-are-higher}, $f$ is concave and $\beta$-smooth on the domain $\mathcal{M}_{\Sigma_w}\times \mathcal{M}_{\Sigma_v}^+$.
% \end{proposition}
\proof[Proof of Proposition \ref{prop:structure-of-f}.]
The function~$f(\Sigma_w, \Sigma_v)$ is concave because the objective function of the inner minimization problem in~\eqref{eq:dr-affine-max-min} is linear (and hence concave) in~$\Sigma_w$ and~$\Sigma_v$ and because concavity is preserved under minimization. 
%Note  that $f(\Sigma_w, \Sigma_v)$ also equals the optimal value of the inner minimization problem in~\eqref{eq: dual distributionally robust control problem restriction} (refer to the proof of \Cref{prop:dual-sdp} for details). As~$\mathcal U_\eta=\mathcal U_y$, $f(\Sigma_w, \Sigma_v)$ can thus be viewed as the optimal value of the classical LQG problem corresponding to the Gaussian distribution~$\PP$ with the covariance matrices~$\Sigma_w$ and~$\Sigma_v$. 
By \Cref{corol:worst-case-Kalman-elliptical}, the value of $f(\Sigma_w,\Sigma_v)$ is given by~\eqref{eq:lqg-cost}, where $\Lambda_t$
% \bahar{Check the new notation for the Kalman updates}, 
for $t \in [T-1]$, is a function of~$(\Sigma_w,\Sigma_v)$ defined recursively through the Kalman filter equations
% ~\eqref{eq:kalman-cov-updates}
\eqref{eq:kalman-cov-updates}. Note that all inverse matrices in \eqref{eq:kalman-cov-updates}
are well-defined because any $\Sigma_v$ {$\in \mathcal{M}_{\Sigma_v}^+$} is strictly positive definite. Therefore, $\Lambda_t$ constitutes a proper rational function, i.e., a ratio of two polynomials with the polynomial in the denominator being strictly positive, for every~$t\in[T-1]$. Thus, $f(\Sigma_w, \Sigma_v)$ is infinitely often continuously differentiable on a neighborhood of $\mathcal{M}_{\Sigma_w} \times \mathcal{M}_{\Sigma_v}^{+}$.

Because $f(\Sigma_w,\Sigma_v)$ is concave and at least twice continuously differentiable, it is $\beta$-smooth on $\mathcal{M}_{\Sigma_w} \times \mathcal{M}_{\Sigma_v}^+$ if and only if the largest eigenvalue of the negative of the Hessian matrix, %$\lambda_{\max}\left( - \nabla^2(-f(\Sigma_w,\Sigma_v)) \right)$, 
$\lvert\lambda_{\max}\left( -\nabla^2 f(\Sigma_w,\Sigma_v) \right)\rvert$, is bounded above by~$\beta$ on $\mathcal{M}_{\Sigma_w} \times \mathcal{M}_{\Sigma_v}^+$. The value $\lvert \lambda_{\max}\left( - \nabla^2(f(\Sigma_w,\Sigma_v)) \right)\rvert$ coincides with the spectral norm of~$\nabla^2 f(\Sigma_w,\Sigma_v)$, so we can set:
\begin{equation}\label{eq: max eigenvalue as spectral norm}
    \beta = \sup_{\Sigma_w \in \mathcal{M}_{\Sigma_w}, \Sigma_v \in \mathcal{M}_{\Sigma_v}^+} \Vert \nabla^2 f(\Sigma_w,\Sigma_v) \Vert_2,
\end{equation}
% }
where $\Vert \cdot \Vert_2$ denotes the spectral norm. The supremum in the above maximization problem is finite and attained thanks to Weierstrass' theorem, which applies because $f(\Sigma_w,\Sigma_v)$ is twice continuously differentiable and the spectral norm is continuous, while the sets~$\mathcal{M}_{\Sigma_w}$ and~$\mathcal{M}_{\Sigma_v}^+$ are compact by Assumption~\ref{ass:general_assumption_about_M_function}. This observation completes the proof.

\endproof

\subsection{Bisection Algorithm for the Linearization Oracle}
\label{sec:bisection}
We now show that the direction-finding subproblem~\eqref{eq:linearization-oracle} can be solved efficiently via bisection for the Wasserstein and KL ambiguity sets. 
\subsubsection{Wasserstein Ambiguity.}
\label{sec:bisection:wass} We establish that~\eqref{eq:linearization-oracle} can be reduced to the solution of $2T+1$ univariate algebraic equations. To this end, let $\Gamma_z = \nabla_{\Sigma_z} f(\Sigma^{(k)}_{w}, \Sigma^{(k)}_v)$ denote the gradient at the current iterate with respect to~$\Sigma_z$. Note that a routine calculation (see proof of \Cref{thm:optimal-covs-are-higher}) shows that $\Gamma_z$$\succeq 0$. If $\Gamma_z=0$, then $\Sigma_z\opt=\hat\Sigma_z$ is trivially optimal in~\eqref{rem:efficient_computation_wasserstein_KL}. From now on we thus assume that~$\Gamma_z\neq 0$.

%
%  Prop A.4
%
\begin{proposition}{\cite[Proposition A.4]{nguyen2023bridging}}
If $\Gamma_z \in \mathbb S_+^d$, $\Gamma_z \neq 0$, $\hat \Sigma_z \in \mathbb S_{++}^{d_z}$, and $\rho_z >0$, then the unique optimal solution to the problem
\begin{equation}
\begin{array}{cclll}
     &\max &\langle \Gamma_z, \Sigma_z - \Sigma_z^{(k)} \rangle \\
     &\st & \Sigma_z \in \mathbb S_+^d \\
     && \mathds G(\Sigma_z, \hat \Sigma_z) \leq \rho_z\\
     && \Sigma_z \succeq \lambda_{\min}(\hat \Sigma_z) I
\end{array}
\label{eq:linear-oracle-problem-wass}
\end{equation}
is ~$\Sigma_z\opt = (\gamma\opt)^2 (\gamma\opt I  - \Gamma_z)^{-1}\hat \Sigma_z(\gamma\opt I - \Gamma_z)^{-1}$,
where $\gamma\opt$ is the unique solution of
\begin{equation}
\rho_z^2 - \langle \hat \Sigma_z , (I - \gamma\opt (\gamma\opt I - \Gamma_z)^{-1})^2\rangle = 0
\label{eq:algebraic-gamma-exp}
\end{equation}
satisfying
\begin{equation}\underline\gamma = \lambda_1(1+ (p_1^\top \hat \Sigma_z p_1)^\frac{1}{2} / \rho_z) \leq \gamma\opt \leq \lambda_1(1+ \Tr(\hat \Sigma_z)^\frac{1}{2} /\rho_z) = \overline \gamma,
\label{eq:upper-lower-gamma-wass}
\end{equation}
where $\lambda_1 = \lambda_{\max}(\Gamma_z)$ and $p_1$ is an eigenvector for $\lambda_1$.
\end{proposition}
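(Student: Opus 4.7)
The plan is to prove the proposition by applying the KKT conditions to~\eqref{eq:linear-oracle-problem-wass}. I would first argue that the problem is convex after replacing the Gelbrich constraint by its square (since $\mathds{G}(\cdot,\hat\Sigma_z)^2$ is convex on $\mathbb S_+^d$), and that Slater's condition holds by choosing $\Sigma_z = \hat\Sigma_z$ (feasible when $\rho_z > 0$). Hence KKT conditions are both necessary and sufficient. Because $\Gamma_z \succeq 0$ and $\Gamma_z \neq 0$, an argument in the spirit of \Cref{thm:optimal-covs-are-higher} shows that an optimizer satisfies $\Sigma_z^\star \succeq \hat\Sigma_z \succ \lambda_{\min}(\hat\Sigma_z)I$, so the spectral lower bound is strictly inactive and the Gelbrich constraint is active; only one nontrivial multiplier $\gamma \geq 0$ remains.

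Writing the Lagrangian $L(\Sigma_z,\gamma) = \langle \Gamma_z, \Sigma_z - \Sigma_z^{(k)}\rangle - \gamma\bigl(\Tr(\Sigma_z + \hat\Sigma_z - 2(\hat\Sigma_z^{1/2}\Sigma_z\hat\Sigma_z^{1/2})^{1/2}) - \rho_z^2\bigr)$ and using the standard identity
\[
    \nabla_{\Sigma_z}\Tr\bigl((\hat\Sigma_z^{1/2}\Sigma_z\hat\Sigma_z^{1/2})^{1/2}\bigr) = \tfrac{1}{2}\hat\Sigma_z^{1/2}(\hat\Sigma_z^{1/2}\Sigma_z\hat\Sigma_z^{1/2})^{-1/2}\hat\Sigma_z^{1/2},
\]
valid on $\mathbb S_{++}^d$, stationarity reads $\Gamma_z = \gamma I - \gamma \hat\Sigma_z^{1/2}(\hat\Sigma_z^{1/2}\Sigma_z\hat\Sigma_z^{1/2})^{-1/2}\hat\Sigma_z^{1/2}$. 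Setting $A = \hat\Sigma_z^{1/2}$ and $M = (A\Sigma_z A)^{1/2}$, this rearranges to $AM^{-1}A = (\gamma I - \Gamma_z)/\gamma$, from which I would solve explicitly for $M = \gamma A(\gamma I - \Gamma_z)^{-1}A$ and back-substitute to obtain the stated closed form $\Sigma_z^\star = (\gamma^\star)^2(\gamma^\star I - \Gamma_z)^{-1}\hat\Sigma_z(\gamma^\star I - \Gamma_z)^{-1}$, under the implicit requirement $\gamma^\star > \lambda_{\max}(\Gamma_z) = \lambda_1$ needed for $\gamma I - \Gamma_z$ to be invertible. Plugging $M$ into the active Gelbrich constraint and using $\Tr(M) = \gamma\Tr(\hat\Sigma_z(\gamma I - \Gamma_z)^{-1})$, a short expansion collapses it to $\langle \hat\Sigma_z,(I - \gamma(\gamma I - \Gamma_z)^{-1})^2\rangle = \rho_z^2$, which is exactly~\eqref{eq:algebraic-gamma-exp}.

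To establish existence, uniqueness, and the bracket~\eqref{eq:upper-lower-gamma-wass}, I would diagonalize $\Gamma_z = \sum_i \lambda_i p_i p_i^\top$ and rewrite the left-hand side of~\eqref{eq:algebraic-gamma-exp} as $\varphi(\gamma) = \sum_i \bigl(\lambda_i/(\gamma - \lambda_i)\bigr)^2 (p_i^\top \hat\Sigma_z p_i)$. Each summand is strictly decreasing on $(\lambda_1,\infty)$, with $\varphi(\gamma)\to\infty$ as $\gamma\downarrow\lambda_1$ and $\varphi(\gamma)\to 0$ as $\gamma\to\infty$, so $\varphi(\gamma)=\rho_z^2$ has a unique root $\gamma^\star$. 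The lower bound in~\eqref{eq:upper-lower-gamma-wass} then follows by retaining only the $i=1$ term (which gives $(\lambda_1/(\gamma^\star - \lambda_1))^2(p_1^\top\hat\Sigma_z p_1) \leq \rho_z^2$), and the upper bound follows from $\lambda_i/(\gamma^\star - \lambda_i) \leq \lambda_1/(\gamma^\star - \lambda_1)$ combined with $\sum_i p_i^\top\hat\Sigma_z p_i = \Tr(\hat\Sigma_z)$.

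The main obstacle will be the matrix-calculus step and the algebraic inversion producing the closed form: both manipulations require $A\Sigma_z A$ to be strictly positive definite near the optimum and $\gamma^\star > \lambda_1$, and these positivity claims need to be justified rather than assumed. Once these are in place, the rest of the argument reduces to a transparent KKT derivation that leverages the Bures--Wasserstein structure of the Gelbrich distance.
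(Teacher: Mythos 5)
The paper does not prove this proposition at all: it is imported verbatim as Proposition~A.4 of \cite{nguyen2023bridging}, so your blind attempt is being compared against a citation rather than an in-house argument. Your KKT derivation is essentially correct and, as far as I can tell, reconstructs the argument of the cited reference: squaring the Gelbrich constraint to get a convex program, invoking Slater, reading off stationarity via the gradient $I - \hat\Sigma_z^{1/2}(\hat\Sigma_z^{1/2}\Sigma_z\hat\Sigma_z^{1/2})^{-1/2}\hat\Sigma_z^{1/2}$ (the same formula the paper uses in the proof of \Cref{prop:assumption4_for_base_examples}), solving for $M=(\hat\Sigma_z^{1/2}\Sigma_z\hat\Sigma_z^{1/2})^{1/2}$, and reducing the active constraint to the scalar equation $\sum_i\bigl(\lambda_i/(\gamma-\lambda_i)\bigr)^2 p_i^\top\hat\Sigma_z p_i=\rho_z^2$, whose monotonicity on $(\lambda_1,\infty)$ gives existence, uniqueness, and both bounds exactly as you state. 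This buys the paper a self-contained proof where it currently leans on an external result.

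Two loose ends are worth tightening. First, your inactivity argument asserts $\hat\Sigma_z\succ\lambda_{\min}(\hat\Sigma_z)I$, which is false: $\hat\Sigma_z-\lambda_{\min}(\hat\Sigma_z)I$ is always singular. The clean fix is to drop the spectral constraint, solve the relaxed problem in closed form, and then verify a posteriori that the solution satisfies $\Sigma_z\opt\succeq\hat\Sigma_z\succeq\lambda_{\min}(\hat\Sigma_z)I$, so it remains optimal for the original problem; and the inequality $\Sigma_z\opt\succeq\hat\Sigma_z$ itself should be derived from stationarity ($\gradgz(\Sigma_z\opt)=\Gamma_z/\gamma\opt\succeq 0=\gradgz(\hat\Sigma_z)$ together with the order-reflecting property of $\gradgz$ established in \Cref{prop:assumption4_for_base_examples}), not from a congruence bound of the form $B\hat\Sigma_z B\succeq\hat\Sigma_z$ for $B\succeq I$, which fails for non-commuting matrices. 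Second, you establish uniqueness of $\gamma\opt$ but not of $\Sigma_z\opt$; since the objective is linear, uniqueness of the maximizer is not automatic and needs a sentence (e.g., any optimizer must be a KKT point with the semidefinite multipliers vanishing, and stationarity then pins down $\Sigma_z$ uniquely from $\gamma\opt$). Neither issue undermines the structure of your argument.
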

In practice, we need to solve the algebraic equation~\eqref{eq:algebraic-gamma-exp} numerically.
The numerical error in approximating~$\gamma\opt$ should be contained to ensure that~$\Sigma_z\opt$ approximates the exact maximizer of problem~\eqref{eq:linear-oracle-problem-wass}. 
The next proposition shows that for any tolerance~$\delta \in (0,1)$, a $\delta$-approximate solution of~\eqref{eq:linear-oracle-problem-wass} can be computed with an efficient bisection algorithm.
\begin{proposition}{\cite[Theorem~6.4]{nguyen2023bridging}}
For any fixed $\rho_z >0, \hat \Sigma_z \in \mathbb{S}_{++}^d$ and $\Gamma_z \in \mathbb{S}_{+}^d, \Gamma_z \neq 0$, define 
$\mathcal G_{\Sigma_z} = \{\Sigma_z \in \mathbb S_+^d : \mathds G(\Sigma_z, \hat \Sigma_z) \leq \rho_z, \hat \Sigma_z \succeq \lambda_{\rm{min}}(\hat \Sigma_z)\}$
as the feasible set of problem~\eqref{eq:linear-oracle-problem-wass}, and let $\hat{\Sigma}_z\in \mathcal G_{\Sigma_z}$ be any reference covariance matrix. Then, Algorithm~\ref{alg:bisection} with $\delta \in(0,1)$, $\varphi(\gamma)=\gamma(\rho_z^2+\left\langle\gamma(\gamma I-\Gamma_z\right)^{-1}-I, \hat \Sigma_z  \rangle) - \langle \Gamma_z, \Sigma^{(k)}_z \rangle$, $L(\gamma) = (\gamma)^2  (\gamma I - \Gamma_z)^{-1} \hat \Sigma_z (\gamma I - \Gamma_z)^{-1}$ for any $\gamma>\lambda_{\max }(\Gamma_z)$, $\overline\gamma$ and $\underline \gamma$ as defined in~\eqref{eq:upper-lower-gamma-wass}
% $\underline \gamma = \lambda_1(1+ (p_1^\top\hat \Sigma_z p_1)^{\frac{1}{2}} / \rho_z)$ and
%  $\overline \gamma= \lambda_1 (1+ \Tr(\hat \Sigma_z)^{\frac{1}{2}} / \rho_z)$, where $\lambda_1 = \lambda_{\max}(\Gamma_z)$ and $p_1$ is an eigenvector for $\lambda_1$.
% Then, Algorithm~\ref{alg:bisection} 
returns in finite time a matrix $\Sigma_z^\delta \in \mathbb{S}_{+}^d$ that satisfies:  $\Sigma_z^\delta \in \mathcal G_{\Sigma_z}$ (feasible) and $\langle \Gamma_z, \Sigma_z^\delta - \Sigma_z^{(k)} \rangle \geq  \delta \max _{\Sigma_z \in \mathcal{G}_{\Sigma_z}}\langle \Gamma_z, \Sigma_z - \Sigma_z^{(k)}\rangle$ ($\delta$-Suboptimal).
\label{prop:bisection}
\end{proposition}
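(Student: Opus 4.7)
The plan is to leverage the closed-form characterization of the unique optimizer from the preceding proposition, namely $\Sigma_z^\star = L(\gamma^\star)$ for a scalar $\gamma^\star \in [\underline{\gamma}, \overline{\gamma}]$ solving the algebraic equation~\eqref{eq:algebraic-gamma-exp}, in order to reduce the matrix-valued problem~\eqref{eq:linear-oracle-problem-wass} to a one-dimensional root search. Bisection on $[\underline{\gamma}, \overline{\gamma}]$ then produces iterates $\gamma^{(j)}$ with geometrically shrinking brackets, and the matrix iterate is recovered via $\Sigma_z^{(j)} = L(\gamma^{(j)})$.

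The first step is to verify that bisection is applicable to $\varphi$. Using the algebraic identity $\gamma(\gamma I - \Gamma_z)^{-1} - I = \Gamma_z(\gamma I - \Gamma_z)^{-1}$ and direct matrix calculus, one shows that $\gamma \mapsto \mathds{G}(L(\gamma),\hat{\Sigma}_z)^2$ is strictly decreasing on $(\lambda_{\max}(\Gamma_z), \infty)$ and attains the value $\rho_z^2$ precisely at $\gamma = \gamma^\star$. The function $\varphi$ admits an interpretation as the Lagrangian dual value at multiplier $\gamma$ associated with the Wasserstein constraint, so that $\varphi(\gamma^\star)$ equals the optimal objective value by strong duality and its derivative changes sign at $\gamma^\star$, which is what the bisection exploits. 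Standard bisection analysis then yields $|\gamma^{(j)} - \gamma^\star| \leq 2^{-j}(\overline{\gamma} - \underline{\gamma})$ after $j$ iterations, so any desired bracket width is attained in finitely many steps.

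The second step is to translate scalar bracket precision into the claimed feasibility and $\delta$-suboptimality guarantees. Because $L(\cdot)$ is smooth and hence Lipschitz continuous on any compact subset of $(\lambda_{\max}(\Gamma_z), \infty)$, the objective gap $|\langle \Gamma_z, L(\gamma^{(j)}) - L(\gamma^\star)\rangle|$ is controlled linearly by $|\gamma^{(j)} - \gamma^\star|$. Feasibility is ensured by always selecting the iterate on the side of the bracket satisfying $\gamma^{(j)} \geq \gamma^\star$, where the monotonicity above guarantees the Wasserstein constraint; moreover, $\gamma(\gamma I - \Gamma_z)^{-1} \succeq I$ on this range (since $\Gamma_z \succeq 0$), and a variational argument based on $M^{-2} \preceq I$ with $M = \gamma(\gamma I - \Gamma_z)^{-1}$ then gives $L(\gamma) \succeq \lambda_{\min}(\hat{\Sigma}_z) I$ automatically. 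Terminating the bisection once the Lipschitz-scaled bracket width falls below $(1-\delta)$ times a running lower bound on the optimum then certifies the multiplicative suboptimality.

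The main obstacle will be carefully establishing the strict monotonicity of $\mathds{G}(L(\gamma),\hat{\Sigma}_z)^2$ in $\gamma$ together with the quantitative slope estimates needed for the suboptimality translation, as the required manipulations of the matrix-valued function $\gamma^2(\gamma I - \Gamma_z)^{-1}\hat{\Sigma}_z(\gamma I - \Gamma_z)^{-1}$ and its derivatives are technical. The overall argument closely mirrors Theorem~6.4 in \citet{nguyen2023bridging}, from which the detailed derivations can be adapted to our setting.
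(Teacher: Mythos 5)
The paper does not prove this proposition at all: it is imported verbatim as \cite[Theorem~6.4]{nguyen2023bridging}, so there is no in-paper argument to compare against. Your sketch is a faithful reconstruction of the dual-bisection argument from that reference and is essentially sound: the reduction to a scalar root search via $\Sigma_z^\star=L(\gamma^\star)$, the monotonicity of $\gamma\mapsto\mathds G(L(\gamma),\hat\Sigma_z)^2$ (which indeed equals $\langle\hat\Sigma_z,(\Gamma_z(\gamma I-\Gamma_z)^{-1})^2\rangle$ and is strictly decreasing on $(\lambda_{\max}(\Gamma_z),\infty)$), and the verification that $L(\gamma)\succeq\lambda_{\min}(\hat\Sigma_z)I$ via $\gamma(\gamma I-\Gamma_z)^{-1}\succeq I$ all check out. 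The one place where you deviate from what Algorithm~\ref{alg:bisection} actually does is the stopping rule: you propose terminating when a Lipschitz-scaled bracket width drops below $(1-\delta)$ times a running lower bound on the optimum, which requires an explicit Lipschitz constant for $L$ on the bracket and a strictly positive lower bound (it degenerates when the optimal value is near zero). The algorithm as stated instead uses the weak-duality certificate directly — it stops once $\frac{\diff\phi}{\diff\gamma}(\gamma)>0$ (ensuring $\gamma>\gamma^\star$, hence feasibility of $L(\gamma)$) and $\langle L(\gamma)-\Sigma_z^{(k)},\Gamma_z\rangle\geq\delta\,\phi(\gamma)$, where $\phi(\gamma)$ upper-bounds the primal optimum for every dual-feasible $\gamma$ — which certifies the multiplicative guarantee without any Lipschitz estimate. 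If you intend your write-up to justify the algorithm as printed, you should adopt that certificate; otherwise your variant needs the extra hypothesis that the optimal value is bounded away from zero.
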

{\small
\begin{algorithm}
\hspace*{\algorithmicindent} \textbf{Input:} nominal covariance matrix $\hat \Sigma_z \in \mathbb S_{++}^{d_z}$, radius $\rho_z\in \R_{++}$,  \\
\hspace*{\algorithmicindent} \quad \quad \quad reference covariance matrix $\Sigma_z \in \mathcal{M}_z$,\\
\hspace*{\algorithmicindent} \quad \quad \quad gradient matrix~$\Gamma_z\in \mathbb S_+^d$, $\Gamma_z\neq 0$, precision~$\delta \in (0,1)$,\\
\hspace*{\algorithmicindent} \quad \quad \quad dual objective function $\phi : \R \to \R$ and estimation function $L(\gamma) : \R \to \mathbb S^{d}_+$ \\
\hspace*{\algorithmicindent} \quad \quad \quad upper bound~$\overline \gamma$ and lower bound $\underline \gamma$ 
\caption{Bisection algorithm to compute~$L_{\Sigma_z}^\delta$}
\label{alg:bisection}
\begin{algorithmic}[1]
\State{\textbf{repeat}}
\State{\quad set $\gamma \gets (\overline \gamma + \underline \gamma ) /2$ }
\State{\quad \textbf{if} $\frac{\diff \phi}{\diff \gamma}(\gamma) < 0$~\textbf{then}~set $\underline \gamma \gets \gamma$~\textbf{else}~set $\overline \gamma \gets \gamma$~\textbf{endif}}
\State{\textbf{until} $\frac{\diff \phi}{\diff\gamma}(\gamma) > 0$ and $\langle  L(\gamma )- \Sigma^{(k)}_z, \Gamma_z \rangle \geq  \delta \phi(\gamma)$}
\end{algorithmic}
\hspace*{\algorithmicindent} \textbf{Output}: $L(\gamma)$
\end{algorithm}}
In summary, for any~$z \in \mathcal Z$, Algorithm~\ref{alg:bisection} computes a~$\delta$-approximate solutions to the direction-finding subproblem~\eqref{eq:linearization-oracle} with $\Gamma_z = \nabla_{\Sigma_z} f(\Sigma^{(k)}_{w}, \Sigma^{(k)}_v)$.

\subsubsection{Kullback-Leibler Divergence}
We first establish that~\eqref{eq:linearization-oracle} can be reduced to the solution of a univariate algebraic equation. Let $\Gamma_z = \nabla_{\Sigma_z} f(\Sigma^{(k)}_{w}, \Sigma^{(k)}_v)$ denote the gradient at the current iterate.

\begin{theorem}
\label{thm:algebraic-form-kl}
    If $\hat \Sigma_z \in \mathbb S_{++}^{d_z}$, $\Gamma_z \in \mathbb S_+^d$, $\Gamma_z \neq 0$, $\Sigma_z^{(k)} \in \mathbb S^d_+$,  and $\rho_z \in \R_{++}$, then the problem
    \begin{equation}
    \begin{array}{ccll}
         &\max\limits_{\Sigma_z \in \mathbb S_+^d} &\langle \Gamma_z, \Sigma_z -\Sigma_z^{(k)} \rangle \\
         &\st & \mathds T(\Sigma_z, \hat{\Sigma}_z) \leq \rho_z\\
    \end{array}
    \label{eq:linear-oracle-problem-kl}
    \end{equation}
    is uniquely solved by~$L\opt_{\Sigma_z}=$  
    $\gamma\opt(\gamma\opt \hat \Sigma_z^{-1} -  \Gamma_z )^{-1}$,
    % solves
    % \noteck{$\Gamma_z$ is arbitrary psd matrix as in Wasserstein case, which is a placeholder for the gradient in \eqref{eq:linearization-oracle}? I think we either need to rewrite the problem~\eqref{eq:linearization-oracle} similarly to (17) or use notation consistent with (16), e.g., we need to write the gradient instead of $\Gamma_z$. Also, I think we need to write $L\opt_{\Sigma_z}$ instead of $\Sigma\opt_z$ in either case},
    % \bahar{I am fine with any notation. This is parallel at the moment with what we have in Wasserstein in Proposition 7.1. }
    where
    $\gamma \opt$ is the unique solution of the following nonlinear equation
    \begin{equation}2 \rho_z = \log\det(I - \hat \Sigma_z \Gamma_z / \gamma\opt) +  \Tr(( \gamma\opt I - \hat \Sigma_z \Gamma_z)^{-1}\hat \Sigma_z \Gamma_z),
    \label{eq:kl-dual-var-algebraic-equation}
    \end{equation}
satisfying
\begin{equation}\lambda_1 < \gamma \opt\leq  \lambda_1\left (1 + \frac{d}{\rho_z}\right),
\label{eq:upper-lower-gamma-kl}
\end{equation}
where $\lambda_1$ is the largest eigenvalue of $\hat \Sigma^{\frac{1}{2}} \Gamma_z \hat \Sigma^{\frac{1}{2}}$.
\end{theorem}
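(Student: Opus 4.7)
The plan is to follow the same Lagrangian strategy used in \cite{nguyen2023bridging} for the Wasserstein case. First I would observe that problem~\eqref{eq:linear-oracle-problem-kl} is a \emph{convex} program: the objective $\langle \Gamma_z, \Sigma_z - \Sigma_z^{(k)} \rangle$ is linear in $\Sigma_z$, and the constraint function $\Sigma_z \mapsto \mathds T(\Sigma_z, \hat{\Sigma}_z)$ is convex on $\mathbb{S}_{++}^{d}$ by Proposition~\ref{prop: KL bound} (equivalently, by inspection of its Hessian). Slater's condition is satisfied because $\hat{\Sigma}_z \in \mathbb S_{++}^{d}$ is strictly feasible whenever $\rho_z > 0$, and hence Lagrangian duality holds with zero duality gap and the KKT conditions are necessary and sufficient for optimality. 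Since $\Gamma_z \succeq 0$ (by the same reasoning used in the proof of Theorem~\ref{thm:optimal-covs-are-higher}) and $\Gamma_z \neq 0$, the objective is unbounded above on $\mathbb{S}_+^{d}$, so the KL-ball constraint must bind at any maximizer.

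Next I would form the Lagrangian $L(\Sigma_z,\gamma) = \langle \Gamma_z, \Sigma_z - \Sigma_z^{(k)} \rangle - \gamma\bigl(\mathds T(\Sigma_z,\hat{\Sigma}_z) - \rho_z\bigr)$ with multiplier $\gamma > 0$. The stationarity condition $\nabla_{\Sigma_z} L = 0$ yields, using $\nabla_{\Sigma_z} \mathds T(\Sigma_z,\hat{\Sigma}_z) = \tfrac{1}{2}\bigl(\hat{\Sigma}_z^{-1} - \Sigma_z^{-1}\bigr)$, the identity $\Sigma_z^{-1} = \hat{\Sigma}_z^{-1} - (2/\gamma)\Gamma_z$. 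Relabeling $\gamma/2$ as $\gamma$ (absorbing the factor 1/2 of $\mathds T$ into the multiplier), we obtain the explicit closed form
\begin{equation*}
\Sigma_z\opt(\gamma) = \gamma\bigl(\gamma\hat{\Sigma}_z^{-1} - \Gamma_z\bigr)^{-1}.
\end{equation*}
This expression is well-defined (and positive definite) if and only if $\gamma\hat{\Sigma}_z^{-1} \succ \Gamma_z$, i.e., $\gamma I \succ M := \hat{\Sigma}_z^{1/2}\Gamma_z\hat{\Sigma}_z^{1/2}$, which is equivalent to $\gamma > \lambda_1 = \lambda_{\max}(M)$. This establishes the strict lower bound in~\eqref{eq:upper-lower-gamma-kl}.

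I would then substitute $\Sigma_z\opt(\gamma)$ into the binding constraint $\mathds T(\Sigma_z\opt(\gamma),\hat{\Sigma}_z) = \rho_z$. Using the similarity of $\hat{\Sigma}_z\Gamma_z$ and $M$ and the identity $\hat{\Sigma}_z^{1/2}(\gamma\hat{\Sigma}_z^{-1} - \Gamma_z)^{-1}\hat{\Sigma}_z^{1/2} = (\gamma I - M)^{-1}$, one finds $\Tr(\Sigma_z\opt\hat{\Sigma}_z^{-1}) = \gamma\Tr\bigl((\gamma I - M)^{-1}\bigr)$ and $\log\det(\Sigma_z\opt\hat{\Sigma}_z^{-1}) = d\log\gamma - \log\det(\gamma I - M)$. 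A short rearrangement gives exactly equation~\eqref{eq:kl-dual-var-algebraic-equation}, $2\rho_z = \log\det(I - \hat{\Sigma}_z\Gamma_z/\gamma) + \Tr\bigl((\gamma I - \hat{\Sigma}_z\Gamma_z)^{-1}\hat{\Sigma}_z\Gamma_z\bigr)$.

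Finally, I would establish uniqueness and the upper bound on $\gamma\opt$ by monotonicity. Let $\lambda_1 \geq \cdots \geq \lambda_d \geq 0$ denote the eigenvalues of $M$ and define $\phi(\gamma) = \sum_{i=1}^d \bigl[\lambda_i/(\gamma - \lambda_i) + \log(1 - \lambda_i/\gamma)\bigr]$ on $(\lambda_1,\infty)$. A direct calculation gives $\phi'(\gamma) = -\sum_i \lambda_i^2/\bigl[\gamma(\gamma - \lambda_i)^2\bigr] < 0$ (strict since $\Gamma_z \neq 0$ forces $\lambda_1 > 0$), so $\phi$ is strictly decreasing. One checks $\phi(\gamma) \to +\infty$ as $\gamma \downarrow \lambda_1$ and $\phi(\gamma) \to 0$ as $\gamma \to \infty$, so $\phi$ is a bijection $(\lambda_1,\infty) \to (0,\infty)$ and the equation $\phi(\gamma) = 2\rho_z$ admits a unique root $\gamma\opt \in (\lambda_1,\infty)$. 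For the upper bound, using $\log(1 - \lambda_i/\gamma) \leq 0$ and $\lambda_i \leq \lambda_1$ yields $\phi(\gamma) \leq d\lambda_1/(\gamma - \lambda_1)$; setting the right-hand side equal to $2\rho_z$ gives $\gamma \leq \lambda_1(1 + d/(2\rho_z)) \leq \lambda_1(1 + d/\rho_z)$, which is the stated bound.

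The main obstacle I anticipate is bookkeeping the factor of $1/2$ inside $\mathds T$ when applying stationarity — this determines whether the multiplier in the final formula reads $\gamma$ or $\gamma/2$, and getting the scaling consistent with~\eqref{eq:kl-dual-var-algebraic-equation} requires care. Everything else reduces to a univariate calculus exercise on $\phi$.
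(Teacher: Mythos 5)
Your proposal is correct and follows essentially the same Lagrangian-duality route as the paper: the paper invokes a ready-made strong dual of a linear objective over a KL ball (Lemma A.1 of \citet{taskesen2021sequential}) and minimizes it, whereas you derive the identical closed form and univariate equation directly from the primal KKT system, which is equivalent since that dual is exactly the Lagrangian dual you write down. Your scalar function $\phi$ is, up to sign and an additive constant, the derivative of the paper's dual objective $h$, so your strict-monotonicity argument for uniqueness and the eigenvalue bound on $\gamma\opt$ coincide with the paper's strict-convexity argument, and your bookkeeping of the factor $1/2$ in $\mathds T$ is consistent with~\eqref{eq:kl-dual-var-algebraic-equation}.
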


The proof relies on Lemma~A.1 in \cite{taskesen2021sequential}, restated below for completeness.
\begin{lemma}{\cite[Lemma A.1]{taskesen2021sequential}}
\label{lemma:kl-ball-dual} 
% \cagil{Doesn't this follow from \cite[Lemma A.1]{taskesen2021sequential}? 
% If so, we can just cite it and don't need to prove this. In terms of notation, maybe we can write things consistently with whatever problem we want to relate this problem to, e.g., using subscript z's and $\Gamma_z$, etc.}
% \bahar{That proof does not have means equal to zero. So this proof is easier to read and this will be in the appendix. I am fine with both notation. This will be used to prove Theorem 7.3. We can use anything for $F$.}
Fix $\hat \Sigma \in \mathbb S_{++}^{d_z}$, then for any~$F  \in \mathbb S^d$ and $\rho> 0$, the optimization problem
\begin{equation}
    \begin{array}{ccll}
         &\sup\limits_{\Sigma \in \mathbb S^d_{++}} & \Tr(F \Sigma)  \\
         & \st & \mathds T(\Sigma, \hat \Sigma) \leq \rho
         % & \st & \Tr[\Sigma \hat \Sigma^{-1}] - \log\det(\Sigma \hat \Sigma^{-1}) - d \leq \rho
    \end{array}
    \label{eq:kl-ball-lin-obj-primal}
\end{equation}
admits the strong dual formulation
\begin{equation}
    \begin{array}{ccllll}
&\inf\limits_{\gamma \in \R} & 2 \gamma \rho - \gamma \log\det(I - \hat \Sigma F / \gamma)  \\
         & \st &\gamma \geq  0, ~ \gamma \hat \Sigma^{-1} \succ F.
    \end{array}
\end{equation}
\end{lemma}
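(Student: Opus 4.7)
The plan is to apply Lemma~\ref{lemma:kl-ball-dual} to reduce \eqref{eq:linear-oracle-problem-kl} to a one-dimensional convex dual and then analyze the resulting scalar minimization carefully.

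First, I would drop the constant $-\Tr(\Gamma_z \Sigma_z^{(k)})$ from the objective so that \eqref{eq:linear-oracle-problem-kl} becomes $\sup_{\Sigma_z \succeq 0} \Tr(\Gamma_z \Sigma_z)$ subject to $\mathds{T}(\Sigma_z, \hat\Sigma_z) \leq \rho_z$. Because $\mathds{T}$ contains a $-\log\det \Sigma_z$ term, finite-divergence constraints force $\Sigma_z \in \mathbb S_{++}^{d}$, which aligns the problem with the setting of Lemma~\ref{lemma:kl-ball-dual}. Applying this lemma with $F = \Gamma_z$ yields strong duality
\[
   \sup_{\Sigma_z} \Tr(\Gamma_z\Sigma_z) \;=\; \inf_{\gamma > \lambda_1} g(\gamma), \qquad g(\gamma) = 2\gamma\rho_z - \gamma\log\det\bigl(I - \hat\Sigma_z\Gamma_z/\gamma\bigr),
\]
where I use the equivalence $\gamma\hat\Sigma_z^{-1} \succ \Gamma_z \Longleftrightarrow \gamma > \lambda_1 = \lambda_{\max}(\hat\Sigma_z^{1/2}\Gamma_z\hat\Sigma_z^{1/2})$, obtained by conjugating with $\hat\Sigma_z^{1/2}$.

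Next, I would analyze $g$ via a simultaneous diagonalization. In a basis where $\hat\Sigma_z^{1/2}\Gamma_z\hat\Sigma_z^{1/2} = \mathrm{diag}(\lambda_1,\dots,\lambda_d)$, with $\lambda_1 \geq \cdots \geq \lambda_d \geq 0$ and at least one strictly positive (since $\Gamma_z \neq 0$), one obtains $g(\gamma) = 2\gamma\rho_z - \gamma\sum_i \log(1-\lambda_i/\gamma)$. Introduce $h(x) = \log(1-x) + x/(1-x)$ on $[0,1)$. Routine differentiation gives $g'(\gamma) = 2\rho_z - \sum_i h(\lambda_i/\gamma)$ and $g''(\gamma) = \sum_i \lambda_i^2/\bigl(\gamma(\gamma-\lambda_i)^2\bigr) > 0$ on $(\lambda_1, \infty)$, so $g$ is strictly convex. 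Coercivity holds at both endpoints: as $\gamma \to \lambda_1^+$ the term $-\gamma\log(1-\lambda_1/\gamma) \to +\infty$, and as $\gamma \to \infty$ the expansion $-\gamma\log(1-\lambda_i/\gamma) = \lambda_i + O(1/\gamma)$ shows $g(\gamma) = 2\gamma\rho_z + \sum_i \lambda_i + O(1/\gamma) \to +\infty$. Hence $g$ admits a unique minimizer $\gamma\opt \in (\lambda_1, \infty)$ characterized by $g'(\gamma\opt) = 0$; rewriting $\sum_i \lambda_i/(\gamma\opt - \lambda_i) = \Tr((\gamma\opt I - \hat\Sigma_z\Gamma_z)^{-1}\hat\Sigma_z\Gamma_z)$ recovers exactly \eqref{eq:kl-dual-var-algebraic-equation}.

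For the upper bound on $\gamma\opt$, I would exploit the strict monotonicity of $h$ (which follows from $h'(x) = x/(1-x)^2 > 0$). Because each $\lambda_i \leq \lambda_1$, the first-order condition gives $2\rho_z = \sum_i h(\lambda_i/\gamma\opt) \leq d\cdot h(\lambda_1/\gamma\opt)$, so $h(\lambda_1/\gamma\opt) \geq 2\rho_z/d$. Evaluating $h$ at $s := \rho_z/(\rho_z+d)$ yields $h(s) = -\log(1 + \rho_z/d) + \rho_z/d < \rho_z/d < 2\rho_z/d$, so strict monotonicity forces $\lambda_1/\gamma\opt > s$, i.e., $\gamma\opt < \lambda_1(1 + d/\rho_z)$, which together with $\gamma\opt > \lambda_1$ (dual feasibility) yields \eqref{eq:upper-lower-gamma-kl}. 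Finally, to identify the primal maximizer, I would verify directly in the diagonal basis that the candidate $\Sigma\opt_{\Sigma_z} := \gamma\opt(\gamma\opt\hat\Sigma_z^{-1} - \Gamma_z)^{-1}$ lies in $\mathbb S_{++}^d$ (from $\gamma\opt > \lambda_1$), satisfies $\mathds{T}(\Sigma\opt_{\Sigma_z},\hat\Sigma_z) = \rho_z$ (a direct consequence of the first-order condition), and attains the dual value $g(\gamma\opt)$. Uniqueness then follows because the feasible set is strictly convex (from strict convexity of $\mathds{T}$ on $\mathbb S_{++}^d$) while the primal objective is linear.

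The main obstacle is the upper bound on $\gamma\opt$: strict convexity, boundary blow-up, and the verification that $\Sigma\opt_{\Sigma_z} = \gamma\opt(\gamma\opt\hat\Sigma_z^{-1} - \Gamma_z)^{-1}$ attains the dual value are all mechanical once the scalar functions $g$ and $h$ are identified. The sharp bound $\gamma\opt \leq \lambda_1(1+d/\rho_z)$ is the only step that requires a short, non-obvious reduction -- bounding the sum in the first-order condition by $d$ times its largest term and then evaluating $h$ at the explicit threshold $\rho_z/(\rho_z+d)$. A minor bookkeeping point is that the $\tfrac12$ factor in \Cref{def:kl-covs} is absorbed into the dual variable, which is why the coefficient in \eqref{eq:kl-dual-var-algebraic-equation} is $2\rho_z$ rather than $\rho_z$.
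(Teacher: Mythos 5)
Your proposal does not prove the statement in question. The statement is the strong duality lemma itself (Lemma~\ref{lemma:kl-ball-dual}): that $\sup_{\Sigma\succ 0}\{\Tr(F\Sigma):\mathds T(\Sigma,\hat\Sigma)\leq\rho\}$ equals $\inf_{\gamma\geq 0,\,\gamma\hat\Sigma^{-1}\succ F}\,2\gamma\rho-\gamma\log\det(I-\hat\Sigma F/\gamma)$. Your very first substantive step is ``Applying this lemma with $F=\Gamma_z$ yields strong duality'' --- that is, you assume the result you were asked to prove and then use it to derive Theorem~\ref{thm:algebraic-form-kl} (the algebraic equation for $\gamma\opt$, the bounds $\lambda_1<\gamma\opt\leq\lambda_1(1+d/\rho_z)$, and the closed-form maximizer). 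Everything after that first line is an analysis of the scalar dual and of the downstream theorem, not of the lemma. Nothing in the proposal establishes that the matrix-valued primal and the one-dimensional dual have the same optimal value, which is the entire content of the statement. (For reference, the paper itself supplies no proof either; it cites the result from \citet[Lemma~A.1]{taskesen2021sequential}.)

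A genuine proof would have to run the Lagrangian duality argument: attach a multiplier $\lambda\geq 0$ to the constraint $\mathds T(\Sigma,\hat\Sigma)\leq\rho$, observe that the inner problem $\sup_{\Sigma\succ 0}\Tr(F\Sigma)-\tfrac{\lambda}{2}\bigl(\Tr(\Sigma\hat\Sigma^{-1})-\log\det(\Sigma\hat\Sigma^{-1})-d\bigr)$ is concave in $\Sigma$ with stationarity condition $\Sigma^{-1}=\hat\Sigma^{-1}-2F/\lambda$, hence is finite precisely when $\lambda\hat\Sigma^{-1}\succ 2F$ and is attained at $\Sigma=(\hat\Sigma^{-1}-2F/\lambda)^{-1}$; substituting back and rescaling the multiplier (your ``$\tfrac12$ absorbed into the dual variable'' remark is the right instinct, but it has to be carried out, since it is exactly what turns $\lambda\hat\Sigma^{-1}\succ 2F$ into $\gamma\hat\Sigma^{-1}\succ F$ and produces the coefficient $2\gamma\rho$) yields the stated dual function. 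Zero duality gap then follows from Slater's condition, which holds because $\Sigma=\hat\Sigma$ is strictly feasible ($\mathds T(\hat\Sigma,\hat\Sigma)=0<\rho$). Your analysis of $g(\gamma)$ --- strict convexity, blow-up at both endpoints, the first-order condition, and the threshold evaluation of $h$ at $\rho_z/(\rho_z+d)$ --- is sound and would serve well as a proof of Theorem~\ref{thm:algebraic-form-kl}, but it answers a different question.
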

\proof[Proof of Theorem~\ref{thm:algebraic-form-kl}.]
 By \Cref{lemma:kl-ball-dual}, 
 for any~$\hat \Sigma_z \in \mathbb S_{++}^{d_z}$, problem~\eqref{eq:linear-oracle-problem-kl} admits the strong dual
\begin{equation}
\begin{array}{cclll}
      & \inf\limits_{\gamma \in \R} &2 \gamma \rho_z - \gamma \log\det(I - \hat\Sigma_z \Gamma_z /\gamma)- \langle \Gamma_z, \Sigma_z^{(k)} \rangle \\
     & \st & \gamma \geq  0,~ \gamma \hat \Sigma_z^{-1} \succ \Gamma_z.
\end{array}    
\label{eq:kl-dual}
\end{equation} 
For ease of exposition, we let ~$h(\gamma) = 2 \gamma \rho_z - \gamma \log\det(I - \hat \Sigma_z \Gamma_z / \gamma)$, which is the objective function of problem~\eqref{eq:kl-dual} without the constant term $\langle \Gamma_z, \Sigma_z^{(k)} \rangle$. The gradient of~$h(\gamma)$ satisfies
\[\nabla h(\gamma) =  2\rho_z- \log\det(I - \hat \Sigma_z \Gamma_z / \gamma) - \Tr((\gamma - \hat \Sigma_z \Gamma_z)^{-1} \hat \Sigma_z\Gamma_z ).
\]
By the above expression of $\nabla h(\gamma)$ and the strict convexity of $h(\gamma)$ for $\gamma \geq  0$ and $\gamma \hat \Sigma_z^{-1} \succ \Gamma_z$ 
% [IT IS NOT EASY FOR ME TO SEE WHY SETTING GRADIENT EQUAL ZERO GIVES US A FEASIBLE SOLUTION IN THE DUAL PROBLEM.] \bahar{if $\gamma$ would not satisfy these inequalities then $\nabla \gamma$ cannot be zero. It would be $-\infty$.} \bahar{As $\gamma \hat\Sigma_z^{-1} \succ \Gamma_z$, $\gamma \succ \hat\Sigma_z^{\frac{1}{2}} \Gamma_z \Sigma_z^{\frac{1}{2}}$, Since $\hat \Sigma_z \succ 0$, $\Gamma_z$ is psd and cannot be the zero vector, the $\gamma \geq  0$ and in fact it will be strictly greater than zero.}
, the value $\gamma \opt$ that solves~\eqref{eq:kl-dual-var-algebraic-equation} is the unique minimizer of~\eqref{eq:kl-dual}.

% \noindent\textbf{$\Sigma\opt$ is feasible.}
Now, we show that $\Sigma_z\opt$ obtained as $\gamma\opt (\gamma\opt \hat \Sigma_z^{-1} - \Gamma_{z})^{-1}$ is feasible in~\eqref{eq:linear-oracle-problem-kl}, that is, it satisfies
\begin{equation}\Tr(\Sigma_z\opt \hat\Sigma_z^{-1}) - \log \det(\Sigma_z\opt \hat \Sigma_z^{-1}) - d_z \leq 2 \rho_z.
\label{eq:kl-feasible-inequality}
\end{equation}
Through the Woodbury matrix identity we have
\[ \Sigma_z\opt \hat \Sigma_z^{-1} =\gamma\opt(\gamma\opt\hat \Sigma_z^{-1} - \Gamma_z)^{-1}\hat \Sigma_z^{-1}= I + \hat \Sigma_z(\gamma\opt - \Gamma_z \hat \Sigma_z)^{-1} \Gamma_z.\]
By replacing the $\Sigma_z\opt\hat \Sigma_z^{-1}$ term inside the trace operator in~\eqref{eq:kl-feasible-inequality}, we have
% {\color{red} BT: above should be $d_z$ instead of $d$.}
\[\Tr(I  + \hat \Sigma_z (\gamma\opt - \Gamma_z \hat \Sigma_z)^{-1} \Gamma_z ) - \log\det(\gamma\opt (\gamma\opt \hat \Sigma_z^{-1} - \Gamma_z)^{-1} \hat \Sigma_z^{-1}) - d_z = 2\rho_z,\]
where the equality follows because $\gamma\opt$ solves~\eqref{eq:kl-dual-var-algebraic-equation}.
% Finally, the uniquess of $\Sigma\opt$ follows from the unique 

Next, we show that $\Sigma_z\opt = \gamma\opt(\gamma\opt \hat \Sigma_z^{-1} - \Gamma_z)^{-1}$ is optimal in~\eqref{eq:linear-oracle-problem-kl}. To this end, note that the objective value of $\Sigma_z\opt$ in~\eqref{eq:linear-oracle-problem-kl} coincides with the optimal value of its strong dual \eqref{eq:kl-dual} because
\begin{align*}
\Tr(\Sigma_z\opt \Gamma_z ) &= \Tr((I - \hat \Sigma_z
\Gamma_z / \gamma\opt)^{-1} \hat \Sigma_z \Gamma_z)\\
&= \gamma\opt \rho_z - \gamma\opt \log\det ( I - \hat \Sigma_z\Gamma_z / \gamma\opt)\\
&=\inf\limits_{\gamma\geq  0, \, \gamma \hat \Sigma_z^{-1} \succ \Gamma_z} \gamma \rho_z - \gamma\log\det (I - \hat \Sigma_z\Gamma_z / \gamma) ,
\end{align*}
where the second equality follows because $\gamma\opt$ solves~\eqref{eq:kl-dual-var-algebraic-equation}.

% \noindent \textbf{$\gamma\opt$'s lower bound.} As $\gamma\opt$ satisfy $\gamma\opt \hat \Sigma^{-1} - F \succ 0$, we have $\gamma\opt > \Tr[F] / \Tr[\Sigma^{-1}]$.
% \bahar{The lower bound will be also derived below I think.}

It remains to show the upper and lower bounds on~$\gamma\opt$. The lower bound on $\gamma\opt$ follows because $\gamma\opt$ is feasible in~\eqref{eq:kl-dual} and thus satisfies $\gamma\opt\hat\Sigma^{-1}_z \succ \Gamma_z$. 
% Let $\hat Q_z \hat \Lambda_z \hat Q_z^{\top}$ be the eigendecomposition of $\hat \Sigma_z$ with $\hat \Lambda_z =\text{diag}(\hat \lambda_1, \ldots, \hat \lambda_d)$ where the eigenvalues~$\hat \lambda_i$ are indexed in descending order. 
For any~$\gamma\opt$ such that $\gamma\opt\hat \Sigma_z^{-1} \succ \Gamma_z $, the algebraic equation~\eqref{eq:algebraic-gamma-exp} yields
\begin{align}
   0 &=  \rho_z - \log\det (I - \hat \Sigma_z \Gamma_z / \gamma\opt) - \Tr((\gamma\opt I - \hat \Sigma_z \Gamma_z)^{-1} \hat \Sigma_z \Gamma_z) \nonumber\\
   &>\rho_z - \Tr((\gamma\opt I - \hat\Sigma_z \Gamma_z)^{-1} \hat\Sigma_z \Gamma_z)\nonumber\\
   &= \rho_z - \Tr((\gamma\opt I - \hat \Sigma_z^\frac{1}{2} \Gamma_z \hat\Sigma_z^\frac{1}{2} )^{-1}\hat \Sigma_z^\frac{1}{2} \Gamma_z \hat\Sigma_z^\frac{1}{2} ) \nonumber\\
   &=\rho_z - \sum\limits_{i=1}^{d_z} \frac{\lambda_i}{\gamma\opt - \lambda_i},
   \label{eq:ae-upper-bound}
\end{align}
% {\color{red} BT: Similarly here, if I am not mistaken $d$'s should be $d_z$.}
where the inequality holds because the eigenvalues of $I - \hat \Sigma_z \Gamma_z / \gamma\opt$ are in $(0,1)$ under the condition $\gamma\opt\hat \Sigma_z^{-1} \succ \Gamma_z $, and this implies that $\log \det(I - \hat \Sigma_z \Gamma_z / \gamma\opt)$ is negative. In the last expression, $\lambda_1, \ldots, \lambda_{d_z}$ denote the eigenvalues of $\hat \Sigma_z^\frac{1}{2} \Gamma_z \hat\Sigma_z^\frac{1}{2} $ indexed in descending order. We thus have
\begin{equation*}
    \rho_z < \sum\limits_{i=1}^{d_z} \frac{\lambda_i}{\gamma\opt - \lambda_i} \leq \frac{d_z \lambda_1}{\gamma\opt -\lambda_1},
\end{equation*}
where the second inequality holds immediately because $\gamma^\star > \lambda_1$. The above inequality implies the second inequality in \eqref{eq:upper-lower-gamma-kl}, and therefore the correctness of the upper bound. 
This observation concludes the proof.  
\endproof
% \begin{proposition}{\cite[Theorem~6.4]{nguyen2023bridging}}
% For any fixed $\rho_z >0, \hat \Sigma_z \in \mathbb{S}_{++}^d$ and $\Gamma_z \in \mathbb{S}_{+}^d, \Gamma_z \neq 0$, define 
% $\mathcal E_{\Sigma_z} = \{\Sigma_z \in \mathbb S_+^d : \mathds T(\Sigma_z, \hat \Sigma_z) \leq \rho_z, \hat \Sigma_z \succ 0\}$
% % \\
%  % \{\Sigma_z \in \mathbb S_+^d : \mathds G(\Sigma_z, \hat \Sigma_z) \leq \rho_z, \hat \Sigma_z \succeq \lambda_{\rm{min}}(\hat \Sigma_z)\} \quad \text{otherwise}
% % \end{cases}\]
% as the feasible set of problem~\eqref{eq:linear-oracle-problem-wass}, and let $\Sigma_z\in \mathcal E_{\Sigma_z}$ be any reference covariance matrix. Then, Algorithm~\ref{alg:bisection} with $\delta \in(0,1)$,
% % be the desired oracle precision, and define 
% $\varphi(\gamma)=2 \gamma \rho_z - \gamma \log \det(I - \hat \Sigma_z \Gamma_z / \gamma)$, $L(\gamma) = \gamma (\gamma \hat \Sigma_z^{-1} - \Gamma_z )^{-1}$, and $\underline \gamma$ and
%  $\overline \gamma$ defined in~\eqref{eq:upper-lower-gamma-kl}
% % Then, Algorithm~\ref{alg:bisection} 
% returns in finite time a matrix $L_{\Sigma_z}^\delta \in \mathbb{S}_{+}^d$ with the following properties.
% (i)~Feasibility: $L_{\Sigma_z}^\delta \in \mathcal E_{\Sigma_z}$
% (ii)~$\delta$-Suboptimality: $\langle L_{\Sigma_z}^\delta- \Sigma_z, \Gamma_z\rangle \geq  \delta \max _{L_{\Sigma_z} \in \mathcal{E}_{\Sigma_z}}\langle \Gamma_z, L_{\Sigma_z}-\Sigma_z\rangle$.
% \label{prop:bisection-kl}
% \end{proposition}

%%%%%%%%%%%%%%%%%%%%%%%%%%%%%%%%%%%%
%
%  INFINITE HORIZON 
%
%%%%%%%%%%%%%%%%%%%%%%%%%%%%%%%%%%%%
\medskip
%
%  Lemma 1 (structural properties Toeplitz matrices)
%
\section{Proofs for \S\ref{sec:infinite-horizon}}
\begin{proof}[Proof of \Cref{lem:low-tri-toep-set}.]
Assertions~(i) and~(ii) are easy to verify. Details are omitted for brevity. Assertion~(iii) is a direct consequence of assertion~(ii). Indeed, if $N\in\mathcal T^{k\times k}$ is the unique Toeplitz matrix whose blocks satisfy~\eqref{eq:toeplitz-recursion}, then $O=MN$ is the identity matrix in $\mathcal T^{k\times k}$, that is, we have $O_0=I$ and $O_t=0$ for every $t\in\mathbb N_+$. 
\end{proof}

%
%   LEMMA 2: Expressions for u_t, x_t, covariances, average cost
%
\medskip
\begin{proof}[Proof of \Cref{lem:x-t-u-t-in-w-v}.]
To facilitate following the proof, we restate the identities to prove:
\begin{align*}
    & u_t = \sum_{s=0}^t \left[ (U D)_{t-s} w_s + (U) _{t-s} v_s \right] \quad \text{and} \quad x_t = \sum_{s=0}^t \left[ (G + H U D) _{t-s} w_s + (H U)_{t-s} v_s \right] \\%\label{eq:controls_states_inf_horizon}\\
    & \Sigma_{u_t} = \EE_\PP\left[u_tu_t^\top\right] = \sum_{s=0}^t \left( (U D)_{t-s} \Sigma_{w_s} (U D)_{t-s}^\top + (U) _{t-s} \Sigma_{v_s} (U) _{t-s}^\top\right) \\ %\label{eq:cov_controls_inf_horizon} \\
    & \Sigma_{x_t} = \EE_\PP \left[x_tx_t^\top \right] = \sum_{s=0}^t \left( (G + H U D) _{t-s} \Sigma_{w_s} (G + H U D) _{t-s}^\top + (H U)_{t-s} \Sigma_{v_s} (H U)_{t-s}^\top \right) \\ %\label{eq:cov_states_inf_horizon} \\
    & \frac{1}{T} \sum_{t=0}^{T-1} \EE_{\PP}\left[ x_t^\top Q_0 x_t + u_t^\top R_0 u_t \right] = \frac{1}{T} \sum_{s=0}^{T-1} \Tr\left(\Sigma_{w_s} \left(\sum_{t=0}^{T-1-s} M_{t}\right) + \Sigma_{v_s} \left(\sum_{t=0}^{T-1-s} N_{t} \right) \right). %\label{eq:exp_avg_run_cost_inf_horizon}
\end{align*}
Recall from \Cref{lemma:eta-rep-w-v} that $u=U\eta= U(Dw + v)$. As $U\in\mathcal T^{m\times p}$, $C \in \mathcal T^{p\times n}$ and $G \in \mathcal T^{n \times n}$, \Cref{lem:low-tri-toep-set}\,(ii) further implies that $UD=UCG \in \mathcal T^{m\times n}$. Hence, we find
\[
    u_t = \sum_{s=0}^t \left( (U D)_{t-s} w_s + (U)_{t-s} v_s \right). 
\]
As $x= Hu+ Gw = HU(Dw + v) + Gw$ and as $H \in \mathcal T^{n \times m}$, $U D \in \mathcal T^{m\times n}$ and $G \in \mathcal T^{n \times n}$, the expression for $x_t$ readily follows from a similar argument. 

The expressions for the covariance matrices $\Sigma_{u_t}, \Sigma_{x_t}$ follow readily by recognizing that the random vectors in the set $\mathcal{Z}$ (i.e., $x_0, \{w_s\}_{s=0}^\infty, \{v_s\}_{s=0}^\infty)$ have zero mean and are uncorrelated. 

The average expected cost of $(x,u)$ over the first $T$ periods can then be expressed as
\begin{align*}
    \frac{1}{T} \sum_{t=0}^{T-1} \EE_{\PP}\left[ x_t^\top Q_0 x_t + u_t^\top R_0 u_t \right] & = \frac{1}{T} \sum\limits_{t=0}^{T-1} \Tr\left( \Sigma_{x_t} Q_0 + \Sigma_{u_t} R_0 \right) \\
    & = \frac{1}{T} \sum_{t=0}^{T-1} \sum_{s=0}^t \Tr\left(\Sigma_{w_s} M_{t-s} + \Sigma_{v_s} N_{t-s}\right) \\
    & = \frac{1}{T} \sum_{s=0}^{T-1} \sum_{t=s}^{T-1} \Tr\left(\Sigma_{w_s} M_{t-s} + \Sigma_{v_s} N_{t-s}\right)\\
    & = \frac{1}{T} \sum_{s=0}^{T-1} \Tr\left(\Sigma_{w_s} \left(\sum_{t=0}^{T-1-s} M_{t}\right) + \Sigma_{v_s} \left(\sum_{t=0}^{T-1-s} N_{t} \right) \right).
\end{align*}
Here, the first equality holds because $u_t$, $x_t$ have zero mean; the second equality uses~\eqref{eq:cov_controls_inf_horizon}-\eqref{eq:cov_states_inf_horizon} and the expressions for $M_t$ and $N_t$; the third equality is obtained by interchanging the order of summing over~$t$ and~$s$; and the fourth inequality follows from an index shift $t\leftarrow t-s$.  
\end{proof}

%
%  Proposition 4 - 
%
\begin{proof}[Proof of \Cref{prop:inf-optimal-distributions}.]
Select any $\PP\in \overline{\mathcal{B}}{}^\infty$ and $U\in\mathcal U_\infty$. We claim that the expression of the long-run-average costs in ~\Cref{lem:x-t-u-t-in-w-v} achieves a finite limit as $T \rightarrow \infty$. To that end, recalling the definition of the matrices $M_t$ and $N_t$ from~\Cref{lem:x-t-u-t-in-w-v}, we claim that the infinite matrix sums
\[
    M_\infty= \sum_{t=0}^{\infty} M_{t} \quad \text{and}\quad N_\infty= \sum_{t=0}^{\infty} N_{t}
\]
exist and are finite. Specifically, an infinite sum of matrices such as $\sum_{t=0}^\infty M_t$ exists and is finite if and only if each corresponding sum of matrix entries, $\sum_{t=0}^\infty (M_t)_{ij}$, converges for all $i, j$. Equivalently, this condition holds if and only if the scalar series $\sum_{t=0}^\infty a^\top M_t b$ converges for all vectors $a, b \in \mathbb{R}^n$.

Note first that, for any $a\in\R^n$, the limit $\lim_{T\to\infty}\sum_{t=0}^T a^\top  M_t a$ exists because $a^\top M_t a\geq 0$ for every~$t\in\mathbb N$, so $\sum_{t=0}^T a^\top M_t a$ is non-decreasing in~$T$. The limit is also finite because
\begin{align*}
    \sum_{t=0}^T a^\top M_t a & \leq \sum_{t=0}^T \left( \lambda_{\rm max}(R_0)\, a^\top (UD)_t^\top (UD)_t a + \lambda_{\rm max}(Q_0)\, a^\top(G+HUD)_t^\top (G+HUD)_t a\right) \\
    & \leq \sum_{t=0}^T \left( \lambda_{\rm max}(R_0)\, \|(UD)_t\|_{\rm F}^2 \|a\|_2^2 + \lambda_{\rm max}(Q_0)\, \|(G+HUD)_t\|_{\rm F}^2 \|a\|_2^2\right)\\
    & \leq \lambda_{\rm max}(R_0)\, \|UD\|_{\mathcal T}^2 \|a\|_2^2 + \lambda_{\rm max}(Q_0)\, \|G+HUD\|_{\mathcal T}^2 \|a\|_2^2 < \infty \quad \forall T\in\mathbb N.
\end{align*}
Above, the first two inequalities follow from the definition of $M_t$ and the Cauchy-Schwarz inequality, respectively. The third inequality exploits our definition of the Toeplitz norm $\|\cdot\|_{\mathcal T}$ and holds because $U\in\mathcal U_\infty$. Hence, $\sum_{t=0}^{\infty} a^\top M_{t} a$ exists and is finite. In addition, we have
\begin{align*}
    \lim_{T\to\infty} \sum_{t=0}^T a^\top M_t b =  \lim_{T\to\infty} \frac{1}{4} \sum_{t=0}^T (a+b)^\top M_t(a+b)- \lim_{T\to\infty} \frac{1}{4}\sum_{t=0}^T (a-b)^\top M_t(a-b)
\end{align*}
for all $a,b\in\R^n$. Both limits on the right hand side exist and are finite thanks to the above arguments. Hence, the limit on the left hand side exists and is finite, too. Because the choice of $a,b\in\R^m$ was arbitrary, every element of the matrix $\sum_{t=0}^{T} M_{t}$ has a finite limit as $T\rightarrow \infty$, proving that $M_\infty$ exists and is finite. Similarly, one can show that $N_\infty$ exists and is finite.

Select any $\Sigma_w^\star \in\arg\max_{\Sigma_w\in\mathcal{M}_{\Sigma_w}} \Tr(\Sigma_w M_\infty)$ and $\Sigma_v^\star \in\arg\max_{\Sigma_v\in\mathcal{M}_{\Sigma_v}} \Tr(\Sigma_v N_\infty)$, which exist because $\mathcal{M}_{\Sigma_w}$ and $\mathcal{M}_{\Sigma_v}$ are compact. Next, define $\PP^\star=\PP^\star_{x_0} \otimes ( \otimes_{t=0}^\infty (\PP^\star_{w_t} \otimes \PP^\star_{v_t}))$, where $\PP^\star_{x_0}=\PP^\star_{w_t} =\mathcal N(0,\Sigma^\star_w)$ and $\PP^\star_{v_t} =\mathcal N(0,\Sigma^\star_v)$ for all $t\in\mathbb N$. By construction, $\PP^\star$ is a time-invariant Gaussian distribution. By~\eqref{eq:definitions_Mw_Mv_inf_horizon}, $\mathcal{M}_{\Sigma_{x_0}}=\mathcal{M}_{\Sigma_{w_t}}=\mathcal{M}_{\Sigma_w}$ and $\mathcal{M}_{\Sigma_{v_t}}=\mathcal{M}_{\Sigma_v}$ for all $t\in\mathbb N$, so 
one readily verifies that~$\PP^\star$ belongs to~$\mathcal{B}^\infty_{\mathcal{N}}$.
Then, because $\mathcal{B}^\infty_{\mathcal{N}}\subseteq \overline{\mathcal{B}}{}^\infty$, we have that~$\PP^\star$ is feasible in the inner maximization problem in~\eqref{eq:upper-bound-infty}. In the remainder of the proof we will show that~$\PP^\star$ is also optimal. To this end, note first that for any $\PP\in \overline{\mathcal{B}}{}^\infty$ and $T\in\mathbb N$ we have
\begin{align*}
    \frac{1}{T} \sum_{t=0}^{T-1} \EE_{\PP}\left[ x_t^\top Q_0 x_t + u_t^\top R_0 u_t \right] & \leq \frac{1}{T} \sum_{s=0}^{T-1} \Tr\left(\Sigma_{w_s} M_\infty + \Sigma_{v_s} N_\infty \right) \leq \Tr\left(\Sigma_{w}^\star M_\infty + \Sigma_{v}^\star N_\infty \right).
\end{align*}
Above, the first inequality follows from~\eqref{eq:exp_avg_run_cost_inf_horizon} and the construction of~$M_\infty$ and~$N_\infty$, while the second inequality follows from the choice of~$\Sigma_w^\star$ and~$\Sigma_v^\star$. Because the inequality above holds for all $\PP\in \overline{\mathcal{B}}{}^\infty$ and $T\in\mathbb N$, we may conclude that $\max_{\mathbb{P} \in \overline{\mathcal{B}}{}^\infty} J_\PP(x,u) \leq \Tr\left(\Sigma_{w}^\star M_\infty + \Sigma_{v}^\star N_\infty \right)$. To show that this upper bound is attained by~$\PP^\star$, choose any~$\varepsilon>0$. Then, there exists $T_0\in\mathbb N$ such that
\begin{equation}
    \label{eq:M-N-convergence}
    \left\|M_\infty - \sum_{t=0}^{T} M_{t} \right\|_2\leq \frac{\varepsilon}{2(1+\Tr(\Sigma_{w}^\star))} \quad \text{and} \quad \left\|N_\infty  - \sum_{t=0}^{T} N_{t} \right\|_2\leq \frac{\varepsilon}{2(1+\Tr(\Sigma_{v}^\star))} \quad \forall T\geq T_0.
\end{equation}
Thus, we have
\begin{align*}
    & \frac{1}{T} \sum_{t=0}^{T-1} \EE_{\PP^\star}\left[ x_t^\top Q_0 x_t + u_t^\top R_0 u_t \right]  \geq \frac{1}{T} \sum_{s=0}^{T-1-T_0} \Tr\left(\Sigma_{w}^\star \left(\sum_{t=0}^{T-1-s} M_{t}\right) + \Sigma_{v}^\star \left(\sum_{t=0}^{T-1-s} N_{t} \right) \right) \\
    & \quad \geq \frac{T-T_0}{T} \left(\Tr\left(\Sigma_{w}^\star M_\infty+ \Sigma_{v}^\star N_\infty \right) - \Tr(\Sigma_{w}^\star)\, \frac{\varepsilon}{2(1+\Tr(\Sigma_{w}^\star))} - \Tr(\Sigma_{v}^\star)\, \frac{\varepsilon}{2(1+\Tr(\Sigma_{v}^\star))}\right)\\
    & \quad \geq \frac{T-T_0}{T} \left(\Tr\left(\Sigma_{w}^\star M_\infty+ \Sigma_{v}^\star N_\infty \right) - \varepsilon \right) \quad \forall T>T_0.
\end{align*}
The first inequality in the above expression follows from~\eqref{eq:exp_avg_run_cost_inf_horizon} applied to $\PP=\PP^\star$ and from the observation that all terms in the sum over~$s$ are nonnegative. The second inequality exploits~\eqref{eq:M-N-convergence}. Taking the limit superior over~$T$ on both sides then shows that
\[
    J_{\PP^\star}(x,u) \geq \limsup_{T\to\infty} \frac{T-T_0}{T} \left(\Tr\left(\Sigma_{w}^\star M_\infty+ \Sigma_{v}^\star N_\infty \right) - \varepsilon \right) = \Tr\left(\Sigma_{w}^\star M_\infty+ \Sigma_{v}^\star N_\infty \right) - \varepsilon.
\]
As the choice of~$\varepsilon>0$ was arbitrary, we may conclude that $J_{\PP^\star}(x,u) = \Tr(\Sigma_{w}^\star M_\infty+ \Sigma_{v}^\star N_\infty )$. This shows that~$\PP^\star$ solves indeed the inner maximization problem in~\eqref{eq:upper-bound-infty}.

\end{proof}

%
%     Proof of Proposition 5 
%
\medskip
\begin{proof}[Proof of \Cref{prop:inf-optimal-controllers}.]
Fix any $\PP\in \underline{\mathcal{B}}^\infty_{\mathcal{N}}$. By Proposition~II.1 in  \citet{ref:hadjiyiannis2011affine}, the inner minimization problem in~\eqref{eq:inf-lower-bound-dual} over \emph{purified} output feedback controls $u \in \mathcal{U}_\eta$ is equivalent to a classical infinite-horizon LQG problem with output feedback controls $u \in \mathcal{U}_y$, of the form
\begin{equation}
    \begin{array}{cll} \min\limits_{u, x, y} & J_\PP(u,x) %\displaystyle \limsup_{T\to\infty} \frac{1}{T}\sum\limits_{t=0}^{T-1}\EE_{\PP} \left[x_t^\top Q x_t^\top + u_t^\top Q u_t \right]
    \\
    \st &  u \in \mathcal U_{y},~{x} = H u + G w, ~y=Cx+v. \end{array}
\label{eq:min-limsup-y}
\end{equation}
Under Assumption~\ref{ass:stabilizability-detectibility}, the results in Appendix\S\ref{sec:classical-lqg-inf-horizon} are directly applicable to the latter problem. Specifically, there exist $K\in\R^{m\times n}$ and $L\in\R^{n\times p}$ such that both $ A_0+ B_0 K$ and $A_0-LC_0$ are Schur stable, and problem~\eqref{eq:min-limsup-y} is solved by $u_t= K \hat x_t$ for every $t\in\mathbb N$, where $\hat{x}_t$ is a state estimator that follows the recursion in \eqref{eq:state-estimate-dyn} (from Appendix\S\ref{sec:classical-lqg-inf-horizon}) and satisfies
\begin{align*}
    % \hat{x}_{t+1} &= A_0\hat{x}_t + B_0 u_t + L (y_{t+1} - C_0(A_0 \hat{x}_t + B_0 u_t)) \\
    \hat x_{t+1} &=(I-LC_0) (A_0 + B_0 K)\hat{x}_t+ L y_{t+1} \quad \forall t \in \mathbb N.
\end{align*}
This recursion implies that $u=U'y$, where $U'\in\mathcal T^{m\times p}$ is a block lower triangular Toeplitz matrix with blocks 
\[
    U'_t = K(I-LC_0)^t (A_0 + B_0K)^tL \quad \forall t\in\mathbb N.
\]
Consequently, problem~\eqref{eq:min-limsup-y} is solved by a {\em stationary} linear output feedback policy.

An immediate generalization of \Cref{lemma:linear-rel-u-eta} to infinite-horizon control problems ensures that the linear output feedback policy~$u = U' y$ can equivalently be represented as a linear purified output feedback policy $u=U\eta$, where $U = (I - U' CH)^{-1} U' $ and~$I$ stands for the identity matrix in $\mathcal T^{m\times m}$. Observe now that $C\in\mathcal T^{p\times n}$ and $H\in\mathcal T^{n\times m}$. As $U'\in\mathcal T^{m\times p}$, assertions~(i) and~(ii) of \Cref{lem:low-tri-toep-set} imply that $I - U' C H \in \mathcal{T}^{m\times m}$. In addition, as~$H$ has zero blocks on the diagonal, one can readily verify that all blocks on the main diagonal of $I - U' CH$ are $m$-dimensional identity matrices. This ensures via \Cref{lem:low-tri-toep-set}\,(iii) that $U= (I - U' CH)^{-1} U'\in\mathcal T^{m\times p}$. Consequently, $u=U\eta$ constitutes a {\em stationary} linear purified output feedback policy. By construction, this policy solves the inner minimization problem in \eqref{eq:inf-lower-bound-dual}, which is equivalent to \eqref{eq:min-limsup-y}

We verify that $U \in \mathcal{U}_\infty$. Recall from \Cref{lem:x-t-u-t-in-w-v}  that with $u=U\eta$ for $U\in\mathcal T^{m\times p}$, the covariance of~$u_t$ is:
\begin{align*}
    \Sigma_{u_t} &= \EE_\PP\left[u_t u_t^\top\right] = \sum_{s=0}^t \left( (U D)_{t-s} \Sigma_{w_s} (U D)_{t-s}^\top + (U) _{t-s} \Sigma_{v_s} (U) _{t-s}^\top\right)
    % \Sigma_{x_t} &= \EE_\PP \left[x_t x_t^\top \right] = \sum_{s=0}^t \left( (G + H U D) _{t-s} \Sigma_{w_s} (G + H U D) _{t-s}^\top + (H U)_{t-s} \Sigma_{v_s} (H U)_{t-s}^\top \right),
\end{align*}
where $(UD)_s$ is shorthand for the block matrix on the $s$-the subdiagonal of the Toeplitz matrix~$UD$. Moreover, by \Cref{lem:stationary-covs} in Appendix\S\ref{sec:classical-lqg-inf-horizon}, the state estimator $\hat{x}_t$ must admit a stationary covariance matrix in the limit $t\rightarrow \infty$, which implies that the feedback control $u_t = K \hat{x}_t$ also admits a stationary covariance matrix, and thus the limit
\[
    \lim_{t\to\infty}\Sigma_{u_t} = \sum_{t=0}^{\infty} (UD)_t\Sigma_w (UD)_t^\top + \sum_{t=0}^{\infty} U_t\Sigma_v U_t^\top
\]
exists and is finite. Because $\Sigma_w, \Sigma_v\succ 0$ by our standing assumption that $\PP\in \underline{\mathcal{B}}^\infty_{\mathcal{N}}$, this implies that
\[
    \sum_{t=0}^{\infty}\Tr\left( (UD)_t (UD)_t^\top \right) = \left\| UD \right\|^2_{\mathcal T} <\infty \quad \text{and} \quad \sum_{t=0}^{\infty} \Tr\left(U_t U_t^\top\right) = \left\| U \right\|^2_{\mathcal T}<\infty.
\]
Next, recall that $x=H u + G w=(HUD+G) w +HUv$. From \Cref{lem:stationary-covs}, we know that~$x_t$ admits a stationary covariance matrix, and thus the limit
\[
    \lim_{t\to\infty}\Sigma_{x_t} = \sum_{t=0}^{\infty} (HUD+G)_t\Sigma_w (HUD+G)_t^\top + \sum_{t=0}^{\infty} (HU)_t\Sigma_v (HU)_t^\top
\]
exists and is finite. Because $\Sigma_w, \Sigma_v\succ 0$ (because $\PP\in \underline{\mathcal{B}}^\infty_{\mathcal{N}}$), this implies that $\| HUD+G \|^2_{\mathcal T} <\infty$ and $\| HU \|^2_{\mathcal T} <\infty$. In summary, we have shown that $U\in{\mathcal U}_\infty$, and thus the claim follows.

\end{proof}

%
%  Lemma 3 - replacing limsup with lim 
\medskip
\begin{proof}[Proof of \Cref{lem:limsup-converges}.]
Recall that $R_0 \succ 0$ and $Q_0 \succ 0$ by Assumption~\ref{ass:stabilizability-detectibility}-(iii). This ensures that $\Tr(R_0)$ and $\Tr(Q_0)$ are both strictly positive.
By \Cref{lem:stationary-covs}, which applies because $\PP\in \underline{\mathcal{B}}^\infty_{\mathcal{N}}$, the limits $\lim_{t\to\infty}\Sigma_{x_t} = \Sigma_{x_\infty}$ and $\lim_{t\to\infty}\Sigma_{u_t} = \Sigma_{u_\infty}$ exist and are finite. We will show that
\[
    J_\PP(x,u) = \Tr(\Sigma_{x_\infty}Q_0) + \Tr(\Sigma_{u_\infty}R_0).
\]
As $\Sigma_{u_t}$ converges to $\Sigma_{u_\infty}$ and $\Sigma_{x_t}$ converges to $\Sigma_{x_\infty}$, for any $\varepsilon>0$ there exists $t_0 \in \mathbb N$ such that 
\begin{equation}
    \left\|\Sigma_{x_T} - \Sigma_{x_{\infty}} \right\|_2 \leq \frac{\varepsilon}{3 \Tr(Q_0)}\quad \text{and} \quad \left\|\Sigma_{u_T} - \Sigma_{u_{\infty}} \right\|_2 \leq \frac{\varepsilon}{3 \Tr(R_0)} \quad \forall t \geq t_0.
    \label{eq:covariance-norm-bounds}
\end{equation}
In addition, one easily verifies that there exists $T_0 > t_0$ such that
\begin{equation}
    \left| \frac{1}{T} \sum_{t=1}^{t_0} \bigl[ \Tr \bigl((\Sigma_{x_t}  -\Sigma_{x_\infty}) Q_0 \bigr) + \Tr\bigl((\Sigma_{u_t}  -\Sigma_{u_\infty}) R_0 \bigr) \bigr] \right| \leq \frac{\varepsilon}{3} \quad \forall T\geq T_0.
    \label{eq:partial-sum-T-0}
\end{equation}
Then, we have
\begin{align*}
    &\left| \frac{1}{T} \sum_{t=1}^T \mathbb{E}[ x_t^\top Q_0 x_t +  u_t^\top R_0 u_t] - \Tr(\Sigma_{x_\infty} Q_0 ) - \Tr(\Sigma_{u_\infty} R_0 )\right| \\
    =& \left| \frac{1}{T} \sum_{t=1}^T \Bigl[ \Tr \bigl( (\Sigma_{x_t}  -\Sigma_{x_\infty}) Q_0 \bigr) + \Tr\bigl( (\Sigma_{u_t}  -\Sigma_{u_\infty}) R_0 \bigr) \Bigr] \right| \\
    \leq &\left| \frac{1}{T} \sum_{t=1}^{t_0} \Bigl[ \Tr\bigl( (\Sigma_{x_t}  -\Sigma_{x_\infty}) Q_0 \bigr) + \Tr\bigl( (\Sigma_{u_t}  -\Sigma_{u_\infty}) R_0 \bigr) \Bigr] \right| \\
    & ~~ + \left| \frac{1}{T} \sum_{t=t_0+1}^{T}  \Tr((\Sigma_{x_t}  -\Sigma_{x_\infty}) Q_0) \right| + \left| \frac{1}{T} \sum_{t=t_0+1}^{T}  \Tr((\Sigma_{u_t}  -\Sigma_{u_\infty}) R_0) \right|\\
    &\leq \frac{\varepsilon}{3} + \frac{\varepsilon (T-t_0)}{3 T} + \frac{\varepsilon (T-t_0)}{3 T} \leq \varepsilon
\end{align*}
for all $T\geq T_0$. Here, the first inequality exploits the triangle inequality, while the second inequality follows from~\eqref{eq:covariance-norm-bounds} and~\eqref{eq:partial-sum-T-0} and from the matrix H\"older inequality
$\Tr(\Sigma Q) \leq \| \Sigma\|_2 \Tr(Q)$, which holds for any symmetric matrices~$\Sigma$ and~$Q$ with~$Q\succeq 0$. As~$\varepsilon$ was chosen arbitrarily, the above estimate proves that the limit in~\eqref{eq:J-limit-formula} exists and is indeed equal to $\Tr(\Sigma_{x_\infty}Q_0) + \Tr(\Sigma_{u_\infty}R_0)$. 

\end{proof}

%
%  Proposition 6
%
\begin{proof}[Proof of \Cref{prop:J-saddle}.]
Consider any $U\in\mathcal U_\infty$ and any $\PP\in \underline{\mathcal{B}}^\infty_{\mathcal{N}}$ with covariance matrices $\Sigma_w\in\mathcal{M}^+_{\Sigma_w}$ and $\Sigma_v\in\mathcal{M}^+_{\Sigma_v}$. For any~$T \in \mathbb N_+$, define $J_T(U; \Sigma_w, \Sigma_v)$ as shorthand for the average expected cost under the control policy $u=U(Dw+v)$ and associated states $x=Hu+Gw$ over the first~$T$ periods, with an expression given by~\eqref{eq:exp_avg_run_cost_inf_horizon}. Hence, $J_T(U; \Sigma_w, \Sigma_v)$ is convex quadratic in~$U$ and linear in~$(\Sigma_w, \Sigma_v)$ for every~$T\in\mathbb N_+$. By the definition of the long-run average expected cost~$J(U; \Sigma_w, \Sigma_v)$, we have
\[
    J(U; \Sigma_w, \Sigma_v) = \limsup_{T\to\infty} J_T(U; \Sigma_w, \Sigma_v) = \lim_{T'\to\infty} \sup_{T\geq T'} J_T(U; \Sigma_w, \Sigma_v).
\]
Because convexity is preserved under pointwise suprema and limits, this shows that $J(U; \Sigma_w, \Sigma_v)$ is convex in~$U\in\mathcal U_\infty$ for any $(\Sigma_w,\Sigma_v) \in\mathcal{M}^+_{\Sigma_w} \times \mathcal{M}^+_{\Sigma_v}$. Moreover, \Cref{lem:limsup-converges} implies that
\begin{equation}
    \label{eq:J-identity}
    J(U; \Sigma_w, \Sigma_v) = \lim_{T\to\infty} J_T(U; \Sigma_w, \Sigma_v).
\end{equation}
Hence, the claim follows.
\end{proof}

%%%%%%%%%%%%%%%%%%%%%%%%%%%%%%%%%%%%
%
%  Entropy-Regularized Optimal Transport Example
%
%%%%%%%%%%%%%%%%%%%%%%%%%%%%%%%%%%%%
\medskip
\section{Extension to Entropy-Regularized Optimal Transport}
\label{sec:entropy_reg_opt_transport}
%\subsection*{Entropy-Regularized Optimal Transport Ambiguity}
In this section, we discuss a different example of ambiguity set that is compatible with our framework. Consider taking the divergence~$\mathds D$ as the entropy-regularized optimal transport discrepancy with regularization parameter~$\epsilon\geq  0$, denoted with $\mathds W_\epsilon$ and defined as follows.

\begin{definition}[Entropy-regularized Optimal Transport Discrepancy]
\label{def:ent_regularized_OT_divergence}
The entropy-regularized optimal transport discrepancy between two distributions $\PP_z$ and $\hat \PP_{z}$ on $\R^{d_z}$ with regularization parameter %$\epsilon \in \R_+$ 
$\epsilon \geq  0$ is
\begin{align*}
    \mathds W_\epsilon(\PP_z, \hat \PP_{z}) = \min\limits_{\pi \in \Pi(\PP_z, \hat \PP_{z}) } \int_{\R^{d_z} \times \R^{d_z}} \| z - \hat z \|^2 \, \diff \pi(z, \hat z) + \epsilon \mathds H(\pi),
\end{align*}
where $\Pi(\PP_z, \hat \PP_{z})$ is the set of couplings of $\PP_z$ and $\hat \PP_{z}$ as in Definition~\ref{def:wasserstein-distance}, and the entropy of a coupling~$\pi$ is defined as:
\[
    \mathds H(\pi)=\int_{\mathbb{R}^{d_z} \times \mathbb{R}^{d_z}} \frac{\diff \pi}{\diff \mathbb{L}}(z, \hat z) \log\Bigl( \frac{\diff \pi}{\diff \mathbb{L}}(z, \hat z) \Bigr) \, \diff \mathbb{L}(z,\hat z),
\]
where $\frac{\diff \pi}{\diff \mathbb{L}}$ denotes the Radon-Nikodym derivative of $\pi$ with respect to  the Lebesgue measure $\mathbb{L}$ on $\R^{d_z} \times \R^{d_z}$, defined for any $\pi$ that is absolutely continuous with respect to $\mathbb{L}$. If $\pi$ is not absolutely continuous with respect to $\mathbb{L}$, then we set $\mathds H(\pi)=+\infty$.
\end{definition}

Ambiguity sets based on discrepancies similar to $\mathbb{W}_\epsilon$ have appeared before in the DRO literature \citep[see, e.g.,][]{ref:wang2021sinkhorn,azizian2023regularization}. Although these sets are compatible with our framework, a few notable differences arise. First, the divergence $\mathds{W}_\epsilon$ no longer satisfies the identity of indiscernibles principle, i.e., $\mathds{W}(\PP,\PP)$ may not be identically zero for all $\PP \in \setalldist{d}$. Moreover, the ambiguity set could be empty when the ambiguity radius $\rho_z$ is small (this arises due to the regularization term $\epsilon \mathds{H}(\pi)$, for a sufficiently large $\epsilon$). As a result, small changes are required in our assumptions to ensure the problem is non-trivial. We point these out subsequently, as we verify each assumption.

\subsection{Verifying Assumption~\ref{ass:general_assumption_about_M_function}}
\label{appendix:assumption3_entropy_reg_KL}
Mirroring our earlier developments, for two non-degenerate distributions with finite second moments, we define a distance function between two mean and covariance matrix pairs as follows.
\begin{definition}%[Entropy-regularized Bures-Wasserstein Metric]
The entropy-regularized Bures-Wasserstein distance between $(\mu_z, \Sigma_z) \in \R^{d_z} \times \mathbb S_{++}^{d_z}$ and $(\hat\mu_{z}, \hat \Sigma_{z})  \in\R^{d_z} \times \mathbb S_{++}^{d_z}$ with regularization parameter~$\epsilon\in \R_+$ is given by  
\begin{align*} 
&\mathds G_\epsilon((\mu_z, \Sigma_z), (\hat \mu_{z}, \hat \Sigma_{z})) \\
&\qquad =\sqrt{\| \mu_z  - \hat \mu_{z}\|^2  + \Tr(\Sigma_z) + \Tr(\Sigma_2)  -  2 \Tr( X_{\epsilon}(\Sigma_z)) - \frac{\epsilon}{2} \log\left((2\pi e)^{2d} \left(\frac{\epsilon}{2}\right)^{d}| X_\epsilon(\Sigma_z)|\right)},
\label{eq:B_eps}
\end{align*}
where
\begin{equation}X_{\epsilon}(\Sigma_z)=\left(\hat \Sigma_z^{1 / 2} \Sigma_{z} \hat \Sigma_z^{1 / 2}+\left(\frac{\epsilon}{4}\right)^2 I\right)^{1 / 2} -\frac{\epsilon}{4} I.
\label{eq:X_eps}\end{equation}
\end{definition}
The following proposition shows that the entropy-regularized OT between any non-degenerate distribution and a non-degenerate Gaussian distribution is bounded below by the entropy-regularized OT between their respective mean vectors and covariance matrices. Moreover, if both distributions are Gaussian, then this bound is tight.
\begin{proposition}[\citet{ref:del2020statistical}]
\label{prop:entropic-ot-gauss-ineqs}
For any distribution $\PP_z$ with mean and covariance matrix $
(\mu_z, \Sigma_z) \in\R^{d_z} \times \mathbb S_{++}^{d_z}$ and any Gaussian distribution $\hat \PP_z$ on $\R^{d_z}$ with mean and covariance matrix $
(\hat \mu_z, \hat \Sigma_z) \in\R^{d_z} \times \mathbb S_{++}^{d_z}$, we have
\begin{enumerate}
    \item[i)] $\mathds W_\epsilon (\PP_z, \hat \PP_z) \geq  \mathds{G}_{\epsilon}((\mu_z, \Sigma_{z}),(\hat \mu_z, \hat \Sigma_z))$
    \item[ii)] $\mathds W_\epsilon (\PP_z, \hat \PP_z) =  \mathds{G}_{\epsilon}((\mu_z, \Sigma_{z}),(\hat \mu_z, \hat \Sigma_z))$ if $\PP_z$ is Gaussian.
\end{enumerate}
\end{proposition}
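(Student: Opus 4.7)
The plan is to prove both assertions by reducing the variational problem over arbitrary couplings to a finite-dimensional optimization over Gaussian couplings. First, I would observe that for any $\pi \in \Pi(\PP_z, \hat\PP_z)$ with $\mathds{H}(\pi) < \infty$ (otherwise the bound is trivial), the two summands defining $\mathds W_\epsilon$ depend on $\pi$ in a highly structured way. The transport cost $\int \|z - \hat z\|^2 \,\diff \pi$ is quadratic and therefore depends on $\pi$ only through its mean vector and (block) covariance matrix. Furthermore, $\mathds{H}(\pi)$ is the negative of the differential entropy of $\pi$, and it is a classical fact (analogous to the argument used in the proof of \Cref{prop: KL bound}) that among all densities on $\R^{2 d_z}$ with prescribed first two moments, the Gaussian density minimizes $\mathds H$.

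Combining these observations, the natural next step is to replace any admissible $\pi$ by the unique Gaussian $\pi_G$ on $\R^{2 d_z}$ with the same first and second moments. This replacement cannot increase the entropy-regularized OT cost and preserves the marginal first two moments, though the marginals of $\pi_G$ become $\mathcal N(\mu_z, \Sigma_z)$ and $\mathcal N(\hat\mu_z, \hat\Sigma_z)$. Hence
\[
\mathds W_\epsilon(\PP_z, \hat \PP_z) \;\geq\; \inf_{\pi_G}\Bigl\{\int \|z-\hat z\|^2\,\diff \pi_G + \epsilon\,\mathds H(\pi_G)\Bigr\},
\]
where the infimum runs over joint Gaussian distributions on $\R^{2 d_z}$ with marginals $\mathcal N(\mu_z, \Sigma_z)$ and $\mathcal N(\hat\mu_z, \hat\Sigma_z)$. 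Applied in reverse, the same moment-matching argument shows that over all couplings of Gaussian marginals, the infimum is attained at a Gaussian coupling; this observation will yield~(ii) as soon as the explicit minimum is identified with $\mathds G_\epsilon$.

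Next, I would solve the Gaussian-coupling program in closed form. Parameterizing $\pi_G$ by its cross-covariance $C$, the feasibility condition $\Gamma \succeq 0$ on the joint covariance $\Gamma$ is equivalent to the Schur-complement inequality $\Sigma_z \succeq C \hat\Sigma_z^{-1} C^\top$, and the block-determinant identity $\det(\Gamma) = \det(\hat\Sigma_z)\,\det(\Sigma_z - C\hat\Sigma_z^{-1}C^\top)$ lets one write the objective as
\[
\|\mu_z - \hat\mu_z\|^2 + \Tr(\Sigma_z) + \Tr(\hat\Sigma_z) - 2\Tr(C) - \tfrac{\epsilon}{2}\log\bigl((2\pi e)^{2 d_z}\det(\hat\Sigma_z)\det(\Sigma_z - C\hat\Sigma_z^{-1}C^\top)\bigr).
\]
First-order stationarity in $C$, using $\partial_C \log\det(\Sigma_z - C\hat\Sigma_z^{-1}C^\top) = -2(\Sigma_z - C\hat\Sigma_z^{-1}C^\top)^{-1}C\hat\Sigma_z^{-1}$, produces a Riccati-type matrix equation whose unique positive-semidefinite solution can be written in terms of the matrix $X_\epsilon(\Sigma_z)$ defined in~\eqref{eq:X_eps}. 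Substituting this optimal $C$ back into the objective collapses the expression to the formula appearing under the radical in the definition of $\mathds G_\epsilon$, completing the proof of~(i); assertion~(ii) then follows because when $\PP_z$ is Gaussian, every reduction step above preserves admissibility and the lower bound is attained.

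The main obstacle will be the algebraic step of solving the Riccati equation and matching the resulting closed form precisely to $\mathds G_\epsilon$, including the delicate constants from the Gaussian differential entropy at the optimum (the factor $(2\pi e)^{2 d_z}(\epsilon/2)^{d_z}$ inside the logarithm). The characteristic form $X_\epsilon$ arises naturally by completing a matrix square under the radical after substituting the optimal $C$ back into the Schur complement; tracking this carefully and verifying that the $-(\epsilon/4)I$ shift in~\eqref{eq:X_eps} is precisely the one produced by the stationarity condition is the core technical step.
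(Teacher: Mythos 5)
The paper does not actually prove this proposition: its ``proof'' is a citation to Theorems~2.2 and~2.3 of \citet{ref:del2020statistical}. Your from-scratch derivation is therefore a genuinely different, self-contained route, and its two-step structure is sound --- indeed it is essentially how the cited closed form is obtained in the literature. Step one is correct: the quadratic transport cost depends on a coupling $\pi$ only through its first two moments, the Gaussian maximizes differential entropy (hence minimizes $\mathds H$) among distributions with prescribed first two moments, and couplings without a density or with degenerate entropy contribute $+\infty$ and can be discarded; you also correctly notice the subtlety that Gaussianizing $\pi$ changes its marginals, so the resulting lower bound is an infimum over the \emph{larger} class of joint Gaussians whose marginal moments are $(\mu_z,\Sigma_z)$ and $(\hat\mu_z,\hat\Sigma_z)$, parameterized by the cross-covariance $C$. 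Step two (feasibility via the Schur complement, the block-determinant identity, and the observation that for Gaussian marginals these couplings are genuinely feasible, upgrading the inequality to the equality in~(ii)) is also the right skeleton. What your route buys is transparency about where Gaussianity and the entropy term enter; what the paper's route buys is that it outsources the Riccati computation entirely.

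Two caveats. First, essentially all of the work sits in the step you defer: solving the stationarity condition $2I=\epsilon\,(\Sigma_z-C\hat\Sigma_z^{-1}C^\top)^{-1}C\hat\Sigma_z^{-1}$ (a genuine matrix Riccati system in a not-a-priori-symmetric $C$), verifying that its solution lies in the interior of the feasible region $\Sigma_z\succeq C\hat\Sigma_z^{-1}C^\top$ (true for $\epsilon>0$ because the $\log\det$ term blows up at the boundary, but this needs saying), and checking the determinant identity $\det(\hat\Sigma_z)\det\bigl(\Sigma_z-C^\star\hat\Sigma_z^{-1}(C^\star)^\top\bigr)=(\epsilon/2)^{d_z}\det\bigl(X_\epsilon(\Sigma_z)\bigr)$ that produces the $(\epsilon/2)^{d_z}$ constant in the definition of $\mathds G_\epsilon$. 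Until that algebra is executed the argument is a plan rather than a proof. Second, your computation identifies the optimal value of the Gaussian program with the expression \emph{under the radical} in the definition of $\mathds G_\epsilon$, i.e.\ with $\mathds G_\epsilon^2$; since $\mathds W_\epsilon$ in \Cref{def:ent_regularized_OT_divergence} carries no square root, the proposition as stated ($\mathds W_\epsilon\geq\mathds G_\epsilon$ rather than $\mathds W_\epsilon\geq\mathds G_\epsilon^2$) is dimensionally inconsistent with the paper's own definitions --- your derivation makes this mismatch visible, and it is worth flagging explicitly rather than silently absorbing it into the final matching step.
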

The results in~(i) and~(ii) of \Cref{prop:entropic-ot-gauss-ineqs} follow from Theorem~2.3 and Theorem 2.2 in~ \citet{ref:del2020statistical}, respectively. Notably, for the nominal Gaussian distribution $\hat{\PP}_z = \mathcal{N}(\hat \mu_z, \hat{M}_z)$, the minimum value of the square of~$\mathds{G}_{\epsilon}(\PP_z, \mathcal N(\hat \mu_z, \hat \Sigma_z))$ over $\mathbb P_z\in \setalldist{d_z}$ is attained by $\mathcal N(\hat\mu_z, \hat \Sigma_z + \epsilon/2 I)$~\cite[Theorem~2.4]{ref:del2020statistical}. Importantly, this value is nonzero (and is not attained by the nominal distribution $\hat{\PP}_z$), so to ensure that the ambiguity set is nonempty, we require 
\begin{align}
    \rho_z \geq \underline{\rho}_z := \mathds G_\epsilon \bigl( (\revision{\hat\mu_z}, \hat \Sigma_z + \epsilon/2 I),(\revision{\hat \mu_z}, \hat \Sigma_z) \bigr), ~\mbox{for all}~ z \in \mathcal{Z}.
    \label{eq:radius_requirement_EROT}
\end{align}
Under this premise, we have the following result.
\begin{proposition}
    \label{prop:entropic-ot-cov-set-convex-compact}
    If $\rho_z \geq \underline{\rho}_z$, the divergence $\mathds{W}_\epsilon$ satisfies
    Assumption~\ref{ass:general_assumption_about_M_function}.
\end{proposition}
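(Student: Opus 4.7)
The plan is to verify Assumption~\ref{ass:general_assumption_about_M_function}-(i) and~(ii) separately, with the two parts of \Cref{prop:entropic-ot-gauss-ineqs} doing most of the heavy lifting for Part~(i), while Part~(ii) requires an additional convexity argument on the ambiguity set of distributions plus a coercivity estimate on $\mathds{G}_\epsilon^2$.

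Verifying Assumption~\ref{ass:general_assumption_about_M_function}-(i) is immediate from \Cref{prop:entropic-ot-gauss-ineqs}. For any $(\mu_z, M_z) \in \setmoments{d_z}$ and any $\PP_z \in \setalldist{d_z}$ with mean $\mu_z$ and second moment $M_z$, part~(i) of the proposition gives $\mathds{W}_\epsilon(\PP_z, \hat\PP_z) \geq \mathds{G}_\epsilon((\mu_z, \Sigma_z), (\hat\mu_z, \hat\Sigma_z))$ with $\Sigma_z = M_z - \mu_z \mu_z^\top$. Part~(ii) of the proposition then shows that $\PP_z = \mathcal{N}(\mu_z, \Sigma_z)$ attains this lower bound, so the Gaussian with matching first two moments solves the moment-constrained minimization of $\mathds{W}_\epsilon(\cdot, \hat\PP_z)$.

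For the convexity in Assumption~\ref{ass:general_assumption_about_M_function}-(ii), my plan is to use the identification $\mathcal{M}_{(\mu_z, M_z)} = f(\mathcal{B}_z)$, where $f(\PP) = (\EE_\PP[z], \EE_\PP[zz^\top])$ is the (affine) moment map. Part~(i) just established that $\mathcal{N}(\mu, M - \mu\mu^\top) \in \mathcal{B}_z$ if and only if $(\mu, M) \in \mathcal{M}_{(\mu_z, M_z)}$, which gives both inclusions needed for this identity. It then suffices to argue that $\mathcal{B}_z$ itself is convex, which in turn follows from convexity of $\mathds{W}_\epsilon(\cdot, \hat\PP_z)$: for two distributions $\PP^1, \PP^2$ with optimal couplings $\pi^1, \pi^2$ and any $\alpha \in [0,1]$, the mixture $\alpha\pi^1 + (1-\alpha)\pi^2$ is a coupling of $\alpha \PP^1 + (1-\alpha)\PP^2$ with $\hat\PP_z$, the transport cost is linear in $\pi$, and the entropy $\mathds{H}(\pi)$ is convex in $\pi$. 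Linearity of $f$ then transfers convexity from $\mathcal{B}_z$ to $\mathcal{M}_{(\mu_z, M_z)}$.

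For compactness, I plan to work directly with the alternative characterization $\mathcal{M}_{(\mu_z, M_z)} = \{(\mu, M) \in \setmoments{d_z}: \mathds{G}_\epsilon^2((\mu, M - \mu\mu^\top), (\hat\mu_z, \hat\Sigma_z)) \leq \rho_z^2\}$, which is well-defined because $\rho_z \geq \underline{\rho}_z$ ensures non-emptiness. The map $(\mu, M) \mapsto \mathds{G}_\epsilon^2((\mu, M-\mu\mu^\top), (\hat\mu_z, \hat\Sigma_z))$ is continuous on $\{M - \mu\mu^\top \succ 0\}$ and diverges to $+\infty$ as any eigenvalue of $\Sigma_z = M-\mu\mu^\top$ approaches zero (via the $-(\epsilon/2)\log|X_\epsilon(\Sigma_z)|$ term, since $X_\epsilon$ vanishes with $\Sigma_z$), so the sublevel set is closed in $\R^{d_z} \times \mathbb{S}^{d_z}$. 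Boundedness will follow from a coercivity estimate: the $\|\mu - \hat\mu_z\|^2$ and $\Tr(\Sigma_z)$ terms grow quadratically and linearly, respectively, while $\Tr(X_\epsilon(\Sigma_z)) = O(\sqrt{\Tr(\Sigma_z)})$ (using $\Tr(A^{1/2}) \leq \sqrt{d\,\Tr(A)}$ by Cauchy--Schwarz) and $-\log|X_\epsilon(\Sigma_z)| = O(\log \|\Sigma_z\|)$, so $\mathds{G}_\epsilon^2 \to +\infty$ whenever $\|\mu\| + \Tr(M) \to \infty$.

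The delicate part will be the coercivity estimate -- specifically, checking that the negative contributions $-2\Tr(X_\epsilon(\Sigma_z)) - (\epsilon/2)\log|X_\epsilon(\Sigma_z)|$ are genuinely dominated by $\Tr(\Sigma_z)$ over the full cone $\mathbb{S}_{++}^{d_z}$. A secondary subtlety is making the convexity of $\mathds{W}_\epsilon$ in its first argument fully rigorous in the presence of the entropic term, since one must check that the infimum in $\mathds{W}_\epsilon$ is attained (or approached) by couplings that remain absolutely continuous under mixtures, which is why the convex (rather than concave) character of $\mathds{H}$ is critical.
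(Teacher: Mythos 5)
Your proposal is correct, and Part~(i) plus the compactness argument follow the paper's route (the paper is terser on coercivity, simply invoking continuity and coercivity of $\mathds{G}_\epsilon^2$ with a citation, whereas you spell out the estimates $\Tr(X_\epsilon(\Sigma_z)) = O(\sqrt{\Tr \Sigma_z})$ and the logarithmic growth of the determinant term; both work). The convexity argument, however, is genuinely different. The paper proves that the \emph{function} $(\mu_z,M_z)\mapsto \mathds{G}_\epsilon^2\bigl((\mu_z, M_z-\mu_z\mu_z^\top),(\hat\mu_z,\hat\Sigma_z)\bigr)$ is jointly convex, via matrix analysis: it shows $\Sigma_z\mapsto X_\epsilon(\Sigma_z)$ is operator monotone and matrix concave, composes it with the operator-monotone concave map $X\mapsto 2\Tr(X)+\tfrac{\epsilon}{2}\log\det X$, and handles the mean through the rank-one identity $\mu_\lambda\mu_\lambda^\top = \lambda\mu\mu^\top+(1-\lambda)\mu'\mu'^{\top}-\lambda(1-\lambda)(\mu-\mu')(\mu-\mu')^\top$. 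You instead prove convexity of the \emph{set} of distributions $\mathcal{B}_z$ (mixtures of couplings, linearity of the transport cost, convexity of $\mathds{H}$ in $\pi$) and push it through the affine moment map, using the just-verified part~(i) to identify $\mathcal{M}_{(\mu_z,M_z)}$ with the image of $\mathcal{B}_z$. Your route is shorter, avoids operator-monotonicity machinery entirely, and generalizes to any divergence that is convex in its first argument; the paper's route is heavier but delivers a strictly stronger conclusion — convexity of $\mathds{G}_\epsilon^2$ as a function — which the paper explicitly reuses later (the proof of \Cref{prop:assumption4_for_EROT} cites ``\Cref{prop:entropic-ot-cov-set-convex-compact} (and its proof)'' to assert that $\gz$ is a convex function when verifying Assumption~\ref{ass:M-gradients-for-W_epsilon}). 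So your argument fully establishes the proposition as stated, but would not by itself support that downstream use.
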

\begin{proof}[Proof of \Cref{prop:entropic-ot-cov-set-convex-compact}.]
    If $\rho_z \geq \underline{\rho}_z$, \Cref{prop:entropic-ot-gauss-ineqs} implies that Assumption~\ref{ass:general_assumption_about_M_function}-(i) holds. To see that~Assumption~\ref{ass:general_assumption_about_M_function}-(ii) also holds, note that:
    \begin{align*}
    \mathcal{M}_{(\mu_z, M_z)}^{\mathds G_\epsilon} 
    &= \{(\mu_z , M_z) \in \setmoments{d_z} : M_z \succ \mu_z\mu_z^\top,~  \mathds W_\epsilon (\mathcal N(\mu_z, M_z), \hat \PP_z) \leq \rho_z\}  \\
    &= \{(\mu_z, M_z) \in \setmoments{d_z} : M_z \succ \mu_z\mu_z^\top,~ \bigl( \mathds G_\epsilon((\mu_z, M_z - \mu_z\mu_z^\top),(\hat\mu_{z}, \hat \Sigma_z ) \bigr)^2 \leq \rho_z^2\},
    \end{align*}
    where the second equality follows from~\Cref{prop:entropic-ot-gauss-ineqs}-(ii). Note that $\mathcal{M}_{(\mu_z, M_z)}^{\mathds G_\epsilon}$ contains only non-degenerate distributions because otherwise, $\mathds{G}_\epsilon (\mathcal{N}(\mu_z, \Sigma_z), \hat{\mathbb{P}}_z)$ would not be finite. 
    
    We next establish that the set $\mathcal{M}_{(\mu_z, M_z)}^{\mathds G_\epsilon}$ is convex and compact. As a first step, we prove that the map $\Sigma_z \mapsto X_\epsilon(\Sigma_z)$ is matrix concave and operator monotone on the cone of positive definite matrices, $\mathbb S^{d_z}_{++}$. By Theorem~1.5.9 and Theorem~4.2.3 in~\citet{ref:bhatia2009positive}, the map $X \mapsto X^{\frac{1}{2}}$ is operator monotone and matrix concave on $\mathbb S_+^{d_z}$. The map $\Sigma_z \mapsto \hat{\Sigma}_z^{1 / 2} \Sigma_z \hat{\Sigma}_z^{1 / 2}+\left(\frac{\epsilon}{4}\right)^2 I$ is linear and operator monotone for all $\hat{\Sigma}_z \in \mathbb S^{d_z}_{++}$. Because matrix convexity/concavity is preserved under linear maps\footnote{For an elementary proof, assume $f : \mathbb S_{+}^{d_z} \rightarrow \mathbb {S}_{+}^{d_z}$ is matrix convex %, i.e., $f(\lambda X_1 + (1-\lambda)X_2) \leq \lambda f(X_1) + (1-\lambda)f(X_2)$ for all $X_1,X_2 \in \mathbb S_{+}^d$ and any $\lambda \in [0,1]$, 
    and consider an affine map $X(Y)$. Then, $f\left( X(\lambda Y_1 + (1-\lambda)Y_2) \right) = f\left( \lambda X(Y_1) + (1-\lambda) X(Y_2) \right) \preceq \lambda f(X(Y_1)) + (1-\lambda) f(X(Y_2))$, where the first equality follows because $X(Y)$ is affine and the inequality follows because $f$ is matrix convex.} and the composition of operator monotone functions is operator monotone, it follows that $X_\epsilon(\Sigma_z)$ is matrix concave and operator monotone.

    Next, we prove that the function $\mathds{G}_\epsilon((\mu_z, M_z - \mu_z \mu_z^\top), (\hat \mu_z, \hat \Sigma_z))^2$ is jointly convex in $(\mu_z, M_z) \in \setmoments{d_z}$ for any fixed $(\hat \mu_z, \hat \Sigma_z) \in \setmoments{d_z}$. As a function of $(\mu_z, M_z)$ (ignoring constant terms), %{\color{red} DK: What do we mean by ``independent terms''?}), 
    this can be rewritten as $\|\mu_z - \hat \mu_z\|^2 + \Tr(M_z) - \|\mu_z\|^2 - 2 \Tr(X_\epsilon(M_z - \mu_z \mu_z^\top)) -\epsilon/2 \log(|X_\epsilon(M_z - \mu_z\mu_z^\top)|)$. Expanding the first term and canceling the third term, it can be seen that the first three terms yield a jointly convex function of $(\mu_z, M_z)$. So it suffices to prove that $f( X_\epsilon(M_z - \mu_z \mu_z^\top))$ is jointly concave in $(\mu_z,M_z)$ on $\setmoments{d_z}$, where $f(X) = 2 \Tr(X) + \epsilon/2 \log(|X|)$.

    To analyze this, we first prove that $f(X_\epsilon(\Sigma_z))$ is operator monotone and concave in $\Sigma_z$ on $\mathbb {S}_{++}^{d_z}$. Note that the function $f(X)$ is operator monotone on $\mathbb {S}_+^{d_z}$ because $\Tr(\cdot)$ and $\log \det (\cdot)$ are both operator monotone. Because $X_\epsilon(\Sigma_z)$ is also operator monotone (as argued above) and the composition of operator monotone functions remains operator monotone, we conclude that $f( X_\epsilon(\Sigma_z) )$ is operator monotone on $\mathbb {S}_{++}^{d_z}$. To prove concavity, consider any $\Sigma_z,\Sigma_z' \in \mathbb {S}_+^{d_z}$ and any $\lambda \in [0,1]$. We have:
    \begin{align*}
        f\left( X_\epsilon(\lambda \Sigma_z + (1-\lambda) \Sigma_z') \right) &\geq  f\left( \lambda X_\epsilon(\Sigma_z) + (1-\lambda) X_\epsilon(\Sigma_z') \right)
        \geq  \lambda f\left( X_\epsilon(\Sigma_z) \right) + (1-\lambda) f \left( X_\epsilon(\Sigma_z') \right),
    \end{align*}
    where the first inequality follows because  $f(X)$ is operator monotone on $\mathbb {S}_+^{d_z}$ and $X_\epsilon(\Sigma_z)$ is matrix concave on $\mathbb {S}_+^{d_z}$, and the second inequality follows because $f(X)$ is concave on $\mathbb {S}_+^{d_z}$. This shows that $f( X_\epsilon(\Sigma_z))$ is concave in $\Sigma_z$. 
    
    To prove that $f( X_\epsilon(M_z - \mu_z \mu_z^\top))$ is jointly concave in $(\mu_z,M_z)$ on $\setmoments{d_z}$, consider any $(\mu_z,M_z)$, $(\mu_z',M_z') \in \setmoments{d_z}$ and $\lambda \in [0,1]$. With $\mu_\lambda = \lambda \mu_z + (1-\lambda) \mu_z'$, we can verify the identity:
    \begin{align}
        \mu_\lambda \mu_\lambda^\top = \lambda \mu_z \mu_z^\top + (1-\lambda) \mu_z' (\mu_z')^\top - \lambda (1-\lambda) (\mu_z - \mu_z')(\mu_z- \mu_z')^\top.
        \label{eq:inequality_rank_one_trick}
    \end{align}
    This implies that:
    \begin{align*}
       f\left( X_\epsilon( \lambda M_z + (1-\lambda) M_z' - \mu_\lambda \mu_\lambda^\top ) \right) & \geq  
       f\left( X_\epsilon( \lambda M_z + (1-\lambda) M_z' - \lambda \mu_z \mu_z^\top - (1-\lambda) \mu_z' (\mu_z')^\top ) \right) \\
       & \geq  \lambda f\left( X_\epsilon( M_z - \mu_z \mu_z^\top ) \right) + (1-\lambda) f\left( X_\epsilon( M_z' - \mu_z'( \mu_z')^\top) \right),
       %x_\lambda x_\lambda^\top - \left[ \lambda x_1 x_1 ^\top + (1-\lambda) x_2 x_2^\top \right)
    \end{align*}
    where the first inequality follows from~\eqref{eq:inequality_rank_one_trick} and because $f( X_\epsilon(\Sigma_z) )$ is operator monotone in $\Sigma_z$. The second inequality follows because $f( X_\epsilon(\Sigma_z) )$ is concave in $\Sigma_z$.
    
    These arguments imply that $\mathds{G}_\epsilon((\mu_z, M_z - \mu_z \mu_z^\top), (\hat \mu_z, \hat \Sigma_z))^2$ is jointly convex in $(\mu_z,\revision{M_z})$ \revision{on} $\setmoments{d_z}$. 
    
    To conclude the proof, we argue that $\mathcal{M}_{\Sigma_z}^{\mathds G_\epsilon}$ is convex and compact. $\mathcal{M}_{\Sigma_z}^{\mathds G_\epsilon}$ is convex because it  corresponds to the $\rho^2_z > 0$ sublevel set of the convex function $\mathds{G}^2_\epsilon((\mu_z, M_z - \mu_z \mu_z^\top), (\hat \mu_z, \hat \Sigma_z))$. Moreover, $\mathds{G}^2_\epsilon((\mu_z, M_z - \mu_z \mu_z^\top), (\hat\mu_z, \hat \Sigma_z))$ is a continuous and coercive function
    in $(\mu_z, M_z)$, and thus its sublevel sets are closed and bounded, and thus are compact (see, e.g., Lemma~1.24 and Proposition~11.12 in \citealp{ref:BauschkeCombettes2017}, respectively). 
\end{proof}

%%
%  ASSUMPT 3
%%
\smallskip
\subsection{Verifying Assumption~\ref{ass:setting_mean_zero_feasible}}
\begin{proposition}
    Fix $\hat{\mu}_z=0$. If $\rho_z \geq \underline{\rho}_z$, the divergence $\mathds{W}_\epsilon$ satisfies Assumption~\ref{ass:setting_mean_zero_feasible}.
    \label{prop:assumption3_for_EROT}
\end{proposition}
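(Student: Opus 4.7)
{Proof Plan for Proposition~\ref{prop:assumption3_for_EROT}.}
The plan is to reduce the assumption to a monotonicity statement for the function $f(X) = 2\Tr(X) + \frac{\epsilon}{2}\log|X|$ composed with the matrix map $X_\epsilon(\cdot)$, both of which were already analyzed in the proof of Proposition~\ref{prop:entropic-ot-cov-set-convex-compact}. Concretely, fix $z \in \mathcal{Z}$ with $\hat{\mu}_z = 0$, and suppose $(\mu_z, M_z) \in \mathcal{M}_{(\mu_z, M_z)}^{\mathds{W}_\epsilon}$ with finite radius $\rho_z \geq \underline{\rho}_z$. Since the divergence is finite, one must have $M_z - \mu_z\mu_z^\top \succ 0$, and hence also $M_z \succeq M_z - \mu_z\mu_z^\top \succ 0$, so both $\mathcal{N}(\mu_z, M_z)$ and $\mathcal{N}(0, M_z)$ are non-degenerate Gaussians and Proposition~\ref{prop:entropic-ot-gauss-ineqs}(ii) applies to both. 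Therefore, to show $(0, M_z) \in \mathcal{M}_{(\mu_z, M_z)}^{\mathds{W}_\epsilon}$, it suffices to verify the inequality
\begin{equation*}
\bigl(\mathds{G}_\epsilon\bigl((0, M_z),(0,\hat{\Sigma}_z)\bigr)\bigr)^2 \;\leq\; \bigl(\mathds{G}_\epsilon\bigl((\mu_z, M_z - \mu_z\mu_z^\top),(0,\hat{\Sigma}_z)\bigr)\bigr)^2.
\end{equation*}

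Next, I would compute the difference between the two squared Gelbrich-type quantities using the explicit form of $\mathds{G}_\epsilon$. Writing $\Sigma := M_z - \mu_z\mu_z^\top$ and observing that $\|\mu_z\|^2 + \Tr(\Sigma) = \Tr(M_z)$, the Euclidean-norm and trace contributions on the two sides cancel out, leaving only
\begin{equation*}
\bigl(\mathds{G}_\epsilon((\mu_z,\Sigma),(0,\hat{\Sigma}_z))\bigr)^2 - \bigl(\mathds{G}_\epsilon((0,M_z),(0,\hat{\Sigma}_z))\bigr)^2 = f\bigl(X_\epsilon(M_z)\bigr) - f\bigl(X_\epsilon(\Sigma)\bigr),
\end{equation*}
modulo additive constants that drop out. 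Thus the whole problem reduces to showing $f(X_\epsilon(M_z)) \geq f(X_\epsilon(\Sigma))$ whenever $M_z \succeq \Sigma$.

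This monotonicity is immediate from two facts established in the proof of Proposition~\ref{prop:entropic-ot-cov-set-convex-compact}: (i) the map $\Sigma \mapsto X_\epsilon(\Sigma)$ is operator monotone on $\mathbb{S}_{++}^{d_z}$, i.e., $\Sigma_1 \succeq \Sigma_2$ implies $X_\epsilon(\Sigma_1) \succeq X_\epsilon(\Sigma_2)$; and (ii) the scalar function $f(X) = 2\Tr(X) + \frac{\epsilon}{2}\log|X|$ is monotone in the Loewner order (since both $\Tr(\cdot)$ and $\log\det(\cdot)$ are). Applying (i) to $M_z \succeq \Sigma = M_z - \mu_z\mu_z^\top$ yields $X_\epsilon(M_z) \succeq X_\epsilon(\Sigma)$, and then (ii) gives $f(X_\epsilon(M_z)) \geq f(X_\epsilon(\Sigma))$, establishing the required inequality.

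I do not anticipate a serious obstacle: the bulk of the analytic work is already carried out in the verification of Assumption~\ref{ass:general_assumption_about_M_function} for $\mathds{W}_\epsilon$, and the argument here is essentially the observation that the additive quadratic term $\|\mu_z\|^2$ in $\mathds{G}_\epsilon^2$ is exactly compensated by the decrease in $\Tr(\Sigma)$ when the mean is transferred into variance, so that only the monotone piece $f\circ X_\epsilon$ remains to be compared. The only care point is ensuring $M_z \succ 0$ so that $X_\epsilon(M_z)$ lies in $\mathbb{S}_{++}^{d_z}$ and $f(X_\epsilon(M_z))$ is finite, which as noted follows automatically from feasibility of $(\mu_z, M_z)$.
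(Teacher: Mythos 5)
Your proposal is correct and follows essentially the same route as the paper's proof: both exploit the cancellation $\|\mu_z\|^2 + \Tr(M_z - \mu_z\mu_z^\top) = \Tr(M_z)$ so that only the $X_\epsilon$-dependent terms survive, then conclude via operator monotonicity of $\Sigma \mapsto X_\epsilon(\Sigma)$ together with Loewner-monotonicity of $\Tr(\cdot)$ and $\log\det(\cdot)$ applied to $M_z \succeq M_z - \mu_z\mu_z^\top$. Your packaging of the surviving terms as a single monotone function $f\circ X_\epsilon$, and your explicit remark that feasibility forces $M_z \succ 0$ so that Proposition~\ref{prop:entropic-ot-gauss-ineqs}(ii) applies to both Gaussians, are minor presentational refinements of the same argument.
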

\begin{proof}{Proof.}
We simplify notation by omitting the subscript $z$. Recall that $\hat{\mu}_{}=0$, which implies that $\hat{M}_{} = \hat{\Sigma}_{}$, and the nominal distribution is $\hat{\PP}_{} = \mathcal{N}(0,\hat{\Sigma}_{})$. As in the proof of \Cref{prop:assumption3_for_W_KL_M}, consider any two Gaussian distributions $\PP_{}, \PP'_{} \in \mathcal{B}_{}$ such that $\EE_{\PP}[zz^\top] = \EE_{\PP'_{}}[zz^\top] = M_{}$ and $\mu'_{} = \hat{\mu}_{} = 0$, which implies that $\Sigma'_{} =\EE_{\PP'_{}}[(z-\mu'_{})(z-\mu'_{})^\top]= M_{}$. 
%Let $\mu_i = \EE_{\PP_i}[z]$ and $\Sigma_i = M_z - \mu_i \mu_i^\top$ for $i=1,2$. 
For the subsequent arguments, it helps to note that
\begin{align}
    \| \mu_{} - \EE_{\hat{\PP}_{}}[z] \|^2 + \Tr ( \Sigma_{} ) - \| \mu'_{} -  \EE_{\hat{\PP}_{}}[z] \|^2 - \Tr ( \Sigma'_{} ) &= \Tr (M_{}) - \Tr (M_{}) = 0.     \label{eq:equality_for_normals_implied_by_assump_4_EROT}
\end{align}
Also, we recall from the proof of \Cref{prop:entropic-ot-cov-set-convex-compact} that the maps $\Sigma_z \mapsto X_\epsilon(\Sigma_z)$ and $X \mapsto X^{\frac{1}{2}}$ are matrix-concave and operator monotone on the cone of positive semidefinite matrices $\mathbb S^{d_z}_{++}$, that is, are functions $f : \mathbb{S}_{+}^{d_z} \rightarrow \mathbb{S}_{+}^{d_z}$ that satisfy $f(X) \succeq f(Y)$  for any $X,Y \in \mathbb S_{+}^{d_z}$ with $X \succeq Y$.

Recall from \Cref{prop:entropic-ot-gauss-ineqs} that the set of moments $\mathcal{M}_{(\mu_z, M_z)}$ can be written in this case as:
\begin{align}
    \mathcal{M}_{(\mu_z, M_z)} = \{(\mu_z, M_z) \in \setmoments{d_z}: ~ \left( \mathds{G}_\epsilon\bigl((\mu_z, M_z), (\hat{\mu}_{}, \hat{M}_{}) \bigr) \right)^2 \leq \rho_z^2\}.
    \label{eq:first_condition_assumpt_3}
\end{align}
To prove that $(\mu,M) \in  \mathcal{M}_{(\mu_{}, M_{})}$ implies that $(0,M) \!\in  \!\!\mathcal{M}_{(\mu_{}, M_{})}$, it suffices to show that 
\[\mathds{G}_\epsilon^2((0, M_{}), (0, \hat{M}_{}) )\leq \mathds{G}_\epsilon^2((\mu_z, M_{}),(0, \!\hat{M}_{}) ).\]
By \Cref{prop:entropic-ot-gauss-ineqs} and~\eqref{eq:equality_for_normals_implied_by_assump_4_EROT}, we have:
\begin{align*}
    & \mathds{G}_\epsilon^2\bigl((0, M_{}), (0, \hat{M}_{}) \bigr) - \mathds{G}_\epsilon^2\bigl((\mu_{}, M_{}), (0, \hat{M}_{}) \bigr) \\
    & \qquad \qquad = - 2 ( \Tr\left(X_\epsilon(M_{}) \right) - \Tr ( X_\epsilon(M_{} -\mu_{} \mu_{}^\top) )) - \frac{\epsilon}{2} \left(\log( |X_\epsilon(M_{})|)  - \log \left(|X_\epsilon(M_{} - \mu_{} \mu_{}^\top)|\right) \right)\leq 0,
\end{align*}
where the inequality follows because $M_{} \succeq M_{} - \mu_{} \mu_{}^\top$, the maps $X_\epsilon$ and $X^{1/2}$ are operator monotone on $\mathbb{S}^{d_z}_{++}$, and $\Tr(X) \geq \Tr(Y)$ and $\log \det (X) \geq \log \det (Y)$ if $X \succeq Y$.  
\end{proof}

\subsection{\Cref{thm:optimal-covs-are-higher} for the $\mathds{W}_\epsilon$ Divergence}
For the $\mathds{W}_\epsilon$ divergence, Assumption~\ref{ass:M-gradients} as stated in the main text is not satisfied. Instead, we require the following slightly modified assumption.
\begin{assumption}
\label{ass:M-gradients-for-W_epsilon}
Fix $\mu_z = \hat{\mu}_z=0$ for all $z\in \mathcal{Z}$. There exists $\gz: \mathbb S_+^{d_z} \to \R$ such that the sets defined in ~Assumption~\ref{ass:general_assumption_about_M_function} can be represented as \(\mathcal{M}_{\Sigma_z} = \{ \Sigma_z \in \mathbb{S}_+^{d_z} \,: \, \gz(\Sigma_z) \leq  0\}\) for any $z \in \mathcal{Z}$, where $\gz$ is convex on $\mathbb{S}_+^{d_z}$, differentiable on $\mathbb{S}_{++}^{d_z}$, and satisfies: (i) $\gz(\bar{\Sigma}_z) < 0$, (ii)~$\bar{\Sigma}_z \in \argmin_{\Sigma_z \succeq 0} \gz(\Sigma_z)$, and (iii) $\gradgz(\Sigma_1) \succeq \gradgz(\Sigma_2)$ implies $\Sigma_1 \succeq \Sigma_2$, for any $\Sigma_1, \Sigma_2 \in \mathbb S_+^{d_z}$, where $\bar{\Sigma}_z = \hat \Sigma_z + \epsilon/2 I$.
\end{assumption}
The key difference compared to Assumption~\ref{ass:M-gradients} is that conditions~(i) and~(ii) are required for the matrix $\bar{\Sigma}_z$ rather than $\hat{\Sigma}_z$. We claim that \Cref{thm:optimal-covs-are-higher} holds for the $\mathds{W}_\epsilon$-based ambiguity sets if Assumptions~\ref{ass:Gaussian}-\ref{ass:setting_mean_zero_feasible} and Assumption~\ref{ass:M-gradients-for-W_epsilon} hold: one can follow the same steps in the proof of \Cref{thm:optimal-covs-are-higher} to argue that $\Sigma_z\opt \succeq \bar{\Sigma}_z = \hat \Sigma_z + \epsilon/2 I$, which implies that $\Sigma_z\opt \succeq \hat{\Sigma}_z$, as desired.

What remains is to show that Assumption~\ref{ass:M-gradients-for-W_epsilon} is satisfied. This is always the case if the ambiguity set is not a singleton, as summarized in the next result.
\begin{proposition}
    Fix $\hat{\mu}_z=0$. If $\rho_z > \underline{\rho}_z$, the divergence $\mathds{W}_\epsilon$ divergence satisfies Assumption~\ref{ass:M-gradients-for-W_epsilon}.
    \label{prop:assumption4_for_EROT}
\end{proposition}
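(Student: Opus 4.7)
The plan is to mirror the proof of the Wasserstein case in \Cref{prop:assumption4_for_base_examples}. I would set
\[
g_z(\Sigma_z) \;=\; \mathds G_\epsilon\bigl((0,\Sigma_z),(0,\hat\Sigma_z)\bigr)^2 - \rho_z^2
\]
for every $z\in\mathcal Z$. By \Cref{prop:entropic-ot-gauss-ineqs}(ii), this gives the representation $\mathcal M_{\Sigma_z}=\{\Sigma_z\in\mathbb S_+^{d_z}\colon g_z(\Sigma_z)\le 0\}$. The joint-convexity argument developed inside the proof of \Cref{prop:entropic-ot-cov-set-convex-compact}, restricted to $\mu_z=0$, shows that $g_z$ is convex on $\mathbb S_+^{d_z}$; smoothness on $\mathbb S_{++}^{d_z}$ follows because $\Sigma_z\mapsto X_\epsilon(\Sigma_z)$ is a smooth composition of the congruence with $\hat\Sigma_z^{1/2}$, the constant shift by $(\epsilon/4)^2 I$, the operator square root (smooth on $\mathbb S_{++}^{d_z}$), and a further constant shift.

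Conditions (i) and (ii) reduce to an algebraic fact about $X_\epsilon$. From the defining identity $X_\epsilon(\Sigma_z)\bigl(X_\epsilon(\Sigma_z)+\tfrac{\epsilon}{2}I\bigr)=\hat\Sigma_z^{1/2}\Sigma_z\hat\Sigma_z^{1/2}$, a short computation gives $X_\epsilon(\bar\Sigma_z)=\hat\Sigma_z$ (for the scalar-valued analogue the positive root of $x(x+\epsilon/2)=\hat\sigma(\hat\sigma+\epsilon/2)$ is exactly $x=\hat\sigma$, and functional calculus lifts this to matrices since all matrices involved commute). Substituting this into $g_z$ yields $g_z(\bar\Sigma_z)=\underline\rho_z^2-\rho_z^2<0$ whenever $\rho_z>\underline\rho_z$, which verifies~(i). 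Condition~(ii) is then a direct consequence of Theorem~2.4 in \citet{ref:del2020statistical}, which identifies $\bar\Sigma_z$ as the unique global minimizer of $\mathds G_\epsilon((0,\cdot),(0,\hat\Sigma_z))^2$ on $\mathbb S_+^{d_z}$.

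Condition~(iii) requires a gradient computation. Writing $Y=\hat\Sigma_z^{1/2}\Sigma_z\hat\Sigma_z^{1/2}+(\epsilon/4)^2 I$, $W=Y^{1/2}$ and $X_\epsilon=W-\tfrac{\epsilon}{4}I$, differentiating the Sylvester identity $W\,dW+dW\,W=dY=\hat\Sigma_z^{1/2}\,d\Sigma_z\,\hat\Sigma_z^{1/2}$ and using that $W$ commutes with $X_\epsilon$ gives $\Tr(dW)=\tfrac12\Tr(W^{-1}\,dY)$ and $\Tr(X_\epsilon^{-1}dW)=\tfrac12\Tr(W^{-1}X_\epsilon^{-1}\,dY)$. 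Combining these with the expression for $g_z$ and the relation $W=X_\epsilon+\tfrac{\epsilon}{4}I$, the trace and log-determinant contributions collapse to the compact formula
\[
\nabla g_z(\Sigma_z) \;=\; I - \hat\Sigma_z^{1/2}\,X_\epsilon(\Sigma_z)^{-1}\,\hat\Sigma_z^{1/2},
\]
which sanity-checks to $0$ at $\bar\Sigma_z$ because $X_\epsilon(\bar\Sigma_z)=\hat\Sigma_z$.

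With this formula, the order-reflection property follows by mirroring the Wasserstein argument: if $\nabla g_z(\Sigma_1)\succeq\nabla g_z(\Sigma_2)$, pre- and post-multiplying by $\hat\Sigma_z^{-1/2}$ yields $X_\epsilon(\Sigma_1)^{-1}\preceq X_\epsilon(\Sigma_2)^{-1}$, and inversion on the positive definite cone gives $X_\epsilon(\Sigma_1)\succeq X_\epsilon(\Sigma_2)$; since $X_\epsilon$ is an order-preserving bijection built from congruence, a shift, the operator-monotone square root, and another shift, unfolding this chain (as in \Cref{prop:assumption4_for_base_examples}) produces $\Sigma_1\succeq\Sigma_2$. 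The main technical obstacle is the gradient derivation itself, where the trace and $\log\det$ contributions have to combine precisely, via the commutativity of $W$ and $X_\epsilon$ and the factorization $I+\tfrac{\epsilon}{4}X_\epsilon^{-1}=X_\epsilon^{-1}W$, to collapse into the clean expression above.
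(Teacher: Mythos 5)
Your proof follows the same route as the paper's: the same choice of $\gz(\Sigma_z)=\mathds G_\epsilon((0,\Sigma_z),(0,\hat\Sigma_z))^2-\rho_z^2$, convexity and differentiability imported from \Cref{prop:entropic-ot-cov-set-convex-compact}, the identification of $\bar\Sigma_z=\hat\Sigma_z+\tfrac{\epsilon}{2}I$ as the minimizer via \citet{ref:del2020statistical}, the observation $\gz(\bar\Sigma_z)=\underline\rho_z^2-\rho_z^2<0$, and an order-reflection argument on the gradient.

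The one substantive difference is the gradient formula, and here your version is the right one. Differentiating as you do (the Sylvester identity plus commutativity of $W$ and $X_\epsilon$) gives
\[
\gradgz(\Sigma_z)=I-\hat\Sigma_z^{1/2}\,X_\epsilon(\Sigma_z)^{-1}\,\hat\Sigma_z^{1/2},
\]
which vanishes at $\bar\Sigma_z$ since $X_\epsilon(\bar\Sigma_z)=\hat\Sigma_z$ --- as it must, because a convex differentiable function attaining its minimum at the interior point $\bar\Sigma_z$ has zero gradient there. The paper instead prints $I-\hat\Sigma_z^{1/2}\bigl(\tfrac{\epsilon}{4}I+(\tfrac{\epsilon^2}{16}I+\hat\Sigma_z^{1/2}\Sigma_z\hat\Sigma_z^{1/2})^{1/2}\bigr)^{-1}\hat\Sigma_z^{1/2}=I-\hat\Sigma_z^{1/2}(X_\epsilon(\Sigma_z)+\tfrac{\epsilon}{2}I)^{-1}\hat\Sigma_z^{1/2}$, which does \emph{not} vanish at $\bar\Sigma_z$ (already visible in the scalar case, where one gets $\tfrac{\epsilon/2}{\hat\sigma+\epsilon/2}>0$); the paper's expression appears to carry a sign error in the $\tfrac{\epsilon}{4}I$ shift. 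Your sanity check at $\bar\Sigma_z$ is exactly the right diagnostic. One caveat: the final step of your order-reflection chain, passing from $X_\epsilon(\Sigma_1)\succeq X_\epsilon(\Sigma_2)$, i.e.\ $Y_1^{1/2}\succeq Y_2^{1/2}$, to $Y_1\succeq Y_2$, implicitly inverts the operator-monotone square root, and squaring is not operator monotone in general; but this is stated at exactly the same level of rigor as the paper's own treatment of both the Wasserstein and the entropic case, so it is not a gap relative to the reference proof.
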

\begin{proof}{Proof.}
We show that $\gz (\Sigma_z) = ( \mathds{G}_\epsilon( (0, \Sigma_z), (0, \hat{\Sigma}_z) ) )^2 - \rho_z^2$ satisfies Assumption~\ref{ass:M-gradients-for-W_epsilon} for $\rho_z > \underline{\rho}_z$. %When possible, we omit the subscript $z$ for conciseness. 

\Cref{prop:entropic-ot-cov-set-convex-compact} (and its proof) show that $\gz$ is a convex function. That $\gz$ is differentiable in $\Sigma_z$ on $\mathbb{S}_{++}^{d_z}$ and achieves its minimum value at $\bar{\Sigma}_z = \hat \Sigma_z + \epsilon/2 I$ follows from the same properties that hold for $\mathds{G}_\epsilon$ (see Lemma~1.24 and Proposition~11.12 in \citealp{ref:BauschkeCombettes2017}.) As $\underline{\rho}_z$ was chosen so that $g(\bar{\Sigma}_z) = 0$ for $\rho_z = \underline{\rho}_z$, the continuity of $\gz$ implies that $\gz(\bar{\Sigma}_z) < 0$ for $\rho_z > \underline{\rho}_z$. 

These arguments show that requirements~(i)-(ii) of~Assumption~\ref{ass:M-gradients-for-W_epsilon} are satisfied. For~(iii), write the gradient of $\gz$ as:
\[ \gradgz(\Sigma_z) = I - \hat\Sigma_z^\frac{1}{2}\left( 
\frac{\epsilon}{4} I + \Bigl( \frac{\epsilon^2}{16} I + \hat\Sigma_z^\frac{1}{2} \Sigma_z \hat\Sigma_z^\frac{1}{2}  \Bigr)^\frac{1}{2} \right)^{-1}\hat\Sigma_z^\frac{1}{2}.\]
Then, consider $\Sigma_1, \Sigma_2 \in \mathbb{S}_+^{d_z}$ and note that $\gradgz(\Sigma_1) \succeq \gradgz(\Sigma_2)$  implies that $\Sigma_1 \succeq \Sigma_2$ because the mapping $X \mapsto X^{1/2}$ preserves ordering and the mapping $X \mapsto X^{-1}$ reverses ordering on $\mathbb{S}_{++}^{d_z}$. 
\end{proof}

%%%%%%%%%%%%%%%%%%%%%%%%%%%%%%%%%%%%%%%%%%%%%
%
%  Extension - Fisher Divergence
%
%%%%%%%%%%%%%%%%%%%%%%%%%%%%%%%%%%%%%%%%%%%%%
\bigskip
\section{Extension to Fisher Divergence}
\label{sec:Fisher_divergence_extension}
Our final example involves another information-theoretic divergence inspired by the Fisher information matrix. To formalize it, let $\tilde{\mathcal{P}}(\R^d)$ denote the set of probability distributions $\PP$ on $\R^d$ that admit densities $p(z)$ that are continuously differentiable, everywhere positive and satisfy the condition:
\begin{align}
  \exists \epsilon > 0, ~ \bar{p} > 0 ~~:~~ \|z\|^{d_z+\epsilon} p(z) \leq \bar{p}, ~ \forall \, z \in \R^{d_z}.
  \label{eq:Fisher_fast_poly_decay}
\end{align}
All distributions with sub-exponential/sub-Gaussian tails or tails with a sufficiently fast polynomial decay satisfy the requirement; this includes many common distributions such as Gaussian, Laplace, or (with minor parameter restrictions) Student-$t$ or Pareto. The assumption rules out heavy-tailed distributions whose density decays slower than $\|z\|^{-d_z}$.

For the rest of the section, we will be interested in a divergence $\mathds{D}$ corresponding to the \emph{relative Fisher divergence} (or \emph{score-matching distance}), which we formalize next.
\begin{definition}[Fisher divergence]
  \label{def:Fisher_divergence}
 Consider $\PP_z, \hat{\PP}_z \in \setalldist{d_z}$. If $\PP_z, \hat{\PP}_z \in \tilde{\mathcal{P}}(\R^{d_z})$, we define the \emph{Fisher divergence} (also called the \emph{relative Fisher information} or \emph{score‑matching distance}) between $\PP_z$ and $\hat{\PP}_z$ as
 \[
 \mathds F(\PP_z, \hat \PP_z) =  \frac{1}{2} \int_{\R^{d_z}} \norm{\nabla  \log p(z )-\nabla\log \hat{p}(z)}_2^{2} \, p(z) \, \diff z ,
 \]
 where $p$ and $\hat{p}$ denote the densities of $\PP_z$ and $\hat{\PP}_z$, respectively, and $\nabla$ denotes the gradient with respect to $z$. If $\PP_z \notin \tilde{\mathcal{P}}(\R^{d_z})$ or $\hat{\PP}_z \notin \tilde{\mathcal{P}}(\R^{d_z})$, 
 we set $\mathds{F}(\PP_z,\hat{\PP}_z)=\infty$ if $\PP_z \neq \hat{\PP}_z$ and $\mathds{F}(\PP_z,\hat{\PP}_z) = 0$ if $\PP_z = \hat{\PP}_z$.
\end{definition}
%{\color{red} BT: Throughout the paper we were making the gradients explicit, so I suggest we write $\nabla_z$ below.}

Originally considered in information theory and applied probability \citep{johnson_2004}, this divergence has recently been used to derive novel concentration inequalities \citep{Rioul2010} and in many applications in machine learning and computer vision \citep{hyvarinen2005,SongErmon2019ScoreGenerative}. However, to the best of our knowledge, it has never been considered in distributionally robust optimization or control models before.

If can be readily seen that $\mathds{F}(\PP_z,\hat \PP_z)$ is non-negative and  equals~$0$ iff $\PP_z=\hat{\PP}_z$. Our definition extends the Fisher divergence to all distributions from $\setalldist{d_z}$, although the distributions of interest are from $\tilde{\mathcal{P}}(\R^{d_z})$. (Note that this also requires the nominal distribution $\hat{\PP}_z$ to belong to $\tilde{\mathcal{P}}(\R^{d_z})$, which means that setting $\PP_z$ as an empirical distribution from a finite set of samples is not possible.) The quantity $s(z) =\nabla\log p(z)$ is commonly referred to as the \emph{score} of the distribution $\PP$, which explains the alternative naming for the divergence. We are interested in a Gaussian nominal distribution $\hat \PP_z$, and we subsequently use $\hat s(z) =\nabla\log\hat p(z)=-\hat\Sigma_z^{-1}(z-\hat\mu_z)$ to denote its score. % where $\mu_z$ denotes the mean and $\Sigma_z$ denotes the covariance matrix.

\subsection{Verifying Assumption~\ref{ass:general_assumption_about_M_function}}

\begin{proposition}
\label{prop:Fisher_convex_compact}
  If $\rho_z \geq 0$, the Fisher divergence $\mathds{F}$ satisfies Assumption~\ref{ass:general_assumption_about_M_function}.
\end{proposition}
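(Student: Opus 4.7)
The plan is to rewrite the Fisher divergence in a tractable closed form when the nominal distribution is Gaussian and then verify each part of Assumption~\ref{ass:general_assumption_about_M_function} separately. For $\PP_z \in \tilde{\mathcal{P}}(\R^{d_z})$ and $\hat \PP_z = \mathcal N(\hat\mu_z, \hat\Sigma_z)$, the nominal score $\nabla \log \hat p(z) = -\hat\Sigma_z^{-1}(z - \hat\mu_z)$ is affine, so expanding the squared norm in \Cref{def:Fisher_divergence} splits the divergence into three pieces: the Fisher information $J(\PP_z) = \int \|\nabla \log p\|^2\, p\, \diff z$ of $\PP_z$; a cross term that, after writing $(\nabla \log p)\, p = \nabla p$ and applying componentwise integration by parts, evaluates to $-\Tr(\hat\Sigma_z^{-1})$ (the polynomial decay condition~\eqref{eq:Fisher_fast_poly_decay} kills the boundary contributions at infinity); and a Gaussian-type quadratic that, because it involves $z$ only quadratically, depends on $\PP_z$ only through $(\mu_z,\Sigma_z)$ and equals $\tfrac12 \Tr\bigl(\hat\Sigma_z^{-2}[\Sigma_z + (\mu_z-\hat\mu_z)(\mu_z-\hat\mu_z)^\top]\bigr)$. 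The upshot is the decomposition
\[
\mathds F(\PP_z, \hat \PP_z) = \tfrac12 J(\PP_z) - \Tr(\hat\Sigma_z^{-1}) + \tfrac12\Tr\bigl(\hat\Sigma_z^{-2}\bigl[\Sigma_z + (\mu_z-\hat\mu_z)(\mu_z-\hat\mu_z)^\top\bigr]\bigr),
\]
in which only $J(\PP_z)$ depends on the shape of $\PP_z$ beyond its first two moments.

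For Assumption~\ref{ass:general_assumption_about_M_function}(i), fixing mean $\mu_z$ and second moment $M_z$ (hence $\Sigma_z = M_z-\mu_z\mu_z^\top$) reduces minimizing $\mathds F(\PP_z,\hat\PP_z)$ over $\PP_z$ to minimizing $J(\PP_z)$. The vector-valued Cram\'er--Rao inequality, applied to the translation family $\{p(\,\cdot - \theta)\}_\theta$, gives the Fisher information matrix bound $I(\PP_z)\succeq \Sigma_z^{-1}$ and hence $J(\PP_z) = \Tr(I(\PP_z)) \geq \Tr(\Sigma_z^{-1})$, with equality attained by the Gaussian $\mathcal N(\mu_z, M_z)$, for which $I = \Sigma_z^{-1}$ exactly. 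Substituting back gives the Gaussian value
\[
\mathds F(\mathcal N(\mu_z, M_z), \hat \PP_z) = \tfrac{1}{2}\Tr(\Sigma_z^{-1}) - \Tr(\hat\Sigma_z^{-1}) + \tfrac{1}{2}\Tr\bigl(\hat\Sigma_z^{-2}\bigl[\Sigma_z + (\mu_z-\hat\mu_z)(\mu_z-\hat\mu_z)^\top\bigr]\bigr),
\]
which is the minimum required by Assumption~\ref{ass:general_assumption_about_M_function}(i).

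For Assumption~\ref{ass:general_assumption_about_M_function}(ii), $\mathcal{M}_{(\mu_z,M_z)}$ is the $\rho_z$-sublevel set of this function on $\setmoments{d_z}$. After expanding $\Sigma_z = M_z - \mu_z\mu_z^\top$ inside $\Tr\bigl(\hat\Sigma_z^{-2}[\Sigma_z+(\mu_z-\hat\mu_z)(\mu_z-\hat\mu_z)^\top]\bigr)$, all rank-one terms in $\mu_z$ cancel and every summand other than $\tfrac12\Tr(\Sigma_z^{-1})$ is linear in $(\mu_z, M_z)$. The remaining term is jointly convex as the composition of the operator-antitone convex map $A\mapsto \Tr(A^{-1})$ on $\mathbb S_{++}^{d_z}$ with the Loewner-concave map $(\mu_z, M_z)\mapsto M_z-\mu_z\mu_z^\top$ (whose concavity follows because the difference between its value at a convex combination and the convex combination of its values equals $\lambda(1-\lambda)(\mu_z^1-\mu_z^2)(\mu_z^1-\mu_z^2)^\top \succeq 0$, as also used in the proof of \Cref{prop:entropic-ot-cov-set-convex-compact}). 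Compactness then follows by observing that each of the three non-negative terms $\tfrac12\Tr(\Sigma_z^{-1})$, $\tfrac12\Tr(\hat\Sigma_z^{-2}\Sigma_z)$, $\tfrac12(\mu_z-\hat\mu_z)^\top \hat\Sigma_z^{-2}(\mu_z-\hat\mu_z)$ is bounded above by $\rho_z + \Tr(\hat\Sigma_z^{-1})$ on $\mathcal{M}_{(\mu_z,M_z)}$: positive definiteness of $\hat\Sigma_z^{-2}$ gives uniform bounds on $\Tr(\Sigma_z)$ and $\|\mu_z\|$, hence on $\|M_z\|_{\rm F}$, while $\Tr(\Sigma_z^{-1})$ bounded above forces $\lambda_{\min}(\Sigma_z)$ bounded below, so the sublevel set is a closed subset of the open cone $\{M_z \succ \mu_z\mu_z^\top\}$ on which the objective is continuous.

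The main obstacle is the integration-by-parts step used in the derivation of the closed-form divergence: one must verify carefully that the polynomial decay condition~\eqref{eq:Fisher_fast_poly_decay} together with the continuous differentiability of $p$ built into the definition of $\tilde{\mathcal P}(\R^{d_z})$ suffices to justify both absolute integrability and the vanishing of the boundary-at-infinity contributions for the integrand $\nabla p(z)^\top \hat\Sigma_z^{-1}(z-\hat\mu_z)$, whose magnitude grows linearly in $\|z\|$ times the gradient of $p$. Once this computation is settled, the remaining argument reduces to standard matrix calculus, the Fisher information bound, and elementary compactness arguments.
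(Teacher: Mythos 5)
Your proof is correct and follows essentially the same route as the paper's: the same three-term expansion of $\mathds F$ with the cross term handled by integration by parts under the decay condition~\eqref{eq:Fisher_fast_poly_decay}, the same Fisher-information lower bound $J(\PP_z)\geq \Tr(\Sigma_z^{-1})$ with equality exactly for Gaussians (the paper cites Stam's inequality where you invoke the multivariate Cram\'er--Rao bound for the location family, which is the same fact), and coercivity of the linear terms for compactness. The only genuine variation is in part (ii), where you obtain joint convexity of $(\mu_z,M_z)\mapsto\Tr\bigl((M_z-\mu_z\mu_z^\top)^{-1}\bigr)$ by composing the convex, Loewner-order-reversing map $A\mapsto\Tr(A^{-1})$ with the Loewner-concave map $(\mu_z,M_z)\mapsto M_z-\mu_z\mu_z^\top$, whereas the paper uses the variational representation $\Tr(S^{-1})=\sup_{W\in\mathbb S_+^{d_z}}2\Tr(W)-\Tr(SW^2)$; both arguments are valid, and your explicit observation that a bound on $\Tr(\Sigma_z^{-1})$ keeps the sublevel set away from the boundary of the open cone is a welcome extra detail for closedness.
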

\begin{proof}{Proof.}
To simplify notation, we subsequently drop the subscript $z$. We verify parts~(i) and~(ii) separately.

\smallskip
\noindent{\bf Part (i).} Define the ambiguity set 
\(
\mathcal C(\mu,M)=
\{\PP \in \tilde{\mathcal{P}}(\R^d): \EE_{\PP}[z]=\mu, ~\EE_{\PP}[zz^{\top}]=M\}.
\)
It suffices to prove that:
\[
\mathds{F}(\PP,\hat \PP) \geq \mathds{F}\bigl(\mathcal{N}(\mu,M), \hat \PP \bigr) ~ \forall \, \PP \in \mathcal C(\mu,M),
\]
%{\color{red} DK: Assumption~\ref{ass:general_assumption_about_M_function} looks at {\em all} distributions with a given mean and second moment matrix. The above condition focuses only on the subset of these distributions with a smooth density that decays sufficiently fast. Strictly speaking, we are therefore not proving Assumption~\ref{ass:general_assumption_about_M_function}. One simple fix would be to extend $\mathds{F}$ to $\mathcal P(\mathbb R^{d_z})\times \mathcal P(\mathbb R^{d_z})$. That is, we could define $\mathds{F}(\PP,\hat \PP)=+\infty$ if $\PP\neq\hat\PP$ and either $\PP$ or $\hat \PP$ does {\em not} belong to $\tilde{\mathcal{P}}(\R^{d_z})$. In addition, we should define $\mathds D(\PP_z,\PP_z)=0$ whenever $\PP_z \notin \setalldist{d_z}$ to ensure that the identity of indiscernibles holds. The above inequality would then have to hold for {\em all} distributions with mean $\mu$ and second moment matrix $M$. We would obviously have to assume that $\hat\PP\in \tilde{\mathcal{P}}(\R^{d_z})$ (I would perhaps mention that this rules out the empirical distribution, which does not have a density). The above inequality would then trivially hold whenever $\PP\not\in \tilde{\mathcal{P}}(\R^{d_z})$. The rest of the proof would not change.}
and equality holds if only if $~\PP_z = \mathcal{N}(\mu_z,M_z)$. To prove this inequality, consider any distribution $\PP_{} \in \mathcal C(\mu_{},M_{})$ and expand the expression for $\mathds{F}(\PP_{},\hat \PP_{})$:
\[
\mathds{F}(\PP_{},\hat \PP_{})
=\frac12\,
\EE_{\PP_{}}\bigl[\norm{\nabla\log p(z)}_2^{2}\bigr]
+\frac12\,\EE_{\PP_{}}\bigl[\norm{\hat s(z)}_2^{2}\bigr]
-\EE_{\PP_{}}\bigl[ (\nabla\log p(z)) ^\top \hat s(z)\bigr].
\]
We evaluate each term separately. The first term is exactly equal to $\tfrac12 \Phi(\PP_{})$, where $\Phi(\PP_{}) =\EE_{\PP_{}}\bigl[\norm{\nabla\log p(z)}_{2}^{2}\bigr]$ is the trace of the Fisher information matrix of $\PP_{}$. 

Because $\hat s(z)$ is linear in~$z$, the second term $\EE_{\PP_{}}[\norm{\hat s(z)}_2^{2}]$ only depends on the first two moments of the distribution $\PP_{}$, so is a constant on $\mathcal C(\mu_{},M_{})$.

For the last term, we use integration by parts to prove that $\EE_{\PP}[(\nabla\log p(z)) ^\top \hat s(z)]$ is also constant on $\mathcal C(\mu,\Sigma)$. Recall that for any scalar function \( f \) and vector field \( g \),
\[
(\nabla f)^\top g = \operatorname{div}(fg) - f\, \operatorname{div} g,
\]
where $\operatorname{div}(g) = \nabla \cdot g = \sum_i \frac{\partial g_i(z)}{\partial z_i}$. Take \( f(z) = p(z) \) and \( g(z) = \hat{s}(z) \). Then, integrating over all \( \mathbb{R}^d \),
\begin{align}   
\EE_{\PP}[ (\nabla\log p(z))^\top \hat s(z)] = \underbrace{\int_{\mathbb{R}^d} \operatorname{div}(p \hat{s}) \diff z}_{A}
- \underbrace{\int_{\mathbb{R}^d} p(z)\, \operatorname{div} \hat{s}(z)\diff z}_{B}.
\label{eq:AB_terms_def_fisher}
\end{align}
We claim that $A=0$ and $B$ is a constant for all distributions in $\mathcal{C}$. We first argue the latter: note that $\hat{s}(z) = -\hat{\Sigma}^{-1}(z - \hat{\mu})$, so $\operatorname{div} \hat{s}(z) = -\operatorname{Tr}(\hat{\Sigma}^{-1})$. Integrating this constant over the density $p(z)$ thus proves that $B$ is a constant. To prove that $A=0$, note that $A$ can be expressed via the Gauss-Ostrogradski Theorem as:
\[
A = \lim_{R \rightarrow \infty} A_R, ~\mbox{where}~ A_R = \int_{B_R} \operatorname{div} F(z) \diff z = \int_{S_R} (F(z))^\top n(z) \diff S(z).
\]
Above, $F(z) = p(z) \hat{s}(z)$ and for $R>0$, we define $B_R=\{z\in \R^{d} : \|z\|\le R\}$ and $S_R=\partial B_R=\{z \in \R^d :\|z\|=R\}$ as the sphere with radius $R$, with outward unit normal $n(z)=z/\|z\|$ and unit volume/area given by $\diff S(z)$. On $S_R$, we have
\[
|(\hat{s}(z)^\top n(z)| = \Bigl| \bigl(-\hat{\Sigma}^{-1}(z - \hat{\mu}) \bigr)^\top n(z)\Bigr| \leq \| \hat{\Sigma}^{-1}\| \big( \|z\| + \|\hat{\mu}\| \big)
= \| \hat{\Sigma}^{-1}\| R + \|\hat{\Sigma}^{-1}\| \|\hat{\mu}\|, ~\forall \, z \in S_R.
\]
Therefore,
\begin{equation*}
|A_R| \leq \Bigl( \|\hat{\Sigma}^{-1}\| R + \| \hat{\Sigma}^{-1}\| \|\hat{\mu}\| \Bigr) \underbrace{\int_{S_R} p(z) \diff S(z)}_{=I_R}.
\end{equation*}
We claim that requirement~\eqref{eq:Fisher_fast_poly_decay} implies that $R I_R \rightarrow 0$ as $R\rightarrow \infty$, which in turn implies that $I_R \rightarrow 0$ and completes the argument that $A = 0$. We have: 
\begin{align} 
R \cdot I_R = R \int_{S_R} p(z)\, dS(z) \leq R \int_{S_R}  \frac{\bar{p}}{\|z\|^{d + \epsilon}} \diff S(z) = \omega_{d} R^{-\epsilon} \bar{p},
\label{eq:Fisher_inequality_IR}
\end{align}
where the inequality follows from~\eqref{eq:Fisher_fast_poly_decay} and the last step follows by recalling that the sphere $S_R$ in $\R^{d}$ has surface $\omega_{d} R^{d-1}$, for a constant $\omega_{d}$ that depends on $d$. Then, we readily conclude that the right-hand-side in~\eqref{eq:Fisher_inequality_IR} converges to 0 as $R\rightarrow \infty$, proving that $A=0$.

The arguments above show that $\mathds{F}(\PP,\hat \PP) =\tfrac12\,\Phi(\PP)$ plus a term that is constant over $\mathcal C(\mu,\Sigma)$. By the Stam Inequality \citep{stam1959} and its multidimensional generalizations (see inequality~(16) and the related discussion in~\cite{Rioul2010}), it is well known that the Fisher information $\Phi(\PP)$ for any random variable with covariance $\Sigma_\PP$ satisfies the inequality:
\begin{align*}
    \Phi(\PP) \geq \Tr( \Sigma_\PP^{-1} ), 
\end{align*}
with equality only if the random variable is Gaussian. This implies that among all distributions $\PP \in \mathcal{C}$, the one that minimizes $\mathds{F}(\PP, \hat \PP)$ is a Gaussian distribution. 

\medskip
\noindent{\bf Part (ii).} Consider the set $\mathcal{M}^{\mathds{F}}_{\mu,M} = \bigl\{(\mu,M)\in \setmoments{d} : \mathds{F}\bigl(\mathcal{N}(\mu,M), \hat{\PP}\bigr)\le\rho \bigr\}$. The Fisher divergence between two Gaussian distributions with mean-covariance pairs $(\mu,\Sigma)$ and $(\hat \mu, \hat \Sigma)$, respectively, has the closed form expression \cite[Eq.~(2.2)]{chafai2021}
\begin{equation}\label{eq:D_Fisher_Gaussians}
\norm{\hat\Sigma^{-1}(\mu-\hat\mu)}_2^{2}
+\Tr\bigl(\hat\Sigma^{-2}\Sigma
               -2\hat\Sigma^{-1}+\Sigma^{-1}\bigr),
\end{equation}
We can therefore rewrite:
\begin{align*}
\mathds{F}\bigl(\mathcal{N}(\mu,M), \hat \PP\bigr) 
 & =\norm{\hat\Sigma^{-1}(\mu-\hat\mu)}_2^{2} + \Tr\bigl(\hat\Sigma^{-2} (M - \mu \mu^\top) -2\hat\Sigma^{-1}+ (M - \mu \mu^\top)^{-1}\bigr) \\
 &=\Tr \Bigl( \hat\Sigma^{-2} \big(  (\mu - \hat{\mu})(\mu - \hat{\mu})^\top + M - \mu \mu^\top \bigr) \Bigr) + \Tr\bigl( -2\hat\Sigma^{-1}+ (M - \mu \mu^\top)^{-1}\bigr).
\end{align*}
Recognizing that quadratic terms in $\mu$ cancel out, the first trace term above is linear in $(\mu,M)$. Thus, the expression is jointly convex in $(\mu,M)$ if we can argue that the mapping $(\mu,M) \mapsto \Tr ( (M - \mu \mu^\top)^{-1} )$ is jointly convex in $(\mu,M)$ on $\setmoments{d}$. We claim that for any \( S \in \mathbb{S}_{++}^d \), we have the identity:
\begin{equation}
\Tr(S^{-1}) = \sup_{W \in \mathbb{S}_+^d} 2 \Tr(W) - \Tr(S W^2).
\label{eq:variational_rep_trace_inverse}
\end{equation}
To prove this, fix \( S \succ 0 \) and set $\phi(W) = 2\, \Tr(W) - \Tr(S W^2)$ for $W \succeq 0$. Because \( \phi \) is concave in \( W \), its maximizers must satisfy the first-order condition \( 0 = 2I - SW - WS \). The choice $W\opt = S^{-1}$ satisfies this equation and is feasible, and evaluating the objective for $W\opt$  proves~\eqref{eq:variational_rep_trace_inverse}. Applying~\eqref{eq:variational_rep_trace_inverse} in our case, with \( S = M - \mu\mu^\top \), yields:
\[
 \Tr \bigl( (M - \mu \mu^\top)^{-1} \bigr) = \sup_{W \in \mathbb{S}_{+}^d}  2\, \Tr(W) - \Tr \bigl( (M - \mu\mu^\top) W^2 \bigr) .
\]
It can be readily seen that the expression maximized above is jointly convex in $(\mu,M)$ for any $W$. Because the supremum of convex functions preserves convexity, we readily obtain that $(\mu,M) \mapsto \Tr ( (M - \mu \mu^\top)^{-1} )$ is jointly convex in $(\mu,M)$ on the set $\{(\mu, M): M- \mu \mu^\top \succ 0\}$, which proves that the set $\mathcal{M}^\mathds{F}_{(\mu,\Sigma)}$ is convex for any $\rho \geq 0$. 

That $\mathcal{M}^\mathds{F}_{(\mu,\Sigma)}$ is compact follows because the term $\norm{\hat\Sigma^{-1}(\mu-\hat\mu)}_2^{2}$ is a coercive function in $\mu\in\mathbb R^{n}$ and $\Tr( \hat{\Sigma}^{-2} M )$ is coercive in $M\in\mathbb S^n_+$. 
\end{proof}

%
%   Assumption 3
%
\subsection{Verifying Assumption~\ref{ass:setting_mean_zero_feasible}}
\begin{proposition}
    Fix $\hat{\mu}_z = 0$. For any $\rho_z \geq 0$, the Fisher Divergence $\mathds{F}$ satisfies Assumption~\ref{ass:setting_mean_zero_feasible}.
\end{proposition}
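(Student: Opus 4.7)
The plan is to reduce the claim to a short algebraic comparison between two Fisher divergences that only differ in the mean of the distribution. Since $\mathcal{M}_{(\mu_z, M_z)}^{\mathds F}$ is, by its very definition, the sublevel set $\{(\mu_z, M_z) \in \setmoments{d_z} : \mathds{F}(\mathcal{N}(\mu_z, M_z), \hat{\PP}_z) \leq \rho_z\}$, Assumption~\ref{ass:setting_mean_zero_feasible} will follow immediately once I establish that for every $(\mu_z, M_z) \in \setmoments{d_z}$ with $M_z \succ \mu_z \mu_z^\top$,
\[
\mathds{F}\bigl(\mathcal{N}(0, M_z), \hat{\PP}_z\bigr) \leq \mathds{F}\bigl(\mathcal{N}(\mu_z, M_z), \hat{\PP}_z\bigr).
\]
The restriction to $M_z \succ \mu_z\mu_z^\top$ is without loss because any $(\mu_z, M_z)$ with $\Sigma_z = M_z - \mu_z\mu_z^\top$ singular yields $\mathds F(\mathcal N(\mu_z, M_z), \hat{\PP}_z) = +\infty$ via the $\Sigma_z^{-1}$ term in~\eqref{eq:D_Fisher_Gaussians}, and is thus infeasible for any finite $\rho_z$.

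The key step will be to apply the closed-form Gaussian-Gaussian expression~\eqref{eq:D_Fisher_Gaussians} to both sides, noting that in the mean/second-moment parametrization the covariance of $\mathcal{N}(\mu_z, M_z)$ is $M_z - \mu_z\mu_z^\top$, whereas the covariance of $\mathcal{N}(0, M_z)$ is simply $M_z$. Computing the difference and invoking the identity $\|\hat\Sigma_z^{-1}\mu_z\|_2^2 = \mu_z^\top \hat\Sigma_z^{-2} \mu_z = \Tr(\hat\Sigma_z^{-2}\mu_z\mu_z^\top)$, the mean contribution from the first divergence will cancel exactly with the $-\Tr(\hat\Sigma_z^{-2}\mu_z\mu_z^\top)$ contribution picked up when the linear-in-covariance term $\Tr(\hat\Sigma_z^{-2}\Sigma_z)$ shrinks from $\Tr(\hat\Sigma_z^{-2}M_z)$ to $\Tr(\hat\Sigma_z^{-2}(M_z - \mu_z\mu_z^\top))$. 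What will remain is the clean identity
\[
\mathds{F}\bigl(\mathcal{N}(\mu_z, M_z), \hat{\PP}_z\bigr) - \mathds{F}\bigl(\mathcal{N}(0, M_z), \hat{\PP}_z\bigr) = \Tr\bigl((M_z - \mu_z\mu_z^\top)^{-1}\bigr) - \Tr\bigl(M_z^{-1}\bigr).
\]

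The final step will be to argue that this residual trace difference is nonnegative. Since $M_z = (M_z - \mu_z\mu_z^\top) + \mu_z\mu_z^\top \succeq M_z - \mu_z\mu_z^\top \succ 0$, the operator antitonicity of matrix inversion on $\mathbb{S}_{++}^{d_z}$ yields $(M_z - \mu_z\mu_z^\top)^{-1} \succeq M_z^{-1}$, and taking traces (which preserves the Loewner order) yields the desired inequality. The whole argument is a direct algebraic verification coupled with one invocation of a standard matrix inequality; I do not anticipate any serious obstacle, with the only subtle point being to confirm that the nondegeneracy $M_z - \mu_z\mu_z^\top \succ 0$ is automatic on the relevant portion of the ambiguity set, as noted above.
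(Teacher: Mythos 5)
Your proof is correct and follows essentially the same route as the paper: both plug the closed-form Gaussian--Gaussian expression~\eqref{eq:D_Fisher_Gaussians} into the mean/second-moment parametrization, observe that the $\|\hat\Sigma_z^{-1}\mu_z\|_2^2$ term cancels against the change in $\Tr(\hat\Sigma_z^{-2}\Sigma_z)$, and reduce the claim to $\Tr\bigl((M_z-\mu_z\mu_z^\top)^{-1}\bigr)\geq\Tr(M_z^{-1})$ via operator antitonicity of inversion. Your additional remark that degenerate covariances are automatically excluded by the $\Sigma_z^{-1}$ term is a fine (if implicit in the paper) clarification.
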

\begin{proof}{Proof.}
Again, we omit subscripts $z$ for notational simplicity. The set of feasible moments $\mathcal{M}^\mathds{F}_{(\mu,M)}$ exactly corresponds to the 0 sublevel set of the function $h : \setmoments{d} \times \setmoments{d} \rightarrow \R$ defined as:
\begin{align*}
h\bigl( (\mu,M), (\hat{\mu},\hat{M})\bigr) &= \mathds{F}\bigl(\mathcal{N}(\mu,M), \mathcal{N}(\hat{\mu}, \hat{M}) \bigr) - \rho \\
  &= \| \hat{M}^{-1}  (\mu-\hat{\mu}) \|^2 +  \Tr \Bigl( \hat{M}^{-2} \bigl( M - \mu \mu^\top \bigr) \Bigr) -2 \Tr (\hat{M}^{-1}) + \Tr\bigl( (M - \mu \mu^\top)^{-1}\bigr) - \rho.
\end{align*}
To complete the proof, note that: %{\color{red} DK: We are using the assumption that $\hat\mu=0$, right? We should perhaps make this more explicit in the proposition statement. We did the same in the statement of Propostion~3.}
\begin{align*}
   h\bigl( (\mu,M), (0,\hat{M})\bigr) - h\bigl( (0,M), (0,\hat{M})\bigr) 
   = \Tr\bigl( (M - \mu \mu^\top)^{-1}\bigr) - \Tr\bigl( M^{-1}\bigr) \geq 0,
\end{align*}
where the inequality follows because the operator $X \mapsto X^{-1}$ reverses order on $\mathbb{S}_{++}^d$ and $M - \mu \mu^\top \preceq M$ for any $M \in \mathbb{S}_+^d$.
\end{proof}

%
%   Assumption 4
%
\subsection{Verifying Assumption~\ref{ass:M-gradients}}
\begin{proposition}
    Fix $\hat{\mu}_z=0$. For any $\rho_z > 0$, the Fisher Divergence $\mathds{F}$ satisfies Assumption~\ref{ass:M-gradients}.
\end{proposition}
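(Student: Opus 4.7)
Proof proposal:

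The plan is to mirror the structure of the Kullback-Leibler case in \Cref{prop:assumption4_for_base_examples} by exhibiting an explicit closed-form candidate for $\gz$ that corresponds to the divergence of interest restricted to centered Gaussians, and then verifying each of the three conditions in Assumption~\ref{ass:M-gradients} by direct computation with trace/inverse identities.

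First, I would use the closed-form expression for the Fisher divergence between two centered Gaussians (cf.\ equation~\eqref{eq:D_Fisher_Gaussians} in the proof of \Cref{prop:Fisher_convex_compact}): with $\hat\mu_z=\mu_z=0$,
\[
\mathds{F}(\mathcal{N}(0,\Sigma_z),\mathcal{N}(0,\hat\Sigma_z))
= \Tr(\hat\Sigma_z^{-2}\Sigma_z) + \Tr(\Sigma_z^{-1}) - 2\,\Tr(\hat\Sigma_z^{-1}).
\]
I define $\gz(\Sigma_z) = \Tr(\hat\Sigma_z^{-2}\Sigma_z) + \Tr(\Sigma_z^{-1}) - 2\,\Tr(\hat\Sigma_z^{-1}) - \rho_z$ on $\mathbb{S}_{++}^{d_z}$, extending it to $+\infty$ on the boundary, as in the KL case. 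By Assumption~\ref{ass:general_assumption_about_M_function}-(i) (which holds for $\mathds F$ by \Cref{prop:Fisher_convex_compact}), $\mathcal{M}_{\Sigma_z}$ coincides with the $0$-sublevel set of $\gz$. Convexity on $\mathbb S_+^{d_z}$ follows because $\Tr(\hat\Sigma_z^{-2}\Sigma_z)$ is linear and $\Sigma_z\mapsto\Tr(\Sigma_z^{-1})$ is a standard convex function on $\mathbb S_{++}^{d_z}$; differentiability on $\mathbb S_{++}^{d_z}$ is immediate.

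Next, I verify the three conditions. For~(i), substitution gives $\gz(\hat\Sigma_z)= \Tr(\hat\Sigma_z^{-1})-2\Tr(\hat\Sigma_z^{-1})+\Tr(\hat\Sigma_z^{-1})-\rho_z=-\rho_z<0$, since $\rho_z>0$. For~(ii), matrix calculus yields the gradient
\[
\gradgz(\Sigma_z) = \hat\Sigma_z^{-2} - \Sigma_z^{-2},
\]
so $\gradgz(\hat\Sigma_z)=0$, and combined with convexity this shows $\hat\Sigma_z\in\argmin_{\Sigma_z\succeq 0}\gz(\Sigma_z)$.

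For~(iii), which I expect to be the slightly delicate step, suppose $\gradgz(\Sigma_1)\succeq \gradgz(\Sigma_2)$. This is equivalent to $\Sigma_2^{-2}\succeq \Sigma_1^{-2}$. Because the map $X\mapsto X^{1/2}$ is operator monotone on $\mathbb S_+^{d_z}$ (Theorem~4.2.3 in \citet{ref:bhatia2009positive}), taking square roots gives $\Sigma_2^{-1}\succeq \Sigma_1^{-1}$; then, because $X\mapsto X^{-1}$ reverses the Loewner order on $\mathbb S_{++}^{d_z}$, we conclude $\Sigma_1\succeq\Sigma_2$, as desired. The main thing to be careful about here is that $\gradgz$ is defined only on $\mathbb{S}_{++}^{d_z}$, so the comparison implicitly restricts $\Sigma_1,\Sigma_2$ to the interior; this is consistent with how~(iii) is used in the proof of \Cref{thm:optimal-covs-are-higher} (where the feasible set of (SDP)$_2$ is bounded away from the boundary by $\epsilon I$), so no additional argument is required.
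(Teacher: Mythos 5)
Your proposal is correct and follows essentially the same route as the paper: the same candidate $\gz(\Sigma_z)=\Tr(\hat\Sigma_z^{-2}\Sigma_z-2\hat\Sigma_z^{-1}+\Sigma_z^{-1})-\rho_z$, the same gradient $\hat\Sigma_z^{-2}-\Sigma_z^{-2}$, and the same verification of (i)--(iii). If anything, your step (iii) -- passing through the operator-monotone square root to get $\Sigma_2^{-1}\succeq\Sigma_1^{-1}$ before inverting -- is slightly more careful than the paper's one-line appeal to $X\mapsto X^{-2}$ being order-reversing, since $X\mapsto X^2$ is not operator monotone and only the one-directional implication you prove is actually needed.
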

\begin{proof}{Proof.}
    We omit the subscript $z$. Let $\gz(\Sigma) = \mathds{F}\bigl(\mathcal{N}(0,\Sigma), \mathcal{N}(0, \hat{\Sigma}) \bigr) - \rho$.  By~\eqref{eq:D_Fisher_Gaussians}, the expression for $\gz$ is:
    \begin{align*}
        \gz\bigl(\Sigma) =\Tr\bigl(\hat\Sigma^{-2}\Sigma
                       -2\hat\Sigma^{-1}+\Sigma^{-1}\bigr) - \rho.
    \end{align*}
    That $\gz$ is differentiable on $\mathbb{S}_{++}^d$ is immediate. That $\gz$ is minimized at $\Sigma=\hat{\Sigma}$ follows because $\mathds{F}$ is a divergence, so $\mathds{F}\bigl(\mathcal{N}(0,\Sigma),\mathcal{N}(0, \hat{\Sigma}) \bigr)$ is minimized for $\Sigma=\hat{\Sigma}$. That $\gz$ is convex follows readily from \Cref{prop:Fisher_convex_compact} and its proof. These arguments imply that properties~(i)-(ii) are readily satisfied. To verify~(iii), note that the gradient is given by $\gradgz(\Sigma) = \hat{\Sigma}^{-2} - \Sigma^{-2}$. Therefore, for $\Sigma_1,\Sigma_2 \in \mathbb{S}^d_{++}$,
    \begin{align*}
        \gradgz(\Sigma_1) \succeq \gradgz(\Sigma_2) ~\Leftrightarrow~ \Sigma_2^{-2} \succeq \Sigma_1^{-2} ~\Leftrightarrow~ \Sigma_1 \succeq \Sigma_2,
    \end{align*}
    where the equivalences follow because $X \mapsto X^{-2}$ is order reversing on $\mathbb{S}_{++}^{d}$.  
\end{proof}

%%%%%%%%%%%%%%%%%%%%%%%%%%%%%%%%%%%%%%%%%%%%%%%%%%%%%%%%%%%%
%
%  Extension - Elliptical
%
%%%%%%%%%%%%%%%%%%%%%%%%%%%%%%%%%%%%%%%%%%%%%%%%%%%%%%%%%%%%
\bigskip
\section{Extension to Elliptical Nominal Distribution}
\label{sec: elliptical}
In this section, we focus on an ambiguity set where $\mathds{D}$ is the 2-Wasserstein distance~$\mathds{W}$. We will show that our results from \Cref{sec:Nash} continue to hold for any \emph{elliptical} nominal distribution $\hat{\PP}$ with finite second moments. To this end, we recall the following definition of elliptical distributions. 
\begin{definition}[Elliptical distribution]\label{def:elliptical distribution}
The distribution $\PP_z$ for $z \in \R^{d_z}$ is called elliptical if the characteristic function~$\Phi_{\PP_{z}}(t) = \EE_{\PP_{z}}[\exp(it^\top z)]$ of $\PP_{z}$ has a representation of the form $\Phi_{\PP_z}(t) = \exp(i t^\top \mu) \psi(t^\top S t)$ for some $\mu_z \in \R^{d_z}$ (location parameter), some~$S_z \in \mathbb S_+^{d_z}$ (dispersion matrix), and some function $\psi: \R_+ \to \R$ (characteristic generator). 
\end{definition}
This class of distributions includes many non-Gaussian distributions, such as the %(multivariate) stable, 
Laplace, logistic, or hyperbolic distributions, among others~\citep{ref:fang2018symmetric}. Gaussian distributions are a special case of elliptical distributions, with the characteristic generator~$\psi(r) = \exp(-r/2)$. 

%For the technical discussions that follow, it is worthwhile to note that all marginals of an elliptical distribution are themselves elliptical with the same characteristic generator \cite[Corollary 3.1]{hult2002multivariate}.

In the remainder of this section, we focus on elliptical distributions with finite second moments. Such distributions can be re-parameterized to ensure that the dispersion matrix $S_z$ exactly equals the covariance matrix $\Sigma_z$, which will be convenient for our subsequent discussion. The following remark formalizes this idea.

\begin{remark}[Technical remark on re-parametrization]\label{rem: Technical remark on re-parametrization}
Denote by~$\mathcal{E}$ an arbitrary elliptical distribution of $z$ with finite second moments. Recall that the characteristic function~$\Phi_{\mathcal{E}}$ always exists and is finite even if some moments of $\mathcal{E}$ do not exist. Denote the right derivative of $\psi(u)$ at~$u=0$ as $\psi'(0)$. The mean and covariance of~$\mathcal{E}$ exist if and only if $\psi'(0)$ exists and is finite, and they can be expressed as $\mu_z$ and $-2\psi'(0) S_z$~\cite[Theorem~4]{ref:cambanis1981theory}, respectively. Denote by $\mathcal{E}^\psi(\mu_z, S_z)$ the elliptical distribution with mean $\mu_z$, dispersion matrix~$S_z$, and characteristic generator $\psi$. It can be checked that for any admissible~$\psi$ with $|\psi'(0)|< \infty$, we have $\mathcal{E}^\psi(\mu_z, S_z) = \mathcal{E}^{\tilde{\psi}}(\mu_z, \tilde{S}_z)$, where $\tilde \psi(u) = \psi({-u}/ ({2 \psi'(0)})) $ and $\tilde S_z = -2 \psi'(0) S_z$. But then, a choice that ensures $\tilde \psi'(0) = -1/2$ will also imply that $\tilde S_z = \Sigma_z$, so we can re-parameterize any elliptical distribution so that its dispersion matrix equals its covariance matrix. (The re-parametrization has no effect on the actual distribution.) 
\end{remark}
\Cref{rem: Technical remark on re-parametrization} illustrates that any elliptical distribution can be defined by a characteristic generator $\psi$, a mean $\mu_z$, and a covariance matrix $\Sigma_z$ -- or equivalently, a second-moment matrix $M_z$ -- where, without loss of generality, we have $S_z = \Sigma_z$. In the following, we use $\mathcal{E}^\psi(\mu_z, M_z)$ to denote an elliptical distribution with characteristic generator $\psi$, mean $\mu_z$, and second-moment matrix $M_z$ 
(and where no confusion can arise, we also use $\mathcal{E}^\psi(\mu_z, \Sigma_z)$ to denote an elliptical distribution with characteristic generator $\psi$, mean $\mu_z$, and covariance matrix $\Sigma_z$). Henceforth, we use the terms \textit{dispersion matrix} and \textit{covariance matrix }interchangeably. 

\subsection{Assumptions}
We now formally present the counterparts of the tractability assumptions from the main text for the setup considered in this appendix section.
\begin{assumption}
\label{ass: elliptical}
$\hat {\PP}$ is an elliptical distribution with finite second moments.
\end{assumption}
Requiring $\hat{\PP}$ to be elliptical renders the model studied in this appendix section computationally tractable and is consistent with the assumptions of the so-called linear quadratic-elliptical (LQE) model, which assumes that the exogenous uncertainties follow a known, elliptical distribution and are uncorrelated. Note that if the joint distribution of the random variables is elliptical, mutual uncorrelatedness does not imply independence as in the case of Gaussians, but linear conditioning preserves ellipticity. 
%{\color{red} DK: I belive this should be ``uncorrelated''. To my best understanding, we should require all exogenous uncertainties to be uncorrelated and to follow a {\em joint} elliptical distribution $\hat\PP$. If we assume that they are independent, then they will not follow a joint elliptical distribution, meaning that conditional expectations will no longer be elliptical. This is a very subtle point and perhaps merits a short discussion?} 
%{\color{blue} BT: Thanks a lot for your comments Daniel! Yes, they should be uncorrelated. We could maybe say something like "... and are uncorrelated. Note that if the joint distribution of the random variables is elliptical, mutual uncorrelatedness does not imply independence as in the case of Gaussians, but linear conditioning preserves ellipticity. Therefore, similarly to the LQG model,"} \dan{I like Bahar's suggestion here! I made minor changes in the wording...}
Similarly to the LQG model, the minimum mean-squared-error state estimators~$\hat x_t=\mathbb E_{\PP}[x_t|y_0,\ldots,y_t]$ for the LQE model can be obtained through Kalman filtering and dynamic programming techniques \citep{ref:basu1994bayesian, ref:chu1972estimation}. In fact, the Kalman filter equations and the expressions for the optimal control inputs discussed in Appendix~\S\ref{sec: appx: optimal-solution-classic-LQG} remain unchanged for the LQE case, with the sole difference being that $\Sigma_t$ and $\Sigma_{t+1 | t}$ do not necessarily represent the covariance matrices of the estimated state in the LQE case. 

For the rest of our discussion, we also recall that marginal distributions of elliptical distributions are elliptical with the same characteristic generator \cite[Corollary 3.1]{hult2002multivariate}.

\begin{assumption}
\label{ass: elliptical Wasserstein ambiguity}
$\mathds{D}$ is the 2-Wasserstein distance~$\mathds{W}$.
\end{assumption}
Assumption~\ref{ass: elliptical Wasserstein ambiguity} guarantees that a variant of Assumption~\ref{ass:general_assumption_about_M_function} holds, where Gaussian distributions are replaced by elliptical distributions that share the same characteristic generator as the elliptical nominal distribution $\hat{\PP}_z$. This is formalized in \Cref{prop: elliptical variant of Assumption 2 holds}. 
%
%  Proposition for elliptical case
%
\begin{proposition}\label{prop: elliptical variant of Assumption 2 holds}
The 2-Wasserstein distance~$\mathds{W}$ satisfies the following properties:
\begin{enumerate}[noitemsep,nolistsep,label=(\roman*)]    
\item For every $(\mu_z, M_z) \in \setmoments{d_z}$, an elliptical distribution that shares the same characteristic generator ${\psi}$ as $\hat \PP_z$ minimizes the distance~$\mathds{W}$ from $\hat \PP_z$ among all distributions with mean~$\mu_z$ and second moment matrix $M_z$, that is,
    \begin{equation*}
    \mathds W(\mathcal E^{{\psi}}(\mu_z, M_z), \hat \PP_z) = \left\{
    \begin{array}{ccll}  
         &\inf\limits_{\PP_z \in \setalldist{d_z}} &\mathds W(\PP_z, \hat \PP_z) \\
         &\st&\EE_{\PP_z}[z] = \mu_z, &\EE_{\PP_z}[z z^\top ] = M_z.
    \end{array}\right.
   \end{equation*}
    \item The set $\mathcal{M}^{\mathcal{E}^\psi}_{(\mu_z, M_z)} = \{(\mu_z, M_z) \in \setmoments{d_z} : \, \mathds W(\mathcal E^{{\psi}}(\mu_z, M_z), \hat{\PP}_z) \leq \rho_z \}$ is convex and compact. 
    \end{enumerate}
\end{proposition}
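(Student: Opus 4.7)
My plan is to prove the two parts of \Cref{prop: elliptical variant of Assumption 2 holds} by leveraging the Gelbrich distance machinery already developed for the Gaussian case in Appendix~\S\ref{appendix:assumption2_wasserstein}, together with the extension of Gelbrich's identity to elliptical distributions noted in the footnote of \Cref{prop:WassersteinSubsetGelbrich}.

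For part~(i), I would invoke the full version of Gelbrich's theorem \citep{gelbrich1990formula}, which states that for any $\PP_z, \hat \PP_z \in \setalldist{d_z}$ with mean-covariance pairs $(\mu_z, \Sigma_z)$ and $(\hat \mu_z, \hat \Sigma_z)$, we have $\mathds W(\PP_z, \hat \PP_z) \geq \mathds G((\mu_z,\Sigma_z),(\hat\mu_z,\hat\Sigma_z))$, with equality when $\PP_z$ and $\hat \PP_z$ are elliptical distributions sharing the same characteristic generator. Writing $\Sigma_z = M_z - \mu_z \mu_z^\top$, this yields the chain of inequalities $\mathds W(\PP_z, \hat \PP_z) \geq \mathds G((\mu_z,\Sigma_z),(\hat\mu_z,\hat\Sigma_z)) = \mathds W(\mathcal E^\psi(\mu_z, M_z), \hat \PP_z)$, valid for any $\PP_z$ with mean $\mu_z$ and second moment $M_z$, where the last equality uses the tightness of Gelbrich's bound applied to two elliptical distributions sharing the generator $\psi$ of $\hat \PP_z$. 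In view of \Cref{rem: Technical remark on re-parametrization}, the re-parametrization ensures that the dispersion matrix of $\mathcal E^\psi(\mu_z, M_z)$ is indeed the covariance $M_z - \mu_z \mu_z^\top$, so Gelbrich's identity applies directly. Taking the infimum over $\PP_z$ on the left then gives the desired equality.

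For part~(ii), the argument mirrors the Gaussian case verbatim. Using the identity established in~(i), the set admits the representation
\[
\mathcal{M}^{\mathcal{E}^\psi}_{(\mu_z, M_z)} = \Bigl\{(\mu_z, M_z) \in \setmoments{d_z} : \bigl(\mathds G((\mu_z, M_z - \mu_z\mu_z^\top),(\hat\mu_z, \hat\Sigma_z))\bigr)^2 \leq \rho_z^2 \Bigr\}.
\]
This set is known to be convex and compact by \citet[Proposition~3.17]{ref:nguyen2019adversarial}, which applies because the characterization is purely in terms of the Gelbrich distance between mean-covariance pairs and is independent of the specific generator $\psi$.

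The proof is essentially a direct appeal to existing results; the only subtlety, and the one I would be most careful about, is making sure that the re-parametrization discussed in \Cref{rem: Technical remark on re-parametrization} aligns the dispersion matrix with the actual covariance, so that Gelbrich's identity for elliptical distributions (which is naturally stated in terms of dispersion matrices) can be invoked in terms of the second-moment-derived covariance $M_z - \mu_z \mu_z^\top$ without introducing a generator-dependent rescaling factor. Once that alignment is in place, both claims follow immediately.
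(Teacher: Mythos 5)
Your proposal is correct and matches the paper's own argument essentially verbatim: both parts rest on Gelbrich's theorem (the lower bound together with its tightness for elliptical distributions sharing a characteristic generator) for (i), and on the convexity and compactness of the Gelbrich ball of moment pairs from \citet[Proposition~3.17]{ref:nguyen2019adversarial} for (ii). Your added care about the re-parametrization in \Cref{rem: Technical remark on re-parametrization}, ensuring the dispersion matrix coincides with $M_z - \mu_z\mu_z^\top$, is a worthwhile explicit check that the paper leaves implicit.
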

\Cref{prop: elliptical variant of Assumption 2 holds} follows from Theorem~2.1 in~\cite{gelbrich1990formula}, which shows that, for any two distributions $\PPz{1}$ and $\PPz{2}$ on $\R^{d_z}$ with first and second moments given by $(\mu_1, M_1) \in \setmoments{d_z}$ and $( \mu_2, M_2) \in \setmoments{d_z}$, respectively,
\begin{align*} 
\mathds{W}(\PPz{1}, \PPz{2}) \geq  \mathds{G}\left((\mu_1, M_1 - \mu_1 \mu_1^\top),(\mu_2, M_2 - \mu_2 \mu_2^\top)\right),
\end{align*}
with equality holding if $\PPz{1}$ and $\PPz{2}$ are elliptical with the same characteristic generator. Here, $\mathds{G} ((\mu_1, \Sigma_1), (\mu_2, \Sigma_2))$ is the Gelbrich distance between two mean-covariance pairs, defined in~\eqref{eq: Gelbrich distance definition}. This implies that \Cref{prop: elliptical variant of Assumption 2 holds}-(i) is satisfied. \Cref{prop: elliptical variant of Assumption 2 holds}-(ii) is also satisfied because the set $\mathcal{M}^{\mathcal{E}^\psi}_{(\mu_z, M_z)}$ exactly corresponds to the set of pairs of first and second moments with Gelbrich distance at most $\rho_z$ from the mean-covariance pair for the nominal distribution, which is known to be convex and compact \citep[Proposition~3.17]{ref:nguyen2019adversarial}.

Our treatment considers only 2-Wasserstein distance~$\mathds{W}$ because we are not aware of other divergences $\mathds{D}$ that satisfy the requirements in~\Cref{prop: elliptical variant of Assumption 2 holds} for elliptical distributions $\hat{\PP}$.
Subsequently, we refer to the DRLQ problem formulation with elliptical nominal and 2-Wasserstein distance as the ``elliptical DRLQ problem" for conciseness.

\subsection{Results for the Elliptical DRLQ Problem}
\label{sec:results_elliptical_DRLQ}
It is worth emphasizing that the elliptical DRLQ problem exactly mirrors the Gaussian case with 2-Wasserstein distance discussed in \Cref{sec:examples}. The reason is that the 2-Wasserstein distance between two elliptical distributions $\PP$ and $\hat{\PP}$ with the same generator $\psi$ exactly equals the 2-Wasserstein distance between two Gaussian distributions with the same first two moments, and both distances are given by the Gelbrich distance between the respective mean-covariance pairs:
\begin{align}
    \mathds{W}\bigl(\mathcal{E}^{{\psi}}(\mu_z, M_z), \mathcal{E}^{{\psi}}(\hat{\mu}_z, \hat{M}_z) \bigr) 
    &= \mathds{W}\bigl(\mathcal{N}(\mu_z, M_z), \mathcal{N}(\hat{\mu}_z, \hat{M}_z) \bigr) \notag \\
    &= \mathds{G}\bigl( (\mu_z, M_z - \mu_z \mu_z^\top), (\hat{\mu}_z, \hat{M}_z - \hat{\mu}_z \hat{\mu}_z^\top) \bigr).
    \label{eq:equivalence_2Wass_elliptical}
\end{align}
This directly parallels the Gaussian case in \Cref{sec:examples} (under the assumption that the nominal distributions have the same means and second moments). For instance, the set of valid pairs of first and second moments $\mathcal{M}^{\mathcal{E}^\psi}_{(\mu_z, M_z)}$ defined in~\Cref{prop: elliptical variant of Assumption 2 holds} exactly equals the set $\mathcal{M}_{(\mu_z, M_z)}$ defined in \Cref{sec:examples}. Moreover, because most of our constructions and results rely on the set $\mathcal{M}_{(\mu_z, M_z)}$, we can mirror all the main results in \Cref{sec:Nash}. For conciseness, we refrain from formally restating and reproving all results and instead just discuss how they apply to the elliptical DRLQ case.

We consider the reformulation \eqref{eq:DRLQG} of the DRLQ problem in terms of purified observations, and its dual \eqref{DRCPdual}. These reformulations are applicable here because they do not rely on Assumption~\ref{ass: elliptical} and Assumption~\ref{ass: elliptical Wasserstein ambiguity} (or their counterparts in the main text).

\subsubsection*{Upper Bound for Primal.}
Similar to the construction of the primal upper bound in \Cref{sec:Nash}, we can obtain an upper bound for $p^\star$ by enlarging the ambiguity set~$\mathcal{B}$ and restricting the policies $u_t$ to affine dependencies. 
We define the following outer approximation for the ambiguity set:
\begin{equation*}
    \begin{aligned}
        &\overline{\mathcal{B}} = \left\{\PP \in \mathcal P(\R^{n+ T(n+ p)}): \PP_{z} \in \overline{\mathcal{B}}_z, ~ \EE_{\PP}[z' z^\top] = 0 ~\forall z ,z'\in \mathcal Z, ~ z'\neq z \right\},
    \end{aligned}
\end{equation*}
where, for all $z \in \mathcal Z$,
\begin{align*}
\overline{\mathcal{B}}_z\!=\! \left\{\PP_z  \in  \setalldist{d_z} : \exists (\mu_z , M_z) \in \setmoments{d_z} \text{ with } \EE_{\PP_z}[z]\! =\! \mu_z,  \EE_{\PP_z}[z z^\top ]\! =\! M_z,  \mathds{W}\left(\mathcal E^{{\psi}}(\mu_z, M_z), \hat{\PP}_z\right) \!\leq\! \rho_z \right\},
\end{align*}
where $\psi$ is the characteristic generator of $\hat{\PP}_z$ (the same for all $z \in \mathcal Z$ because the characteristic generator of the marginals is the same as that of $\hat{\PP}$). Affine policies take the same form as before, i.e., $u = q + U   \eta = q + U(Dw + v)$, where $q = (q_0, \dots, q_{T-1})$, and $U$ is a block lower triangular matrix as defined in \eqref{eq:block-lower-triangular}. The optimal value of Problem~\eqref{upper bound problem 1} now constitutes an upper bound for the primal here. \Cref{prop: min max SDP upper bound problem} also holds, and the proof follows the same arguments. 

\subsubsection*{Lower Bound for Dual.} We restrict nature's feasible set to the family~$\mathcal{B}_{\mathcal E^{\psi}}$ of all elliptical distributions from the ambiguity set~$\mathcal{B}$ that have the same characteristic generator $\psi$ as $\hat{\PP}$. Paralleling the construction in \Cref{sec:Nash}, we formulate a lower bound problem:
\begin{equation}
\label{eq: elliptical dual distributionally robust control problem restriction}
    \underline{d}^\star = \left\{
    \begin{array}{ccl} \max\limits_{\mathbb P \in \mathcal{B}_{\mathcal E^{\psi}}} &\min\limits_{x,   u} &\mathbb E_{\mathbb P} \left[   u^\top R   u +   x^\top Q  x \right]\\
    &\st &  u \in \mathcal U_{\eta}, {x} = H u + G w.
    \end{array}\right.
\end{equation}
We can now prove that a variant of \Cref{prop:dual-sdp} holds.
\begin{proposition}
    Under the standing assumptions in this section, problem~\eqref{eq: elliptical dual distributionally robust control problem restriction} has the same optimal value as problem~\eqref{eq:dr-affine-max-min}.
    \label{prop:elliptical dual-sdp}
\end{proposition}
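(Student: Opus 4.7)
The proof will mirror that of \Cref{prop:dual-sdp}, with ``elliptical distribution with generator $\psi$'' replacing ``Gaussian distribution'' throughout. The first step is to observe that for any fixed $\PP \in \mathcal{B}_{\mathcal{E}^\psi}$, the inner minimization problem in \eqref{eq: elliptical dual distributionally robust control problem restriction} reduces to a classical LQE problem, which---by the Kalman filtering and dynamic programming extension noted after \Cref{ass: elliptical}---is solved by an affine output feedback policy. Invoking \Cref{lemma:linear-rel-u-eta}, this translates to an optimal affine purified-output feedback policy $u = q + U\eta$ with $U \in \mathcal U$ and $q \in \R^{pT}$, so the inner minimization can be restricted to this class without loss.

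Having done so, I would substitute $u = q + U(Dw + v)$ and $x = Hu + Gw$ into the quadratic cost, exactly as in the derivation of \Cref{prop: min max SDP upper bound problem}. The resulting objective depends on $\PP$ only through the first two moments $(\mu_w, M_w)$ and $(\mu_v, M_v)$, and the outer maximization over $\PP \in \mathcal{B}_{\mathcal{E}^\psi}$ collapses to a maximization over those moments. The remaining task is to show that the set of moment pairs realizable by some $\PP \in \mathcal{B}_{\mathcal{E}^\psi}$ coincides with $\mathcal{M}_{(\mu_w,M_w)} \times \mathcal{M}_{(\mu_v,M_v)}$, which is the feasible set of the maximization in \eqref{eq:dr-affine-max-min}. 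One inclusion is immediate: the marginal of any elliptical $\PP$ with generator $\psi$ is itself elliptical with generator $\psi$ by \cite[Corollary~3.1]{hult2002multivariate}, and the identity~\eqref{eq:equivalence_2Wass_elliptical} then converts the 2-Wasserstein ball constraint on $\PP_z$ into the Gelbrich ball constraint defining $\mathcal{M}_{(\mu_z,M_z)}$.

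The reverse inclusion is where the main obstacle sits. In the Gaussian case one exhibits a feasible $\PP$ by taking an independent coupling of Gaussian marginals with the prescribed moments, which remains Gaussian; this shortcut fails here because independent couplings of elliptical distributions are generally not elliptical. Instead, for each pair $\bigl((\mu_w, M_w), (\mu_v, M_v)\bigr) \in \mathcal{M}_{(\mu_w, M_w)} \times \mathcal{M}_{(\mu_v, M_v)}$, I would construct $\PP$ directly as a single elliptical distribution with generator $\psi$, concatenated mean $(\mu_{x_0}, \mu_{w_0}, \ldots, \mu_{v_{T-1}})$, and block-diagonal dispersion matrix whose diagonal blocks are the $\Sigma_z = M_z - \mu_z \mu_z^\top$. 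The block-diagonal structure encodes the cross-component orthogonality required by $\mathcal{B}$, and the marginal property of elliptical distributions guarantees that each $\PP_z$ is elliptical with generator $\psi$ and the prescribed mean-second moment pair, placing it in $\mathcal{B}_z$ via \eqref{eq:equivalence_2Wass_elliptical}. Verifying that $\PP$ so constructed produces the same objective value as its Gaussian counterpart in \eqref{eq:dr-affine-max-min} is then a line-by-line repeat of the quadratic moment computation already carried out in the proof of \Cref{prop: min max SDP upper bound problem}.
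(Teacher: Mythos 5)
Your proposal is correct and follows essentially the same route as the paper's proof: restrict the inner minimization to affine purified-output policies via classical LQE theory and \Cref{lemma:linear-rel-u-eta}, reduce the outer maximization to first and second moments, use the marginal property of elliptical distributions together with~\eqref{eq:equivalence_2Wass_elliptical} for one inclusion, and for the reverse inclusion build a single joint elliptical distribution with generator $\psi$ and block-diagonal dispersion matrix rather than an independent coupling. The paper's proof uses exactly this joint block-diagonal construction, so your explicit flagging of why the Gaussian independent-coupling shortcut fails is a welcome clarification but not a departure.
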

\begin{proof}[Proof of~\Cref{prop:elliptical dual-sdp}.]
The proof follows from arguments similar to those in the proof of \Cref{prop:dual-sdp}. Noting that the inner minimization problem in \eqref{eq: elliptical dual distributionally robust control problem restriction} is solved by an affine policy according to standard LQE theory, we can reformulate problem~\eqref{eq: elliptical dual distributionally robust control problem restriction} as
\begin{equation}
\begin{array}{cccll}
&\max\limits_{\substack{\mu_w, {M_w}, \\ \mu_v, {M_v},\mathbb P}} &\min\limits_{\substack{q \in \R^{pT}\\ U \in \mathcal U}}  &%\Tr\Big((G^\top Q HUD + G^\top Q G + D^\top U^\top \bar R UD) M_w  +  U^\top \bar R U M_v\Big ) \\[-2ex]
\Tr\Big( \bigl( (UD)^\top R UD + (G+HUD)^\top Q (G+HUD) \bigr) M_w  +  U^\top \bar R U M_v\Big ) \\[-1ex]
&&&\qquad +2 q^\top(\bar R U D + G^\top QH) \mu_w + 2 q^\top \bar R U \mu_v + q^\top \bar R q,\\[5pt]
&&\st &\mathbb P \in \mathcal{B}_{\mathcal E^{\psi}}, ~\mu_w = \EE_{\PP}[w], ~M_w= \mathbb{E}_{\mathbb P}[w w^\top],
~\mu_v = \EE_\PP[v],~M_v = \mathbb{E}_{\mathbb P}[  v v^\top].
\end{array}
\label{eq:elliptical simplified_rewriting_dual_bnd}
\end{equation}
It remains to prove that problem~\eqref{eq:elliptical simplified_rewriting_dual_bnd} has the same optimal value as problem~\eqref{eq:dr-affine-max-min}. Consider any $(\mu_w, {M_w}), (\mu_v, {M_v}), \mathbb P$ feasible in the outer maximization problem in~\eqref{eq:elliptical simplified_rewriting_dual_bnd}. Because $\PP\in \mathcal \mathcal{B}_{\mathcal E^{\psi}}$ is an elliptical distribution with characteristic generator $\psi$ and marginal distributions of elliptical distributions are also elliptical with the same characteristic generator, we can write the requirement $\mathds{W}(\PP_z, \hat{\mathbb{P}}_{z}) \leq \rho_{z}$ in the definition of $\mathcal{B}_{\mathcal E^{\psi}}$ equivalently as $\mathds{W}({\mathcal E^{\psi}}(\mu_{z},M_{z}), \hat{\mathbb P}_{z}) \leq \rho_{z}$, for any $z \in {\cal Z}$. By~\eqref{eq:equivalence_2Wass_elliptical}, this implies that $(\mu_w, {M_w}), (\mu_v, {M_v})$ is feasible in the outer maximization problem in~\eqref{eq:dr-affine-max-min}, proving that the optimal value in~\eqref{eq:dr-affine-max-min} is at least as large as that in~\eqref{eq:elliptical simplified_rewriting_dual_bnd}. However, for any~$(\mu_w, {M_w}), (\mu_v, {M_v})$ feasible in the outer maximization in~\eqref{eq:dr-affine-max-min}, the elliptical distribution with characteristic generator $\psi$, mean $(\mu_w, \mu_v)$ and covariance matrix $\operatorname{diag}({M_w} - \mu_w \mu_w^\top, {M_v} - \mu_v \mu_v^\top)$
%obtained from independent couplings $\mathbb P = \PP_{x_0} \otimes (\otimes_{t=0}^{T-1}\PP_{w_t})\otimes (\otimes_{t=0}^{T}\PP_{v_t})$ where $\PP_{z} = {\mathcal E^{\psi}}(\mu_{z}, M_{z})$ for every $z \in \mathcal{Z}$, 
is feasible in~\eqref{eq:elliptical simplified_rewriting_dual_bnd}, proving that the two problems have the same optimal value. %{\color{red} DK: This seems to be incorrect. We cannot work with independent couplings. Instead, we need to construct the {\em joint} elliptical distribution with first and second moments given by the $\mu_z$ and the $M_z$, $z\in\mathcal Z$. That is, the independent coupling is {\em not} an elliptical distribution (think of uniform distributions on intervals).} \dan{[How about this:] However, for any~$(\mu_w, {M_w}), (\mu_v, {M_v})$ feasible in the outer maximization in~\eqref{eq:dr-affine-max-min}, we can construct a joint elliptical distribution with marginals $\PP_{z} = {\mathcal E^{\psi}}(\mu_{z}, M_{z})$ for every $z \in \mathcal{Z}$; this will be feasible in~\eqref{eq:elliptical simplified_rewriting_dual_bnd}, proving that the two problems have the same optimal value.} 
%{\color{purple} BT: Thanks a lot Daniel for your comments, Dan and Cagil for proposition the solution. I agree with what Cagil wrote (also what Dan wrote). If I need to vote for a version, I might prefer the one where we say explicitly the covariance matrices of the joint elliptical distribution as in the text now, this would be more clear for the reader.}

\end{proof}
\subsubsection*{Optimality of Affine Policies and Elliptical Distributions.} A counterpart of \Cref{theorem:lower-equal-upper} -- i.e., the strong duality of the primal~\eqref{eq:DRLQG} and its dual~\eqref{DRCPdual} -- holds under the assumptions in this section, and the proof relies on mirroring arguments. This implies that an affine policy $u = q + U \eta$ is optimal in the elliptical DRLQ problem and that an elliptical distribution with the same characteristic generator as $\hat{\PP}$ is optimal for the dual of the elliptical DRLQ problem.
\subsubsection*{Optimality of Linear Policies and Zero-Mean Distributions.} Mirroring our discussion in \Cref{sec:Nash}, we can readily argue that if $\hat{\mu}_z=0$ for every $z \in \mathcal{Z}$, Assumption~\ref{ass:setting_mean_zero_feasible} readily holds for the elliptical DRLQ problem. This follows because the distances between two \emph{elliptical} distributions $\PPz{1},\PPz{2} \in \setalldist{d_z}$ (with the same characteristic generator as $\hat{\PP}$) is exactly given by the Gelbrich distance between their pairs of first two moments, by~\eqref{eq:equivalence_2Wass_elliptical}. So an argument that mirrors the one for the 2-Wasserstein distance can be used to show that Assumption~\ref{ass:setting_mean_zero_feasible} holds.

This allows us to extend \Cref{thm:worst-case-mean-zero} to this case, with an identical line of arguments. We can therefore conclude that the dual of the elliptical DRLQ problem admits an optimal solution $\PP\opt$ that is elliptical and has the same mean as the nominal mean (i.e., zero), and under this distribution, there is an optimal \emph{linear} control policy, $u\opt = U\opt \eta$. 
\subsubsection*{Efficient Numerical Solution.} The results in \Cref{sec:DR-LQG-algorithm} can also be readily extended to the elliptical DRLQ problem because our algorithms rely on solving problem~\eqref{eq: dual distributionally robust control problem restriction} and the objective and feasible set of that problem are identical in the elliptical DRLQ case and in the Gaussian case due to~\eqref{eq:equivalence_2Wass_elliptical}. 

% \end{APPENDICES}

\bibliographystyle{abbrvnat}
\bibliography{bib}
\end{document}